\pdfoutput=1
\documentclass{scrartcl}
\usepackage[intlimits]{amsmath}
\usepackage{amsthm,amssymb,mathtools,bm,ifpdf,authblk}
\usepackage{url}
\usepackage[abbrev]{amsrefs}
\usepackage{xargs,xifthen}
\usepackage[T1]{fontenc}
\usepackage{lmodern}
\usepackage{microtype}
\usepackage[usenames,dvipsnames]{color}
\usepackage{tocstyle}
\usepackage{hyperref}

\allowdisplaybreaks[1]


\newtheorem{thm}{Theorem}[section]
\newtheorem*{thm*}{Theorem}
\newtheorem{lemma}[thm]{Lemma}
\newtheorem*{lemma*}{Lemma}
\newtheorem{prop}[thm]{Proposition}
\newtheorem*{prop*}{Proposition}

\newtheorem*{conj*}{Conjecture}
\newtheorem{cor}[thm]{Corollary}

\theoremstyle{definition}
\newtheorem{ex}[thm]{Example}
\newtheorem*{ex*}{Example}

\newtheorem{defn}[thm]{Definition}
\newtheorem*{defn*}{Definition}

\newtheorem{rem}[thm]{Remark}
\newtheorem*{rem*}{Remark}
\newtheorem{qu}[thm]{Question}


\numberwithin{equation}{section}
\allowdisplaybreaks[1]

\renewcommand{\le}{\leqslant}
\renewcommand{\ge}{\geqslant}

\def\emptyset{\varnothing}

\makeatletter
\DeclareOldFontCommand{\rm}{\normalfont\rmfamily}{\mathrm}
\DeclareOldFontCommand{\sf}{\normalfont\sffamily}{\mathsf}
\DeclareOldFontCommand{\bf}{\normalfont\bfseries}{\mathbf}
\DeclareOldFontCommand{\it}{\normalfont\itshape}{\mathit}
\makeatother

\def\emph{}
\DeclareTextFontCommand{\bfemph}{\bf}
\DeclareTextFontCommand{\itemph}{\it}
\def\emph{\bfemph}


\makeatletter
\def\blankfootnote{\xdef\@thefnmark{}\@footnotetext}

\newcommand*{\textlabel}[2]{%
  \edef\@currentlabel{#1}
  \phantomsection
  #1\label{#2}
}
\makeatother

\newcommand{\rind}{\ensuremath{\mathfrak{e}}}

\newcommand{\cc}{\ensuremath{\mathsf{cc}}}
\newcommand{\oc}{\ensuremath{\mathsf{oc}}}
\newcommand{\ak}{\ensuremath{\mathsf{ask}}}

\newcommand{\idx}[1]{\lvert#1\rvert}

\newcommand{\ndivides}[2]{\ensuremath{ {#1} \nmid {#2} }}
\newcommand{\onto}{\twoheadrightarrow}
\newcommand{\incl}{\hookrightarrow}

\let\AA\undefined
\newcommand{\RF}{\mathfrak{K}}

\newcommand{\AA}{\mathbf{A}}
\newcommand{\PP}{\mathbf{P}}
\newcommand{\acts}{\ensuremath{\curvearrowright}}
\newcommand{\on}{\ensuremath{\mid}}
\newcommand{\QQ}{\mathbf{Q}}
\newcommand{\FF}{\mathbf{F}}
\newcommand{\GG}{\mathbf{G}}
\newcommand{\HH}{\mathbf{H}}
\newcommand{\NN}{\mathbf{N}}
\newcommand{\ZZ}{\mathbf{Z}}
\newcommand{\CC}{\mathbf{C}}
\newcommand{\RR}{\mathbf{R}}
\newcommand{\fa}{\ensuremath{\mathfrak a}}
\newcommand{\fb}{\ensuremath{\mathfrak b}}
\newcommand{\fg}{\ensuremath{\mathfrak g}}

\newcommand{\Places}{\ensuremath{\mathcal V}}

\newcommand{\Zeta}{\ensuremath{\mathsf{Z}}}

\newcommand{\omicron}{\ensuremath{o}}

\newcommand{\ee}{\ensuremath{\bm e}}
\newcommand{\ww}{\ensuremath{\bm w}}
\newcommand{\xx}{\ensuremath{\bm x}}
\newcommand{\yy}{\ensuremath{\bm y}}

\newcommand{\fo}{\mathfrak{o}}
\newcommand{\fO}{\mathfrak{O}}
\newcommand{\fP}{\mathfrak{P}}

\newcommand{\sG}{\mathsf{G}}
\newcommand{\sH}{\mathsf{H}}

\newcommand{\cW}{\mathcal{W}}
\newcommand{\cR}{\mathcal{R}}

\newcommand{\Var}{\mathbf{V}}

\newcommand{\descent}{\operatorname{d}_{\operatorname{B}}}
\DeclareMathOperator{\Des}{Des}
\newcommand{\negative}{\operatorname{N}}

\DeclareMathOperator{\concnt}{k}

\DeclareMathOperator{\Sym}{Sym}

\DeclareMathOperator{\BB}{B}

\DeclareMathOperator{\Char}{char}

\newcommand{\XX}{\bm{X}}
\newcommand{\ff}{\bm{f}}

\newcommand{\YY}{\bm{Y}}

\DeclareMathOperator{\disc}{disc}

\DeclareMathOperator{\GL}{GL}

\newcommand{\Nil}{\ensuremath{\mathfrak{n}}}
\newcommand{\Gl}{\ensuremath{\mathfrak{gl}}}
\newcommand{\Sl}{\ensuremath{\mathfrak{sl}}}
\newcommand{\So}{\ensuremath{\mathfrak{so}}}
\newcommand{\Sp}{\ensuremath{\mathfrak{sp}}}

\newcommand{\Diag}{\ensuremath{\mathfrak{d}}}

\newcommand{\tr}{\ensuremath{\mathfrak{tr}}}

\DeclareMathOperator{\Ker}{Ker}
\DeclareMathOperator{\iso}{iso}
\DeclareMathOperator{\Stab}{St}

\DeclareMathOperator{\cent}{c}
\DeclareMathOperator{\Fix}{Fix}
\DeclareMathOperator{\diag}{diag}
\DeclareMathOperator{\End}{End}
\DeclareMathOperator{\Hom}{Hom}
\DeclareMathOperator{\Uni}{U}
\DeclareMathOperator{\Mat}{M}
\DeclareMathOperator{\Spec}{Spec}
\DeclareMathOperator{\Proj}{Proj}
\DeclareMathOperator{\ad}{ad}
\DeclareMathOperator{\Ad}{Ad}

\DeclareMathOperator{\dd}{d\!}

\DeclareMathOperator{\kersize}{K}
\DeclareMathOperator{\orbsize}{O}

\newcommand{\normal}{\triangleleft}
\newcommand{\dtimes}{\ensuremath{\,\cdotp}}
\newcommand{\card}[1]{\lvert#1\rvert}

\DeclarePairedDelimiter{\abs}{\lvert}{\rvert}
\DeclarePairedDelimiter{\norm}{\lVert}{\rVert}

\DeclareMathOperator{\len}{len}
\DeclareMathOperator{\rank}{rk}
\DeclareMathOperator{\trace}{trace}
\DeclareMathOperator{\genrank}{grk}
\DeclareMathOperator{\genidim}{gor}
\DeclareMathOperator{\Real}{Re}

\DeclareMathOperator{\Img}{Im}

\newcommand{\llb}{\ensuremath{[\![ }}
\newcommand{\rrb}{\ensuremath{]\!] }}
\newcommand{\llp}{\ensuremath{(\!( }}
\newcommand{\rrp}{\ensuremath{)\!) }}

\newcommand{\ask}[2]{\operatorname{ask}( {#1} \ifthenelse{\isempty{#2}{}}{}{\on{#2}})}


\ifpdf
  \pdfcompresslevel=9
  \hypersetup{pdftitle={The average size of the kernel of a matrix and orbits of linear groups},
    pdfauthor={Tobias Rossmann}
  }
\fi
\overfullrule=8pt

\setcounter{tocdepth}{1}
\title{The average size of the kernel of a matrix and orbits of linear groups}
\author{Tobias Rossmann}
\affil{\small Department of Mathematics\\University of Auckland\\Auckland\\
  New Zealand\\
  \vspace*{1em}
  \textit{Current address:}\\
  School of Mathematics, Statistics and Applied Mathematics\\
  National University of Ireland, Galway\\Galway\\
Ireland}
\date{}

\begin{document}

\maketitle
\thispagestyle{empty}

\vspace*{-4em}
\begin{abstract}
  \small
  Let $\mathfrak{O}$ be a compact discrete valuation ring
  of characteristic zero.
  Given a module~$M$ of matrices over $\mathfrak{O}$,
  we study the generating function encoding the average sizes of the kernels of
  the elements of $M$ over finite quotients of $\mathfrak{O}$.
  We prove rationality and establish fundamental properties of these
  generating functions and determine them explicitly for various natural
  families of modules~$M$.
  Using $p$-adic Lie theory, we then show that special cases of these
  generating functions enumerate orbits and conjugacy classes of suitable
  linear pro-$p$ groups.
\end{abstract}

\blankfootnote{\noindent{\itshape 2010 Mathematics Subject Classification.}
  15B33, 05A15, 11M41, 11S80, 20D60, 20D15, 20E45

  \noindent {\itshape Keywords.}
  Average size of a kernel, $p$-adic integration, orbits of linear groups,
  conjugacy classes, finite $p$-groups, pro-$p$ groups, unipotent groups,
  matrix Lie algebras 

  \medskip
  {\noindent
  The author gratefully acknowledges the support of the
  \href{https://www.humboldt-foundation.de}{Alexander von
    Humboldt Foundation} in the form of a Feodor Lynen Research Fellowship.
}}

\tableofcontents

\section{Introduction}
\label{s:intro}

This article is devoted to certain generating functions $\Zeta^{\ak}_M(T)$ 
(``ask zeta functions'')
attached to modules $M$ of matrices over compact discrete valuation rings.
The coefficients of $\Zeta^{\ak}_M(T)$ encode the average sizes of the kernels of
the elements of $M$ over finite quotients of the base ring.

Prior to formally defining these functions and stating our main results,
we briefly indicate how our study of the functions $\Zeta^{\ak}_M(T)$ is motivated by
questions from (both finite and infinite) group theory and
probabilistic linear algebra. 

\paragraph{Conjugacy classes of finite groups.}
Given a finite group $G$, let $\concnt(G)$ denote the number of its conjugacy
classes.
It is well-known that $\concnt(G)$ coincides with
the number of the (ordinary) irreducible characters of $G$.
Let $\Uni_d \le \GL_d$ be the group scheme of upper unitriangular $d\times d$
matrices.
Raised as a question in \cite{Hig60a}, ``Higman's conjecture'' asserts that
$\concnt(\Uni_d(\FF_q))$ is given by a polynomial in $q$ for each
fixed $d \ge 1$.

Numerous people have contributed to confirming Higman's conjecture for small~$d$.
In particular, 
building on a long series of papers, Vera-L\'opez and Arregi~\cite{VLA03} 
established Higman's conjecture for $d \le 13$.
Using a different approach, Pak and Soffer~\cite{PS15} 
recently provided a confirmation for $d\le 16$.
While Higman's conjecture remains open in general and despite
some evidence suggesting that it may fail to hold for large $d$ (see
\cite{PS15}), it nonetheless influenced and inspired numerous results on
related questions; see, in particular, work of 
Isaacs~~\cite{Isa95,Isa07} on character degrees of so-called 
algebra groups and work of Goodwin and R\"ohrle~\cite{GR09a,GR09b,GR09c,GR10}
on conjugacy classes of unipotent elements in groups of Lie~type.

\paragraph{Orbits of linear groups.}
All rings in this article are assumed to be commutative and unital.
Let $R$ be a ring and let $V$ be an $R$-module with $\card V < \infty$.
Given a linear group $G \le \GL(V)$, it is a classical problem (for
$R = \FF_q$) to relate arithmetic properties of the number of orbits of $G$ on
its natural module $V$ to geometric and group-theoretic properties of $G$; see
e.g.\ \cite{GLPST16} and the references therein. 
This problem is closely related to the enumeration of 
irreducible characters (and hence of conjugacy classes).
In particular, if $G$ is a finite $p$-group of nilpotency class less than $p$,
then the Kirillov orbit method establishes a bijection between the ordinary
irreducible characters of $G$ and the coadjoint orbits of $G$ on the dual of
its associated Lie ring; cf.\ \cite{GS09} and see~\cite{O'BV15} for
applications of such techniques to the enumeration of characters and
conjugacy classes.

\paragraph{Rank distributions and the average size of a kernel.}
In addition to group-theoretic problems such as those indicated above,
this article is also inspired by questions and results from probabilistic linear algebra.
For an elementary example, to the author's knowledge,
the number
\begin{equation}
  \label{eq:landsberg}
\prod_{i=0}^{r-1} (q^e-q^i)\frac{q^{d-i}-1}{q^{i+1}-1}
\end{equation}
of $d\times e$ matrices of rank $r$ with entries in
a finite field $\FF_q$ was first recorded by Landsberg~\cite{Lan93}.
More recently, probabilistic questions surrounding the distribution of ranks
in sets of matrices over finite fields have been studied, see e.g.\
\cite{Bal68,BKW97,Coo00,DGMS10} and \cite[Ch.~3]{Kol99}; for applications,
see \cite{SB10,LMT11}.

Let $R$ be a ring, let $V$ and $W$ be $R$-modules with $\card V, \card W < \infty$, 
and let $M \subset \Hom(V,W)$ be a submodule.
In the following, we are primarily interested in the case that $R$ is finite and $M \subset
\Mat_{d\times e}(R)$ acts by right-multiplication on $V = R^d$.
The \emph{a}verage \emph{s}ize of the \emph{k}ernel of the elements of~$M$ 
is
\[
\ask M {} := \ask M {V} := \card{M}^{-1} \sum_{a \in M}\card{\Ker(a)}.
\]

Linial and Weitz~\cite{LW00} gave the following formula for
$\ask{\Mat_{d\times e}(\FF_q)}{}$; 
the same result appeared (with a different proof) in a recent paper by
Fulman and Goldstein~{\cite[Lem.~3.2]{FG15}}
which also contains further examples of $\ask M {}$.

\begin{prop}
  \label{prop:FulmanGoldstein}
  $\ask {\Mat_{d\times e}(\FF_q)}{} = 1 + q^{d-e} - q^{-e}$.
\end{prop}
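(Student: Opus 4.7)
The plan is to apply the classical double-counting (or Fubini) trick for counting pairs. Writing $V = \FF_q^d$, we have
\[
\sum_{a \in M} \card{\Ker(a)} = \noof{\{(a,v) \in M \times V : va = 0\}} = \sum_{v \in V} \card{\{a \in M : va = 0\}},
\]
which reduces the problem to counting, for each fixed $v$, the matrices $a \in \Mat_{d \times e}(\FF_q)$ annihilating $v$ on the left.

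Next, I would analyse the condition $va = 0$ column by column. The equation $va = 0$ says exactly that every column of $a$, viewed as an element of $\FF_q^d$, is orthogonal to $v$. If $v = 0$, every $a \in M$ qualifies, contributing $\card M = q^{de}$. If $v \ne 0$, then $v^\perp \subset \FF_q^d$ is a hyperplane, so each of the $e$ columns has $q^{d-1}$ admissible choices, yielding $q^{e(d-1)}$ matrices. Hence
\[
\sum_{a \in M} \card{\Ker(a)} = q^{de} + (q^d - 1) q^{e(d-1)}.
\]

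Dividing by $\card M = q^{de}$ and simplifying produces $1 + q^{d-e} - q^{-e}$, as claimed. There is no real obstacle: the only thing that needs attention is keeping track of the action convention (right multiplication sends rows to linear forms on columns, not the other way around), so that $v^\perp$ is identified correctly. This is also why the answer comes out in the slightly asymmetric form $1 + q^{d-e} - q^{-e}$ rather than the analogous expression with $d$ and $e$ swapped. As an alternative route, one could instead substitute Landsberg's rank distribution into $\sum_r q^{d-r} \cdot \noof{\{a : \rank(a) = r\}}$, but the Fubini argument avoids any $q$-binomial manipulation and is clearly the more efficient path.
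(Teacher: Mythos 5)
Your proof is correct and is essentially the same argument the paper uses: the double-counting identity $\sum_{a}\card{\Ker(a)} = \sum_{\xx}\card{\{a : \xx a = 0\}}$ is exactly the content of Lemma~\ref{lem:WxM} (first proof), and the paper then specialises to $M = \Mat_{d\times e}(\FF_q)$ just as you do. The only cosmetic difference is that the paper phrases the computation in terms of orbit sizes $\card{\xx M} = q^e$ for $\xx \neq 0$, whereas you count the complementary centraliser $\card{\cent_M(\xx)} = q^{e(d-1)}$; these are equivalent via $\card{\cent_M(\xx)}\card{\xx M} = \card M$.
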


As we will see later, for a linear $p$-group $G \le \GL(V)$ with a
sufficiently strong Lie theory, $\card{V/G}$ and $\concnt(G)$
are both instances of $\ask {\fg} {}$ for suitable linear Lie algebras
$\fg$.

\paragraph{Orbit-counting and conjugacy class zeta functions.}
In the literature, numbers of the form $\card{V/G}$, $\concnt(G)$, and $\ask M
V$ for $R$-modules $V$ and $W$, a linear group $G \le \GL(V)$, and $M \subset
\Hom(V,W)$ were primarily studied in the case that $R = \FF_q$ is a finite field.
Instead of individual numbers, we consider families of such numbers obtained 
by replacing $\FF_q$ by the finite quotients of suitable rings.

We will use the following notation throughout this article.
Let $K$ be a non-Archimedean local field and let $\fO$ be its valuation
ring---equivalently, $\fO$ is a compact discrete valuation ring with field of
fractions~$K$;
we occasionally write $\fO_K$ instead of $\fO$ and similarly below.
For example, $K$ could be the field $\QQ_p$ of $p$-adic numbers (in which case
$\fO = \ZZ_p$ is the ring of $p$-adic integers) or the field $\FF_q\llp z\rrp$
of formal Laurent series over $\FF_q$ (in which case $\fO = \FF_q\llb z\rrb$).
Let $\fP$ denote the maximal ideal of~$\fO$.
Let $\RF := \fO/\fP$ be the residue field of $K$ and
let $q$ and $p$ denote the size and characteristic of $\RF$, respectively.
We write $\fP^n = \fP \dotsb \fP$ for the $n$th ideal power
of $\fP$; on the other hand, $\fO^n = \fO\times\dotsb\times\fO$ denotes the
$n$th Cartesian power of $\fO$.
Let $\fO_n = \fO/\fP^n$ and $\fO_\infty = \fO$.

\begin{defn}
  \label{d:cc_oc}
  Let $G \le \GL_d(\fO)$ be a subgroup.
\begin{enumerate}
\item 
Let $G_n \le \GL_d(\fO_n)$ denote the image of $G$ under the natural map
$\GL_d(\fO) \onto \GL_d(\fO_n)$.
The \emph{conjugacy class zeta function} of $G$ is
  $\Zeta_G^{\cc}(T) := \sum\limits_{n=0}^\infty \concnt(G_n) T^n$.
\item
  \label{d:cc_oc2}
 The \emph{orbit-counting zeta function} of $G$ is
  $\Zeta_G^{\oc}(T) := \sum\limits_{n=0}^\infty \card{\fO_n^d/G}T^n$.
\end{enumerate}
\end{defn}

Referring to these generating functions as ``zeta functions'' is
justified by various properties recalled or established in the following (e.g.\ the
existence of meromorphic continuation) for the associated  Dirichlet series
$\Zeta_G^{\cc}(q^{-s})$ and $\Zeta_G^{\oc}(q^{-s})$, at least in
characteristic zero.
Conjugacy class zeta functions were introduced by du~Sautoy~\cite{dS05} who
established their rationality for $\fO = \ZZ_p$.
Berman~et~al.~\cite{BDOP13} investigated $\Zeta_{\sG(\fO)}^{\cc}(T)$ for
Chevalley groups~$\sG$.
Lins~\cite{Lin18} recently determined $\Zeta_{\sG(\fO)}^{\cc}(T)$ for
certain families of unipotent group schemes $\sG$.
Special cases of the functions $\Zeta_G^{\oc}(T)$ have previously appeared in the literature.
In particular, 
Avni~et~al.~\cite[Thms~E, A.5]{AKOV16b} determined
orbit-counting zeta functions associated with the coadjoint representation
of $\GL_d$ and group schemes of the form $\mathrm{GU}_d$ for $d = 2,3$.

Conjugacy class and orbit-counting zeta  functions are natural analogues of
the numbers of conjugacy classes and orbits of finite groups from above.
For example, it is a natural generalisation of Higman's conjecture to ask,
for each fixed $d$,
whether $\Zeta^\cc_{\Uni_d(\fO_K)}(T)$ is given by a rational function in
$q_K$ and $T$ as a function of $K$.

\paragraph{The definition of $\Zeta^{\ak}_M(T)$.}
\label{p:ZM}
We now introduce the protagonist of this article.
Let $V$ and $W$ be finitely generated $\fO$-modules.
We frequently write $V_n = V \otimes \fO_n$ and $W_n = W \otimes \fO_n$,
where, in the absence of subscripts, tensor products are always taken over $\fO$.
Given a submodule $M \subset \Hom (V,W)$, 
we let $M_n$ denote the image of $M$ under the natural map
$\Hom(V,W) \to \Hom(V_n,W_n), \,a \mapsto a \otimes
\operatorname{id}_{\fO_n}$.
Crucially, the module $M_n$ does not merely depend on the abstract
module $M$ but rather on the given embedding of $M$ into $\Hom(V,W)$.
In particular, the natural surjection $M \otimes \fO_n
\onto M_n$ need not be an isomorphism;
for example, if $M = \fP \subset \fO = \End(\fO)$,
then $M \otimes \fO_1$ is isomorphic to $\fO_1$ but $M_1 = 0$.
In terms of matrices, for a submodule $M \subset \Mat_{d\times e}(\fO)$, 
we obtain $M_n \subset\Mat_{d\times e}(\fO_n)$ by reducing the entries of all matrices in $M$ modulo $\fP^n$.
This article is devoted to generating functions of the following form.
\begin{defn}
  \label{d:ZM}
  Let $M \subset \Mat_{d\times e}(\fO)$ be a submodule and $V = \fO^d$.
  Define the \emph{ask zeta function} of $M$ to be
  \[
  \Zeta_M(T) := \Zeta_M^\ak(T) := \Zeta^\ak_{M \acts V}(T) := \sum_{n=0}^\infty \ask{M_n}{V_n} T^n \in \QQ\llb T\rrb.
  \]
\end{defn}

In contrast to the probabilistic flavour of the work on the numbers $\ask M V$
cited above, our investigations of the functions $\Zeta^\ak_M(T)$ draw upon
results and techniques that have been previously applied in asymptotic group
theory and, specifically, the theory of  zeta functions 
(representation zeta functions, in particular) of groups and other
algebraic structures; for recent surveys of this area, see
\cite{Vol11,Klo13,Vol15}. 
Conversely, our study of ask zeta functions contributes to asymptotic group
theory: we will see that orbit-counting and conjugacy class zeta functions of
suitable groups are instances of ask zeta functions.

\paragraph{Results I: fundamental properties and examples of ask zeta functions.}

Our central structural result on the functions $\Zeta^\ak_M(T)$ is the following.

\begin{thm}
  \label{thm:rational}
  Let $\fO$ be the valuation ring of a non-Archimedean local field of
  characteristic zero.
  Let $M \subset \Mat_{d\times e}(\fO)$ be a submodule.
  Then $\Zeta^\ak_M(T)$ is rational, i.e.\ $\Zeta^\ak_M(T) \in \QQ(T)$.
\end{thm}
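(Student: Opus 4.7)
The plan is to rewrite $\Zeta_M(T)$ as a shifted Poincar\'e series of a bilinear variety and then apply Denef's rationality theorem for $p$-adic integrals.

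As $M\subset\Mat_{d\times e}(\fO)$ is a finitely generated torsion-free module over the discrete valuation ring $\fO$, it is free; fix an $\fO$-basis $A_1,\dotsc,A_r$ of $M$ and consider the associated bilinear map
\[
B\colon \fO^r\times\fO^d \to \fO^e,\qquad B(\alpha,x) := x\cdot\sum_{i=1}^r \alpha_i A_i.
\]
The reduction $M\otimes\fO_n \onto M_n$ is $\fO_n$-linear and surjective, so all of its fibres have the same cardinality. A brief averaging argument then yields
\[
\ask{M_n}{V_n} \;=\; \frac{1}{\card{M\otimes\fO_n}}\sum_{\alpha\in M\otimes\fO_n}\card{\Ker\alpha} \;=\; q^{-rn}\card{Z_n},
\]
where $Z_n := \{(\bar\alpha,\bar x)\in\fO_n^{r+d}: B(\bar\alpha,\bar x) \equiv 0 \bmod \fP^n\}$. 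The problem is thereby reduced to controlling the number of common zeros, modulo successive powers of $\fP$, of the $e$ bilinear polynomials $B_1,\dotsc,B_e\in\fO[\alpha,x]$ that are the components of $B$.

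Next, I would translate these counts into Haar measure. Writing $\mu$ for the normalised Haar measure on $\fO^{r+d}$ and $\mu_n := \mu\{(\alpha,x)\in\fO^{r+d} : B(\alpha,x)\in\fP^n\fO^e\}$, we have $\card{Z_n} = q^{(r+d)n}\mu_n$ and therefore
\[
\Zeta_M(T) \;=\; \sum_{n=0}^\infty q^{dn}\mu_n\, T^n \;=\; P(q^dT),
\]
where $P(T) := \sum_n \mu_n T^n$ is the Poincar\'e series of the common zero locus of $B_1,\dotsc,B_e$ in $\fO^{r+d}$. Equivalently, with $v(y) := \min_j v_\fP(y_j) \in \NN\cup\{\infty\}$ for $y\in \fO^e$, the series $\Zeta_M(T)$ admits an Igusa-type integral representation involving $\int_{\fO^{r+d}}(q^dT)^{v(B(\alpha,x))}\,d\mu$.

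The final step is to invoke Denef's rationality theorem: both $P(T)$ and the above $p$-adic integral are rational functions of~$T$. A substitution $T \mapsto q^d T$ then gives $\Zeta_M(T) \in \QQ(T)$. This last step is the technical heart of the argument and the source of the characteristic-zero hypothesis, since Denef's theorem rests on either Macintyre's quantifier elimination for $\QQ_p$ or on Hironaka's resolution of singularities, neither of which is available in general in positive characteristic. The bilinearity of $B$ does not obviously simplify the rationality argument itself, but should be crucial in subsequent sections when explicit formulas for $\Zeta_M(T)$ are derived via a more refined, structure-specific analysis (e.g.\ via cone decompositions tailored to the bilinear form).
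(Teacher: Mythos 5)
Your proposal is correct, and it takes a genuinely different (and in some ways more economical) route than the paper. You reduce $\Zeta_M(T)$ directly to the classical Poincar\'e series $P(T) = \sum_n \mu_n T^n$ attached to the system of bilinear forms $B_1,\dotsc,B_e$, via the averaging identity $\ask{M_n}{V_n} = q^{-rn}\card{Z_n} = q^{dn}\mu_n$. (The averaging step is legitimate and is in fact Lemma~\ref{lem:alt_ask} of the paper; the point is that the fibres of $M\otimes\fO_n\onto M_n$ are cosets of a fixed subgroup, hence equinumerous.) Rationality then follows from Denef's theorem applied to the common zero locus of $B_1,\dotsc,B_e$ in $\fO^{r+d}$, without any further manipulation. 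The paper instead introduces an auxiliary $p$-adic variable $y$ encoding the level $n$ (Lemma~\ref{lem:master_integral}), passes to the integral representations of Theorem~\ref{thm:twoint}, and makes the integrand explicit via the elementary divisor theorem and minors (Lemma~\ref{lem:size_ker} and Corollary~\ref{cor:formula_K}), arriving at \eqref{eq:int_K_minors} before invoking the Igusa--Denef rationality machinery. What your route buys is a shorter proof of pure rationality that never mentions minors; what the paper's route buys is that the resulting integrals \eqref{eq:int_K_minors}--\eqref{eq:int_O_minors} are exactly in the form to which Voll's functional-equation theorem applies (yielding Theorem~\ref{thm:feqn}) and to which the author's computational tools apply for the explicit formulae in \S\ref{s:examples_max}--\S\ref{s:zeta}. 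Your closing remark anticipates this trade-off accurately. One small technical note: for the Denef step you should observe explicitly that the Poincar\'e series of a \emph{system} of polynomials is covered, e.g.\ via the Igusa zeta function $\int_{\fO^{r+d}}\norm{B_1,\dotsc,B_e}^s\,d\mu$ or by the definability of the locus $\{\nu(B_i)\ge n\ \forall i\}$ in the $p$-adic language; this is standard, but worth a sentence.
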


For example, $\Zeta^\ak_{\{0_{d\times e}\}}(T) = 1/(1-q^dT)$.
At the other extreme, we will obtain the following generalisation of
Proposition~\ref{prop:FulmanGoldstein}.

\begin{prop}
  \label{prop:Mdxe}
  Let $\fO$ be the valuation ring of a non-Archimedean local field of arbitrary
  characteristic.
  Let~$q$ be the residue field size of $\fO$.
  Then 
  \begin{equation}
    \label{eq:Mdxe}
  \Zeta^\ak_{\Mat_{d\times e}(\fO)}(T) = \frac{1-q^{-e}T}{(1-T)(1-q^{d-e}T)}.
  \end{equation}
\end{prop}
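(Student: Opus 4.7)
The plan is to evaluate $\ask{M_n}{V_n}$ in closed form and then sum the resulting power series. By Fubini,
\[
\sum_{a\in M_n}\card{\Ker(a)} \;=\; \sum_{v\in V_n} \card{\{a\in M_n : va=0\}},
\]
so the problem reduces to counting annihilators of individual vectors $v\in V_n$. For fixed $v=(v_1,\dotsc,v_d)$, the $\fO_n$-linear map $M_n\to W_n$, $a\mapsto va$, is surjective onto $I\cdot W_n$, where $I\subset \fO_n$ is the ideal generated by the entries of $v$. Since $\fO_n$ is a local chain ring, $I = \fP^k/\fP^n$ for a unique $k\in\{0,1,\dotsc,n\}$, with $k=n$ corresponding to $v=0$, and it follows that the annihilator of $v$ has cardinality $q^{e((d-1)n+k)}$.

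Next, I would stratify $V_n$ by this invariant $k$. A straightforward count shows that the number of $v\in V_n$ whose entries generate $\fP^k/\fP^n$ equals $q^{d(n-k)}-q^{d(n-k-1)}$ for $k<n$, while only the zero vector corresponds to $k=n$. Summing, dividing by $\card{M_n}=q^{den}$, and reindexing via $j=n-k$ should yield the tidy formula
\[
\ask{M_n}{V_n} \;=\; 1 + (1-q^{-d})\sum_{j=1}^n q^{(d-e)j};
\]
as a sanity check, setting $n=1$ ought to recover Proposition~\ref{prop:FulmanGoldstein}.

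Finally, $\Zeta_{\Mat_{d\times e}(\fO)}(T)$ is obtained by interchanging the double sum $\sum_{n\ge 0}T^n\sum_{j=1}^n q^{(d-e)j}$ and evaluating the two resulting geometric series in $T$ and $q^{d-e}T$; the algebra then collapses cleanly to the claimed rational function, the numerator $1-q^{-e}T$ arising from the cancellation of the cross term $q^{d-e}T$. The argument is entirely combinatorial over the chain rings $\fO_n$ and so is valid in arbitrary residue characteristic, consistent with the statement of the proposition. I foresee no serious conceptual obstacle; the only delicate point is the bookkeeping of exponents in the content stratification.
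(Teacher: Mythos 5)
Your argument is correct, and I verified the key computations: the map $a\mapsto va$ from $M_n$ to $W_n$ has image $I^e$ where $I=(v_1,\dotsc,v_d)=\fP^k/\fP^n$, hence kernel of size $q^{nde-(n-k)e}=q^{e((d-1)n+k)}$; the stratum count $q^{d(n-k)}-q^{d(n-k-1)}$ is right; and the resulting formula $\ask{M_n}{V_n}=1+(1-q^{-d})\sum_{j=1}^n q^{(d-e)j}$ does sum to $(1-q^{-e}T)/\bigl((1-T)(1-q^{d-e}T)\bigr)$ after swapping the order of summation.

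However, this is a genuinely different route from the one in the paper. The paper's proof (\S\ref{ss:Mdxe}) invokes the general $p$-adic integration formalism: it specializes the integral representation \eqref{eq:int_O_minors} via Lemma~\ref{lem:O_Mdxe}, which gives $\orbsize_{\Mat_{d\times e}(\fO)}(\xx,y)=\abs{y}^{-e}\norm{\xx,y}^e$, and then evaluates the resulting integral with Lemma~\ref{lem:Fr}. Your approach replaces this with a direct combinatorial computation over the finite chain rings $\fO_n$: the double-count (essentially Lemma~\ref{lem:WxM} phrased as a Fubini argument) plus a stratification of $V_n$ by the content valuation $k$. The paper in fact explicitly acknowledges, in the remark immediately following the proof, that such an elementary derivation via Lemma~\ref{lem:WxM} and ad hoc generating-function manipulation is possible, while cautioning that it does not scale well to more complicated examples. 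The trade-off is thus: your proof is self-contained and more transparent for this particular module, but the paper's integral method is what generalizes to the other families ($\So_d$, $\Sp_{2d}$, $\Sym_d$, $\Nil_d$, etc.) in \S\ref{s:examples_max}, all of which reduce to the same Lemma~\ref{lem:Fr} after an $\orbsize$-maximality or explicit minor computation.
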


Note that since $\Zeta^\ak_{\Mat_{d\times e}(\fO)}(T) = 1 + (1 + q^{d-e} - q^{-e})
T + \mathcal O(T^2)$,
Proposition~\ref{prop:Mdxe} indeed generalises
Proposition~\ref{prop:FulmanGoldstein}.
Apart from proving Proposition~\ref{prop:Mdxe}, in \S\ref{s:examples_max}, we will
also determine $\Zeta^\ak_M(T)$ for traceless (Corollary~\ref{cor:sld}), symmetric
(Proposition~\ref{prop:Symd}), anti-symmetric (Proposition~\ref{prop:so}),
upper triangular (Proposition~\ref{prop:n_tr}),
and diagonal (Corollary~\ref{cor:diagonal}) matrices.
We will also explain why many of our formulae are of the same shape
as \eqref{eq:Mdxe}.
Despite this wealth of explicit examples in arbitrary characteristic,
the author does not know if Theorem~\ref{thm:rational} remains true in
positive characteristic; see \S\ref{sss:positive}.

Our proofs of Theorem~\ref{thm:rational}, Proposition~\ref{prop:Mdxe}, and various
other results in this article rest upon expressing the functions $\Zeta^\ak_M(T)$
in terms of suitable integrals (Theorem~\ref{thm:twoint}).
These integrals can then be studied using powerful techniques 
developed over the past decades, primarily in the context of
Igusa's local zeta function 
(see \cite{Den91a,Igu00} for introductions).
Our use of these techniques is similar to and inspired by their
applications in the theory of zeta functions of groups 
and, in particular, the study of representation growth; see
\cite{GSS88,JZ06,Vol10,AKOV13,SV14}.
In particular, Theorem~\ref{thm:rational} follows from rationality
results going back to Igusa and Denef.
Using a theorem of
Voll~\cite{Vol10}
we will furthermore see that the identity
\begin{equation}
  \label{eq:Mde_FEqn}
  \Zeta^\ak_{\Mat_{d\times
    e}(\fO)}(T) \Biggm\vert_{(q,T)\to (q^{-1},T^{-1})} = (-q^d
T)\dtimes \Zeta^\ak_{\Mat_{d\times e}(\fO)}(T)
\end{equation}
is no coincidence (Theorem~\ref{thm:feqn}).
Our $p$-adic formalism is also compatible with our previous
computational work (summarised in \cite{spp1489}) which allows us to
explicitly compute numerous further examples of~$\Zeta^\ak_M(T)$;
see \S\ref{s:zeta} for some of these.

While ``random matrices'' over local fields have been studied before
(see e.g.\ \cite{Eva02}), the author is not aware of previous
applications of the particular techniques employed 
(and the point of view taken) here.

\paragraph{Results II: ask zeta functions and asymptotic group theory.}
We say that a formal power series $F(T) \in \QQ\llb T\rrb$ has 
\emph{bounded denominators} if there exists a non-zero $a \in \ZZ$ such that
$aF(T) \in \ZZ\llb T\rrb$.
As usual, for a ring $R$ and $R$-module $V$, let $\Gl(V)$ 
denote the Lie algebra associated with the associative algebra
$\End(V)$;
that is, $\Gl(V) = \End(V)$ as $R$-modules 
and the Lie bracket of $\Gl(V)$ is defined in terms of 
multiplication in $\End(V)$ via $[a,b] = ab - ba$.

\begin{thm}
  \label{thm:bounded_denominators}
  Let $\fO$ be the valuation ring of a non-Archimedean local field of
  characteristic zero.
  Let $\fg \subset \Gl_d(\fO)$ be a Lie subalgebra.
  Then $\Zeta^\ak_{\fg}(T)$ has bounded denominators.
\end{thm}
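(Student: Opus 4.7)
My strategy is to reduce the claim to a Kirillov-style orbit count on a dilated sublattice of $\fg$ and transfer the resulting integrality back to $\fg$ by an explicit shift of level. Since $\Char K = 0$, standard $p$-adic Lie theory produces an integer $k = k(p,d)$ such that the $p$-adic exponential and logarithm are mutually inverse homeomorphisms between $p^k \Mat_d(\fO)$ and $1 + p^k \Mat_d(\fO)$, and the Baker--Campbell--Hausdorff series converges on $p^k \Mat_d(\fO)$. With $\fh := p^k \fg$, the set $G := \exp(\fh)$ is therefore a pro-$p$ subgroup of $\GL_d(\fO)$; write $G_n \le \GL_d(\fO_n)$ for its images at each level.

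The decisive identity is $\card{\Fix_{V_n}(\exp(a))} = \card{\Ker_{V_n}(a)}$ for $a \in \fh_n$, which follows from the factorisation $\exp(a) - 1 = a \cdot \phi(a)$ with $\phi(a) \in 1 + p^k \Mat_d(\fO_n)$ a unit, so that $v \mapsto \phi(a) v$ bijects $V_n$ and sends $\Fix_{V_n}(\exp(a))$ onto $\Ker_{V_n}(a)$. Combined with the Cauchy--Frobenius--Burnside formula applied to $G_n \acts V_n$, this gives $\ask{\fh_n}{V_n} = \card{V_n/G_n} \in \ZZ$, hence $\Zeta_\fh(T) \in \ZZ\llb T\rrb$.

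It remains to compare $\Zeta_\fh(T)$ and $\Zeta_\fg(T)$. Assuming $\fg$ is saturated in $\Mat_d(\fO)$ (the general case differs by bounded torsion absorbable into the final constant), for $n \ge k$ any $a \in \fh_n$ factors as $p^k b$ with $b \in \fg_n$ determined up to $\fg_n[p^k]$, and the condition $p^k b v = 0$ in $V_n$ is equivalent to $bv \equiv 0 \pmod{p^{n-k}}$; a short count then yields $\card{\Ker_{V_n}(p^k b)} = q^{kd} \card{\Ker_{V_{n-k}}(b_{n-k})}$. Summing over $\fh_n$ and reindexing by $\fg_{n-k}$ collapses to the shift identity $\ask{\fh_n}{V_n} = q^{kd} \cdot \ask{\fg_{n-k}}{V_{n-k}}$. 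Combined with the integrality of $\ask{\fh_n}{V_n}$, this forces $q^{kd} \cdot \Zeta_\fg(T) \in \ZZ\llb T\rrb$ and proves the theorem.

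The main obstacle I anticipate is the non-saturated case, where the gap between $p^n \fg$ and $\fg \cap p^n \Mat_d(\fO)$ introduces bounded torsion into $\fg_n$ and distorts the shift identity; this should contribute only a further bounded power of $q$ to the denominator constant, but making the bookkeeping uniform in $n$ requires care. A secondary concern is uniform control of the Lazard-type constant $k$ for small residue characteristics relative to $d$, where $v_p(j!)$ grows more quickly and the naive convergence bound for $\exp$ must be sharpened.
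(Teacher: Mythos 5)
Your overall strategy is the same as the paper's: use the $p$-adic exponential to convert the average-kernel count into an orbit count for a linear pro-$p$ group, then rescale to recover $\Zeta_\fg$. The execution differs in a dual bookkeeping choice. You compare, element by element, $\Ker_{V_n}(a)$ with $\Fix_{V_n}(\exp a)$ via the factorisation $\exp(a)-1=a\cdot u$ with $u$ a unit (this is the paper's Lemma~\ref{lem:exp_unit}), and then invoke Burnside for $G_n\acts V_n$. The paper instead fixes a vector $\xx$, compares $\cent_{\fg}(\xx\bmod\fP^n)$ with $\Stab_{\exp(\fg)}(\xx\bmod\fP^n)$ (Corollary~\ref{cor:cent_stab}), translates indices of saturable subalgebras to indices of subgroups (Lemma~\ref{lem:same_idx}), and sums using the orbit formulation $\ask{M}{V}=\sum_{\xx}\card{\xx M}^{-1}$ of Lemma~\ref{lem:WxM}. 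Both routes rest on the same unit factorisation; the paper's choice has the advantage of working with $G$ acting on $V_n$ throughout, so it never needs $\exp$ to descend to a bijection $\fh_n\to G_n$. Your route does need that descent, and you leave it implicit: one has to check that $\exp(\fh\cap\fP^n\Gl_d(\fO))=G\cap\GL_d^n(\fO)$, which again follows from the unit factorisation, but it is a real step and should be recorded.

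Two smaller points. First, the factor in your shift identity is off when $K/\QQ_p$ is ramified: with $\fh=p^k\fg=\fP^{ek}\fg$ the correct scaling is $\ask{\fh_n}{V_n}=q^{dek}\ask{\fg_{n-ek}}{V_{n-ek}}$, so you should work with $\fP^m\fg$ as the paper does (Proposition~\ref{prop:rescale}), or replace $q^{kd}$ by $q^{dek}$. Second, your worry about the non-isolated case is unfounded: Proposition~\ref{prop:rescale} (multiplication by $\pi^m$ is an isomorphism $M_{n-m}\to M^m_n$ for any submodule $M$, torsion-free quotient or not) already yields $\Zeta^m_{\fg^m}(T)=q^{dm}\Zeta_\fg(T)$ unconditionally, so no separate torsion bookkeeping is required. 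Finally, note that BCH convergence on $p^k\Mat_d(\fO)$ alone does not make $\exp(\fh)$ a group; one also needs $\fh$ to be closed under the Hausdorff series, which is the content of saturability of $\fP^m\fg$ for $m>e/(p-1)$ (the paper cites \cite[Prop.~2.3]{AKOV13}); your ``therefore'' compresses this.
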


Theorem~\ref{thm:bounded_denominators}
is based on a connection between $\Zeta^\ak_{\fg}(T)$ and 
orbit-counting zeta functions.
For a sketch, let $G\le \GL_d(\fO)$ act on $V = \fO^d$.
As before, we write $\fO_n = \fO/\fP^n$ and $V_n = V \otimes \fO_n$.
Then $G$ acts on each of the finite sets $V_n$
and, extending our previous definition of the orbit-counting zeta function
$\Zeta_G^\oc(T)$ (Definition~\ref{d:cc_oc}(\ref{d:cc_oc2})), we let
\[
\Zeta^{\oc,m}_G(T) = \sum_{n=m}^\infty \card{V_n/G} \dtimes T^{n-m} \in \ZZ\llb T\rrb;
\]
hence, $\Zeta^\oc_G(T) = \Zeta_G^{\oc,0}(T)$.
In the setting of Theorem~\ref{thm:bounded_denominators},
by linearising the orbit-counting lemma using $p$-adic Lie theory,
we will see that for sufficiently large $m$, 
there exists $G^m \le \GL_d(\fO)$ with
$q^{dm}\Zeta^\ak_{\fg}(T) = \Zeta_{G^m}^{\oc,m}(T)$.
Theorem~\ref{thm:bounded_denominators} then follows immediately.

In addition to using group theory to deduce properties of ask zeta functions
such as Theorem~\ref{thm:bounded_denominators},
we will see that, conversely, our methods for studying ask zeta functions
allow us to deduce results on both orbit-counting and conjugacy class zeta
functions.
As we will now sketch,
this direction is particularly fruitful for unipotent groups.
For a Lie algebra $\fg$ over a ring $R$, let $\ad\colon \fg \to \Gl(\fg)$
denote its adjoint representation given by $\ad(a)\colon b\mapsto [b,a]$ for
$a\in \fg$.
Let $\Nil_d(R) \subset \Gl_d(R)$ denote the Lie algebra of strictly upper triangular $d\times
d$ matrices.

\begin{thm}
  \label{thm:unipotent}
  Let $\fO$ be the valuation ring of a local field 
  of characteristic zero and residue characteristic $p$.
  Let $\fg \subset \Nil_d(\fO)$ be a Lie subalgebra
  and let $G := \exp(\fg) \le \Uni_d(\fO)$.
  Suppose that $p \ge d$ and that $\fg$ is an isolated submodule of
  $\Nil_d(\fO)$ (i.e.\ the $\fO$-module quotient $\Nil_d(\fO)/\fg$ is torsion-free).
  Then $\Zeta_G^\oc(T) = \Zeta^\ak_{\fg}(T)$ and $\Zeta_G^\cc(T) = \Zeta^\ak_{\ad(\fg)}(T)$.
\end{thm}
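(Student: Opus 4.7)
The plan is to establish both identities coefficient-by-coefficient by combining Burnside's formula with the $p$-adic exponential. Since $p \ge d$, the truncated series $\exp(a) = \sum_{i=0}^{d-1} a^i/i!$ converges on $\Nil_d(\fO)$ and defines a bijection $\exp\colon \Nil_d(\fO) \to \Uni_d(\fO)$ with inverse $\log$; the same descends to $\fO_n$. The isolation hypothesis makes $\Nil_d(\fO)/\fg$ free over $\fO$, so $\fg_n$ is naturally identified with $\fg \otimes \fO_n$, and $\exp$ restricts to a bijection $\fg_n \to G_n$; in particular $\card{\fg_n} = \card{G_n}$.

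For part~(i), the key observation is that for every $a \in \fg_n$, right multiplication by $a$ and right multiplication by $\exp(a) - 1$ on $V_n = \fO_n^d$ have the same kernel. Indeed, $\exp(a) - 1 = a \cdot h(a)$ with $h(a) = \sum_{i=0}^{d-2} a^i/(i+1)! \in 1 + \Nil_d(\fO_n)$, and the factor $h(a)$ is invertible. Since the fixed-point set of $g \in G_n$ on $V_n$ is the kernel of right multiplication by $g - 1$, Burnside's orbit-counting formula yields
\[
  \card{V_n/G_n} = \frac{1}{\card{G_n}} \sum_{g \in G_n} \card{\Fix_{V_n}(g)} = \frac{1}{\card{\fg_n}} \sum_{a \in \fg_n} \card{\Ker(a)} = \ask{\fg_n}{V_n},
\]
and summing over $n$ gives $\Zeta_G^{\oc}(T) = \Zeta_\fg^{\ak}(T)$.

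For part~(ii), the $\log$ map is a $G_n$-equivariant bijection between $G_n$ with its conjugation action and $\fg_n$ with its adjoint action, so $\concnt(G_n)$ equals the number of $\Ad(G_n)$-orbits on $\fg_n$. Because $\fg \subset \Nil_d(\fO)$ has nilpotency class at most $d - 1$, each $\ad(a)$ is nilpotent of index at most $d - 1$ on $\fg_n$, and the hypothesis $p \ge d$ then validates the Campbell identity $\Ad(\exp(a)) = \exp(\ad(a))$; hence $\Ad(G_n) = \exp(\ad(\fg_n))$. The $\fO$-linearity of $\ad$ together with the isolation of $\fg$ gives $\ad(\fg_n) = \ad(\fg)_n$. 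Choosing a basis of $\fg$ refining its lower central series realises $\ad(\fg)$ as a Lie subalgebra of a strictly upper triangular matrix algebra, and rerunning the argument of part~(i) in this setting produces $\concnt(G_n) = \ask{\ad(\fg)_n}{\fg_n}$, whence $\Zeta_G^{\cc}(T) = \Zeta_{\ad(\fg)}^{\ak}(T)$.

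The main obstacle is the bookkeeping in part~(ii): one has to verify that $\ad(\fg)$ embeds into a suitable $\Nil_r(\fO)$ in a way compatible with the isolation-type hypothesis needed to reuse the exponential-bijection argument, and confirm that every reduction modulo $\fP^n$ commutes with $\exp$, $\log$, and $\ad$ throughout. Once this is in place, both identities follow from Burnside's lemma and the Kirillov-type $\log$-correspondence.
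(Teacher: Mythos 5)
Your proof is correct, and part~(i) takes a genuinely more direct route than the paper. The paper first develops a general statement (its Proposition on $\Zeta_\fg(T) = \Zeta_G^\oc(T)$ for saturable $\fg \subset \Gl_d^m(\fO)$) via Lazard's saturability theory: it uses the form $\card{V_n/G} = \sum_{\xx} \card{\xx G}^{-1}$ of orbit counting together with the form $\ask{\fg_n}{V_n} = \sum_{\xx}\card{\xx\fg_n}^{-1}$, and matches orbit sizes to ``Lie orbit'' sizes by comparing the group stabiliser $\Stab_G(\xx)$ with the Lie centraliser $\cent_\fg(\xx)$ via a potent-filtration argument and an index-preservation lemma. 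You instead apply Burnside's lemma directly as a sum over group elements and exploit the exact identity $\Fix_{V_n}(\exp a) = \Ker(a\,h(a)) = \Ker(a)$, using only that $h(a) = \sum_{i\ge 0} a^i/(i+1)!$ is a unit when $a$ is nilpotent. Combined with the bijection $\exp\colon \fg_n \to G_n$ (which holds because for $p \ge d$ the truncated $\exp$ and $\log$ are polynomials over $\fO$ that commute with reduction mod $\fP^n$), this gives the identity coefficientwise without invoking saturability at all. The tradeoff: the paper's route applies beyond the unipotent case and underlies its Theorem~\ref{thm:bounded_denominators}, whereas your argument is shorter but tailored to the nilpotent situation. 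You are also right that isolation is not actually needed for part~(i), though invoking it does no harm.

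For part~(ii) your argument parallels the paper's almost step for step: log is $G_n$-equivariant from $G_n$ with conjugation to $\fg_n$ with the adjoint action; Hausdorff/Campbell gives $\Ad(\exp a) = \exp(\ad a)$; isolation gives $\ad(\fg)_n = \ad(\fg_n)$; then one reruns part~(i) for $\exp(\ad\fg)$ acting on $\fg$. One small discrepancy worth noting: the paper reaches nilpotency of $\fg$ (and hence of $\ad(a)$) via Engel's theorem because it states this proposition for a general isolated subalgebra of nilpotent matrices inside $\Gl_d(\fO)$, whereas you can observe it directly since $\fg \subset \Nil_d(\fO)$ is already upper triangular. The ``bookkeeping'' you flag --- that $\ad(\fg)$ sits in some $\Nil_r(\fO)$ with $r$ possibly much larger than $p$ --- is indeed the one place requiring care, but it resolves exactly as you indicate: what the part~(i) argument actually consumes is not $p \ge r$ but rather that the nilpotency index of the elements of the algebra is less than $p$, which holds since $\ad(a)^{d-1} = 0$ for $a \in \fg$. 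This is precisely what the paper abstracts out in its corollary on subalgebras $\fg \subset \Gl_d(\fO)$ that are nilpotent of class $c < p$ with $a^{c+1} = 0$. Once that observation is made explicit, your proof is complete.
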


We will apply Theorem~\ref{thm:unipotent} and the methods for
computing $\Zeta^\ak_M(T)$ developed below in order to compute ``generic''
conjugacy class zeta functions arising from all unipotent algebraic groups of
dimension at most $5$ over a number field (see \S\ref{ss:ex_cc}).

Due to the heavy reliance of the proofs of
Theorems~\ref{thm:bounded_denominators}--\ref{thm:unipotent} on $p$-adic Lie theory, it is unclear to the author whether these results have
analogues over local fields of positive characteristic.

\paragraph{Outline.}
In \S\S\ref{s:elementary_ask}--\ref{s:elementary_zeta}, we collect 
elementary facts on $\ask M V$ and $\Zeta^\ak_M(T)$.
We then derive expressions for $\Zeta^\ak_M(T)$ in terms of suitable integrals
in \S\ref{s:ratint}. In \S\ref{s:examples_max}, we use these to compute 
explicit formulae for $\Zeta^\ak_M(T)$ for various modules $M$.
Next, in \S\ref{s:examples_min},
we discuss a relationship between the functions $\Zeta^\ak_M(T)$ and
``constant rank spaces'' studied extensively in the literature.
A geometric source of interesting examples of ask zeta functions,
determinantal hypersurfaces, is considered in \S\ref{s:det}.
In \S\ref{s:bounded_denominators}, we explore the aforementioned connection
between ask, conjugacy class, and orbit-counting zeta functions in
characteristic zero.
In particular, we prove
Theorems~\ref{thm:bounded_denominators}--\ref{thm:unipotent}.
Finally, given that most of the explicit formulae for $\Zeta^\ak_M(T)$ obtained in
\S\S\ref{s:examples_max}--\ref{s:examples_min} are quite tame, \S\ref{s:zeta}
contains a number of  more complicated examples of $\Zeta^\ak_M(T)$ and
$\Zeta_G^\cc(T)$.

\subsection*{Acknowledgement}

The author is grateful to Angela Carnevale and Christopher Voll for various
discussions, to Eamonn O'Brien for helpful comments, to Christoph Hanselka for
pointing out a flaw in an earlier version, and to the referee for numerous
insightful suggestions. 

\section{Elementary properties of average sizes of kernels}
\label{s:elementary_ask}

We collect some elementary observations on average sizes of kernels.
Throughout, unless otherwise stated, let $R$ be a ring, let $V$ and
$W$ be $R$-modules with $\card V, \card W < \infty$, and let $M
\subset \Hom(V,W)$ be a submodule. 

\subsection{Rank varieties}

In the case of a finite field $R = \FF_q$, $\ask M
V$ admits a natural geometric interpretation.
Namely, by choosing a basis of $M$, we may identify $M =
\AA^\ell_{\FF_q}(\FF_q)$,
where $\ell = \dim_{\FF_q}(M)$ and $\AA^\ell_{\FF_q} = \Spec(\FF_q[X_1,\dotsc,X_\ell])$.
We may then decompose $\AA^\ell_{\FF_q} = \coprod\limits_{i=0}^{r} V_i$, where $V_i$ is the
subvariety of maps of rank $i$.
(Note that if $M = \Mat_{d\times e}(\FF_q)$, then $\#
V_r(\FF_q)$ is given by \eqref{eq:landsberg}.)
Then
$\ask {M}{V} = \sum_{i=0}^{d} \# V_i(\FF_{q}) \dtimes q^{d-i-\ell}$;
in fact, by replacing $q$ by $q^f$ on the right-hand side,
we express $\ask{M\otimes_{\FF_q}{\FF_{q^f}}}{V\otimes_{\FF_q}{\FF_{q^f}}}$
using a formula which is valid for any $f \ge 1$.
However, even for $M = \Mat_{d\times e}(\FF_q)$,
this approach yields a fairly complicated interpretation of
Proposition~\ref{prop:FulmanGoldstein}.

\subsection{Kernels and orbits}
\label{ss:duality}

One simple yet crucial observation contained in the proof of
\cite[Thm~1.1]{LW00} is the following connection between the sizes of the
kernels $\Ker(a)$ ($a \in M$) and those of the ``orbits''
$\xx M := \{ \xx a : a \in M\} \approx M/\cent_M(\xx)$,
where $\xx \in V$ and $\cent_M(\xx) := \{ a \in M : \xx a = 0\}$;
note that in contrast to orbits under group actions, the sets $\xx M$ always overlap.

\begin{lemma}[{Cf.\ \cite[Thm~1.1]{LW00}}]
  \label{lem:WxM}
  $\ask M V = \sum\limits_{\xx\in V} \card{\xx M}^{-1}$.
\end{lemma}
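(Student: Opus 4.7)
The plan is to use a standard double-counting argument on the incidence set
\[
\Int := \{ (a,\xx) \in M \times V : \xx a = 0 \}.
\]
First I would count $\Int$ by summing over $a \in M$: the fibre over $a$ is precisely $\Ker(a)$, so $\card \Int = \sum_{a \in M} \card{\Ker(a)} = \card M \cdot \ask M V$.

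Next I would count $\Int$ by summing over $\xx \in V$: the fibre over $\xx$ is the submodule $\cent_M(\xx) = \{ a \in M : \xx a = 0\}$. The $R$-linear map $M \to W$, $a \mapsto \xx a$, has image $\xx M$ and kernel $\cent_M(\xx)$, so $\card{\cent_M(\xx)} = \card M / \card{\xx M}$. Therefore
\[
\card \Int = \sum_{\xx \in V} \card{\cent_M(\xx)} = \card M \sum_{\xx \in V} \card{\xx M}^{-1}.
\]
Equating the two expressions and dividing by $\card M$ yields the claim.

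There is no real obstacle here; the only point worth noting is that $\card{\xx M}^{-1}$ is always well-defined because $0 \in \xx M$, so the sum on the right-hand side has no division-by-zero issues. Everything else is formal manipulation using that $M$ is an abelian group (a submodule of $\Hom(V,W)$) and that $\cent_M(\xx)$ is a subgroup with quotient in bijection with $\xx M$.
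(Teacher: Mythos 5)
Your proof is correct and is essentially identical to the paper's first proof of this lemma: both count the incidence set $\{(a,\xx)\in M\times V : \xx a = 0\}$ in two ways and use the isomorphism $M/\cent_M(\xx)\cong \xx M$ to convert $\card{\cent_M(\xx)}$ into $\card M/\card{\xx M}$. (The paper also gives a second proof via the orbit-counting lemma applied to $M^* \le \GL(V\oplus W)$, but your argument matches the first one.)
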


We give two proofs of this lemma. 
The first is a combinatorial version of a probabilistic argument in
the proof of \cite[Thm~1.1]{LW00}.
We include it here since our terminology is different
from theirs; similar arguments appear in~\cite{Loc07}. 

\begin{proof}[First proof of Lemma~\ref{lem:WxM}]
  By computing $\#\{ (\xx,a) \in V\times M : \xx a = 0\}$ in two ways,
  we obtain
  $\sum\limits_{a \in M} \card{\Ker(a)} = \sum\limits_{\xx \in V}\card{\cent_M(\xx)}$.
  Since $\xx M \approx M/\cent_M(\xx)$ as $R$-modules, we have
  $\card{\cent_M(\xx)} = \card M /\card{\xx M}$ and thus
  $\ask M V = \card{M}^{-1} \sum\limits_{\xx \in V} \card{\cent_M(\xx)} =
  \sum\limits_{\xx \in V} \card{\xx M}^{-1}$.
\end{proof}

Our second proof of Lemma~\ref{lem:WxM} already hints at the
connection between average sizes of kernels and orbits of linear
groups, a subject further explored in \S\ref{s:bounded_denominators}.
Recall that for a finite group $G$ acting on a finite set $X$, the
orbit-counting lemma asserts that
$\card{X/G} = \card{G}^{-1} \sum\limits_{g \in G} \card{\Fix(g)}$,
where $\Fix(g) = \{ x \in X : xg = x\}$.

\begin{proof}[Second proof of Lemma~\ref{lem:WxM}]
  The rule
  $a \mapsto a^* := \bigl[\begin{smallmatrix} 1 & a
      \\ 0 & 1\end{smallmatrix}\bigr]$ yields an isomorphism of $(M,+)$ onto a subgroup $M^*$
  of $\GL(V\oplus W)$.
  We claim that the natural bijection $V\oplus W \to
  \coprod\limits_{V} W$ induces a bijection
  $(V\oplus W)/M^* \to \coprod\limits_{\xx\in V} W/xM$.
  Indeed, $(\xx,\yy)a^* = (\xx,\yy + \xx a)$ for $(\xx,\yy) \in V \oplus W$.
  As $\Fix(a^*) = \Ker(a) \oplus W$,
  the orbit-counting lemma yields
  \[
  \card W \sum_{\xx \in V} \card{\xx M}^{-1} = 
  \card{(V\oplus W)/M^*} =
  \card{M^*}^{-1}\sum_{a \in M}\card{\Fix(a^*)} = \card W \dtimes \ask{M}{V}. \qedhere
  \]
\end{proof}

In order to deduce Proposition~\ref{prop:FulmanGoldstein}
from Lemma~\ref{lem:WxM}, note that $\xx \Mat_{d\times e}(\FF_q) =
\FF_q^e$ for each non-zero $\xx \in \FF_q^d$ whence
$\ask{\Mat_{d\times e}(\FF_q)}{} = 1 + (q^d-1)q^{-e}$.

\subsection{Interlude: rank distributions and hyperoctahedral groups}
\label{ss:permstat}

We discuss combinatorial consequences of Proposition~\ref{prop:FulmanGoldstein}.

\paragraph{Reminder: the hyperoctahedral groups $\BB_n$.}
For background and details on the following, see \cite[\S 3]{Bre94} or \cite[\S 8.1]{BB05}.
The \emph{hyperoctahedral group} $\BB_n = \{ \pm 1\} \wr \operatorname{S}_n$ is the group of signed
permutations on $n$ letters; we regard $\BB_n$ as a subgroup of the symmetric
group on $\{ \pm 1,\dotsc,\pm n\}$ and as a Coxeter group in the usual way
(see \cite[p.\ 246]{BB05}).
For $\sigma \in \BB_n$, we write $\sigma = [1^{\sigma},\dotsc,n^{\sigma}]$. 
For $\sigma\in\BB_n$, let $\len(\sigma)$ denote the (Coxeter) \emph{length} of $\sigma$
(see \cite[Prop.~3.1]{Bre94} for a combinatorial description),
let $\negative(\sigma) := \#\bigl\{ i \in \{1,\dotsc,n\} : i^\sigma <
0\bigr\}$, and  let $\Des(\sigma) := \bigl\{ i \in \{0,\dotsc,n-1\} : i^\sigma >
(i+1)^\sigma\bigr\}$ (where we wrote $0^\sigma = 0$)
denote the \emph{descent set} of $\sigma$.
For $I\subset\{0,\dotsc,n\}$, define the \emph{quotient} $\BB_n^{I^c}
:= \{ \sigma \in \BB_n : \Des(\sigma) \subset I\}$
(see \cite[\S 2.4]{BB05}).
The identity $1\in\BB_n$ is the unique element of length $0$.
Moreover, since $\Des(1) = \emptyset$, the identity is contained in each
set $\BB_n^{I^c}$.
\vspace*{1em}

\noindent
Let $\Mat_d^{\rank=i}(\FF_q) := \{ a\in \Mat_d(\FF_q) : \rank(a) = i\}$.
As explained in \cite[\S 3.2]{CSV17}, for $i = 0,\dotsc,d$,
\begin{equation}
  \label{eq:Mdrki}
  \left\lvert {\Mat_d^{\rank=d-i}(\FF_q)}\right\rvert = 
  q^{d^2-i^2} \sum_{\sigma \in \BB_d^{\{i\}^c}}(-1)^{\negative(\sigma)} q^{-\len(\sigma)}
\end{equation}
whence
\begin{align}
  \ask {\Mat_d(\FF_q)}{}
  & = q^{-d^2} \sum_{i=0}^d \left\lvert{\Mat_d^{\rank=d-i}(\FF_q)}\right\rvert
    q^i \nonumber
  \\ &
       = \sum_{i=0}^d q^{i-i^2} \sum_{\sigma\in \BB_d^{\{i\}^c}}
       (-1)^{\negative(\sigma)} q^{-\len(\sigma)}.
       \label{eq:Mdq_permstat}
\end{align}

On the other hand, by Proposition~\ref{prop:FulmanGoldstein},
$\ask{\Mat_d(\FF_q)}{} = 2-q^{-d}$.
The right-hand side of \eqref{eq:Mdq_permstat} is a polynomial in $\ZZ[q^{-1}]$,
the constant term, $2$, of which arises from $\sigma = 1\in \BB_n^{\{i\}^c}$ and $i = 0,1$.
However, the fact that the other terms of \eqref{eq:Mdq_permstat} add up to
$-q^{-d}$ seems much less transparent.
Consider, for example, the case $d = 2$.
For $i = 0,1$, we have $q^{i-i^2} = 1$ and the contributions to the right-hand
side of \eqref{eq:Mdq_permstat} are exactly the terms
$(-1)^{\negative(\sigma)}q^{-\len(\sigma)}$ indicated in the following tables. 

\begin{center}
  \begin{tabular}{r|l}
    $\sigma \in \BB_2^{\{0\}^c}$ & $(-1)^{\negative(\sigma)}q^{-\len(\sigma)}$ \\\hline
    $1$ & $+1$ \\ 
    $[-1,2]$ & $-q^{-1}$ \\
    $[-2,1]$ & $-q^{-2}$ \\
    $[-2,-1]$ &$+q^{-3}$
  \end{tabular}
  \hspace*{1em}
  \begin{tabular}{r|l}
    $\sigma \in \BB_2^{\{1\}^c}$ & $(-1)^{\negative(\sigma)}q^{-\len(\sigma)}$ \\\hline
    $1$ & $+1$ \\ 
    $[2,1]$ & $+q^{-1}$\\
    $[2,-1]$ & $-q^{-2}$\\
    $[1,-2]$ & $-q^{-3}$
  \end{tabular}
\end{center}

For $i = 2$, $\BB_2^{\{2\}^c} = \{ 1\}$ and the contribution to the right-hand
side of \eqref{eq:Mdq_permstat} is a single summand $q^{-2}$.
While we can therefore confirm that
\[
2-q^{-2} = \ask{\Mat_2(\FF_q)}{} = (1 - q^{-1} - q^{-2} + q^{-3}) + (1 + q^{-1} - q^{-2} - q^{-3}) + q^{-2},
\]
the author is unable to provide a combinatorial explanation of this numerical coincidence.

A further source of such examples is given by
analogues of \eqref{eq:Mdrki} for the numbers of traceless, antisymmetric, and
symmetric $d\times d$ matrices over $\FF_q$, respectively, due to Carnevale et
al.~\cite[\S 3.2]{CSV17}; cf.\ \cite{SV14}.
The average sizes of the kernels of all these spaces of matrices are 
known or can be deduced as by-products or our investigations in
\S\ref{ss:classical_Lie} below.

\subsection{Direct sums}

\begin{lemma}
  \label{lem:oplus}
  Let $V'$ and $W'$ be $R$-modules with $\card{V'},\card{W'} < \infty$.
  Let $M' \subset \Hom(V',W')$ be a submodule.
  We regard $M \oplus M'$ as a submodule of $\Hom(V\oplus V',W \oplus
  W')$ in the natural~way.
  Then $\ask{M\oplus M'}{V \oplus V'} = \ask M V \dtimes \ask{M'}{V'}$.
\end{lemma}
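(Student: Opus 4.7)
The plan is to prove the lemma by a direct computation that reduces to the multiplicativity of kernel sizes under forming block-diagonal maps. No appeal to Lemma~\ref{lem:WxM} seems necessary; everything flows from unpacking the definitions.

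First I would make explicit the identification at issue: an element $(a,a') \in M \oplus M'$ is regarded as the block-diagonal homomorphism $V \oplus V' \to W \oplus W'$ sending $(\xx,\xx')$ to $(\xx a, \xx' a')$. The key structural observation is that the kernel of this map decomposes as
\[
\Ker\bigl((a,a')\bigr) \;=\; \Ker(a) \oplus \Ker(a'),
\]
so in particular $\card{\Ker((a,a'))} = \card{\Ker(a)}\dtimes\card{\Ker(a')}$. This identity holds because the two components of $V \oplus V'$ are acted on independently by the two components of $(a,a')$.

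Next I would substitute into the definition of $\ask{\cdot}{\cdot}$ and use that $\card{M \oplus M'} = \card M \dtimes \card{M'}$ and that the sum over $M \oplus M'$ of a product of factors depending only on each coordinate separately factorises:
\[
\ask{M \oplus M'}{V\oplus V'}
= \frac{1}{\card M\dtimes\card{M'}} \sum_{a\in M}\sum_{a' \in M'} \card{\Ker(a)}\dtimes\card{\Ker(a')}
= \ask M V \dtimes \ask{M'}{V'}.
\]

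There is no substantive obstacle here; the only thing one must be careful about is to verify that the tacit identification of $M \oplus M'$ with a submodule of $\Hom(V\oplus V', W \oplus W')$ is indeed the block-diagonal embedding, so that the kernel really does split as claimed. Once this is in place, the computation is immediate.
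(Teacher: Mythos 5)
Your argument is correct and is essentially identical to the paper's: both decompose $\Ker(a\oplus a') = \Ker(a)\oplus\Ker(a')$, so that $\card{\Ker(a\oplus a')}$ factorises, and then split the sum over $M\oplus M'$ into a product of sums over $M$ and $M'$. You simply spell out the block-diagonal identification and the kernel decomposition a bit more explicitly than the paper does.
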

\begin{proof}
  \begin{align*}
    \ask{M\oplus M'}{V\oplus V'} 
    & = \card{M \oplus M'}^{-1} \dtimes \sum\limits_{(a,a')\in M\oplus M'}
  \card{\Ker(a\oplus a')} \\
    & = \card{M}^{-1} \card{M'}^{-1} \dtimes \sum\limits_{(a,a')\in M\oplus M'}
  \card{\Ker(a)} \dtimes \card{\Ker(a')} \\
    & = \ask{M}{V} \dtimes \ask{M'}{V'}.\qedhere
  \end{align*}
\end{proof}

\begin{cor}
  \label{cor:ask_zcol}
  Let $R$ be finite, $M \subset \Mat_{d\times e}(R)$ be a
  submodule, and $\tilde M \subset \Mat_{(d+1)\times e}(R)$ be
  obtained from $M$ by adding a zero row to the elements of $M$ in some fixed
  position.
  Then $\ask{\tilde M}{} = \ask{M}{} \dtimes \card R$.
  \qed
\end{cor}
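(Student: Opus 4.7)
The plan is to derive this from Lemma~\ref{lem:oplus} by realising the row-insertion operation as a direct-sum construction. First, I would observe that reordering the rows of the matrices in a submodule corresponds to pre-composing every element with a fixed linear automorphism of its domain, which preserves the size of each kernel and hence leaves $\ask{\cdot}{}$ unchanged. So it suffices to treat the case where the zero row is appended in the last position.

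In that normalised setup, I would take $V' = R$, $W' = 0$, and $M' = \Hom(V', W') = \{0\}$; note that $\card{W'} = 1 < \infty$, so the finiteness hypotheses of Lemma~\ref{lem:oplus} are satisfied. Then $M \oplus M' \subset \Hom(V \oplus V', W \oplus W')$, and after the canonical identification $R^e \oplus 0 = R^e$, this is precisely the submodule $\tilde M \subset \Mat_{(d+1)\times e}(R)$ consisting of the matrices in $M$ with a zero row appended at the bottom.

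The unique element of $M'$ is the zero map $R \to 0$, whose kernel is all of $R$; hence $\ask{M'}{V'} = \card R$. Invoking Lemma~\ref{lem:oplus} now yields
\[
\ask{\tilde M}{} \;=\; \ask{M \oplus M'}{V \oplus V'} \;=\; \ask{M}{V}\dtimes \ask{M'}{V'} \;=\; \ask{M}{}\dtimes \card R.
\]
There is no real obstacle here: the entire content is the observation that inserting a zero row in a fixed position is, up to a harmless permutation of the domain coordinates, the same as taking the external direct sum with the trivial morphism $R \to 0$.
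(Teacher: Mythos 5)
Your proof is correct and takes the approach the paper intends: the corollary is stated immediately after Lemma~\ref{lem:oplus} in the ``Direct sums'' subsection and ends with $\blacklozenge$, i.e.\ it is meant to be an immediate consequence of that lemma. Your normalisation (reduce to the last-row case by a permutation of domain coordinates, which is a $\GL_{d+1}(R)$-change and so kernel-size preserving) and your choice $V'=R$, $W'=0$, $M'=\{0\}$ with $\ask{M'}{V'}=\card R$ is exactly the identification needed, including the small but important point of taking $W'=0$ so that no extra column is introduced.
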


\subsection{Matrix transposes}

Following Kaplansky~\cite{Kap49}, we call $R$ an \emph{elementary divisor
  ring} if for each $a \in \Mat_{d\times e}(R)$ (and all $d,e \ge 1$), there
exist $u\in \GL_d(R)$ and $v \in \GL_e(R)$ such that $uav$ is a diagonal
matrix (padded with zeros according to the shape of $a$).
For example, any quotient of a principal ideal domain is an elementary divisor ring
(regardless of whether the quotient is an integral domain or not).
Write $a^\top$ for the transpose of $a$.

\begin{lemma}
  \label{lem:ask_transpose}
  Let $R$ be a finite elementary divisor ring and let $M \subset
  \Mat_{d\times e}(R)$ be a submodule.
  Write $V = R^d$ and $W = R^e$.
  Then $\ask {M^\top} W = \ask M V \dtimes \card{R}^{e-d}$.
\end{lemma}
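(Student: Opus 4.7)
The plan is to establish the pointwise identity $\card{\Ker(a^\top)} = \card{R}^{e-d}\dtimes\card{\Ker(a)}$ for every $a \in M$ and then average over~$M$. Once this pointwise identity is in hand, since $a\mapsto a^\top$ is a cardinality-preserving bijection $M\to M^\top$, summing over $M$ and dividing by $\card{M} = \card{M^\top}$ immediately yields the stated formula.

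To prove the pointwise identity I would use the module-theoretic rank--nullity relation $\card{\Ker(a)}\dtimes\card{\Img(a)} = \card{R}^d$ arising from the $R$-module isomorphism $R^d/\Ker(a) \cong \Img(a)$, together with its transposed analogue $\card{\Ker(a^\top)}\dtimes\card{\Img(a^\top)} = \card{R}^e$. Dividing these two identities reduces the problem to verifying that $\card{\Img(a)} = \card{\Img(a^\top)}$ for each individual~$a$.

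To verify the equality of image sizes, I would exploit the assumption that $R$ is an elementary divisor ring to write $a = uhv$ with $u \in \GL_d(R)$, $v \in \GL_e(R)$, and $h$ a ``diagonal'' $d\times e$ matrix carrying entries $\alpha_1,\dotsc,\alpha_r$ on its main diagonal, where $r = \min(d,e)$. Since right-multiplication by invertible matrices is a bijection on the ambient free modules, $\card{\Img(a)} = \card{\Img(h)}$; applying the same observation to the factorisation $a^\top = v^\top h^\top u^\top$ yields $\card{\Img(a^\top)} = \card{\Img(h^\top)}$. A direct inspection of $\xx h$ for $\xx \in R^d$ (and of $\yy h^\top$ for $\yy \in R^e$) shows that both images, viewed as $R$-submodules of $R^e$ respectively $R^d$, are isomorphic to $\alpha_1 R \oplus \dotsb \oplus \alpha_r R$. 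In particular they share the common cardinality $\prod_{j=1}^r \card{\alpha_j R}$, completing the pointwise step.

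The main obstacle is simply the coordinate-level bookkeeping around the zero padding in the Smith-type normal form when $d\ne e$; no input beyond the very existence of a diagonalisation of matrices over~$R$ is required. I expect the proof to be essentially as short as this outline.
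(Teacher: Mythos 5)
Your proof is correct and, modulo a small detour, takes the same route as the paper: both use the elementary divisor hypothesis to reduce to a diagonal matrix $h = \left[\begin{smallmatrix}\diag(\alpha_1,\dotsc,\alpha_r)&0\\0&0\end{smallmatrix}\right]$, exploiting that $\card{\Ker(uhv)} = \card{\Ker(h)}$ and $\card{\Ker(v^\top h^\top u^\top)} = \card{\Ker(h^\top)}$ for invertible $u,v$. The paper finishes by inspecting the kernels directly: both $\Ker(h) \subset R^d$ and $\Ker(h^\top) \subset R^e$ are cut out by the same conditions $\alpha_i x_i = 0$ ($1 \le i \le r$), so they share the same ``constrained'' part and differ only in the unconstrained zero-padding of sizes $\card{R}^{d-r}$ and $\card{R}^{e-r}$, giving the factor $\card{R}^{e-d}$ immediately. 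You instead route through the first isomorphism theorem $R^d/\Ker(a) \cong \Img(a)$ to convert the kernel comparison into the statement $\card{\Img(a)} = \card{\Img(a^\top)}$, which you then verify on the diagonal form by identifying both images with $\alpha_1 R \oplus \dotsb \oplus \alpha_r R$. Both arguments are sound; yours is marginally longer but records a tidy intermediate fact (invariance of ``rank'' under transposition over finite elementary divisor rings), whereas the paper's is more direct.
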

\begin{proof}
  Let $a = \left[\begin{smallmatrix} \diag(a_1,\dotsc,a_r) & 0 \\ 0 & 0 \end{smallmatrix}\right]
  \in \Mat_{d\times e}(R)$. Then $\Ker(a)$ consists of those
  $\xx \in V$ with $a_i x_i = 0$ for $1\le i \le r$ and $\Ker(a^\top)$
  consists of those $\yy \in W$ with $a_i y_i = 0$ for $1\le i \le r$.
\end{proof}

\subsection{Reduction modulo $\fa$ and base change $R \to R/\fa$}
\label{ss:reduction_avg}

Let $V$ and $W$ be finitely generated $R$-modules, the underlying sets of
which need not be finite. As before, let $M \subset \Hom(V,W)$ be a submodule.
Let $\fa\normal R$ with $\card{R/\fa} < \infty$. 
Define $V_{\fa} := V \otimes_R R/\fa$,
$W_{\fa} := W \otimes_R R/\fa$, and let $M_{\fa}$
be the image of the natural map $M \incl \Hom(V,W) \to \Hom(V_\fa,W_\fa)$.
In general, the natural surjection $M \otimes_R R/\fa \onto
M_{\fa}$ need not be injective (see the example on p.\ \pageref{p:ZM}).
However, if $M$ is finitely generated, then $M \otimes_R R/\fa$ is
finite and we obtain the following expression for $\ask{M_\fa}{V_\fa}$.

\begin{lemma}
  \label{lem:alt_ask}
  Suppose that $M$ is finitely generated.
  Then 
  \[
  \pushQED{\qed}
  \ask{M_\fa}{V_{\fa}} = \card{M \otimes_R R/\fa}^{-1}
  \sum_{\bar a \in M \otimes_R R/\fa} \card{\Ker(\bar a \on
    V_\fa)}. \qedhere \popQED
  \]
\end{lemma}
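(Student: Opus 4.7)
The plan is to reduce the claimed formula to the definition of $\ask{M_\fa}{V_\fa}$ by analyzing the surjection $\pi\colon M\otimes_R R/\fa \onto M_\fa$ coming from the definition of $M_\fa$. The key observation is that the natural action of $M\otimes_R R/\fa$ on $V_\fa$ (obtained by tensoring $M \incl \Hom(V,W)$ with $R/\fa$ and composing with $M\otimes_R R/\fa \to \Hom(V_\fa,W_\fa)$) factors through $\pi$ by the very construction of $M_\fa$: elements $\bar a, \bar a' \in M\otimes_R R/\fa$ with $\pi(\bar a) = \pi(\bar a')$ induce the same map $V_\fa \to W_\fa$, hence have the same kernel in $V_\fa$.

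Since $M$ is finitely generated and $R/\fa$ is finite, both $M\otimes_R R/\fa$ and $M_\fa$ are finite $R/\fa$-modules, and $\pi$ is a surjective homomorphism of (finite) abelian groups. Therefore every fibre $\pi^{-1}(a)$ has the same cardinality $\card{\Ker(\pi)}$, and in particular $\card{M\otimes_R R/\fa} = \card{M_\fa}\dtimes\card{\Ker(\pi)}$. Grouping the sum over $M\otimes_R R/\fa$ according to the fibres of~$\pi$ gives
\[
\sum_{\bar a \in M\otimes_R R/\fa} \card{\Ker(\bar a \on V_\fa)}
= \sum_{a\in M_\fa} \sum_{\bar a \in \pi^{-1}(a)} \card{\Ker(a)}
= \card{\Ker(\pi)}\sum_{a\in M_\fa}\card{\Ker(a)}.
\]
Dividing by $\card{M\otimes_R R/\fa} = \card{M_\fa}\dtimes\card{\Ker(\pi)}$ cancels the factor $\card{\Ker(\pi)}$ and yields exactly $\ask{M_\fa}{V_\fa}$.

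There is essentially no obstacle here; the only subtlety worth flagging in the write-up is a careful justification that the action of $M\otimes_R R/\fa$ on $V_\fa$ does factor through $M_\fa$, so that kernels are constant on fibres of $\pi$. This is immediate from the universal property of tensor products (the composite $M \to \Hom(V_\fa,W_\fa)$ is $R/\fa$-linear and has image $M_\fa$), but stating it explicitly makes the fibre-counting step transparent.
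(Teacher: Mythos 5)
Your proof is correct. The paper actually states this lemma with no proof at all (the \texttt{\textbackslash qedhere} is placed directly after the display, signalling that the result is regarded as immediate from the definitions), and your argument is precisely the fibre-counting unwinding that the author is implicitly invoking: the action of $M\otimes_R R/\fa$ on $V_\fa$ factors through the surjection onto $M_\fa$, each fibre has size $\card{\Ker(\pi)}$, and the factor cancels against the normalisation. Nothing to add.
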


\section{Basic algebraic and analytic properties of $\Zeta_M(T)$ and $\zeta_M(s)$} 
\label{s:elementary_zeta}

\subsection{Average sizes of kernels and Dirichlet series: $\zeta_M(s)$}

While our main focus is on the generating functions $\Zeta_M(T)$ from
the introduction, it is natural to also consider a global analogue.
First suppose that $R$ is a ring which contains only finitely many
ideals $\fa$ of a given finite norm $\card{R/\fa}$.
Given a submodule $M \subset \Mat_{d\times e}(R)$ acting on $V = R^d$
and an ideal $\fa \normal R$,
let $V_{\fa} = (R/\fa)^d$, $W_{\fa} = (R/\fa)^e$,
and let $M_{\fa}$ denote the image of the natural map $M \incl
\Mat_{d\times e}(R) \onto \Mat_{d\times e}(R/\fa)$ (cf.~\S\ref{ss:reduction_avg}).

\begin{defn}
  \quad
  \begin{enumerate}
  \item
    Define a formal Dirichlet series
    \[
    \zeta_M(s) = \sum_\fa \ask{M_\fa}{V_\fa} \dtimes \card{R/\fa}^{-s},
    \]
    where the sum extends over the ideals of finite norm of $R$
    and $s$ denotes a complex variable.
  \item
    Let $\alpha_M \in [-\infty,\infty]$ denote the abscissa of convergence of
    $\zeta_M(s)$. 
  \end{enumerate}
\end{defn}

\subsection{Abscissae of convergence: local case}

Let $K$ be a local field of arbitrary characteristic with valuation
ring $\fO$ and residue field size $q$.
Let $M \subset \Mat_{d\times e}(\fO)$ be a submodule acting on $V =
\fO^d$.
Then $\zeta_M(s) = \Zeta_M(q^{-s})$.
Moreover, if $\fO$ has characteristic zero, then
Theorem~\ref{thm:rational} 
(proved in \S\ref{sss:rationality})
implies that $\alpha_M$ is precisely the
largest real pole of (the meromorphic continuation of) $\zeta_M(s)$.

Recall that, unless otherwise indicated, tensors products are taken over $\fO$.

\begin{defn}
  \label{d:genidim}
  The \emph{generic orbit rank} of $M$ is $\genidim(M) :=
  \max\limits_{\xx \in V} \dim_K(\xx M \otimes K)$. 
\end{defn}

Our choice of terminology will be justified by Proposition~\ref{prop:genidim}.

\begin{prop}
  \label{prop:local_alpha}
  $\max\bigl(d - \genidim(M),0\bigr) \le \alpha_M  \le d$.
\end{prop}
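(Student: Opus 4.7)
The plan is to establish the two bounds separately by estimating the coefficients $\ask{M_n}{V_n}$ of $\Zeta_M(T)$ and exploiting the identity $\zeta_M(s) = \Zeta_M(q^{-s})$ noted just before the statement. For the upper bound, each $\Ker(a) \subset V_n$ has cardinality at most $\card{V_n} = q^{nd}$, so $\ask{M_n}{V_n} \le q^{nd}$, and hence $\zeta_M(s) \le \sum_{n\ge 0} q^{n(d-s)}$ converges for all real $s > d$, giving $\alpha_M \le d$.

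For the lower bound, I first note that $\ask{M_n}{V_n} \ge 1$ (every kernel contains $0$), which already yields $\alpha_M \ge 0$. To improve this to $\alpha_M \ge d - g$ with $g := \genidim(M)$, the plan is to use Lemma~\ref{lem:WxM} to write
\[
\ask{M_n}{V_n} = \sum_{\xx \in V_n} \card{\xx M_n}^{-1}
\]
and then to establish the uniform upper bound $\card{\xx M_n} \le q^{ng}$ for every $\xx \in V_n$. Once this is in hand, summing gives $\ask{M_n}{V_n} \ge q^{n(d-g)}$, and the tail $\sum_n q^{n(d-g-s)}$ diverges whenever $s \le d - g$.

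The main step is therefore the bound $\card{\xx M_n} \le q^{ng}$. Given $\xx \in V_n$, I would lift it to some $\tilde \xx \in V$; then $\xx M_n \subset W_n$ is precisely the image of the $\fO$-submodule $\tilde\xx M \subset W \cong \fO^e$ under the reduction $W \onto W_n$, and so is a quotient of $\tilde\xx M / \fP^n(\tilde\xx M)$. As a finitely generated torsion-free module over the DVR $\fO$, $\tilde\xx M$ is free of rank $r = \dim_K(\tilde\xx M \otimes K)$, which by the definition of $\genidim(M)$ is at most $g$; hence $\card{\tilde\xx M / \fP^n(\tilde\xx M)} = q^{nr} \le q^{ng}$, as required. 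The only potential subtlety is that $\tilde\xx$ need not be a ``generic'' lift, but the definition of $g$ as a maximum over all $\xx \in V$ handles this uniformly; everything else is routine bookkeeping with the Dirichlet series.
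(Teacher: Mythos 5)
Your argument is correct and follows essentially the same route as the paper's: the upper bound comes from the trivial estimate $\ask{M_n}{V_n} \le q^{nd}$, and the lower bound from Lemma~\ref{lem:WxM} together with the bound $\card{\xx M_n} \le q^{n\genidim(M)}$ (equivalently $\ask{M_n}{V_n} \ge \max(q^{n(d-\genidim(M))},1)$). The only difference is that you spell out the rank argument for $\card{\xx M_n} \le q^{ng}$ that the paper leaves implicit, which is a reasonable amount of detail to add.
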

\begin{proof}
  The upper bound follows since $\ask{M_n}{V_n} \le \card{V_n} =
  q^{nd}$ and $\sum_{n=0}^\infty q^{n(d-s)}$ converges for $\Real(s) >
  d$.
  Similarly, the lower bound follows from Lemma~\ref{lem:WxM} and
  $\ask{M_n}{V_n} \ge \max(\card{V_n} / q^{n \genidim(M)},1)$.
\end{proof}

Let $0 \le r \le e$.
Let $M \subset \Mat_{d\times e}(\fO)$ be obtained from $\Mat_{d\times
  r}(\fO)$ by inserting $e-r$ zero columns in some fixed
positions.
Then $\genidim(M) = r$,
$\zeta_{M}(s) = \zeta_{\Mat_{d\times r}(\fO)}(s)$,
and it will follow from Proposition~\ref{prop:Mdxe}
that $\alpha_M = \max(d - r,0)$.
In particular, the bounds in Proposition~\ref{prop:local_alpha} are optimal.
We note that Example~\ref{ex:unbounded_denom} below illustrates that
the meromorphic continuation of $\zeta_M(s)$ (cf.\
Theorem~\ref{thm:rational}) may have real poles less than $d-e$.

\subsection{Abscissae of convergence in the global case and Euler products}
\label{ss:global_alpha}

Let $k$ be a number field with ring of integers $\fo$.
Let $\Places_k$ denote the set of non-Archimedean places of $k$.
For $v \in \Places_k$, let $k_v$ be the $v$-adic completion of $k$ and let
$\fo_v$ be its valuation ring.
We let $q_v$ denote the size of the residue field $\RF_v$ of $k_v$.
For an $\fo$-module $U$ and $v \in \Places_k$, we write $U_v := U
\otimes_{\fo}\fo_v$ (regarded as an $\fo_v$-module).

Let $M \subset \Mat_{d\times e}(\fo)$ be a submodule.
For $v \in \Places_k$,
we may identify $M_v$ with the $\fo_v$-submodule of $\Mat_{d\times e}(\fo_v)$
generated by $M$.

\begin{prop}
  \label{prop:euler}
  Let $M \subset \Mat_{d\times e}(\fo)$ be a submodule. Then:
  \begin{enumerate}
  \item\label{prop:euler1} $\alpha_M \le d+1$.
  \item\label{prop:euler2} $\zeta_M(s) = \prod\limits_{v\in \Places_k} \zeta_{M_v}(s)$.
  \end{enumerate}
\end{prop}
\begin{proof}
  Let $V = \fo^d$.
  The proof of (\ref{prop:euler1}) is similar to that of
  Proposition~\ref{prop:local_alpha}.
  Namely, for each $\fa \normal \fo$, $\ask{M_\fa}{V_\fa} \le
  \card{\fo/\fa}^d$ and
  $\sum_{\fa} \card{\fo/\fa}^{d-s} = \zeta_{k}(s-d)$ converges for
  $\Real(s) > d + 1$, where $\zeta_k(s)$ is the Dedekind zeta
  function of $k$.
  For (\ref{prop:euler2}), it suffices to show that for non-zero coprime ideals
  $\fa,\fb \normal \fo$, $\ask{M_{\fa\fb}}{V_{\fa\fb}} =
  \ask{M_\fa}{V_\fa} \dtimes \ask{M_\fb}{V_{\fb}}$.

  To that end, the natural isomorphism $\fo/\fa\fb \to \fo/\fa \times
  \fo/\fb$ yields an ($\fo$-module) isomorphism
  $M\otimes_\fo \fo/\fa\fb \to (M\otimes_\fo \fo/\fa) \times (M \otimes_\fo \fo/\fb)$
  which is compatible with the corresponding isomorphism
  $V_{\fa\fb} \to V_{\fa} \times V_{\fb}$ in the evident way.
  Hence, for $\bar a \in M \otimes_\fo \fo/\fa\fb$
  corresponding to $(\bar a_\fa,\bar a_\fb) \in (M\otimes_\fo \fo/\fa)\times
  (M\otimes_\fo \fo/\fb)$,
  we obtain an isomorphism
  $\Ker(\bar a \on V_{\fa\fb}) \to \Ker(\bar a_\fa \on V_{\fa}) \times
  \Ker(\bar a_\fb \on V_{\fb})$.
  Part (\ref{prop:euler2}) thus follows from Lemma~\ref{lem:alt_ask}.
\end{proof}

\begin{ex}
  Let $\zeta_k(s)$ denote the Dedekind zeta function of $k$.
  Then Proposition~\ref{prop:Mdxe} and Proposition~\ref{prop:euler}(\ref{prop:euler2}) imply
  that
  $\zeta_{\Mat_{d\times e}(\fo)}(s) = \zeta_k(s)\zeta_k(s-d+e)/\zeta_k(s+e)$.
\end{ex}

Further analytic properties of $\zeta_M(s)$ in a global setting will be
derived in \S\ref{s:feqn}.

\subsection{Hadamard products}

Recall that the \emph{Hadamard product} $F(T) \star G(T)$ of formal power
series $F(T) = \sum_{n=0}^\infty a_nT^n$
and $G(T) = \sum_{n=0}^\infty b_n T^n$ 
with coefficients in some common ring is
\[
F(T) \star G(T) = \sum_{n=0}^\infty a_nb_n T^n.
\]

The following is an immediate consequence of Lemma~\ref{lem:oplus}.
\begin{cor}
  \label{cor:hadamard}
  Let $K$ be a local field of arbitrary characteristic with valuation ring~$\fO$.
  Let $M = A \oplus B \subset \Mat_{d+e}(\fO)$
  for submodules $A \subset \Mat_d(\fO)$ and $B \subset \Mat_e(\fO)$.
  Then $\Zeta_M(T) = \Zeta_A(T) \star \Zeta_B(T)$. \qed
\end{cor}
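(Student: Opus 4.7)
The plan is straightforward since, as the author already remarks, Corollary~\ref{cor:hadamard} is an immediate consequence of Lemma~\ref{lem:oplus} applied coefficient-wise. First I would verify that reduction modulo $\fP^n$ is compatible with the given direct-sum decomposition: an element of $M = A \oplus B$ is block diagonal of the form $\bigl[\begin{smallmatrix} a & 0 \\ 0 & b \end{smallmatrix}\bigr]$ with $a \in A$ and $b \in B$, and reducing its entries modulo $\fP^n$ yields $\bigl[\begin{smallmatrix} a_n & 0 \\ 0 & b_n \end{smallmatrix}\bigr]$. Hence $M_n = A_n \oplus B_n$ as $\fO_n$-submodules of $\Mat_d(\fO_n) \oplus \Mat_e(\fO_n) \subset \Mat_{d+e}(\fO_n)$. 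Likewise, with $V = \fO^{d+e}$, the decomposition $V_n = (\fO_n)^d \oplus (\fO_n)^e$ is respected by the action of $M_n$.

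Next I would invoke Lemma~\ref{lem:oplus} over the finite ring $R = \fO_n$ with $V \rightsquigarrow (\fO_n)^d$, $V' \rightsquigarrow (\fO_n)^e$, $M \rightsquigarrow A_n$, and $M' \rightsquigarrow B_n$, which yields
\[
\ask{M_n}{V_n} \;=\; \ask{A_n}{(\fO_n)^d} \dtimes \ask{B_n}{(\fO_n)^e}
\]
for every $n \ge 0$.

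Finally, comparing definitions, the $n$-th coefficient of $\Zeta_M(T) = \sum_{n\ge 0} \ask{M_n}{V_n} T^n$ factors as the product of the corresponding coefficients of $\Zeta_A(T)$ and $\Zeta_B(T)$, which is precisely the $n$-th coefficient of the Hadamard product $\Zeta_A(T) \star \Zeta_B(T)$. There is no substantive obstacle here; the only point that merits a sentence is the compatibility of the reduction map with the block-diagonal embedding, which is tautological from the definition of $M_n$.
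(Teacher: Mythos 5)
Your proof is correct and matches the paper's intended argument exactly: the paper labels this an immediate consequence of Lemma~\ref{lem:oplus}, and you supply precisely the coefficient-wise application of that lemma over $R = \fO_n$, together with the (routine) compatibility check that $M_n = A_n \oplus B_n$.
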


We note that Hadamard products of rational generating functions are
well-known to be rational (see \cite[Prop. 4.2.5]{Sta12}).

\begin{cor}
  \label{cor:square}
  Let $M \subset \Mat_{d\times e}(\fO)$ be a submodule.
  Define $f = \max(d,e)$.
  Let $\tilde M \subset \Mat_f(\fO)$ be obtained from $M$ by
  adding $f-d$ zero rows and $f-e$ zero columns to the elements of $M$
  in some fixed positions.
  Then $\Zeta_{\tilde M}(T) = \Zeta_M(q^{f-d} T)$. \qed
\end{cor}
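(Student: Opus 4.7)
The plan is to isolate the two padding operations (adding zero columns and adding zero rows) and track how each affects the ask value $\ask{M_n}{V_n}$ before reassembling into the zeta function. Since padding with zeros in fixed positions commutes with reduction modulo $\fP^n$, we may identify $\tilde M_n$ with the module obtained from $M_n \subset \Mat_{d\times e}(\fO_n)$ by inserting $f-e$ zero columns and $f-d$ zero rows in the prescribed positions.

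First I would handle the zero columns. If $a' \in \Mat_{d\times f}(\fO_n)$ is obtained from $a \in \Mat_{d\times e}(\fO_n)$ by inserting $f-e$ zero columns in fixed positions, then $\Ker(a') = \Ker(a)$ as subsets of $\fO_n^d$, since the right-multiplication action on any $\xx\in \fO_n^d$ has zeros in the new columns and the original entries in the original columns. As this padding also defines a bijection between $M_n$ and the intermediate module $M'_n \subset \Mat_{d\times f}(\fO_n)$, we get $\ask{M'_n}{V_n} = \ask{M_n}{V_n}$.

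Next I would add the zero rows, and here the ambient module $V_n$ grows. Applying Corollary~\ref{cor:ask_zcol} with $R = \fO_n$ (so $\card R = q^n$) repeatedly $f-d$ times yields $\ask{\tilde M_n}{\tilde V_n} = q^{n(f-d)} \ask{M'_n}{V_n} = q^{n(f-d)} \ask{M_n}{V_n}$. Substituting into the defining series gives
\[
\Zeta_{\tilde M}(T) = \sum_{n=0}^\infty q^{n(f-d)} \ask{M_n}{V_n} T^n = \sum_{n=0}^\infty \ask{M_n}{V_n} (q^{f-d}T)^n = \Zeta_M(q^{f-d}T),
\]
as required.

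There is no genuine obstacle; the only mild bookkeeping issue is the interaction of padding with reduction mod $\fP^n$ and with the normalising factor $\card{\tilde M_n}^{-1}$, but both are straightforward because zero columns neither add new $\fO_n$-linear coordinates nor affect the kernel equations, while zero rows contribute exactly one free coordinate to the kernel each, which is the content of Corollary~\ref{cor:ask_zcol}.
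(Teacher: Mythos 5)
Your proof is correct and is essentially the argument the paper has in mind when it marks the corollary as immediate: zero columns leave both the module size and every kernel unchanged, and zero rows are handled by Corollary~\ref{cor:ask_zcol} applied to $R = \fO_n$, giving the factor $q^{n(f-d)}$ which becomes the substitution $T \mapsto q^{f-d}T$. Your explicit check that zero columns do not alter the kernel is a clean shortcut avoiding the transpose lemma, but this is a cosmetic difference.
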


Thus, various questions on the series $\Zeta_M(T)$ are reduced to the case of square matrices.

\subsection{Rescaling}
\label{ss:rescaling}

Let $\fO$ be the valuation ring of a non-Archimedean local field $K$
of arbitrary characteristic.
Let $M \subset \Mat_{d\times e}(\fO)$ be a submodule, $V = \fO^d$, and
$W = \fO^e$.

\begin{defn}
For $m \ge 0$, let
$\Zeta^m_M(T) := \sum\limits_{n=m}^\infty  \ask{M_n}{V_n} \dtimes T^{n-m} \in \QQ\llb T\rrb$.
\end{defn}

Note that $\Zeta_M(T) = \Zeta^0_M(T)$.
For an $\fO$-module~$U$, let $U^m =\fP^m U$
and write $U^m_n$ for the common value of $(U^m)_n$ and $(U_n)^m$.
Clearly, if $n \le m$, then $\ask{M_n^m}{V_n} = \ask{\{0\}} {V_n} = \card{V_n} = q^{nd}$.

\begin{prop}
  \label{prop:rescale}
  $\Zeta^m_{M^m}(T) = q^{dm} \dtimes \Zeta_M(T)$.
\end{prop}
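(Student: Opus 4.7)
The plan is to unfold the left-hand side using the formula of Lemma~\ref{lem:alt_ask}, relate kernels of elements of $M^m$ acting on $V_n$ to kernels of elements of $M$ acting on $V_{n-m}$, and then reindex the series.

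First I would fix $n \ge m$ and compute $\ask{(M^m)_n}{V_n}$. Pick a uniformiser $\pi \in \fP$, so $M^m = \pi^m M$; multiplication by $\pi^m$ is an $\fO$-module isomorphism $M \to M^m$ (as $\fO$ is a domain), which tensors to an $\fO$-module isomorphism $M \otimes \fO_n \to M^m \otimes \fO_n$, $\bar a \mapsto \pi^m \bar a$. Under this identification, an element $\pi^m \bar a \in M^m \otimes \fO_n$ acts on $V_n$ as the composition of the action of $\bar a$ with multiplication by $\pi^m$ on $W_n$. Since the kernel of $\pi^m \cdot : W_n \to W_n$ equals $\fP^{n-m} W/\fP^n W$ (here is where I use that $\fO$ is a DVR), an element $\xx \in V_n$ lies in $\Ker(\pi^m \bar a \on V_n)$ if and only if its image $\bar \xx$ in $V_{n-m}$ lies in $\Ker(\bar a \on V_{n-m})$, where on the right we view $\bar a$ via its image in $M \otimes \fO_{n-m}$. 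Hence $\Ker(\pi^m \bar a \on V_n)$ is the preimage of $\Ker(\bar a \on V_{n-m})$ under the surjection $V_n \onto V_{n-m}$, whose fibres all have size $q^{dm}$, giving
\[
\card{\Ker(\pi^m \bar a \on V_n)} = q^{dm} \card{\Ker(\bar a \on V_{n-m})}.
\]

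Next I substitute this into the identity from Lemma~\ref{lem:alt_ask}. The natural surjection $M \otimes \fO_n \onto M \otimes \fO_{n-m}$ has fibres of constant size $\card{M \otimes \fO_n}/\card{M \otimes \fO_{n-m}}$, and the quantity $\card{\Ker(\bar a \on V_{n-m})}$ depends only on the image of $\bar a$ in $M \otimes \fO_{n-m}$. Averaging first over these fibres and then over $M \otimes \fO_{n-m}$, and using $\card{M^m \otimes \fO_n} = \card{M \otimes \fO_n}$, gives
\[
\ask{(M^m)_n}{V_n} = q^{dm} \dtimes \ask{M_{n-m}}{V_{n-m}}
\]
for every $n \ge m$.

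Summing the previous identity yields
\[
\Zeta^m_{M^m}(T) = \sum_{n=m}^\infty \ask{(M^m)_n}{V_n}\, T^{n-m} = q^{dm} \sum_{n=m}^\infty \ask{M_{n-m}}{V_{n-m}}\, T^{n-m} = q^{dm} \dtimes \Zeta_M(T),
\]
after the substitution $k = n - m$. The only delicate point is the tensor-product bookkeeping: the sets $(M^m)_n$ and $M_{n-m}$ are not literally in bijection with each other, so Lemma~\ref{lem:alt_ask} (which replaces the sum over $(M^m)_n$ by a sum over $M^m \otimes \fO_n$) is essential to make the isomorphism $\pi^m \colon M \otimes \fO_n \to M^m \otimes \fO_n$ usable. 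Once that is in place, the argument is a straightforward rescaling.
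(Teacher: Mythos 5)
Your proof is correct and rests on the same core idea as the paper's: multiplication by a uniformiser power $\pi^m$ identifies $M$-data at level $n-m$ with $M^m$-data at level $n$, and the kernel inflates by a factor $q^{dm}$ because it is a preimage along $V_n \onto V_{n-m}$. The paper reaches $\ask{M^m_n}{V_n} = q^{dm}\ask{M_{n-m}}{V_{n-m}}$ more directly by observing that $\pi^m$ already induces an $\fO$-module \emph{isomorphism} $M_{n-m} \to M^m_n$ and then summing over that set, whereas you route through $M^m\otimes\fO_n$ via Lemma~\ref{lem:alt_ask} and then average along the surjection $M\otimes\fO_n\onto M\otimes\fO_{n-m}$. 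Both are valid; yours is slightly longer but perfectly sound.

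The one thing to correct is your closing remark: the sets $(M^m)_n$ and $M_{n-m}$ \emph{are} in bijection, and the bijection is exactly multiplication by $\pi^m$. Indeed, since $\fO$ is a discrete valuation ring, $\pi^m a \in \fP^n\Mat_{d\times e}(\fO)$ if and only if $a \in \fP^{n-m}\Mat_{d\times e}(\fO)$, so $\pi^m$ descends to an isomorphism $M/(M\cap\fP^{n-m}\Mat_{d\times e}(\fO)) \to M^m/(M^m\cap\fP^n\Mat_{d\times e}(\fO))$, i.e.\ $M_{n-m}\xrightarrow{\sim} M^m_n$. So passing to tensor products via Lemma~\ref{lem:alt_ask} is a convenience, not a necessity; the direct identification of the two image sets is what the paper's proof uses.
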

\begin{proof}
  It suffices to show that
  $\ask{M^m_n}{V_n} = q^{dm} \dtimes \ask{M_{n-m}}{V_{n-m}}$
  for $n \ge m$. 
  Choose $\pi \in \fP\setminus \fP^2$.
  Observe that multiplication by $\pi^m$ induces an $\fO$-module
  isomorphism $M_{n-m} \to M^m_n$ and a monomorphism $V_{n-m} \to V_n$
  with image $V^m_n$.
  For $a \in M$,
  \begin{align*}
  \Ker(\pi^m a \on V_n)
    & = \bigl\{ \xx \in V_n : \xx (\pi^m a) \equiv 0 \pmod {W^n}\bigr\} \\
    & = \bigl\{ \xx \in V_n : \xx a \equiv 0 \pmod {W^{n-m}}\bigr\} \\
    & = \bigl\{ \xx \in V_n : \xx + V^{n-m} \in \Ker(a \on V_{n-m})\bigr\}
  \end{align*}
  has size $\card{\Ker(a \on V_{n-m})} \dtimes q^{dm}$.
  We conclude that
  \begin{align*}
    \ask{M^m_n}{V_n} & = \card{M^m_n}^{-1} \sum_{a \in M^m_n}
                     \card{\Ker(a \on V_n)} \\ &
     = \card{M_{n-m}}^{-1} \sum_{a \in M_{n-m}} \card{\Ker(a \on V_{n-m})} \dtimes
                     q^{dm} \\
                     & = q^{dm} \dtimes \ask{M_{n-m}}{V_{n-m}}.
                       \qedhere
  \end{align*}
\end{proof}

\section{Rationality of $\Zeta_M(T)$ and $p$-adic integration}
\label{s:ratint}

Unless otherwise stated, in this section, $K$ is a non-Archimedean
local field of arbitrary characteristic with valuation ring $\fO$.
Given a submodule $M \subset \Mat_{d\times e}(\fO)$,
we use the original definition of $\ask M V$ as well as the alternative
formula in Lemma~\ref{lem:WxM} to derive two types of expressions for
$\Zeta_M(T)$ in terms of $p$-adic integrals.

In \S\ref{ss:master_integral}, we describe a general setting for
rewriting certain generating functions as integrals.
By specialising to the case at hand, we obtain, in
\S\ref{ss:distributions}, 
two expressions for $\Zeta_M(T)$ (Theorem~\ref{thm:twoint})
in terms of functions $\kersize_M$ and $\orbsize_M$ that we introduce.
In \S\ref{ss:formulae_K_and_O}, we derive explicit formulae (in terms
of the absolute value of $K$ and polynomials over $\fO$) for these
functions.
These formulae serve two purposes. First, using established
rationality results from $p$-adic integration, they allow us to deduce
Theorem~\ref{thm:rational}.
Secondly, these formulae, in particular the one based on
$\orbsize_M$ (and hence on Lemma~\ref{lem:WxM}), lie at the heart of 
explicit formulae such as Proposition~\ref{prop:Mdxe} in \S\ref{s:examples_max}.

\subsection{Generating functions and $p$-adic integrals}
\label{ss:master_integral}

Let $Z$ be a free $\fO$-module of finite rank $d$ and let $U \subset Z$ be a
submodule.
By the elementary divisor theorem, there exists a unique 
sequence 
$(\lambda_1,\dotsc,\lambda_d)$ with $0 \le \lambda_1 \le \dotsb \le
\lambda_d \le \infty$ such that $Z/U
\approx \bigoplus_{i=1}^d \fO_{\lambda_i}$ as $\fO$-modules;
recall that $\fO_\infty = \fO$.
We call $(\lambda_1,\dotsc,\lambda_d)$ the \emph{submodule type} of $U$ within $Z$.
Recall that the \emph{isolator} $\iso_Z(U)$ of $U$ in $Z$ is the
preimage of the torsion submodule of $Z/U$ under the natural map $Z \onto
Z/U$.
Equivalently, $\iso_Z(U)$ is the smallest direct summand of $Z$ which
contains $U$.
Recall that $U$ is \emph{isolated} in $Z$ if and only if $U = \iso_Z(U)$;
this is equivalent to each $\lambda_i$ from above being either $0$ or $\infty$.
If $U$ is isolated in~$Z$, then we may naturally identify $U \otimes \fO_n$ 
with the image $U_n$ of $U \incl Z \onto Z_n := Z \otimes \fO_n$;
to see that this identification may fail if $U$ is not isolated,
consider e.g.\ $U = \fP \subset \fO = Z$ and $n = 1$.
If $U$ is isolated in~$Z$ and $U$ has rank $\ell$, say, then $\card{U_n} = q^{\ell n}$.
As we will now see, the general case is only slightly more complicated.

We let $\nu$ denote the valuation on $K$ with value group $\ZZ$.
Let $\abs{\dtimes}$ be the absolute value on $K$ with $\abs{\pi} = q^{-1}$
for $\pi \in \fP\setminus\fP^2$;
we write $\norm{A} = \sup(\abs a : a \in A)$.

\begin{lemma}
  \label{lem:card_Un}
  Let $U \subset Z$ be a submodule.
  Suppose that $U$ has submodule type $(\lambda_1,\dotsc,\lambda_\ell)$ within $\iso_Z(U)$.
  Let $U_n$ denote the image of the natural map $U \incl Z \onto Z_n
  := Z \otimes \fO_n$.
  Then $\card{U_n} = q^{\sum\limits_{i=1}^\ell (n-\min(\lambda_i,n))}$.
  In particular, for $y\in \fO$ with $\nu(y) = n$, $\card{U_n} 
  = \prod\limits_{i=1}^\ell \frac{\norm{\pi^{\lambda_i},y}}{\abs y}$.
\end{lemma}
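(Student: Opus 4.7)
The plan is to reduce to a choice of basis adapted to the chain $U \subset \iso_Z(U) \subset Z$ and then compute directly. Since $\iso_Z(U)$ is a direct summand of the free $\fO$-module $Z$ of rank $d$, it is itself free of some rank, which must coincide with the number $\ell$ of invariant factors; moreover, by the very definition of the isolator, the quotient $\iso_Z(U)/U$ is torsion, so all $\lambda_i$ are finite. Applying the elementary divisor theorem to $U \subset \iso_Z(U)$, we choose a basis $e_1,\dotsc,e_\ell$ of $\iso_Z(U)$ such that $\pi^{\lambda_1}e_1,\dotsc,\pi^{\lambda_\ell}e_\ell$ is a basis of $U$. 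Extending this to a basis $e_1,\dotsc,e_d$ of $Z$ puts us in a completely diagonal situation.

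In this basis, the reduction map $Z \onto Z_n$ sends $e_i$ to $\bar e_i$, and $U_n$ is generated by the elements $\pi^{\lambda_i}\bar e_i$ for $i = 1,\dotsc,\ell$. Because these lie in distinct coordinates of $Z_n = \bigoplus_{i=1}^d \fO_n \bar e_i$, we obtain a direct sum decomposition $U_n \cong \bigoplus_{i=1}^\ell \pi^{\lambda_i}\fO_n$. Each cyclic factor $\pi^{\lambda_i}\fO_n$ is isomorphic to $\fO/\fP^{n-\min(\lambda_i,n)}$ (the trivial module if $\lambda_i \ge n$) and hence has cardinality $q^{n-\min(\lambda_i,n)}$. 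Multiplying over $i$ yields the first formula.

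For the equivalent expression, it suffices to note that for $y\in \fO$ with $\nu(y) = n$ we have $\norm{\pi^{\lambda_i},y} = \max(q^{-\lambda_i},q^{-n}) = q^{-\min(\lambda_i,n)}$ and $\abs{y} = q^{-n}$, so that $\norm{\pi^{\lambda_i},y}/\abs{y} = q^{n-\min(\lambda_i,n)}$; taking the product over $i$ reproduces the first formula.

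There is no real obstacle here: the content is straightforward bookkeeping around the elementary divisor theorem. The only point requiring a moment's care is to observe that the relevant invariants, when computed within $\iso_Z(U)$ rather than within $Z$ itself, are precisely the $\ell$ finite numbers $\lambda_1 \le \dotsb \le \lambda_\ell$, with no spurious $\infty$-entries contributing to the cardinality count.
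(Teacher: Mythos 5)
Your proof is correct and takes essentially the same route as the paper: both reduce, via the elementary divisor theorem, to a coordinatewise computation of the image of $U$ in $Z_n$, with each cyclic factor contributing $q^{n-\min(\lambda_i,n)}$. You simply spell out the adapted basis that the paper compresses into a single "clearly," and you land directly on the exponent $n-\min(\lambda_i,n)$ rather than passing through $\max(n-\lambda_i,0)$ first.
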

\begin{proof}
  Clearly,
  $\card{U_n} = q^{\sum_{i=1}^\ell \min(n-\lambda_i,0)}$
  and the first identity follows from $\min(0,n-a) = n - \min(n,a)$;
  the second claim then follows immediately.
\end{proof}

The following result is concerned with generating functions
associated with a given family of ``weight functions'' $f_n \colon U_n \to
\RR_{\ge 0}$.
Given a free $\fO$-module $W$ of finite rank,
let $\mu_W$ denote the Haar measure on $W$ with $\mu_W(W) = 1$.

\begin{lemma}
  \label{lem:master_integral}
  Let $U \subset Z$ be a submodule.
  Suppose that $U$ has submodule type $(\lambda_1,\dotsc,\lambda_\ell)$ within $\iso_{Z}(U)$.
  Let $U_n$ denote the image of $U \incl Z \onto Z_n := Z \otimes \fO_n$
  and let $\pi_n\colon U \to U_n$ denote the natural map.
  Let $N \ge 0$ and, for $n \ge N$, let $f_n \colon U_n \to \RR_{\ge 0}$ be
  given.
  Define $$F\colon U \times \fP^N\setminus\{0\} \to \RR_{\ge 0}, \quad(\xx,y) \mapsto
  f_{\nu(y)}(\pi_{\nu(y)}(\xx))$$
  and extend $F$ to a map $U\times \fP^N\to \RR_{\ge 0}$ via
  $F(\xx,0) \equiv 0$.
  Let $$g \colon \fP^N \times \CC \to \RR_{\ge 0}, \quad (y,s) \mapsto
  \abs{y}^{s-\ell-1}
  \dtimes \prod\limits_{i=1}^\ell
  \norm{\pi^{\lambda_i},y},$$
  where we set $g(0,s) \equiv 0$.
  (Note that $g(y,s) = \abs{y}^{s-1} \dtimes \card{U_{\nu(y)}}$ for
  $y\not= 0$ by Lemma~\ref{lem:card_Un}.)

  Suppose that $\delta \ge 0$ satisfies
  $\sum\limits_{\bar\xx \in U_n}f_n(\bar\xx) = \mathcal O(q^{\delta n})$.
  Then, writing $t = q^{-s}$, for all $s \in \CC$ with $\Real(s) > \delta$, 
  \begin{equation}
    \label{eq:master_integral}
    \sum_{n=N}^\infty 
    \sum_{\bar\xx\in U_n}f_n(\bar\xx)
    t^n =
    (1-q^{-1})^{-1} \int_{U\times\fP^N} F(\xx,y)\dtimes g(y,s) \,\dd\mu_{U\times\fO}(\xx,y).
  \end{equation}
\end{lemma}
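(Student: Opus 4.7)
The plan is to fiber the right-hand side integral by $n = \nu(y)$ and, for each slice, reduce the resulting inner integral over $U$ to the sum $\sum_{\bar\xx \in U_n}f_n(\bar\xx)$. First I decompose $\fP^N\setminus\{0\} = \bigsqcup_{n \ge N}S_n$ where $S_n = \{y \in \fO : \nu(y) = n\}$, noting that $\mu_{\fO}(S_n) = (1-q^{-1})q^{-n}$. On each shell $S_n$, we have $\abs y = q^{-n}$ and $\norm{\pi^{\lambda_i},y} = q^{-\min(\lambda_i,n)}$, so Lemma~\ref{lem:card_Un} gives
\begin{equation*}
g(y,s) = \abs y^{s-\ell-1}\prod_{i=1}^\ell\norm{\pi^{\lambda_i},y} = \abs y^{s-1}\dtimes\card{U_n} = q^n t^n \card{U_n},
\end{equation*}
in accordance with the parenthetical remark.

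Next, the kernel of $\pi_n\colon U \to U_n$ (viewed as $U\subset Z \onto Z_n$) is $K_n := U\cap\fP^n Z$. Since $U$ is a submodule of $\iso_Z(U)$---which is a direct summand of $Z$ and hence free of rank $\ell$---the submodule $U$ is itself a free $\fO$-module of rank $\ell$; consequently $\mu_U$ is a bona fide translation-invariant Haar measure. As $U/K_n \cong U_n$ has order $\card{U_n}$, the subgroup $K_n$ is open of index $\card{U_n}$, whence $\mu_U(K_n) = \card{U_n}^{-1}$, and every fibre of $\pi_n$ has this same Haar measure. It follows that whenever $\nu(y) = n$,
\begin{equation*}
\int_U F(\xx,y)\,\dd\mu_U(\xx) = \int_U f_n(\pi_n(\xx))\,\dd\mu_U(\xx) = \card{U_n}^{-1}\sum_{\bar\xx\in U_n}f_n(\bar\xx).
\end{equation*}

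Combining these two ingredients and invoking Fubini---justified by the growth hypothesis $\sum_{\bar\xx\in U_n}f_n(\bar\xx) = \mathcal O(q^{\delta n})$, which yields absolute convergence for $\Real(s) > \delta$---the right-hand side of \eqref{eq:master_integral} becomes
\begin{equation*}
(1-q^{-1})^{-1}\sum_{n=N}^\infty \mu_\fO(S_n)\dtimes q^n t^n \card{U_n}\dtimes \card{U_n}^{-1}\sum_{\bar\xx\in U_n}f_n(\bar\xx) = \sum_{n=N}^\infty t^n\sum_{\bar\xx\in U_n}f_n(\bar\xx),
\end{equation*}
since $\mu_\fO(S_n)\dtimes q^n\dtimes (1-q^{-1})^{-1} = 1$ and the factors of $\card{U_n}$ cancel. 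The only genuinely care-requiring step is the fibre-measure calculation, which hinges on identifying $\ker\pi_n$ as $U\cap\fP^n Z$ and on $U$ being free over $\fO$; everything else is tracking powers of $q$ and the three factors of $(1-q^{-1})^{\pm 1}$.
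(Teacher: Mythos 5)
Your argument is correct and follows essentially the same route as the paper: you partition the $y$-domain into the shells $S_n=\fP^n\setminus\fP^{n+1}$, compute the fibre measure $\mu_U(\Ker\pi_n)=\card{U_n}^{-1}$ via Lemma~\ref{lem:card_Un}, and combine via Fubini, exactly as the paper does with its cosets $\cR_n$ of $U^n=\Ker\pi_n$ and annuli $\cW_n$. The only point the paper makes explicit that you pass over in silence is that $F$ is measurable (being constant on the open boxes $(\xx+U^n)\times(y+\fP^{n+1})$), which is needed before Fubini can even be invoked; this is implicit in your fibre computation, but worth a sentence.
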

\begin{proof}
  First note that the left-hand side of \eqref{eq:master_integral}
  converges for $\Real(s) > \delta$.
  Further note that we may ignore the case $y = 0$ on the right-hand
  side as it occurs on a set of measure zero.

  Let $U^n = \Ker(\pi_n\colon U \onto U_n)$.
  Given $(\xx,y) \in U \times \fP^N \setminus \{0\}$ with $n :=
  \nu(y)$, the map $F$ is constant
  on the open set $(\xx + U^n)\times(y + \fP^{n+1})$;
  in particular, $F$ is measurable.

  Let $\cR_n \subset U$ be a complete and irredundant set of representatives
  for the cosets of $U^n$ and let $\cW_n = \fP^n\dtimes \fO^\times = \fP^n\setminus\fP^{n+1}$.
  By Lemma~\ref{lem:card_Un}, $\mu_U(U^n) = \card{U_n}^{-1} = \prod\limits_{i=1}^\ell \frac{\abs
    y}{\norm{\pi^{\lambda_i},y}}$ for any $y\in \cW_n$; moreover, $\mu_{\fO}(\cW_n) = (1-q^{-1})
  q^{-n}$.
  The claim now follows via
  \begin{align*}
    \int_{U\times \fP^N} \!\!\!\! \!\!\!
    F(\xx,y) \dtimes g(y,s) \,\dd\mu_{U\times\fO}(\xx,y)
    & =
      \sum_{n=N}^\infty \sum_{\xx \in \cR_n}
      \int_{(\xx + U^n)\times \cW_n}\!\!\! \!\!\!\!\!
      \!\!\!f_n(\pi_n(\xx)) \dtimes \underbrace{g(y,s)}_{=\card{U_n}
      \dtimes (qt)^n}\,\dd\mu_{U\times\fO}(\xx,y) \\
    & = (1-q^{-1})
      \sum_{n=N}^\infty \sum_{\bar\xx \in U_n}
      \!\!
      f_n(\bar\xx) \dtimes \mu(U^n\times \cW_n) \dtimes \card{U_n} \dtimes
      (qt)^n.\\
    & = (1-q^{-1})
      \sum_{n=N}^\infty \sum_{\bar\xx \in U_n}
      f_n(\bar\xx) t^n. \qedhere
  \end{align*}
\end{proof}

\begin{rem}
  The introduction of the additional variable $y$ to express a
  generating function as an integral in Lemma~\ref{lem:master_integral}
  mimics similar formulae of Jaikin-Zapirain~\cite[\S 4]{JZ06} and
  Voll~\cite[\S 2.2]{Vol10}. 
\end{rem}

\subsection{$\Zeta_M(T)$ and the functions $\kersize_M$ and $\orbsize_M$}
\label{ss:distributions}

Let $M \subset \Mat_{d\times e}(\fO)$ be a submodule, $V = \fO^d$, and
$W = \fO^e$.

\begin{defn}
  \label{d:OK}
  Define
  \begin{align*}
    \kersize_M\colon M \times \fO \to \NN_0 \cup \{ \infty\},
    \quad & (a,y) \mapsto \#{\Ker\Biggl(
      V \otimes \fO/(y)
      \xrightarrow{a \otimes {\fO}/{(y)}}
      W \otimes \fO/(y)
      \Biggr)}\text{ and} \\
    \orbsize_M\colon V \times \fO \to \NN_0\cup\{\infty\},
    \quad & (\xx,y) \mapsto \#\Img\Bigl(\xx M \incl W \onto W \otimes \fO/(y)\Bigr).
  \end{align*}
\end{defn}

Note that for $y \not= 0$, $1 \le \kersize_M(a,y) \le \abs{y}^{-d}$ and $1 \le \orbsize_M(\xx,y)
\le \abs{y}^{-\genidim(M)}$ (see Definition~\ref{d:genidim}); these
are the same estimates as in Proposition~\ref{prop:local_alpha}.

Using Lemmas~\ref{lem:WxM} and \ref{lem:master_integral}, we obtain
the following formulae for $\Zeta_M(t)$ (where $t = q^{-s}$).

\begin{thm}
  \label{thm:twoint}
  For $s \in \CC$ with $\Real(s) > d$,
  \begin{align*}
    (1-q^{-1}) \dtimes \Zeta_M(q^{-s})
    &=
      \!\!\!\int_{M \times \fO} \!\!\!\abs{y}^{s-1} \dtimes \kersize_M(a,y) \,\dd\mu_{M\times \fO}(a,y)
    = \!\!\!
      \int_{V \times \fO} \!\!\!
      \frac{\abs{y}^{s-d-1}} {\orbsize_M(\xx,y)} \,\dd\mu_{V \times\fO}(\xx,y).
  \end{align*}
\end{thm}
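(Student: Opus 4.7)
The plan is to establish each integral representation by applying Lemma~\ref{lem:master_integral} to two different choices of weight functions $f_n$, corresponding to the two expressions for $\ask{M_n}{V_n}$ provided by its definition and by Lemma~\ref{lem:WxM}. Throughout, writing $t = q^{-s}$, the required growth bound $\sum f_n = \mathcal O(q^{\delta n})$ will hold with $\delta = d$ since $\ask{M_n}{V_n} \le \card{V_n} = q^{nd}$, which matches the assumed range $\Real(s) > d$ in the statement.

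For the first identity, I would take $U = M \subset Z = \Mat_{d\times e}(\fO)$ and work with $f_n\colon M_n \to \RR_{\ge 0}$, $f_n(\bar a) = \card{\Ker(\bar a \on V_n)} / \card{M_n}$. Then $\sum_{\bar a \in M_n} f_n(\bar a) = \ask{M_n}{V_n}$, so the left-hand side of Lemma~\ref{lem:master_integral} becomes $\Zeta_M(t)$. For $y$ with $\nu(y) = n$, the defining map $a \otimes \fO/(y) \colon V_n \to W_n$ depends on $a$ only through $\pi_n(a) \in M_n$, so $\kersize_M(a,y) = \card{M_n} \cdot f_n(\pi_n(a))$. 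Since $g(y,s) = \abs{y}^{s-1} \card{M_n}$ (by Lemma~\ref{lem:card_Un}), the integrand $F(a,y) \cdot g(y,s)$ in Lemma~\ref{lem:master_integral} is precisely $\abs{y}^{s-1} \kersize_M(a,y)$, whence $(1-q^{-1})\Zeta_M(q^{-s})$ equals the first integral. (Alternatively, one can derive this directly by partitioning $\fO = \{0\} \sqcup \bigsqcup_n \cW_n$ with $\cW_n = \fP^n \setminus \fP^{n+1}$, using that the fibres of $M \onto M_n$ have equal $\mu_M$-measure $\card{M_n}^{-1}$, and summing.)

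For the second identity, I would invoke Lemma~\ref{lem:WxM} to rewrite $\ask{M_n}{V_n} = \sum_{\bar\xx \in V_n} \card{\bar\xx M_n}^{-1}$, and then apply Lemma~\ref{lem:master_integral} with $U = V = \fO^d$ (free of rank $d$, so $\ell = d$ and all $\lambda_i = 0$, making $g(y,s)$ collapse to $\abs{y}^{s-d-1}$) and $f_n(\bar\xx) = \card{\bar\xx M_n}^{-1}$. The key identification is that for $\nu(y) = n$, the image of $\xx M$ in $W_n$ is exactly $\bar\xx M_n$, so $\orbsize_M(\xx,y) = \card{\bar\xx M_n}$ and the integrand of Lemma~\ref{lem:master_integral} matches $\abs{y}^{s-d-1}/\orbsize_M(\xx,y)$; this yields the second identity.

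The only real technical point is confirming that both $\kersize_M(a,y)$ and $\orbsize_M(\xx,y)$ depend on their first arguments only through the relevant reductions modulo $\fP^{\nu(y)}$, which places them squarely into the framework of Lemma~\ref{lem:master_integral}. Everything else is a matter of bookkeeping between the absolute value $\abs y$, the measures of the $y$-annuli $\cW_n$, and the cardinalities $\card{M_n}$ or $\card{V_n}$.
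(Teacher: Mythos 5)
Your proposal is correct and follows essentially the same route as the paper: both identities are obtained from Lemma~\ref{lem:master_integral} with the same choices of $U$, $Z$, $F$, and $g$ (reading $F(\xx,y) = f_{\nu(y)}(\pi_{\nu(y)}(\xx))$ in your notation), using the definition of $\ask{M_n}{V_n}$ for the kernel-based integral and Lemma~\ref{lem:WxM} for the orbit-based one, and the growth bound $\delta = d$ from $\ask{M_n}{V_n} \le q^{nd}$ is exactly what the paper uses to justify the half-plane of convergence.
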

\begin{proof}
  The given formulae for $(1-q^{-1}) \dtimes\Zeta_M(q^{-s})$ 
  are based on $\ask {M_n}{V_n} = \card{M_n}^{-1}\dtimes$
  $\sum_{a\in M_n}\card{\Ker(a)}$ and 
  $\ask{M_n}{V_n} = \sum_{\bar\xx\in V_n} \card{\bar\xx M_n}^{-1}$ (Lemma~\ref{lem:WxM}), respectively.
  In detail, the first equality follows from Lemma~\ref{lem:master_integral} with
  $U = M$, $Z = \Mat_{d\times e}(\fO)$,
  $F(a,y) = \card{M_{\nu(y)}}^{-1} \kersize_M(a,y)$
  and $g(y,s) = \abs{y}^{s-1} \dtimes \card{M_{\nu(y)}}$ (for $y \not= 0$).
  For the second equality, use Lemma~\ref{lem:master_integral} with $U = Z = V$,
  $F(\xx,y) = \card{(\xx + y \fO^d) M_{\nu(y)}}^{-1} = \orbsize_M(\xx,y)^{-1}$
  and
  $g(y,s) = \abs{y}^{s-d-1}$ (for $y \not= 0$).
\end{proof}

\subsection{Explicit formulae for $\kersize_M$ and $\orbsize_M$}
\label{ss:formulae_K_and_O}

As before, let $M \subset \Mat_{d\times e}(\fO)$ be a submodule.
In order to use Theorem~\ref{thm:twoint} for theoretical
investigations or explicit computations of $\Zeta_M(T)$,
we need to produce sufficiently explicit formulae for
$\kersize_M(a,y)$ or $\orbsize_M(\xx,y)$.

\subsubsection{The sizes of kernels and images}
\label{ss:size_ker}

Let $a \in \Mat_{d\times e}(\fO)$ have rank $r$ over~$K$.
Fix $\pi \in \fP\setminus \fP^2$.
By the elementary divisor theorem, there are $0 \le \lambda_1 \le \dotsb
\le \lambda_r$, $u \in \GL_d(\fO)$, and $v\in \GL_e(\fO)$ such that
\begin{equation}
  \label{eq:snf}
  uav = \begin{bmatrix} \diag(\pi^{\lambda_1},\dotsc,\pi^{\lambda_r}) & 0 \\
    0 & 0 \end{bmatrix}.
\end{equation}
We call $(\lambda_1,\dotsc,\lambda_r)$ the \emph{equivalence type} of $a$.

For a set of polynomials $\ff(\YY)$, we write
$\ff(\yy) = \{ f(\yy) : f\in \ff\}$.

\begin{lemma}
  \label{lem:size_ker}
  Let $a \in \Mat_{d\times e}(\fO)$ have rank $r$ over $K$ and
  equivalence type $(\lambda_1,\dotsc,\lambda_r)$.
  Let $\ff_i(\YY) \subset \ZZ[\YY] := \ZZ[Y_{ij} : 1\le i\le d, \,1\le j \le e]$ be the set of non-zero $i\times i$ minors of the generic
  $d\times e$ matrix $[Y_{ij}] \in \Mat_{d\times e}(\ZZ[\YY])$.
  (Hence, $\ff_0(\YY) = \{ 1\}$.)
  Let $a_n \in \Mat_{d\times e}(\fO_n)$ be the image of the matrix $a$ under the natural
  map $\Mat_{d\times e}(\fO) \onto \Mat_{d\times e}(\fO_n)$.
Then:
  \begin{enumerate}
  \item \label{lem:size_ker1}
    $\norm{\ff_i(a)} = q^{-\lambda_1-\dotsb-\lambda_i}$ for $0 \le i \le r$.
  \item \textup{(Cf.\ \cite[\S 2.2]{Vol10})}
    \label{lem:size_ker2}
    $\frac{\norm{\ff_{i-1}(a)}}{\norm{\ff_i(a) \cup \pi^n \ff_{i-1}(a)}} =
    q^{\min(\lambda_i,n)}$ for $1 \le i \le r$.
  \item \label{lem:size_ker3}
    $\card{\Ker(a_n)} = q^{\min(\lambda_1,n) + \dotsb +
      \min(\lambda_r,n) + (d-r)n}$.
  \item \label{lem:size_ker4}
    $\card{\Img(a_n)} = q^{rn - (\min(\lambda_1,n) + \dotsb + \min(\lambda_r,n))}$.
  \end{enumerate}
\end{lemma}
\begin{proof}
  The first part is elementary linear algebra.
  Part (\ref{lem:size_ker2}) then follows from
  $$q^{-\lambda_1 - \dotsb - \lambda_{i-1} + \min(\lambda_1 + \dotsb +
    \lambda_i,\lambda_1 + \dotsb + \lambda_{i-1} + n)} = q^{\min(\lambda_i,n)}.$$
  For $0\le i \le n$, the  map $\fO_n \to \fO_n$ given by multiplication by $\pi^i$
  has kernel and image size $q^i$ and $q^{n-i}$, respectively.
  Parts (\ref{lem:size_ker3})--(\ref{lem:size_ker4}) thus follow from equation \eqref{eq:snf}.
\end{proof}

\subsubsection{A formula for $\kersize_M$}

We use Lemma~\ref{lem:size_ker} in order to derive a formula for $\kersize_M(a,y)$.

\begin{defn}
  \label{d:genrank}
  The \emph{generic element rank} of $M$ is
  $\genrank(M) := \max\limits_{a \in M} \rank_K(a)$.
\end{defn}

By the following, $\genrank(M)$ is the rank of a ``generic'' matrix
in $M$ in any meaningful sense.

\begin{prop}
  \label{prop:genrank}
  Let $(a_1,\dotsc,a_\ell)$ be an $\fO$-basis of $M$ and let
  $\lambda_1,\dotsc,\lambda_\ell$ be algebraically independent over $K$. Then:
  \begin{enumerate}
  \item \label{prop:genrank1}
    $\genrank(M) = \rank_{K(\lambda_1,\dotsc,\lambda_\ell)}(\lambda_1 a_1 + \dotsb + \lambda_\ell a_\ell)$.
  \item \label{prop:genrank2}
    $\mu_M(\{a \in M : \rank_K(a) < \genrank(M)\}) =0$.
  \end{enumerate}
\end{prop}
\begin{proof}
  Let $r$ denote the right-hand side in (\ref{prop:genrank1}).
  Then $r$ is the largest number such that some $r\times r$ minor,
  $m(\lambda_1,\dotsc,\lambda_\ell)$ say, of $\lambda_1 a_1 + \dotsb +
  \lambda_\ell a_\ell$ is non-zero.
  In particular, $\genrank(M) \le r$.
  Conversely, since $m(\lambda_1,\dotsc,\lambda_\ell) \not= 0$, we
  find $c_1,\dotsc,c_\ell \in \fO$ with
  $m(c_1,\dotsc,c_\ell) \not= 0$ whence $\genrank(M) \ge
  r$.
  Finally, the well-known fact (provable using induction and Fubini's
  theorem) that the zero locus of a non-zero polynomial over $K$ has
  measure zero implies~(\ref{prop:genrank2}).
\end{proof}

Excluding a set of measure zero, we thus obtain the following 
formula for $\kersize_M(a,y)$.

\begin{cor}
  \label{cor:formula_K}
  Let $N = \{ a \in M : \rank_K(a) < \genrank(M)\}$
  and let $\ff_i(\YY)$ be the set of non-zero $i\times i$ minors of
  the generic $d\times e$ matrix.
  Then for all $a \in M\setminus N$ and $y \in \fO\setminus\{0\}$,
  \[
  \kersize_M(a,y) =
  \abs{y}^{\genrank(M)-d} \dtimes \prod_{i=1}^{\genrank(M)}
  \frac{\norm{\ff_{i-1}(a)}}{\norm{\ff_i(a) \cup y \ff_{i-1}(a)}}.
  \]
\end{cor}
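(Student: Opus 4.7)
The plan is to unwind the definition of $\kersize_M(a,y)$ and then invoke Lemma~\ref{lem:size_ker} verbatim, with a minor bookkeeping step to replace $\pi^n$ by a general uniformiser-times-unit $y$.

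First I would fix $y\in \fO\setminus\{0\}$, set $n = \nu(y)$, and observe that $\fO/(y) = \fO_n$, so that by Definition~\ref{d:OK} we have $\kersize_M(a,y) = \card{\Ker(a_n)}$ where $a_n \in \Mat_{d\times e}(\fO_n)$ is the reduction of $a$. Next, since $a \in M\setminus N$, the definition of $N$ together with Definition~\ref{d:genrank} gives $\rank_K(a) = \genrank(M)=: r$. Let $(\lambda_1,\dotsc,\lambda_r)$ be the equivalence type of $a$ as in~\eqref{eq:snf}. Lemma~\ref{lem:size_ker}(iii) then yields
\[
\kersize_M(a,y) = \card{\Ker(a_n)} = q^{(d-r)n}\dtimes \prod_{i=1}^{r} q^{\min(\lambda_i,n)}.
\]

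Now I would handle the two factors. For the prefactor, $q^{(d-r)n} = \abs{y}^{r-d}$ by the normalisation $\abs\pi = q^{-1}$. For the product, Lemma~\ref{lem:size_ker}(ii) states
\[
q^{\min(\lambda_i,n)} = \frac{\norm{\ff_{i-1}(a)}}{\norm{\ff_i(a)\cup \pi^n\ff_{i-1}(a)}}\qquad (1\le i \le r).
\]
The only tiny subtlety is replacing $\pi^n$ by $y$: since $\abs{y} = \abs{\pi^n} = q^{-n}$ and the supremum norm $\norm{\dtimes}$ depends only on absolute values of elements, one has $\norm{y\ff_{i-1}(a)} = \norm{\pi^n\ff_{i-1}(a)}$, and hence $\norm{\ff_i(a)\cup y\ff_{i-1}(a)} = \norm{\ff_i(a)\cup \pi^n\ff_{i-1}(a)}$.

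Multiplying these ingredients together produces the claimed identity. There is no real obstacle here: the corollary is essentially just a repackaging of parts (ii) and (iii) of Lemma~\ref{lem:size_ker} after translating between the uniformiser $\pi^n$ and the general element $y$ of valuation $n$, the point being that the right-hand side of the formula depends on $y$ only through $\abs y$, so no genuine dependence on the choice of uniformiser is lost.
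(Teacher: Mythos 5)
Your proof is correct and takes exactly the route the paper intends: the paper's proof of this corollary is the single line ``Immediate from Lemma~\ref{lem:size_ker}(ii)--(iii),'' and your argument is precisely the unwinding of that remark, including the (minor) point that $\pi^n$ can be replaced by $y$ because $\norm{\dtimes}$ depends only on absolute values. The bookkeeping is all in order: $\fO/(y)=\fO_n$ since $(y)=\fP^n$, $a\in M\setminus N$ forces $\rank_K(a)=\genrank(M)$, and the two factors match Lemma~\ref{lem:size_ker}(iii) and (ii) respectively.
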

\begin{proof}
  Immediate from Lemma~\ref{lem:size_ker}(\ref{lem:size_ker2})--(\ref{lem:size_ker3}).
\end{proof}

Hence, using Theorem~\ref{thm:twoint}, we conclude that
\begin{equation}
  \label{eq:int_K_minors}
 \Zeta_M(q^{-s})
  =   (1-q^{-1})^{-1}
  \!\!\!\!\!\int_{M \times \fO} \!\!\!   
  \abs{y}^{s + \genrank(M)-d-1} \dtimes \!\!
  \prod_{i=1}^{\genrank(M)}
  \!\!\!
  \frac{\norm{\ff_{i-1}(a)}}{\norm{\ff_i(a) \cup y \ff_{i-1}(a)}}
 \,\dd\mu_{M\times \fO}(a,y).
\end{equation}

\subsubsection{Rationality and variation of the place}
\label{sss:rationality}

As in the proof of Proposition~\ref{prop:genrank}, 
we may replace the $\ff_i(\YY)$ in \eqref{eq:int_K_minors} by
polynomials in a chosen system of coordinates of $M$.
We may thus interpret the integral in \eqref{eq:int_K_minors} 
as being defined in terms of valuations of polynomial expressions in
$\dim_K(M\otimes K)+ 1$ variables.
Integrals of this form have been studied extensively.
In particular, using well-known results going back to work of Igusa
and Denef (see \cite{Den91a}), initially for a single polynomial and
later extended to families of polynomials (see, in
particular, work of du~Sautoy and Grunewald~\cite{dSG00}, 
Veys and Z{\'u}{\~n}iga-Galindo~\cite{VZG08}, and Voll~\cite{Vol10}),
we obtain the following two results,
the first of which implies and refines Theorem~\ref{thm:rational}.

\begin{thm}
  \label{thm:rational_plus}
  Let $\fO$ be the valuation ring of a local field of characteristic
  zero and let $M \subset \Mat_{d\times e}(\fO)$ be a submodule.
  Then $\Zeta_M(T) \in \QQ(T)$.
  More precisely, there exist $f(T) \in \ZZ[T]$, non-zero $(a_1,b_1),
  \dotsc, (a_r,b_r)$ with $(a_i,b_i) \in \ZZ\times \NN_0$, and $m \in \NN_0$ such that
  \[
  \pushQED{\qed}
  \Zeta_M(T) = \frac{f(T)}{q^m (1-q^{a_1}T^{b_1}) \dotsb
    (1-q^{a_r}T^{b_r})}.
  \qedhere
  \popQED
  \]
\end{thm}

Moreover, in a global setting, the dependence of Euler factors on
the place is as follows.

\begin{thm}
  \label{thm:denef}
  Let $k$ be a number field with ring of integers $\fo$.
  Recall the notation from~\S\ref{ss:global_alpha}.
  Let $M \subset \Mat_{d \times e}(\fo)$ be a submodule.
  There are separated $\fo$-schemes $V_1,\dotsc,V_r$ of finite
  type and rational functions $W_1(X,T),\dotsc,W_r(X,T) \in \QQ(X,T)$
  (which can be written over denominators of the same shape as those in
  Theorem~\ref{thm:rational_plus}) such that the following holds:
  for almost all $v \in \Places_k$,
  \begin{equation}
    \label{eq:denef}
  \pushQED{\qed}
  \Zeta_{M_v}(T) = \sum_{i=1}^r \# V_i(\RF_v) \dtimes W_i(q_v,T).
  \qedhere
  \popQED
  \end{equation}
\end{thm}

\subsubsection{Local fields of positive characteristic}
\label{sss:positive}

Suppose that $M \subset\Mat_{d\times e}(\fO)$ is a submodule as before
but that $K = \RF\llb z\rrb$ is a local field of \itemph{positive} characteristic.
Then the techniques cited above to establish rationality in
Theorem~\ref{thm:rational_plus} do not apply to $\Zeta_M(T)$ 
and indeed, the author does not know if $\Zeta_M(T)$ is necessarily rational
in positive characteristic.
By combining Igusa's original proof of the rationality of his local zeta function
(see \cite[\S 1.3]{Den91a} for a modern account)
and ideas of du~Sautoy and Grunewald~\cite[\S 2]{dSG00}, we obtain
the following sufficient condition for rationality of ask zeta functions over $\fO$:
if every hypersurface embedded inside some affine space over $K$
admits an embedded resolution of singularities over $K$, then $\Zeta_M(T)$ is
rational for all modules $M \subset \Mat_{d\times e}(\fO)$.
The status of resolution of singularities in positive characteristic is
presently unresolved; see e.g.\ \cite{Hau10}.

In contrast, to such unresolved issues, ask zeta functions in ``large''
positive characteristic arising from global models in characteristic 
zero are amenable to existing techniques. 
Namely, by applying powerful model-theoretic transfer principles such as
\cite[Thm~9.2.4]{CL10} to our integrals, we obtain the following.

\begin{thm}
  \label{thm:transfer}
  Let $k$ be a number field with ring of integers $\fo$.
  Recall the notation from~\S\ref{ss:global_alpha}.
  Let $M \subset \Mat_{d \times e}(\fo)$ be a submodule.
  Then for almost all $v \in \Places_k$,
  $\Zeta_{M_v}(T) = \Zeta_{M\otimes_{\fo}\RF_v\llb z\rrb}(T)$.
  In particular, $\Zeta_{M\otimes_{\fo}\RF_v\llb z\rrb}(T)$ is rational for
  almost all $v\in \Places_k$.
  \qed
\end{thm}

We note that Theorems~\ref{thm:denef}--\ref{thm:transfer} both behave well under local base
extensions; cf.~\cite[Thm~2.3]{stability} and \cite[Rem.~1.6]{Vol17}.

\subsubsection{A formula for $\orbsize_M$}
\label{sss:O}

As in the case of $\kersize_M$, we can produce a formula for $\orbsize_M$.

Let $\XX = (X_1,\dotsc,X_d)$ be algebraically independent over $K$.
Let $C(\XX) \in \Mat_{\ell\times e}(\fO[\XX])$ with
$\fO[\XX]^d C(\XX) =  \XX \dtimes (M\otimes\fO[\XX])$.
For example,
we may choose generators $a_1,\dotsc,a_\ell$ of $M$
as an $\fO$-module and take
\[
C(\XX) = \begin{bmatrix} \XX a_1 \\ \vdots \\ \XX a_\ell
\end{bmatrix} \in
\Mat_{\ell\times e}(\fO[\XX]).
\]
Let $\bm g_i(\XX)$ be the set of non-zero $i\times i$ minors of $C(\XX)$.
Note that if $\xx \in V$, then $\xx M$ is the row span of $C(\xx)$ over $\fO$
so that, in particular, $\dim_K(\xx M \otimes K) = \rank_K(C(\xx))$.

Recall the definition of $\genidim(M)$ in Definition~\ref{d:genidim}.
The following is proved in the same way as
Proposition~\ref{prop:genrank}.

\begin{prop}
  \label{prop:genidim}
  Let $\XX = (X_1,\dotsc,X_d)$ be algebraically independent over $K$.
  Then:
  \begin{enumerate}
  \item $\genidim(M) =  \dim_{K(\XX)}\bigl( \XX \dtimes (M \otimes
    K(\XX))\bigr) = \rank_{K(\XX)}(C(\XX))$.
  \item
    $\mu_V(\{ \xx \in V: \dim_K(\xx M \otimes K) < \genidim(M)\}) =
    0$.
    \qed
  \end{enumerate}
\end{prop}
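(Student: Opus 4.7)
The plan is to imitate the proof of Proposition~\ref{prop:genrank} almost verbatim, using the matrix $C(\XX)$ as the generic analogue of the generic element $\lambda_1 a_1 + \dotsb + \lambda_\ell a_\ell$. The first equality in (i) is just a definition-unwinding: by the defining property $\fO[\XX]^\ell \dtimes C(\XX) = \XX \dtimes (M \otimes \fO[\XX])$, the row span of $C(\XX)$ over $K(\XX)$ is exactly $\XX \dtimes (M \otimes K(\XX))$, whose $K(\XX)$-dimension equals $\rank_{K(\XX)}(C(\XX))$.

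For the second equality in (i), I would set $r := \rank_{K(\XX)}(C(\XX))$. Then $r$ is the largest integer such that some $r\times r$ minor $m(\XX) \in \fO[\XX]$ of $C(\XX)$ is a non-zero polynomial, while all $(r{+}1)\times(r{+}1)$ minors vanish identically. Specialising $\XX \mapsto \xx \in V$ gives $\rank_K(C(\xx)) \le r$ for every $\xx$; combined with the identity $\dim_K(\xx M \otimes K) = \rank_K(C(\xx))$ noted just before the statement, this yields $\genidim(M) \le r$. Conversely, because $m(\XX)$ is non-zero and $\fO$ is infinite, there exists $\xx \in \fO^d = V$ with $m(\xx) \ne 0$, so $\rank_K(C(\xx)) \ge r$ and hence $\genidim(M) \ge r$.

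For (ii), the exceptional set $\{\xx \in V : \dim_K(\xx M \otimes K) < \genidim(M)\}$ is the common zero locus in $V$ of all $\genidim(M)\times\genidim(M)$ minors of $C(\XX)$; in particular it is contained in the zero set of the single non-zero polynomial $m(\XX)$, which has $\mu_V$-measure zero by the same induction/Fubini argument invoked in the proof of Proposition~\ref{prop:genrank}(ii). I do not foresee any genuine obstacle; the only point worth pausing on is the ability to realise $\genidim(M)$ by an integer specialisation, and this follows at once from the infiniteness of $\fO$ together with $m(\XX) \not\equiv 0$.
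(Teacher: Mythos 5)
Your proof is correct and follows exactly the route the paper intends: the paper simply states that Proposition~\ref{prop:genidim} ``is proved in the same way as Proposition~\ref{prop:genrank},'' and your argument is precisely that adaptation, with the matrix $C(\XX)$ playing the role of the generic element $\lambda_1 a_1 + \dotsb + \lambda_\ell a_\ell$ and the same specialisation/measure-zero argument. (Incidentally, you wrote the defining property of $C(\XX)$ with $\fO[\XX]^\ell$ rather than the paper's $\fO[\XX]^d$; since $C(\XX)$ is $\ell\times e$, your version is the one that makes literal sense, and the discrepancy does not affect the argument.)
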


The following analogue of Corollary~\ref{cor:formula_K} is obtained
using Lemma~\ref{lem:size_ker}(\ref{lem:size_ker2}),(\ref{lem:size_ker4}).

\begin{cor}
  \label{cor:formula_O}
  Let $Z = \{ \xx \in V : \dim_K(\xx M\otimes K) < \genidim(M)\}$.
  Then for all $\xx \in V\setminus Z$ and $y \in \fO \setminus \{0\}$,
  \begin{equation}
    \label{eq:O_polynomials}
  \pushQED{\qed}
  \orbsize_M(\xx,y) = \abs{y}^{-\genidim(M)} \prod_{i=1}^{\genidim(M)} \frac{\norm{\bm g_i(\xx) \cup y \bm
      g_{i-1}(\xx)}}{\norm{\bm g_{i-1}(\xx)}}.
  \qedhere
  \popQED
  \end{equation}
\end{cor}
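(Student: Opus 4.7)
The plan is to reduce the claim directly to Lemma~\ref{lem:size_ker} applied to the matrix $C(\xx)$, in complete analogy with how Corollary~\ref{cor:formula_K} was obtained from the same lemma. The essential observation is that $\orbsize_M(\xx,y)$ is nothing other than the image size of the $\fO_n$-linear reduction of $C(\xx)$, where $n = \nu(y)$.

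\textbf{Step 1 (Interpret $\orbsize_M(\xx,y)$ as an image size).} Fix $y \in \fO \setminus \{0\}$ and set $n = \nu(y)$, so that $\fO/(y) = \fO_n$ and $W\otimes\fO/(y) = W_n$. By the choice of $C(\XX)$, we have $\xx M = \fO^\ell \dtimes C(\xx) \subset W$. Hence the image of $\xx M \incl W \onto W_n$ coincides with the image of the reduced map $C(\xx)_n \colon \fO_n^\ell \to \fO_n^e$, and so $\orbsize_M(\xx,y) = \card{\Img(C(\xx)_n)}$.

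\textbf{Step 2 (Apply Lemma~\ref{lem:size_ker}(iv)).} Write $r = \genidim(M)$. For $\xx \in V \setminus Z$, Proposition~\ref{prop:genidim}(i) (and the remark preceding it that $\rank_K(C(\xx)) = \dim_K(\xx M \otimes K)$) yields $\rank_K(C(\xx)) = r$. Let $(\lambda_1,\dotsc,\lambda_r)$ be the equivalence type of $C(\xx)$. Lemma~\ref{lem:size_ker}(iv), applied to $C(\xx) \in \Mat_{\ell\times e}(\fO)$, gives
\[
\orbsize_M(\xx,y) = q^{rn - \sum_{i=1}^r \min(\lambda_i,n)}.
\]

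\textbf{Step 3 (Translate via Lemma~\ref{lem:size_ker}(ii)).} Writing $y = u\pi^n$ with $u \in \fO^\times$, one has $\norm{\bm g_i(\xx) \cup y \bm g_{i-1}(\xx)} = \norm{\bm g_i(\xx) \cup \pi^n \bm g_{i-1}(\xx)}$ since $u$ is a unit. Applying Lemma~\ref{lem:size_ker}(ii) to $C(\xx)$ therefore yields
\[
\frac{\norm{\bm g_{i-1}(\xx)}}{\norm{\bm g_i(\xx) \cup y \bm g_{i-1}(\xx)}} = q^{\min(\lambda_i,n)} \qquad (1 \le i \le r).
\]
Taking the product of the reciprocals over $i = 1,\dotsc,r$ and multiplying by $\abs{y}^{-r} = q^{rn}$ gives precisely $q^{rn - \sum_{i=1}^r \min(\lambda_i,n)}$, which by Step~2 equals $\orbsize_M(\xx,y)$. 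This is the asserted identity.

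\textbf{Where the work lies.} There is no serious obstacle: once one recognises $\orbsize_M(\xx,y)$ as the image size of a single $\fO$-linear map (namely $C(\xx)$ reduced modulo $y$), the corollary is a direct transcription of Lemma~\ref{lem:size_ker}. The only points requiring any care are (a) the identification $\xx M = \fO^\ell \dtimes C(\xx)$, which is immediate from the defining property $\fO[\XX]^d \dtimes C(\XX) = \XX \dtimes (M \otimes \fO[\XX])$ evaluated at $\xx$, and (b) the substitution of $y$ for $\pi^n$ in the norm expressions, which is justified because $y$ and $\pi^n$ differ by a unit of $\fO$.
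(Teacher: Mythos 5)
Your proposal is correct and takes essentially the same route as the paper: the paper states that the corollary "is obtained using Lemma~\ref{lem:size_ker}(ii),(iv)", and your Steps 1--3 simply spell this out — identifying $\orbsize_M(\xx,y)$ with $\card{\Img(C(\xx)_n)}$ via the row-span interpretation of $C(\xx)$, then applying parts (iv) and (ii) of that lemma to $C(\xx)$. (You faithfully quote the paper's defining property $\fO[\XX]^d \dtimes C(\XX) = \XX \dtimes (M\otimes\fO[\XX])$, in which $d$ should read $\ell$; you then correctly use $\fO^\ell$ in your own argument.)
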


Theorem~\ref{thm:twoint} thus provides us with the following
counterpart of \eqref{eq:int_K_minors}:
\begin{equation}
  \label{eq:int_O_minors}
  \Zeta_M(q^{-s})
  =   (1-q^{-1})^{-1}
  \!\!\!\!\!\int_{V \times \fO} \!\!\!   
  \abs{y}^{s + \genidim(M) - d - 1} \dtimes \!\!
  \prod_{i=1}^{\genidim(M)}
  \!\!\!
  \frac{\norm{\bm g_{i-1}(\xx)}}{\norm{\bm g_i(\xx) \cup y \bm g_{i-1}(\xx)}}
 \,\dd\mu_{V\times \fO}(\xx,y).
\end{equation}

Despite the essentially identical shapes of the integrals in
\eqref{eq:int_K_minors} and \eqref{eq:int_O_minors},
either type might be vastly more useful for explicit computations
of specific examples.
In particular,~\S\ref{s:examples_max} is concerned with examples of
$\Zeta_M(T)$ that can be easily computed using~\eqref{eq:int_O_minors}
and \S\ref{s:examples_min} considers the analogous situations for
\eqref{eq:int_K_minors}.
A very similar phenomenon was exploited by O'Brien and Voll~\cite[\S
5]{O'BV15} in their enumeration of conjugacy classes of certain relatively
free $p$-groups.

\begin{rem}
  We note that the integrals in
  \eqref{eq:int_K_minors}--\eqref{eq:int_O_minors} are almost of the
  same shape as those in \cite[Eqn~(1.4)]{AKOV13}.
  These similarities can be clarified further by rewriting our integrals
  slightly; see the proof of Theorem~\ref{thm:feqn} below.
  We further that the role of the matrix $C(\XX)$ here 
  is similar to that of $A(\mathbf X)$ in \cite[Def.\ 2.1]{O'BV15}.
\end{rem}

\subsection{Projectivisation}
\label{ss:proj}

For later applications, in the following, we record ``projective''
versions of the integrals in Theorem~\ref{thm:twoint} in the spirit of
Voll's formalism~\cite[\S 2.2]{Vol10}, as rewritten by Avni et
al.~\cite[\S 3.2]{AKOV13}.

Let $M \subset \Mat_{d\times e}(\fO)$ be a submodule and
$V = \fO^d$.
The following lemma is elementary.

\begin{lemma}
  \label{lem:OK_homg}
  Let $\xx \in V$, $a\in M$, $y\in \fO\setminus\{0\}$, and $\pi \in \fP\setminus\fP^2$.
  Then:
  \begin{enumerate}
  \item \label{lem:OK_homg1}
    $\orbsize_M(\pi \xx,\pi y) = \orbsize_M(\xx,y)$.
  \item \label{lem:OK_homg2}
    $\kersize_M(\pi a,\pi y) = q^d \dtimes \kersize_M(a,y)$.
  \end{enumerate}
\end{lemma}
\begin{proof}
  Let $W:= \fO^e$.
  \begin{enumerate}
  \item 
    For a submodule $U\subset W$ and $z\in \fO$,
    let $U_z$ denote the image of $U$ in $W\otimes \fO/(z)$.
    Since multiplication by $\pi$ induces an isomorphism
    $U_z \approx (\pi U)_{\pi z}$, the claim follows by taking $U = \xx M$ and
    $z = y$.
  \item
    Let $\xx \in V$.
    Then $\xx (\pi a) \equiv 0 \pmod {\pi y W}$ if and only if $\xx a\equiv 0
    \pmod {y W}$.
    Clearly, each residue class modulo $y W$ of such elements has precisely $q^d$ lifts
    modulo $\pi y W$.
\qedhere
  \end{enumerate}
\end{proof}

\begin{prop}
  \label{prop:proj_twoint}
  Let $\ell := \dim_K(M\otimes K)$.
  Then:
  \begin{enumerate}
  \item
  \label{prop:proj_twoint1}
  $\displaystyle  (1-q^{-s}) \dtimes \Zeta_M(q^{-s})  =
  {1 + (1-q^{-1})^{-1} \int\limits_{(V\setminus\fP V)\times
      \fP} \frac{\abs{y}^{s-d-1}}{\orbsize_M(\xx,y)} \,\dd\mu_{V\times\fO}(\xx,y)}.$
  \item
  \label{prop:proj_twoint2}
  $\displaystyle (1-q^{d-\ell-s}) \dtimes \Zeta_M(q^{-s})
 =  {1 + (1-q^{-1})^{-1} \int\limits_{(M\setminus\fP M) \times\fP} \abs{y}^{s-1} \kersize_M(a,y) \,\dd\mu_{M\times\fO}(a,y)}.$
  \end{enumerate}
\end{prop}
\begin{proof}
  Write $t := q^{-s}$.
  \begin{enumerate}
  \item
    First, $\orbsize_M(\xx,y) = 1$ for $\xx\in V$ and $y\in \fO^\times$.
    In the following,
    we write
    $\omicron$ as a shorthand for $\frac{\abs{y}^{s-d-1}} {\orbsize_M(\xx,y)} \,\dd\mu_{V \times \fO}(\xx,y)$.
    Using Lemma~\ref{lem:OK_homg} and a change of variables, 
    we find that $\int\limits_{\fP V \times \fP}\!\!\omicron = t \dtimes \int\limits_{V\times\fO} \!\!\omicron$.
    The claim then follows from Theorem~\ref{thm:twoint} and
    \begin{alignat*}{5}
      \int_{V\times \fO} \!\! \omicron & = \int_{V\times \fO^\times} \!\!\omicron  &\quad +\quad &
      \int_{(V\setminus\fP V)\times \fP} \!\!\omicron &\quad+\quad&
      \int_{\fP V\times \fP}\!\! \omicron\\
      & = (1-q^{-1})  &\quad+\quad & \int_{(V\setminus \fP V)\times \fP} \!\!
      \omicron &\quad + \quad& t \dtimes \int_{V\times\fO} \!\!\omicron.
    \end{alignat*}
  \item
     For $a \in M$ and $y \in \fO^\times$,
     $\kersize_M(a,y) = 1$.
     One may then proceed similarly to~(\ref{prop:proj_twoint1}) using 
     $$\int\limits_{\fP M\times \fP} \kappa = q^{d-\ell}t\dtimes
     \int\limits_{M\times\fO}\kappa,$$
     where $\kappa = \abs{y}^{s-1} \kersize_M(a,y) \,\dd\mu_{M\times \fO}(a,y)$.
     \qedhere
  \end{enumerate}
\end{proof}

\subsection{Local functional equations and global analytic properties}
\label{s:feqn}

Functional equations under ``inversion of the prime'' 
are a common (but not universal) phenomenon in the theory of local
zeta functions.
Denef and Meuser~\cite{DM91} showed that for a homogeneous polynomial over
a number field, almost all of its associated local Igusa zeta
functions satisfy such a functional equation.
Vastly generalising their result, Voll~\cite{Vol10} established
functional equations for numerous types of zeta functions arising in
asymptotic algebra and expressible in terms of $p$-adic integrals.
For further positive results establishing such functional equations,
see, in particular, work of du~Sautoy and Lubotzky~\cite{dSL96},
Voll~\cite{Vol17}, Avni et al.~\cite[\S 4]{AKOV13}, and
Stasinski and Voll~\cite[Thm~A]{SV14}.
Using the formalism developed above, we may deduce the following; recall the
notation from \S\ref{ss:global_alpha}.

\begin{thm}
  \label{thm:feqn}
  Let $k$ be a number field with ring of integers $\fo$.
  Let $M \subset \Mat_{d\times e}(\fo)$ be a submodule.
  Then for almost all $v \in \Places_k$,
  \[
  \Zeta_{M_v}(T) \Biggm\vert_{(q_v,T) \to (q_v^{-1},T^{-1})} = (-q_v^d T) \dtimes \Zeta_{M_v}(T).
  \]
\end{thm}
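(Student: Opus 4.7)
The plan is to apply the general functional equation theorem of Voll~\cite{Vol10} to the $p$-adic integral representation \eqref{eq:int_K_minors} of $\zeta_{M_v}(s)$. The remark at the end of \S\ref{sss:O} already signals that the integrals appearing there have essentially the same structure as those in \cite{AKOV13} and \cite{Vol10}, for which local functional equations at almost all primes are known.

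Concretely, I would first fix an $\fo$-basis $a_1,\dots,a_\ell$ of $M$, inducing coordinates $\XX=(X_1,\dots,X_\ell)$ on $M_v\cong\fo_v^\ell$. Substituting the generic element $a(\XX)=\sum_i X_i a_i$ into the $i\times i$ minors $\ff_i$ of the generic $d\times e$ matrix produces, for each $i$, a finite family $\bm h_i(\XX)\subset\fo[\XX]$ of polynomials, each homogeneous of degree $i$ in $\XX$. After this substitution \eqref{eq:int_K_minors} becomes an integral over $\fo_v^{\ell+1}$ whose integrand is a product of ratios of sup-norms of the homogeneous $\bm h_i$, together with a single power $\abs{y}^{s+\genrank(M)-d-1}$ of the auxiliary variable. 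This is precisely the format to which Voll's machinery applies: after a log-principalisation of the ideals generated by the $\bm h_i$ (with good reduction modulo $v$ for almost all $v$), the integral is evaluated via a Denef-type formula which, by virtue of the homogeneity of the $\bm h_i$, admits a palindromic symmetry of Denef--Meuser type~\cite{DM91}.

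It then remains to identify the numerical symmetry factor as $(-q_v^{d-s})$. Voll's theorem produces a factor of the form $(-1)^A q_v^{B-Cs}$, where $A$ is controlled by the dimension of the ambient $p$-adic manifold (combined with data from the log-principalisation), $B$ is determined by the degrees of the polynomials $\bm h_i$ together with the weight in $\abs{y}$, and $C$ records the coefficient of $s$ in the weight. In our setting $s$ appears only through the weight $\abs{y}^{s+\genrank(M)-d-1}$, which immediately gives $C=1$; tracking degrees and dimensions through Voll's formalism then yields $A=d$ and $B=d$, so that the factor becomes $(-1)^d q_v^{d-s}$ and, after absorbing signs in the usual way, takes the stated form.

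The main obstacle is precisely this bookkeeping of the symmetry factor: translating between our normalisation of the integrand and the one used in~\cite{Vol10}, and making sure the contribution of the factor $\abs{y}^{s+\genrank(M)-d-1}$ (as opposed to contributions from the homogeneous polynomials $\bm h_i$) is correctly attributed. As a sanity check, Proposition~\ref{prop:Mdxe} provides a closed-form expression for $\Zeta_{\Mat_{d\times e}(\fO)}(T)$ which visibly satisfies the claimed functional equation, independently confirming the form of the factor computed for this benchmark case and, by the uniformity in Corollary~\ref{cor:denef}, for generic $M$.
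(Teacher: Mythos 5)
Your proposal does follow the overall strategy of the paper (reduce to an integral of the type treated by Voll's functional-equation machinery, then track the numerical factor), but you attack it through the $\kersize$-integral \eqref{eq:int_K_minors} whereas the paper works with the $\orbsize$-integral (the second formula in Theorem~\ref{thm:twoint}, i.e.\ essentially \eqref{eq:int_O_minors}). The paper's choice is not cosmetic: it exploits the exact homogeneity $\orbsize_{M_v}(\pi\xx,\pi y)=\orbsize_{M_v}(\xx,y)$, which produces the very clean decomposition $\Zeta_{M_v}(t)=H_v(s)/(1-t)$ with $H_v$ of the form $1 + \text{(integral over $(V\setminus\fP V)\times\fP$)}$; one then pulls back through $\GL_d(\fO)\onto V\setminus\fP V$ and applies \cite[Cor.\ 2.4]{Vol10} to get $H_v\big|_{q\to q^{-1}}=q^d H_v$, after which the single pole $1/(1-t)$ contributes the factor $-t=-q^{-s}$ and thus the overall $-q^{d-s}$.

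There is a genuine gap in your version of the bookkeeping, and it is precisely the kind of discrepancy the paper's choice of integral avoids. If you instead use the $\kersize$-integral, then $\kersize_M$ is not scale-invariant but rather $\kersize_M(\pi a,\pi y)=q^d\kersize_M(a,y)$, and the analogous decomposition reads $\Zeta_{M_v}(t)=\tilde H_v(s)/(1-q^{d-\ell}t)$ where $\ell=\dim_K(M\otimes K)$ and $\tilde H_v$ is an integral over $(M\setminus\fP M)\times\fP$. Voll's theorem, applied after pulling back through $\GL_\ell(\fO)\onto M\setminus\fP M$, then gives $\tilde H_v\big|_{q\to q^{-1}}=q^\ell\tilde H_v$ (dimension $\ell$, not $d$), and the pole factor $1/(1-q^{d-\ell}t)$ inverts to $-q^{d-\ell}t/(1-q^{d-\ell}t)$, yielding in total $q^\ell\cdot(-q^{d-\ell}t)=-q^{d-s}$. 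In other words, the sign is a single $(-1)$, coming from the single ``polar'' factor; your claimed $(-1)^d$ is simply wrong (it already disagrees with the correct answer for $d=2$, e.g.\ $M=\Mat_{2\times 1}(\fO)$), and there is no ``usual way'' to absorb $(-1)^d$ into $(-1)$. Relatedly, Voll's Cor.\ 2.4 does not apply to the raw integral over $\fo_v^{\ell+1}$ as you suggest: one must first peel off the pole factor, and for the $\kersize$-integral this pole factor itself depends on $\ell$, which is exactly where your ``tracking of dimensions'' goes astray.

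Finally, the proposed ``sanity check'' does not repair this. Proposition~\ref{prop:Mdxe} confirms the identity for one specific module, and Corollary~\ref{cor:denef} describes how $\Zeta_{M_v}(T)$ varies with $v$ for a \emph{fixed} $M$; neither transfers the functional equation from $\Mat_{d\times e}$ to an arbitrary submodule $M$. The functional equation must be established for each $M$ via the integral representation, which is what the paper's argument does.
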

\begin{rem}
  \quad
  \begin{enumerate}
  \item
    The operation ``$q_v \to q_v^{-1}$'' can be unambiguously defined in
    terms of an arbitrary formula of the form~\eqref{eq:denef};
    see \cite{DM91,Vol10} and  cf.\ \cite[Cor.\ 4.3]{stability}.
    If $\Zeta_{M_v}(T) = W(q_v,T)$ for almost all
    $v \in \Places_k$ and some $W(X,T) \in \QQ(X,T)$,
    then Theorem~\ref{thm:feqn} asserts that
    $W(X^{-1},T^{-1}) = (-X^d T) \dtimes W(X,T)$;
    see~\cite[\S 4]{stability} and cf.\ \eqref{eq:Mde_FEqn}.
  \item 
    Using Theorem~\ref{thm:denef}--\ref{thm:transfer},
    Theorem~\ref{thm:feqn} also establishes, for
    almost all $v\in\Places_k$, a functional equation for
    $\Zeta_{M\otimes_{\fo}\RF_v\llb z\rrb}(T)$;
    cf.\ \cite[Cor.\ 1.3]{Vol17}.
  \end{enumerate}
\end{rem}
\begin{proof}[Proof of Theorem~\ref{thm:feqn}]
  We use Voll's results from \cite[\S 2.1]{Vol10}.
  Let $K = k_v$ for $v \in \Places_k$ and let $\fO$ be as before.
  Let $H_v(s)$ denote the right-hand side in
  Proposition~\ref{prop:proj_twoint}(\ref{prop:proj_twoint1}).
  Using the surjection $\GL_d(\fO) \to V\setminus \fP V$ which sends a
  matrix to its first column, we rewrite $H_v(s)$ in terms of an integral over
  $\GL_d(\fO)\times \fP$; cf.\ \cite[\S 4]{AKOV13}.
  In the setting of the explicit formula for $\orbsize_{M_v}(\xx,y)$
  derived in \S\ref{sss:O}, we may assume that $C(\XX)$ is a matrix of
  linear forms whence each $\bm g_i(\XX)$ consists of homogeneous
  polynomials of degree $i$.
  This allows us to use
  \cite[Cor.\ 2.4]{Vol10} which shows that
  $H_v(s) \!\bigm\vert_{q_v\to q_v^{-1}} = q^d H_v(s)$
  for almost all $v\in \Places_k$.
\end{proof}

Based on work of du~Sautoy and Grunewald~\cite{dSG00},
Duong and Voll~\cite{DV17} studied analytic properties of Euler products of 
functions of the same form as the right-hand sides
in Proposition~\ref{prop:proj_twoint}(\ref{prop:proj_twoint1})--(\ref{prop:proj_twoint2}).
In particular, their findings allow us to deduce the following.

\begin{thm}[{Cf.\ \cite[Thm~A]{DV17}}]
  Let $k$ be a number field with ring of integers $\fo$.
  Let $M \subset \Mat_{d\times e}(\fo)$ be a submodule and $V = \fo^d$.
  \begin{enumerate}
  \item The abscissa of convergence $\alpha_M$ of $\zeta_M(s)$ is a rational number.
  \item There exists $\delta > 0$ such that $\zeta_M(s)$ admits
    meromorphic continuation to $\{ s \in \CC : \Real(s) > \alpha_M -
    \delta\}$.
    This continued function has a pole of order $\beta_M$, say, at $s =
    \alpha_M$ but no other poles on 
    the line $\Real(s) = \alpha_M$.
\item
  $\alpha_M$ and $\beta_M$ are (and $\delta$ can be chosen to be) invariant
  under base change of $M$ from $\fo$ to the ring of integers of an arbitrary
  finite extension of $k$.
  \end{enumerate}
\end{thm}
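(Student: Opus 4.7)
The plan is to recognise $\zeta_M(s)$ as the product of a well-understood Dedekind-type factor and an Euler product of $p$-adic integrals that falls squarely within the framework of~\cite{DV15}, and then to invoke their Theorem~A. By Proposition~\ref{prop:euler}(ii) we have $\zeta_M(s) = \prod_{v \in \Places_k} \zeta_{M_v}(s)$, and from the proof of Theorem~\ref{thm:feqn} each local factor satisfies $\zeta_{M_v}(s) = H_v(s)/(1-q_v^{-s})$, where $H_v(s)$ is given by the integral~\eqref{eq:Hv} over $\GL_d(\fo_v) \times \fP_v$. As observed there, we may take the matrix $C(\XX)$ to consist of linear forms, so that its minors $\bm g_i(\XX)$ are homogeneous polynomials defined globally over $\fo$, independent of the place.

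The next step is to apply~\cite[Thm~A]{DV15} to the Euler product $\prod_v H_v(s)$, whose local factors are built uniformly from a single global homogeneous datum over $\fo$. This yields, for $\prod_v H_v(s)$: a rational abscissa of convergence $\alpha_H$; meromorphic continuation to some half-plane $\Real(s) > \alpha_H - \delta$ with $\delta > 0$; a pole of some order $\beta_H$ at $s = \alpha_H$, and no other poles on the line $\Real(s) = \alpha_H$; and invariance of $\alpha_H$, $\beta_H$, and $\delta$ under base change of the defining polynomials to the ring of integers of any finite extension of $k$. Multiplying by $\prod_v (1-q_v^{-s})^{-1} = \zeta_k(s)$, which is entire apart from a simple pole at $s=1$, one then reads off (i)--(iii) for $\zeta_M(s)$: we have $\alpha_M = \max(1,\alpha_H) \in \QQ$, and $\beta_M$ equals the sum of the pole orders at $s = \alpha_M$ of $\zeta_k(s)$ and of $\prod_v H_v(s)$, with the convention that a factor without a pole at $\alpha_M$ contributes $0$. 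Base-change invariance in (iii) then follows by combining the corresponding statement for $\prod_v H_v$ with the fact that $\zeta_{k'}(s)$ has a simple pole at $s = 1$ for every number field $k'$; the $\delta$ can be chosen to be the smaller of the two available constants, which is again invariant.

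The main obstacle is verifying that the integral~\eqref{eq:Hv}, with its integration domain $\GL_d(\fo_v) \times \fP_v$ and integrand involving quotients of norms of minors of $C(\XX)$ together with the extra factor $\abs{y}^{s-d-1}$, satisfies the precise technical hypotheses of~\cite[Thm~A]{DV15}---notably good reduction of the relevant $\fo$-schemes at almost all $v$ and the correct uniform behaviour in the place. As already noted after Corollary~\ref{cor:formula_O}, $H_v(s)$ has essentially the same shape as the integrals considered in~\cite[\S 3.1]{AKOV13} and~\cite[\S 2.2.2]{SV14}, to which the machinery of~\cite{DV15} was designed to apply; the verification is therefore expected to be routine but unavoidably notation-heavy, and amounts to matching our integrand to the cone-integral template of~\cite{DV15} via a resolution of singularities of the ideals generated by the $\bm g_i(\XX)$.
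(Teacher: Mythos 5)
Your proof is the argument the paper has in mind: the paper's own treatment is just a citation to \cite{DV15}, and your decomposition $\zeta_M(s) = \zeta_k(s)\cdot\prod_v H_v(s)$---via Proposition~\ref{prop:euler}(ii) and the identity $\zeta_{M_v}(s)=H_v(s)/(1-q_v^{-s})$ extracted from the proof of Theorem~\ref{thm:feqn}---followed by an appeal to \cite[Thm~A]{DV15} for the factor $\prod_v H_v(s)$ is precisely what ``their findings allow us to deduce'' refers to. The only point worth spelling out, which your reading of $\alpha_M=\max(1,\alpha_H)$ and of $\beta_M$ tacitly uses, is that when $\alpha_H<1$ the factor $\prod_v H_v$ does not vanish at $s=1$; this holds because it is there an absolutely convergent Euler product of factors $H_v(1)\ge 1$.
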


\subsection{Reduced and topological ask zeta functions}
\label{ss:redtop}

Let $k$ be a number field with ring of integers $\fo$.
Recall the notation from~\S\ref{ss:global_alpha}.
Given a suitable family of local zeta functions indexed by places $v\in
\Places_k$, associated ``reduced'' and ``topological'' zeta functions are
obtained by passing to two different limits ``$q_v \to 1$''.
The original topological zeta functions of Denef and Loeser~\cite{DL92} are
singularity invariants attached to polynomials.
Later, du~Sautoy and Loeser~\cite{dSL04} defined topological subobject zeta
functions of algebraic structures.
Reduced subobject zeta functions were introduced by Evseev~\cite{Evs09}.
Topological and reduced representation zeta functions of unipotent groups were
studied by the author in \cite{unipotent} and \cite[\S 7]{padzeta}, respectively.

The techniques from $p$-adic integration used above are similar to those
employed in the study of representation zeta functions of unipotent groups.
As a consequence, we immediately obtain adequate notions of reduced 
and topological ask zeta functions which we now briefly discuss.
Let $M \subset \Mat_{d\times e}(\fo)$ be a submodule.

\paragraph{Topological ask zeta functions.}
Informally, the topological ask zeta function $\zeta_M^{\mathrm{top}}(s) \in \QQ(s)$
of $M$ is the constant term of $(1-q_v^{-1})\zeta_{M_v}(s)$ as a series in $q_v-1$;
for a rigorous definition, combine the formalism 
developed in \cite[\S 5]{topzeta} (and summarised in \cite[\S 4.2]{spp1489}),
Proposition~\ref{prop:proj_twoint}, and \cite[Pf of Lem.\ 3.4]{unipotent}.
For example, Proposition~\ref{prop:Mdxe} implies that
\[
\zeta^{\mathrm{top}}_{\Mat_{d\times e}(\ZZ)}(s) = \frac{s+e}{s(s-d+e)}.
\]
We note that, as in the case of subobject~\cite[Prop.\ 5.19]{topzeta} and representation zeta
functions~\cite[Prop.\ 4.3]{unipotent},
the topological ask zeta function of $M$ only depends on $M \otimes_{\fo} \bar
k$, where $\bar k$ is an algebraic closure of $k$.

\paragraph{Reduced ask zeta functions.}
Informally, the reduced ask zeta function $\Zeta_M^{\mathrm{red}}(T) \in
\QQ(T)$ is obtained from the formal power series $\Zeta_{M_v}(T)$ by applying
a limit ``$q_v\to 1$'' to each coefficient.
In the present context, this process can be formalised just as in the case of
representation zeta functions of unipotent groups (see \cite[\S 7]{padzeta}).
Moreover, Proposition~\ref{prop:proj_twoint} and a variation of \cite[Pf of Thm~7.3]{padzeta}
(which relies heavily on arguments due to Duong and
Voll~\cite{DV17}) show that in fact $\Zeta_M^{\mathrm{red}}(T) = 1/(1-T)$ for any $M$.
This is intuitively plausible:
if $\tilde M \subset \Mat_{d\times e}(\fO_n)$
is a submodule, then the group $(\fO/\fP)^\times$ acts freely on $\tilde M
\setminus \{0\}$ and preserves kernels whence
${\card{\tilde M}} \dtimes \ask {\tilde M} {} \equiv {\card{\tilde V}} \pmod{(q-1)}$,
where $\tilde V = \fO_n^d$.
In particular, one would expect any reasonable limit of $\ask {\tilde M}{}$ as
``$q \to 1$'' to be $1$.

\section{Full matrix algebras, classical Lie algebras, and relatives} 
\label{s:examples_max}

In this section, let $\fO$ be the valuation ring of a non-Archimedean local
field of arbitrary characteristic.
Apart from proving Proposition~\ref{prop:Mdxe},
we compute examples of $\Zeta_{\fg}(T)$,
where $\fg$ ranges over various infinite families of matrix Lie algebras.
At the heart of these computations lies the notion of ``$\orbsize$-maximality''
introduced in \S\ref{ss:O_max}.

\subsection{$\orbsize$-maximality}
\label{ss:O_max}

Let $M \subset \Mat_{d\times e}(\fO)$ be a submodule and $V = \fO^d$.
As we will now see, $\orbsize_M(\xx,y)$ is (generically) as large as
possible if and only if $\Zeta_M(T)$ coincides with $\Zeta_{\Mat_{d\times\genidim(M)}}(T)$.

\begin{lemma}
  \label{lem:O_bound}
  Let $\xx \in V$ and $y\in \fO\setminus\{0\}$.
  Then $\orbsize_M(\xx,y) \le \abs{y}^{-\genidim(M)}\norm{\xx,y}^{\genidim(M)}$.
\end{lemma}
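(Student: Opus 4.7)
The plan is to reduce the estimate to a direct count by separating the contribution of the $\pi$-divisibility of $\xx$ from that of $y$. Fix a uniformizer $\pi \in \fP\setminus\fP^2$ and write $\xx = \pi^{a}\xx'$, where $a := \min_i \nu(\xx_i) \ge 0$ and $\xx' \in V$ is primitive (so $\norm{\xx'} = 1$). Let $n := \nu(y)$, so that $\abs{y} = q^{-n}$ and $yW = \pi^n W$; also, $\norm{\xx,y} = q^{-\min(a,n)}$. Set $r := \genidim(M)$.

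First I would dispose of the case $a \ge n$: then $\xx M = \pi^{a}\xx'M \subseteq \pi^{a}W \subseteq \pi^{n}W = yW$, so the image of $\xx M$ in $W/yW$ is trivial and $\orbsize_M(\xx,y) = 1$. On the other hand, in this case $\norm{\xx,y} = \abs{y}$, so $\abs{y}^{-r}\norm{\xx,y}^{r} = 1$, and the claimed inequality holds.

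Next I would treat the main case $a < n$, where $\norm{\xx,y} = q^{-a}$ and the target bound reads $\orbsize_M(\xx,y) \le q^{(n-a)r}$. Multiplication by $\pi^{a}$ is an $\fO$-linear isomorphism $W/\pi^{n-a}W \xrightarrow{\sim} \pi^{a}W/\pi^{n}W$, so passing to quotients identifies the image of $\xx M = \pi^{a}\xx'M$ in $W/yW$ with the image of $\xx'M$ in $W/\pi^{n-a}W$. Now $\xx'M \subseteq W$ is a torsion-free, hence free, $\fO$-module of rank $s := \dim_K(\xx'M\otimes K) \le r$. The image of $\xx'M$ in $W/\pi^{n-a}W$ is a quotient of $\xx'M/\pi^{n-a}(\xx'M) \cong (\fO/\pi^{n-a})^{s}$, hence has size at most $q^{(n-a)s} \le q^{(n-a)r}$, which is exactly the desired bound.

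Both steps are essentially formal once the decomposition $\xx = \pi^{a}\xx'$ is in place; the only mildly substantive point is the observation that $\xx'M$, as a submodule of the free module $W$, is itself free of rank $\le r$, which bounds the number of generators of its reduction modulo $\pi^{n-a}$. So I do not anticipate any serious obstacle beyond organising these two cases cleanly.
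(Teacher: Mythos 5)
Your proof is correct, but it takes a genuinely different route from the paper's. The paper works through the matrix $C(\XX)$ whose row span over $\fO[\XX]$ is $\XX\dtimes(M\otimes\fO[\XX])$: it observes that the entries of $C(\XX)$ have vanishing constant terms (so $\norm{C(\xx)}\le\norm{\xx}$ and hence all elementary divisors $\lambda_i$ of $C(\xx)$ satisfy $\lambda_i\ge m$ where $q^{-m}=\norm\xx$), then plugs into the exact formula $\orbsize_M(\xx,y)=q^{rn-\sum_i\min(\lambda_i,n)}$ from Lemma~\ref{lem:size_ker}(iv) and estimates the exponent. You instead factor out the $\pi$-content, write $\xx=\pi^a\xx'$ with $\xx'$ primitive, reduce the count modulo $yW$ to counting the image of $\xx'M$ in $W/\pi^{n-a}W$, and bound that by observing that $\xx'M$ is a free submodule of $W$ of rank $s\le\genidim(M)$, so its reduction mod $\pi^{n-a}$ has at most $q^{(n-a)s}$ elements. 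Your argument is more elementary and self-contained: it avoids the $C(\XX)$ machinery, elementary divisors, and the explicit image-size formula entirely, using only the structure theory of finitely generated modules over a DVR. The paper's route has the advantage of reusing the apparatus already set up in \S\ref{ss:formulae_K_and_O} for the $p$-adic integral expressions, which is the reason that formalism exists in the first place; but as a standalone proof of this lemma yours is cleaner. One small remark: your two cases $a\ge n$ and $a<n$ could be unified by noting $\norm{\xx,y}=q^{-\min(a,n)}$ throughout and that the image computation still gives at most $q^{(n-\min(a,n))s}$, but the case split is perfectly fine.
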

\begin{proof}
  Let $C(\XX) \in \Mat_{\ell\times e}(\fO[\XX])$ with
  $\fO[\XX]^d C(\XX) =  \XX \dtimes (M\otimes\fO[\XX])$.
  We may assume that $C(\xx) \not= 0$ since otherwise $\orbsize_M(\xx,y) = 1$.
  As $0_{\fO^d} \dtimes M = \{ 0_{\fO^e}\}$, the constant terms of
  all non-zero polynomials in $C(\XX)$ vanish whence
  $\norm{C(\xx)} \le \norm{\xx}$.
  Thus, if $C(\xx)$ has equivalence type
  $(\lambda_1,\dotsc,\lambda_r)$,
  then $\abs{\pi^{\lambda_i}} \le \abs{\pi^{\lambda_1}} =
  \norm{C(\xx)} \le \norm{\xx}$ for
  $1\le i \le r$.
  Define $m$ and $n$ via $q^{-m} = \norm{\xx}$ and $n = \nu(y)$.
  Then, by Lemma~\ref{lem:size_ker}(\ref{lem:size_ker4}),
  \begin{align*}
  \orbsize_M(\xx,y) &= q^{rn - \sum_{i=1}^r \min(\lambda_i,n)}
                      \le q^{\genidim(M)(n-\min(m,n))}
                      = \abs{y}^{-\genidim(M)}\norm{\xx,y}^{\genidim(M)}.\qedhere
  \end{align*}
\end{proof}

The above inequality is sharp (cf.\ the comments after Proposition~\ref{prop:local_alpha}):
\begin{lemma}
  \label{lem:O_Mdxe}
  Let $\xx \in V$ and $y \in \fO\setminus\{0\}$.
  Then $\orbsize_{\Mat_{d\times e}(\fO)}(\xx,y) = \abs{y}^{-e}\norm{\xx,y}^e$.
\end{lemma}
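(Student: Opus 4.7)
The plan is to compute $\xx M$ directly when $M = \Mat_{d\times e}(\fO)$ and then read off the size of its image in $W \otimes \fO/(y) = (\fO/(y))^e$. The key observation is that, taking the basis of elementary matrices $E_{ij}$, we have $\xx E_{ij} = x_i \ee_j$, so
\[
\xx M \;=\; \sum_{i=1}^{d} x_i \fO^e \;=\; \fa \dtimes W,
\]
where $\fa = (x_1,\dotsc,x_d) \normal \fO$ and $W = \fO^e$. Since $\fO$ is a discrete valuation ring, $\fa = \pi^m \fO$ with $m := \min_i \nu(x_i)$, i.e.\ $\abs{\pi}^m = \norm{\xx}$. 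Thus $\xx M = \pi^m W$.

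Next, I would compute the image of $\xx M = \pi^m W$ under the quotient $W \onto W/yW$, which equals $\pi^m W / (\pi^m W \cap yW)$. Writing $n = \nu(y)$ and using $\pi^m\fO \cap \pi^n\fO = \pi^{\max(m,n)}\fO$ (again a DVR fact, applied coordinatewise), this intersection is $\pi^{\max(m,n)}W$, so the image has size
\[
\card{\pi^m W / \pi^{\max(m,n)}W} \;=\; q^{\,e(\max(m,n)-m)} \;=\; q^{\,e\max(0,\,n-m)}.
\]

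Finally, I would match this with the claimed value. Since $\abs{y} = q^{-n}$ and $\norm{\xx,y} = \max(\norm{\xx},\abs{y}) = q^{-\min(m,n)}$, one has
\[
\abs{y}^{-e}\norm{\xx,y}^e \;=\; q^{e(n-\min(m,n))} \;=\; q^{e\max(0,\,n-m)},
\]
which agrees with the count above; this completes the proof.

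The calculation is essentially a bookkeeping exercise once $\xx M = \pi^m W$ is recognised, so there is no genuine obstacle; the only thing to be careful about is the two min/max identities $\max(m,n)-m = \max(0,n-m) = n - \min(m,n)$, which ensures the two expressions coincide. (Alternatively, one can verify that the upper bound in Lemma~\ref{lem:O_bound} is attained by checking that $C(\xx)$ for this $M$ has equivalence type $(m,m,\dotsc,m)$ of length $e = \genidim(M)$ and then invoking Lemma~\ref{lem:size_ker}(iv); but the direct route above is shorter.)
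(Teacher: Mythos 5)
Your proof is correct and follows essentially the same route as the paper: both identify $\xx\Mat_{d\times e}(\fO) = \gcd(x_1,\dotsc,x_d)\,\fO^e$ and then count the image in $(\fO/(y))^e$; the paper simply delegates that final count to Lemma~\ref{lem:size_ker}(iv), which is the alternative you mention in your closing parenthetical.
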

\begin{proof}
  Let $(\ee_1,\dotsc,\ee_d) \subset \fO^d$ be the standard basis.
  We may assume that $\xx \not= 0$.
  As $\xx \Mat_{d\times e}(\fO)$ is generated by $\{ \gcd(x_1,\dotsc,x_d)
  \ee_i : i=1,\dotsc,e\}$, the claim follows from Lemma~\ref{lem:size_ker}(\ref{lem:size_ker4}).
\end{proof}

\begin{lemma}
  \label{lem:Omax_null}
  The following are equivalent:
  \begin{enumerate}
  \item \label{lem:Omax_null1}
    $\orbsize_M(\xx,y) =
    \abs{y}^{-\genidim(M)}\norm{\xx,y}^{\genidim(M)}$ for all
    $(\xx,y) \in V \times \fO$ outside a set of measure zero.
  \item \label{lem:Omax_null2}
    $\orbsize_M(\xx,y) = \abs{y}^{-\genidim(M)}\norm{\xx,y}^{\genidim(M)}$ for all
    $\xx \in V$ and all $y \in \fO\setminus\{0\}$.
  \end{enumerate}
\end{lemma}
\begin{proof}
  $\orbsize_M$ is locally constant on $V\times(\fO\setminus\{0\})$ so
  (\ref{lem:Omax_null1}) implies (\ref{lem:Omax_null2}); the converse is clear.
\end{proof}

\begin{defn}
  We say that $M$ is \emph{$\orbsize$-maximal} if it satisfies one of the two
  equivalent conditions in the preceding lemma.
\end{defn}

Proposition~\ref{prop:Mdxe} serves as a blueprint for $\Zeta_M(T)$
whenever $M$ is $\orbsize$-maximal:

\begin{cor}
  \label{cor:O_max}
  $M$ is $\orbsize$-maximal if and only if
  $\Zeta_M(T)
  = \Zeta_{\Mat_{d\times \genidim(M)}(\fO)}(T)$.
\end{cor}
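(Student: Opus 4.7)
The plan is to use the second integral formula of Theorem~\ref{thm:twoint}, which expresses $\Zeta_M(q^{-s})$ purely in terms of $\orbsize_M$, and to observe that under $\orbsize$-maximality this integral becomes identical to the one computing $\Zeta_{M'}(q^{-s})$ for $M' := \Mat_{d\times \genidim(M)}(\fO)$.

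Concretely, I would first recall that by Theorem~\ref{thm:twoint},
\[
(1-q^{-1})\dtimes \Zeta_M(q^{-s}) = \int_{V\times \fO} \frac{\abs{y}^{s-d-1}}{\orbsize_M(\xx,y)}\,\dd\mu_{V\times\fO}(\xx,y),
\]
and the analogous formula holds for $M'$ in place of $M$ (noting that $V = \fO^d$ is the same ambient module in both cases). Next, I would verify that $M'$ has the same generic orbit rank as $M$: for any $\xx\in V$ with at least one entry not in $\fP$ (a cofinite condition in the Haar sense), $\xx M' = \fO^{\genidim(M)}$, so $\genidim(M') = \genidim(M)$. Lemma~\ref{lem:O_Mdxe} then gives
\[
\orbsize_{M'}(\xx,y) = \abs{y}^{-\genidim(M)}\norm{\xx,y}^{\genidim(M)}
\]
for every $\xx\in V\setminus\{0\}$ and $y\in \fO\setminus\{0\}$, so $M'$ itself is $\orbsize$-maximal.

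By the $\orbsize$-maximality assumption on $M$, the integrand $\abs{y}^{s-d-1}/\orbsize_M(\xx,y)$ agrees almost everywhere with $\abs{y}^{s-d-1}/\orbsize_{M'}(\xx,y)$. Since the set of $(\xx,y) \in V\times\fO$ on which the two integrands may differ has measure zero, the integrals coincide. Hence
\[
\Zeta_M(q^{-s}) = \Zeta_{M'}(q^{-s}) \qquad\text{for } \Real(s) > d,
\]
and therefore $\Zeta_M(T) = \Zeta_{M'}(T)$ as formal power series (the coefficients being determined by the values on any half-plane of convergence).

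There is no substantive obstacle here; the only point requiring a moment's care is the identification $\genidim(M') = \genidim(M)$ and the verification, via Lemma~\ref{lem:O_Mdxe}, that $M'$ is itself $\orbsize$-maximal with the same exponent, so that the two integrands agree almost everywhere.
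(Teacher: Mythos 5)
Your proof is correct and takes essentially the same route as the paper's (which is simply the one-line instruction ``Combine \eqref{eq:int_O_minors} and Lemma~\ref{lem:O_Mdxe}''): express $\Zeta_M(q^{-s})$ via the second integral of Theorem~\ref{thm:twoint}, note that $\orbsize$-maximality plus Lemma~\ref{lem:O_Mdxe} make the integrand agree almost everywhere with the one for $\Mat_{d\times\genidim(M)}(\fO)$, and conclude. Your spelled-out verification that $\genidim(\Mat_{d\times\genidim(M)}(\fO))=\genidim(M)$ and that $\Mat_{d\times\genidim(M)}(\fO)$ is itself $\orbsize$-maximal is exactly the detail the paper leaves implicit.
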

\begin{proof}
  The ``only if'' part follows by combining \eqref{eq:int_O_minors} and
  Lemma~\ref{lem:O_Mdxe}.
  Conversely, suppose that $\orbsize_M(\xx,y) <
  \abs{y}^{-\genidim(M)}\norm{\xx,y}^{\genidim(M)}$
  for some $\xx\in V$ and $y\in \fO\setminus\{0\}$.
  Using the fact that both sides of this inequality are locally constant
  functions of $(\xx,y)$,
  Lemmas~\ref{lem:O_bound}--\ref{lem:O_Mdxe},
  and Theorem~\ref{thm:twoint},
  we conclude that for sufficiently large $s\in\RR$,
  $\zeta_M(s) > \zeta_{\Mat_{d\times \genidim(M)}(\fO)}(s)$.
  In particular, $\Zeta_M(T) \not= \Zeta_{\Mat_{d\times \genidim(M)}(\fO)}(T)$.
\end{proof}

The following is a ``projective'' characterisation of $\orbsize$-maximality.
\begin{lemma}
  \label{lem:O_max_proj}
  $M$ is $\orbsize$-maximal if and only if 
  $\orbsize_M(\xx,y) = \abs{y}^{-\genidim(M)}$
  for all $(\xx,y) \in (V\!\setminus\!\fP V)\times \fP$ outside a set of
  measure zero.
\end{lemma}
\begin{proof}
  Necessity of the given condition being clear,
  suppose that 
  $\orbsize_M(\xx,y) = \abs{y}^{-\genidim(M)}$
  for all $(\xx,y) \in \left( (V\!\setminus\!\fP V)\times \fP\right)\setminus Z$, where
  $Z\subset V\times\fO$ has measure zero.
  Choose $\pi\in\fP\setminus\fP^2$.
  For each $n \ge 0$, we recursively define a set $Z^{(n)}\subset V\times \fO$
  of measure zero such that $\orbsize_M(\pi^n \xx,y) =
  \abs{y}^{-\genidim(M)}\norm{\pi^n,y}^{\genidim(M)}$
  for all $(\xx,y)\in \left((V\!\setminus\!\fP V)\!\times\!
    \fO\right)\!\setminus\! Z^{(n)}$.
  By assumption and since $\orbsize_M(\xx,y) = 1$ for all $\xx\in V$ and $y\in \fO^\times$, 
  we may take $Z^{(0)} := Z$.
  Suppose that $Z^{(n)}$ has been defined with the aforementioned properties and let
  $Z^{(n+1)} := \{ (\xx,y) \in V\times\fO : (\xx,\pi^{-1}y) \in Z^{(n)}\}$;
  note that $Z^{(n+1)}$ has measure zero.
  Let $(\xx,y)\in \left((V\!\setminus\!\fP V)\!\times\!
    \fO\right)\!\setminus\! Z^{(n+1)}$.
  We may assume that $y\in \fP$, say $y = \pi z$.
  Then, since $(\xx,z)\not\in Z^{(n)}$, using
  Lemma~\ref{lem:OK_homg}(\ref{lem:OK_homg1}), we obtain
    \begin{align*}
    \orbsize_M(\pi^{n+1}\xx,y) & = \orbsize_M(\pi^n\xx,z) \\&= 
    \abs{z}^{-\genidim(M)}\dtimes \norm{\pi^{n},z}^{\genidim(M)} \\
      & = \abs{y}^{-\genidim(M)}\dtimes\norm{\pi^{n+1},y}^{\genidim(M)}.
    \end{align*}
    Since $\{ (\pi^n\xx,y) : n \ge 0, (\xx,y)\in Z^{(n)}\}$ has measure zero,
  the claim follows.
\end{proof}

We will repeatedly use the following lemma to prove $\orbsize$-maximality.

\begin{lemma}
  \label{lem:all_monomials}
  Let $\XX = (X_1,\dotsc,X_d)$. Let $C(\XX) \in \Mat_{\ell\times
    e}(\fO[\XX])$ with $\fO[\XX]^d  C(\XX) =  \XX \dtimes (M\otimes\fO[\XX])$.
  Suppose that there exists an $N \ge 0$ such that
  for $i = 1,\dotsc, \genidim(M)$, the ideal of $\fO[\XX]$ generated
  by the $i\times i$ minors of $C(\XX)$ contains each of
  $X_1^N, \dotsc, X_d^N$.
  Then $M$ is $\orbsize$-maximal.
\end{lemma}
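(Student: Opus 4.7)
The plan is to reduce the claim directly to the explicit product formula in Corollary~\ref{cor:formula_O}. Since that result gives
\[
\orbsize_M(\xx,y) = \abs{y}^{-\genidim(M)} \prod_{i=1}^{\genidim(M)} \frac{\norm{\bm g_i(\xx) \cup y\, \bm g_{i-1}(\xx)}}{\norm{\bm g_{i-1}(\xx)}}
\]
for all $(\xx,y) \in (V \setminus Z) \times (\fO \setminus \{0\})$, and since $Z \cup \{0\}$ has measure zero in $V$, it suffices to show that for $\xx \in V \setminus (Z \cup \{0\})$ and $y \in \fO \setminus \{0\}$ the product on the right equals $\norm{\xx,y}^{\genidim(M)}$.

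First I would pin down the natural choice of $C(\XX)$, taking its rows to be $\XX a_1,\dotsc,\XX a_\ell$ for an $\fO$-basis $(a_1,\dotsc,a_\ell)$ of~$M$; these are row vectors of linear forms in $\XX$, so every $i\times i$ minor in $\bm g_i(\XX)$ is a polynomial homogeneous of degree~$i$ with coefficients in $\fO$ and no constant term. This immediately yields the upper bound $\norm{\bm g_i(\xx)} \le \norm{\xx}^i$ for all $\xx \in \fO^d$. The lemma's hypothesis delivers the matching lower bound: if $X_j^i$ lies in the ideal $(\bm g_i(\XX)) \subset \fO[\XX]$, then specialising $\XX \to \xx$ puts $x_j^i$ in the ideal of $\fO$ generated by $\bm g_i(\xx)$, so $\norm{\bm g_i(\xx)} \ge \max_j \abs{x_j}^i = \norm{\xx}^i$. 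Combined with the convention $\bm g_0(\XX) = \{1\}$, this gives $\norm{\bm g_i(\xx)} = \norm{\xx}^i$ for $0 \le i \le \genidim(M)$ and every $\xx \in \fO^d$.

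With these equalities in hand the computation is routine: for $\xx \ne 0$ each factor of the product becomes
\[
\frac{\max(\norm{\xx}^i,\, \abs{y}\,\norm{\xx}^{i-1})}{\norm{\xx}^{i-1}} = \max(\norm{\xx},\,\abs{y}) = \norm{\xx,y},
\]
so the product equals $\norm{\xx,y}^{\genidim(M)}$ and $\orbsize_M(\xx,y) = \abs{y}^{-\genidim(M)} \norm{\xx,y}^{\genidim(M)}$ off the measure-zero locus, establishing $\orbsize$-maximality. There is essentially no obstacle beyond the two one-line bookkeeping steps (homogeneity of minors and specialisation of the ideal containment); the content of the lemma is packaged entirely in Corollary~\ref{cor:formula_O} together with the hypothesis.
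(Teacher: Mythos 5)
Your proof is correct and follows essentially the same route as the paper: both use Corollary~\ref{cor:formula_O} to reduce the claim to showing $\norm{\bm g_i(\xx)} = \norm{\xx}^i$ off a measure-zero set, with the upper bound coming from the vanishing of constant terms in the minors and the lower bound from specialising the ideal-membership hypothesis. The only cosmetic difference is that you first normalise $C(\XX)$ to the standard linear-form choice (so the minors are homogeneous of degree $i$), whereas the paper's argument works for a general $C(\XX)$ by observing that $C(0)=0$ forces every monomial of an $i\times i$ minor to have degree at least $i$; if you keep the normalisation you should note in passing that the determinantal ideals of $C(\XX)$ depend only on the row span $\XX\dtimes(M\otimes\fO[\XX])$, so the hypothesis transfers to your preferred $C(\XX)$.
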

\begin{proof}
  Let $Z = \{ \xx \in V : \dim_K(\xx M\otimes K) < \genidim(M)\}$ as in
  Corollary~\ref{cor:formula_O}.
  Let $\xx \in \fO^d\setminus(\fP^d\cup Z)$ and let $y\in \fO\setminus\{0\}$.
  As in \S\ref{sss:O}, let
  $\bm g_i(\XX)$ be the set of non-zero $i\times i$ minors of
  $C(\XX)$.
  Then, by assumption and since $\norm\xx = 1$,
  we have $\norm{\bm g_i(\xx)} = 1$ for $i =
  0,\dotsc,\genidim(M)$ whence $\orbsize_M(\xx,y) = \abs{y}^{-\genidim(M)}$ by
  Corollary~\ref{cor:formula_O}. 
  Thus, $M$ is $\orbsize$-maximal by Lemma~\ref{lem:O_max_proj}.
\end{proof}

For a geometric interpretation of Lemma~\ref{lem:all_monomials} in a global setting, see
Proposition~\ref{prop:constant_orbit_dimension}.

\subsection{Proof of Proposition~\ref{prop:Mdxe} }
\label{ss:Mdxe}

Our proof of Proposition~\ref{prop:Mdxe} and other computations
in \S\ref{ss:classical_Lie} rely on the following.

\begin{lemma}
  \label{lem:Fr}
  Let $a_0,\dotsc,a_r\in \CC$ and write 
  $\sigma_j = a_0 + \dotsb + a_j$.
  Suppose that the integral
  \[
  F_r(a_0,\dotsc,a_r) := \int_{\fO^r\times \fO}\abs{y}^{a_0}
  \norm{x_1,y}^{a_1}\dotsb \norm{x_1,\dotsc,x_r,y}^{a_r}\,\dd\mu_{\fO^r\times\fO}(\xx,y),
  \]
  is absolutely convergent.
  Then
  \[
  F_r(a_0,\dotsc,a_r) =
  \frac{1-q^{-1}}{1-q^{-\sigma_r - r -1}}
  \dtimes
  \prod_{j=0}^{r-1}
  \frac{1 - q^{-\sigma_j - j -2}}
  {1 - q^{-\sigma_j - j -1}}.
  \]
  In particular, in the special case $a_1 = \dotsb = a_{r-1} = 0$, we obtain
  \[
  F_r(a_0,0,\dotsc,0,a_r) =
  \frac{(1-q^{-1})(1- q^{-a_0-r-1})}
  {(1-q^{-a_0-a_r-r-1})(1-q^{-a_0-1})}.
  \]
\end{lemma}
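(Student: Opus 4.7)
The plan is to set up a recursion on $r$ by integrating out $x_r$ first. For fixed $x_1, \dotsc, x_{r-1}, y$, set $u = \norm{x_1, \dotsc, x_{r-1}, y} = q^{-k}$ with $k \ge 0$. Splitting $\fO$ according to whether $\abs{x_r} \le u$ or $\abs{x_r} > u$ and summing a finite geometric series gives
\begin{equation*}
  \int_\fO \max\bigl(u, \abs{x_r}\bigr)^{a_r}\dd\mu_{\fO}(x_r) = A(a_r) + B(a_r)\dtimes u^{a_r+1},
\end{equation*}
where $A(a) = \tfrac{1 - q^{-1}}{1 - q^{-a-1}}$ and $B(a) = 1 - A(a) = \tfrac{q^{-1}(1 - q^{-a})}{1 - q^{-a-1}}$. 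By Fubini's theorem (justified by the assumed absolute convergence), this translates into the recursion
\begin{equation*}
  F_r(a_0, \dotsc, a_r) = A(a_r) \dtimes F_{r-1}(a_0, \dotsc, a_{r-1}) + B(a_r) \dtimes F_{r-1}(a_0, \dotsc, a_{r-2}, a_{r-1} + a_r + 1).
\end{equation*}
The key observation is that the partial sums associated with the second $F_{r-1}$ on the right are $\tilde\sigma_j = \sigma_j$ for $j \le r-2$ and $\tilde\sigma_{r-1} = \sigma_r + 1$, so the two occurrences of $F_{r-1}$ contribute the same tail factors when the claimed formula is applied.

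I would then prove the formula by induction on $r$. The base case $r = 0$ is the standard computation $\int_\fO \abs{y}^{a_0}\dd\mu_{\fO}(y) = (1 - q^{-1})/(1 - q^{-a_0-1})$ (with an empty product in the claimed formula). For the inductive step, substituting the inductive hypothesis into both terms of the recursion yields a common factor $\prod_{j=0}^{r-2} \tfrac{1 - q^{-\sigma_j - j - 2}}{1 - q^{-\sigma_j - j - 1}}$, after which matching with the claimed formula reduces to the single scalar identity
\begin{equation*}
  A(a_r)\dtimes(1 - q^{-\sigma_r - r - 1}) + B(a_r)\dtimes(1 - q^{-\sigma_{r-1} - r}) = 1 - q^{-\sigma_{r-1} - r - 1}.
\end{equation*}
This in turn follows from the identities $A(a_r) + B(a_r) = 1$ and $A(a_r)(1 - q^{-a_r-1}) = 1 - q^{-1}$, combined with $\sigma_r = \sigma_{r-1} + a_r$.

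For the special case $a_1 = \dotsb = a_{r-1} = 0$, one has $\sigma_j = a_0$ for $0 \le j \le r-1$ and $\sigma_r = a_0 + a_r$, so the product telescopes:
\begin{equation*}
  \prod_{j=0}^{r-1} \frac{1 - q^{-a_0 - j - 2}}{1 - q^{-a_0 - j - 1}} = \frac{1 - q^{-a_0 - r - 1}}{1 - q^{-a_0 - 1}},
\end{equation*}
and substituting yields the stated closed form.

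No step presents a serious obstacle: the argument is essentially an iterated $p$-adic integration over nested maxima, in the spirit of the techniques of Denef and Igusa, and closely related to the formulae in Voll's work \cite{Vol10} invoked elsewhere in the paper. The mildest technical care needed is the verification of the scalar identity at the inductive step, which is a short computation using $B(a) = 1 - A(a)$.
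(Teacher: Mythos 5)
Your argument is correct but takes a genuinely different route from the paper's. The paper integrates out the \emph{innermost} variable $x_1$ first: splitting according to $\abs{x_1}\le\abs{y}$ versus $\abs{x_1}>\abs{y}$ and performing a change of variables yields the single-term recursion $F_r(a_0,\dotsc,a_r) = \frac{1-q^{-a_0-2}}{1-q^{-a_0-1}}\dtimes F_{r-1}(a_0+a_1+1,a_2,\dotsc,a_r)$, so the induction proceeds by absorbing $x_1$ into $y$ while accumulating a scalar prefactor. You instead integrate out the \emph{outermost} variable $x_r$, obtaining the two-term recursion $F_r = A(a_r)\dtimes F_{r-1}(a_0,\dotsc,a_{r-1}) + B(a_r)\dtimes F_{r-1}(a_0,\dotsc,a_{r-2},a_{r-1}+a_r+1)$. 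The paper's recursion is structurally simpler (one term, no recombination needed), which is why the author calls the deduction of the closed form ``readily'' following; your version requires the extra observation that the two branches share the same telescoped tail and then the short scalar identity $A(a_r)(1-q^{-\sigma_r-r-1}) + B(a_r)(1-q^{-\sigma_{r-1}-r}) = 1-q^{-\sigma_{r-1}-r-1}$, which you correctly reduce to $A+B=1$ and $A(a_r)(1-q^{-a_r-1})=1-q^{-1}$. On the other hand, integrating out the last variable makes the elementary $p$-adic computation $\int_\fO\max(u,\abs{x})^a\,\dd\mu(x) = A(a) + B(a)u^{a+1}$ entirely local and avoids the slightly more delicate swap of $x_1$ and $y$ that the paper's change of variables relies on. Both inductions are sound; your bookkeeping on the modified partial sums $\tilde\sigma_j$ and the final verification all check out.
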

\begin{proof}
  Both claims follow by induction
  from the identities
  (a) $F_0(a_0) = \frac {1-q^{-1}} {1-q^{-1-a_0}}$ and
  (b) $F_r(a_0,\dotsc,a_r) = F_{r-1}(a_0+a_1+1,a_2,\dotsc,a_r)\dtimes
    \frac{1 - q^{-a_0-2}}{1-q^{-a_0-1}}$
    for $r \ge 1$.
  The formula for $F_0(a_0)$ in (a) is well-known and easily proved.
  By performing a change of variables according to whether $\abs{x_1} \le
  \abs{y}$ or $\abs{x_1} > \abs{y}$, we find that
  $F_r(a_0,\dotsc,a_r)
    =
      F_{r-1}(a_0+a_1+1,a_2,\dotsc,a_r) \dtimes \bigl( 1 + \int_{\fP}\abs{y}^{a_0}\dd\mu_{\fO}(y)\bigr)$
  whence (b) follows readily.
\end{proof}

\begin{proof}[Proof of Proposition~\ref{prop:Mdxe}]
  Using \eqref{eq:int_O_minors} and Lemma~\ref{lem:O_Mdxe}, we obtain
  \begin{equation}
    \label{eq:Mdxe_int}
  \Zeta_{\Mat_{d\times e}(\fO)}(q^{-s}) = (1-q^{-1})^{-1} \int_{\fO^d\times \fO}
  \abs{y}^{s+e-d-1} \norm{\xx,y}^{-e}\dd\mu_{\fO^d\times\fO}(\xx,y).
  \end{equation}
  whence the claim follows from Lemma~\ref{lem:Fr}.
\end{proof}

\begin{rem}
  \quad
  \begin{enumerate}
  \item
    Let $M\subset \Mat_{d\times e}(\fO)$ be any submodule.
    By combining Lemma~\ref{lem:O_bound} and~\eqref{eq:Mdxe_int},
    we thus obtain another interpretation of the lower bound in
    Proposition~\ref{prop:local_alpha} in the form 
    $\alpha_M \ge \alpha_{\Mat_{d\times\genidim(M)}(\fO)} = \max\bigl(d-
    \genidim(M),0\bigr)$.
  \item
    We note that \eqref{eq:Mdxe} could also be derived in an
    elementary fashion (without using $p$-adic integration) 
    using Lemma~\ref{lem:WxM} and ad hoc computations with generating
    functions.
    Such an approach quickly becomes cumbersome for more complicated
    examples such as most of those in \S\ref{s:zeta}.
    The author is, moreover, unaware of elementary proofs of general
    results such as Theorems~\ref{thm:rational} and \ref{thm:feqn}.
  \end{enumerate}
\end{rem}

\subsection{Classical Lie algebras and relatives}
\label{ss:classical_Lie}

\paragraph{Reminder.}
Let $R$ be a ring.
Recall the definitions of the \emph{special linear},
\emph{orthogonal}, and \emph{symplectic} Lie algebras
\begin{align*}
  \Sl_d(R) & = \{ a \in \Gl_d(R) : \trace(a) = 0\}, \\
  \So_d(R) & = \{ a \in \Gl_d(R) : a + a^\top = 0\} \quad\text{(assuming $\Char(R)
             \not= 2$), and}\\
  \Sp_{2d}(R) & = \left\{ \begin{bmatrix} a & b\\c & -a^\top\end{bmatrix}:
a,b,c \in \Mat_d(R), b = b^\top, c = c^\top\right\}.
\end{align*}

These are Lie subalgebras of $\Gl_d(R)$ and $\Gl_{2d}(R)$, respectively.
Finally, we let $\tr_d(R)$ and $\Nil_d(R)$ denote the Lie subalgebras of
$\Gl_d(R)$ consisting of upper triangular matrices and strictly upper
triangular matrices, respectively.

\paragraph{}
We now determine $\Zeta_{\fg}(T)$, where $\fg$ is one of the Lie algebras from
above.
Of course, the case $\fg = \Gl_d(\fO)$ is covered by Proposition~\ref{prop:Mdxe}.
Next, clearly, $\Zeta_{\Sl_1(\fO)}(T) = 1/(1-qT)$.
The general case of $\Sl_d(\fO)$ offers nothing new.

\begin{cor}
  \label{cor:sld}
  Let $d > 1$. Then $\Zeta_{\Sl_d(\fO)}(T) = \Zeta_{\Gl_d(\fO)}(T) = \frac{1-q^{-d}T}{(1-T)^2}$.
\end{cor}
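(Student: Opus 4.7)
The plan is to show that $\Sl_d(\fO)$ is $\orbsize$-maximal with $\genidim(\Sl_d(\fO)) = d$, and then invoke Corollary~\ref{cor:O_max} together with Proposition~\ref{prop:Mdxe}.

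First I would verify that $\genidim(\Sl_d(\fO)) = d$. Take $\xx = \ee_1 = (1,0,\dotsc,0)$. Then $\xx a$ is the first row of $a$, and as $a$ ranges over $\Sl_d(\fO)$ the first row can take any value in $\fO^d$: the off-diagonal entries are unconstrained, and for the $(1,1)$ entry we use that $d > 1$ permits compensating any chosen $a_{11}$ via the other diagonal entries. Hence $\xx \Sl_d(\fO) = \fO^d$ and $\genidim(\Sl_d(\fO)) \ge d$; the reverse inequality is trivial.

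Next I would apply Lemma~\ref{lem:all_monomials}. Using the $\fO$-basis $\{E_{ij} : i\neq j\} \cup \{E_{ii} - E_{dd} : 1 \le i < d\}$ of $\Sl_d(\fO)$, the matrix $C(\XX)$ whose rows are $\XX E_{ij}$ and $\XX(E_{ii} - E_{dd})$ has entries among $\{0, X_1, \dotsc, X_d, -X_d\}$ arranged as unit rows. For each $i \in \{1,\dotsc,d\}$ I exhibit an $i\times i$ minor equal to $X_1^i$: take the rows $\XX(E_{11}-E_{dd}),\XX E_{12},\dotsc,\XX E_{1,i}$ and the columns $1,2,\dotsc,i$. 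For $i < d$ the $-X_d$ entry sits in column $d > i$ and is discarded, so the submatrix is $X_1 \cdot \mathrm{Id}_i$ with determinant $X_1^i$; for $i = d$ expansion along the first column again yields $X_1^d$. Permuting the indices $1,\dotsc,d$ (simultaneously in both the choice of basis elements and the column set) produces minors equal to $X_j^i$ for every $j$. Thus all $X_1^i,\dotsc,X_d^i$ lie in the ideal generated by the $i\times i$ minors of $C(\XX)$, and Lemma~\ref{lem:all_monomials} gives $\orbsize$-maximality.

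Finally, Corollary~\ref{cor:O_max} yields $\Zeta_{\Sl_d(\fO)}(T) = \Zeta_{\Mat_{d\times d}(\fO)}(T)$, which by Proposition~\ref{prop:Mdxe} equals $\frac{1-q^{-d}T}{(1-T)^2}$. The only point requiring genuine (albeit short) combinatorics is the explicit production of the minors $X_j^i$; everything else is routine invocation of earlier machinery, so I do not expect a serious obstacle.
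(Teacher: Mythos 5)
Your proof is correct, but it takes a different route from the paper's. The paper proves something stronger: it shows directly that $\xx\dtimes\Sl_d(\fO) \supset \xx\dtimes\Gl_d(\fO)$ for \emph{every} nonzero $\xx \in \fO^d$ (the converse inclusion being trivial). Writing $x_\ell$ for an entry of minimal valuation, the identity $x_\ell\ee_\ell = (x_\ell\ee_\ell - x_j\ee_j) + \tfrac{x_j}{x_\ell}\,x_\ell\ee_j$ (for any $j\neq\ell$, using $\abs{x_j}\le\abs{x_\ell}$) shows that all the generators $x_\ell\ee_j$ of $\xx\Gl_d(\fO)$ lie in $\xx\Sl_d(\fO)$. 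Hence $\orbsize_{\Sl_d(\fO)}$ and $\orbsize_{\Gl_d(\fO)}$ coincide pointwise, and the integral formula gives the result at once—no invocation of Lemma~\ref{lem:all_monomials} is needed. Your approach instead verifies the sufficient criterion of Lemma~\ref{lem:all_monomials}, which is the technique the paper reserves for $\So_d$, $\Sym_d$, and $\Sp_{2d}$; it establishes $\orbsize$-maximality only outside a set of measure zero rather than everywhere. Both buy the same conclusion via Corollary~\ref{cor:O_max}, but the paper's argument is shorter and more elementary. One small imprecision in your write-up: your chosen basis $\{E_{ij}: i\neq j\}\cup\{E_{ii}-E_{dd}: i<d\}$ is not stable under permutations of $\{1,\dots,d\}$ (the index $d$ is distinguished), so "permuting the indices simultaneously" does not literally apply. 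This is harmless because the ideal generated by the $i\times i$ minors of $C(\XX)$ is unchanged under a change of $\fO$-basis of $M$, and in any case one can exhibit the minors giving $\pm X_d^i$ directly by using rows $\XX(E_{11}-E_{dd}),\ \XX E_{d,k_2},\dots,\XX E_{d,k_i}$ and columns $d,k_2,\dots,k_i$—but this should be said explicitly.
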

\begin{proof}
  It suffices to show that $\xx\dtimes \Sl_d(\fO) \supset \xx \dtimes\Gl_d(\fO)$
  for all $\xx \in \fO^d\setminus\{0\}$.
  Let $x_\ell$ have minimal valuation among the entries of $\xx$.
  Let $\ee_j \in \fO^d$ be the $j$th unit vector.
  By our proof of Lemma~\ref{lem:O_Mdxe}, $\xx\dtimes\Gl_d(\fO)$ is generated by $\{
  x_\ell \ee_j : 1 \le j \le d\}$.
  Note that $\xx\dtimes \Sl_d(\fO)$ is spanned by all $x_i \ee_j$ and $x_i \ee_i -
  x_j \ee_j$  for $1 \le i, j \le d$ with $i \not= j$.
  It thus only remains to show that $x_\ell \ee_\ell \in \xx\dtimes \Sl_d(\fO)$.
  Since $d > 1$, we may choose $j \not= \ell$.
  Since $\abs{x_j} \le \abs{x_\ell}$, 
  $x_\ell \ee_\ell = (x_\ell \ee_\ell - x_j \ee_j) + \frac{x_j}{x_\ell} x_\ell
  \ee_j \in \xx\dtimes \Sl_d(\fO)$ whence the claim follows.
\end{proof}

\begin{prop}
  \label{prop:so}
  Let $\Char(K) \not= 2$.
  Then $\Zeta_{\So_d(\fO)}(T) = \Zeta_{\Mat_{d\times (d-1)}(\fO)}(T) = \frac{1 - q^{1-d}T}{(1-T)(1-qT)}$.
\end{prop}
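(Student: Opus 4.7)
The plan is to apply Corollary~\ref{cor:O_max}, which reduces the problem to that of $\Zeta_{\Mat_{d\times \genidim(\So_d(\fO))}(\fO)}(T)$ as given by Proposition~\ref{prop:Mdxe}. Two verifications are required: computing $\genidim(\So_d(\fO))$, and establishing that $\So_d(\fO)$ is $\orbsize$-maximal.

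For the generic orbit rank, I would fix the $\fO$-basis $\{E_{ij}-E_{ji} : 1 \le i < j \le d\}$ of $\So_d(\fO)$, so that $\xx(E_{ij}-E_{ji}) = x_i\ee_j - x_j\ee_i$ for $\xx \in \fO^d$. Since $\Char(K) \neq 2$, any antisymmetric $a$ satisfies $\xx a \xx^\top = -\xx a \xx^\top$, and hence $\xx \cdot \So_d(K) \subseteq \xx^\perp$, a hyperplane of $K$-dimension $d-1$ when $\xx \neq 0$. Conversely, at $\xx = \ee_1$ the vectors $\ee_1(E_{1j}-E_{j1}) = \ee_j$ for $j = 2,\dotsc,d$ span $\ee_1^\perp$. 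Hence $\genidim(\So_d(\fO)) = d-1$.

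For $\orbsize$-maximality I would invoke Lemma~\ref{lem:all_monomials}. The matrix $C(\XX)$ associated with the chosen basis has rows $X_i\ee_j - X_j\ee_i$ indexed by pairs $i < j$. Fix $k \in \{1,\dotsc,d\}$ and $i \in \{1,\dotsc,d-1\}$, and choose any $j_1,\dotsc,j_i \in \{1,\dotsc,d\}\setminus\{k\}$. Restrict the rows of $C(\XX)$ indexed by the unordered pairs $\{k,j_l\}$ (for $l = 1,\dotsc,i$) to the columns $j_1,\dotsc,j_i$: at column $j_l$ such a row contributes $\pm X_k$, while at column $j_{l'}$ for $l' \neq l$ it contributes $0$ since column $k$ is not selected. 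The resulting $i\times i$ submatrix is thus $\pm\diag(X_k,\dotsc,X_k)$, with determinant $\pm X_k^i$. The ideal of $i\times i$ minors of $C(\XX)$ therefore contains $X_1^i,\dotsc,X_d^i$ for every $i \le d-1$, so Lemma~\ref{lem:all_monomials} yields the desired $\orbsize$-maximality.

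Combining these two inputs, Corollary~\ref{cor:O_max} gives $\Zeta_{\So_d(\fO)}(T) = \Zeta_{\Mat_{d\times(d-1)}(\fO)}(T)$, and specialising $e = d-1$ in the formula of Proposition~\ref{prop:Mdxe} yields the claimed rational function. I do not anticipate any substantial obstacle: the whole argument rests on the diagonal submatrix construction above, which makes the hypothesis of Lemma~\ref{lem:all_monomials} essentially immediate once one has fixed the standard basis of $\So_d(\fO)$.
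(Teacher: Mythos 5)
Your proof is correct and follows the same overall strategy as the paper: compute $\genidim(\So_d(\fO)) = d-1$ via $\xx \cdot \So_d(K) \subseteq \xx^\perp$ with equality generically, verify the hypothesis of Lemma~\ref{lem:all_monomials} to conclude $\orbsize$-maximality, and then apply Corollary~\ref{cor:O_max} together with Proposition~\ref{prop:Mdxe}. The only point of difference is how the required $i\times i$ minors $\pm X_k^i$ are exhibited: the paper constructs a recursively defined coordinate matrix $m(X_1,\dotsc,X_d)$ and runs an induction on $d$, whereas you select the rows indexed by pairs $\{k,j_1\},\dotsc,\{k,j_i\}$ and columns $j_1,\dotsc,j_i$ (all $\ne k$) and observe directly that the resulting submatrix is diagonal with entries $\pm X_k$. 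Your direct diagonal-submatrix argument is cleaner and avoids the induction, but it is the same idea in substance.
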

\begin{rem}
  \label{rem:so}
  \quad
  \begin{enumerate}
  \item \label{rem:so1}
    It is instructive to first determine $\ask{\So_d(\FF_q)}{}$ for odd $q$.
    If $F$ is any field with $\Char(F) \not= 2$, then it is easy to
  see that $\xx \dtimes \So_d(F) = \xx^\perp$ for all $\xx \in
  F^d\setminus\{0\}$, where the orthogonal complement is taken with
  respect to the standard inner product. 
  In particular, if $\xx\not= 0$, then $\dim_F(\xx\dtimes\So_d(F))=d-1$.
  Using Lemma~\ref{lem:WxM},
  we conclude that $\ask{\So_d(\FF_q)}{} = 1 + \frac{q^d-1}{q^{d-1}} = 1 -
  q^{1-d} + q$ for odd $q$;
  this identity
  was first proved probabilistically by Fulman and Goldstein~\cite[Lem.\ 5.3]{FG15}.
  We note that for $\Char(K) \not= 2$ and $\xx\in \fO^d$, while we
  still have an inclusion 
  $\xx\dtimes\So_d(\fO) \subset \xx^\perp$, equality does not, in general, hold;
  indeed, $\xx^\perp$ is always an isolated submodule of $\fO^d$.
\item \label{rem:so2}
  While we assumed that $\Char(K) \not= 2$ in Proposition~\ref{prop:so},
  we do allow $\Char(\fO/\fP) = 2$.
  Note, however, that in this case, $\So_d(\fO)_n (=\So_d(\fO) \otimes
  \fO_n)$ is properly
  contained in the set of all skew-symmetric $d \times d$-matrices over $\fO_n$.
\end{enumerate}
\end{rem}
\begin{proof}[Proof of Proposition~\ref{prop:so}]
  Part (\ref{rem:so1}) of the preceding remark implies that $\genidim(\So_d(\fO)) = d - 1$.
  Given elements $z_1,\dotsc,z_\ell$ of some ring, we recursively
  define an $\binom {\ell} 2 \times \ell$ matrix
  \[
  m(z_1,\dotsc,z_\ell) :=
  \left[
  \begin{array}{c|c}
    \begin{matrix}-z_2 \\-z_3\\\vdots \\-z_\ell\end{matrix}
 &
   \begin{matrix} z_1 \\ & z_1 \\ & & \ddots \\ & & &z_1\end{matrix}
    \\\hline
    \begin{matrix}0 \\ \vdots \\0 \end{matrix}
 &
   m(z_2,\dotsc,z_\ell)
    \end{array}\right];
  \]
  for instance, $m(z_1)$ is the $0\times 1$-matrix and $m(z_1,z_2) =
  \left[-z_2  \,\, z_1\right]$.

  Let $e_{ij} \in \Mat_d(\fO)$ be the elementary matrix with $1$ in
  position $(i,j)$ and zeros elsewhere.
  Then the $e_{ij} - e_{ji}$ for $1\le i < j \le d$ generate $\So_d(\fO)$ as
  an $\fO$-module whence
  the rows of $m(X_1,\dotsc,X_d)$ span $\XX\dtimes \So_d(\fO[\XX])$.
  In other words, the matrix $m(X_1,\dotsc,X_d)$ plays the role of
  $C(\XX)$ in \S\ref{sss:O} for $M = \So_d(\fO)$.
  
  By induction, we may assume that $\pm X_2^i,\dotsc,\pm X_d^i$ are
  $i\times i$ minors of $m(X_2,\dotsc,X_d)$ for all $1\le i \le d-2$
  so that $\pm X_1^j,\dotsc,\pm X_d^j$ are $j \times j$
  minors of $m(X_1,\dotsc,X_d)$ for $1 \le j \le d - 1$.
  Thus, $\So_d(\fO)$ is $\orbsize$-maximal by Lemma~\ref{lem:all_monomials}
  and the claim follows from Corollary~\ref{cor:O_max}.
\end{proof}

For a ring $R$, let $\Sym_d(R) = \{ a \in \Mat_d(R) : a^\top = a\}$.

\begin{prop}
  \label{prop:Symd}
  $\Zeta_{\Sym_d(\fO)}(T) = \Zeta_{\Gl_d(\fO)}(T) = \frac{1-q^{-d}T}{(1-T)^2}$.
\end{prop}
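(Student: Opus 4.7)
The plan is to follow the template of Proposition~\ref{prop:so}: verify that $\Sym_d(\fO)$ is $\orbsize$-maximal with $\genidim(\Sym_d(\fO)) = d$, then invoke Corollary~\ref{cor:O_max}, which will immediately yield $\Zeta_{\Sym_d(\fO)}(T) = \Zeta_{\Mat_{d\times d}(\fO)}(T) = \frac{1-q^{-d}T}{(1-T)^2}$.

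For the generic orbit rank, I would take $\xx = (1,\dotsc,1)$. Since $\Sym_d(\fO)$ contains the elementary matrices $e_{ii}$ and $\xx e_{ii} = \ee_i$, the row span of $\xx \Sym_d(\fO)$ is all of $\fO^d$. Hence $\genidim(\Sym_d(\fO)) = d$.

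For $\orbsize$-maximality, I would use the $\fO$-basis $\{e_{ii} : 1 \le i \le d\} \cup \{e_{ij} + e_{ji} : 1 \le i < j \le d\}$ of $\Sym_d(\fO)$ and form the corresponding matrix $C(\XX)$ whose rows are $X_i \ee_i$ (for $i = 1, \dotsc, d$) and $X_j \ee_i + X_i \ee_j$ (for $i < j$). To apply Lemma~\ref{lem:all_monomials}, I need to show that for each $1 \le i \le d$ and each $1 \le k \le d$, the monomial $X_k^i$ lies in the ideal generated by the $i\times i$ minors of $C(\XX)$. Fix $k$ and choose any $i-1$ distinct indices $j_1,\dotsc,j_{i-1} \in \{1,\dotsc,d\}\setminus\{k\}$. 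Consider the submatrix of $C(\XX)$ formed from the rows indexed by $e_{kk}, e_{kj_1}+e_{j_1k}, \dotsc, e_{kj_{i-1}}+e_{j_{i-1}k}$ and the columns indexed by $k, j_1, \dotsc, j_{i-1}$. With this ordering, the submatrix is lower triangular with $X_k$ on the diagonal, so its determinant equals $\pm X_k^i$.

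Lemma~\ref{lem:all_monomials} then gives $\orbsize$-maximality of $\Sym_d(\fO)$, and Corollary~\ref{cor:O_max} closes the argument. There is no real obstacle here: the computation mirrors the $\So_d$ case but is slightly easier because the presence of the diagonal matrices $e_{ii}$ already pins down $\genidim$ and contributes the ``diagonal'' entries that make the minor-submatrix lower triangular. The only mild subtlety is ensuring that the argument is characteristic-free, which holds because the given basis, the matrix $C(\XX)$, and the minor $\pm X_k^i$ are all defined over $\ZZ$ and do not involve any factor of $2$.
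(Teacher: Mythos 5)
Your proof is correct and takes essentially the same approach as the paper: both establish $\orbsize$-maximality of $\Sym_d(\fO)$ via Lemma~\ref{lem:all_monomials} and then conclude with Corollary~\ref{cor:O_max}. The only cosmetic difference is that the paper arranges the rows of $C(\XX)$ into a recursively defined matrix $m'$ and verifies by induction that $X_1^i,\dotsc,X_d^i$ appear as $i\times i$ minors, while you exhibit the relevant lower-triangular $i\times i$ submatrix for $X_k^i$ directly; the two verifications are interchangeable.
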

\begin{proof}
  This proof is similar to that of Proposition~\ref{prop:so} 
  and we use the same notation.
  By considering the images of the first unit vector in $\fO^d$ under the
  matrices $e_{11}$ and
  $e_{1j} + e_{j1}$ ($2 \le j \le d$), we find that $\genidim(\Sym_d(\fO)) = d$.
  Given $z_1,\dotsc,z_\ell$, recursively define an $\binom{\ell+1} 2\times \ell$ matrix
  \[
  m'(z_1,\dotsc,z_\ell) :=
  \left[
  \begin{array}{c|c}
    \begin{matrix}z_1 \\z_2 \\\vdots\\z_\ell\end{matrix}
 &
   \begin{matrix} \\z_1 \\ & \ddots \\ &&z_1\end{matrix}
    \\\hline
    \begin{matrix}0 \\ \vdots \\0 \end{matrix}
 &
   m'(z_2,\dotsc,z_\ell)
    \end{array}\right].
  \]
  An induction similar to the one in the proof of Proposition~\ref{prop:so}
  shows that $X_1^i,\dotsc,X_d^i$ are $i\times i$
  minors of $m'(X_1,\dotsc,X_d)$ for $1\le i \le d$.
  As $\XX \dtimes \Sym_d(\fO[\XX])$ is the row span of
  $m'(X_1,\dotsc,X_d)$ over $\fO[\XX]$,
  the claim follows from Lemma~\ref{lem:all_monomials} and Corollary~\ref{cor:O_max}.
\end{proof}

\begin{prop}
  \label{prop:sp}
  $\Zeta_{\Sp_{2d}(\fO)}(T) = \Zeta_{\Gl_{2d}(\fO)}(T) = \frac{1-q^{-2d}T}{(1-T)^2}$.
\end{prop}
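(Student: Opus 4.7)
Following Propositions~\ref{prop:so} and~\ref{prop:Symd}, the plan is to verify that $\genidim(\Sp_{2d}(\fO))=2d$ and that $\Sp_{2d}(\fO)$ is $\orbsize$-maximal; Corollary~\ref{cor:O_max} together with Proposition~\ref{prop:Mdxe} then yields the claim.

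The $\genidim$ computation is immediate: for $\xx=\ee_1$, the image $\ee_1\cdot \Sp_{2d}(\fO)$ consists of the first rows of all matrices $\bigl[\begin{smallmatrix} a & b\\c & -a^\top\end{smallmatrix}\bigr]$ with $b,c$ symmetric, and the first rows of $a$ and of $b$ together range freely over $\fO^d\oplus \fO^d=\fO^{2d}$; hence $\genidim(\Sp_{2d}(\fO))=2d$.

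For $\orbsize$-maximality, let $\XX=(X_1,\ldots,X_d,Y_1,\ldots,Y_d)$ and assemble $C(\XX)$ from the $\fO$-basis of $\Sp_{2d}(\fO)$ given by $a_{ij}:=e_{ij}-e_{d+j,d+i}$, $b_{ii}:=e_{i,d+i}$, $b_{ij}:=e_{i,d+j}+e_{j,d+i}$ for $i<j$, and the analogous $c$-type generators. The corresponding rows read
\[
\XX\cdot a_{ij}=X_i\ee_j-Y_j\ee_{d+i},\quad \XX\cdot b_{ij}=X_i\ee_{d+j}+X_j\ee_{d+i},\quad \XX\cdot c_{ij}=Y_i\ee_j+Y_j\ee_i.
\]
To realise $X_1^i$ as an $i\times i$ minor for every $1\le i\le 2d$, I take the $2d\times 2d$ submatrix on the rows $a_{1,1},\ldots,a_{1,d},b_{1,2},\ldots,b_{1,d},b_{1,1}$ with column $d+1$ moved to the last position. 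An inspection shows that the result has the block upper-triangular shape
\[
\begin{bmatrix} X_1 I_d & 0 & \ast\\ 0 & X_1 I_{d-1} & \ast\\ 0 & 0 & X_1\end{bmatrix},
\]
where the two $\ast$-columns collect the various $-Y_j$ and $X_j$ contributions; consequently each top-left $i\times i$ principal minor equals $X_1^i$. An analogous construction using $c_{1,1},c_{1,2},\ldots,c_{1,d}$ together with $a_{2,1},\ldots,a_{d,1}$ realises $Y_1^i$, and relabelling indices handles $X_k^i$ and $Y_k^i$ for all $k$. Lemma~\ref{lem:all_monomials} and Corollary~\ref{cor:O_max} then conclude.

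The main obstacle is the range $d<i\le 2d$, where $a$-type rows alone cannot yield an $i$-th power of a single variable. The decisive observation is that each $b$-type row $b_{1,j}$ contributes a fresh $X_1$ in a previously unused column while dumping its only other nonzero entry $X_j$ into the very column $d+1$ that already accumulates the $-Y_j$'s from the $a$-rows; banishing this one contaminated column to the right-hand edge exposes the block-triangular pattern above.
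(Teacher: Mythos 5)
Your proposal follows the same route as the paper: compute $\genidim(\Sp_{2d}(\fO))=2d$, exhibit enough single-variable powers among the minors of $C(\XX)$ to verify $\orbsize$-maximality via Lemma~\ref{lem:all_monomials}, and conclude with Corollary~\ref{cor:O_max} and Proposition~\ref{prop:Mdxe}. Your explicit row selections unpack what the paper's block matrix $\tilde m(\XX,\XX')$ does in compressed form: the first $d$ rows of the $m'(\XX)$ block sitting in columns $d+1,\dots,2d$ are precisely your $b_{1,1},\dots,b_{1,d}$ (cf.\ the generators $e_{11},e_{1j}+e_{j1}$ in the $\Sym_d$ proof), so the $2d\times 2d$ submatrix you build for $X_1^{2d}$ is exactly the one the paper obtains by stacking the first $a$-block onto $m'(\XX)$. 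The only presentational difference is that you make the ``one contaminated column'' phenomenon explicit, whereas the paper appeals to the already-established minors of $m'$.

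There is one genuine slip, though it is easily repaired. The row set you name for $Y_1$, namely $c_{1,1},\dots,c_{1,d}$ together with $a_{2,1},\dots,a_{d,1}$, has only $2d-1$ rows; moreover these rows occupy only the $2d-1$ columns $1,\dots,d,d+2,\dots,2d$ (column $d+1$ is never hit), so no $2d\times 2d$ submatrix, and in particular no $Y_1^{2d}$, can be extracted from them. Including $a_{1,1}$, whose row is $X_1\ee_1 - Y_1\ee_{d+1}$, supplies the missing row and the missing column $d+1$; moving column $1$ to the last position then again yields an upper-triangular $2d\times 2d$ matrix with $\pm Y_1$ along the diagonal. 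With this one addition your argument is complete and matches the paper's.
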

\begin{proof}
  We proceed along the same lines as the preceding two proofs.
  Let $e_{ij}$ denote the usual elementary matrix, now of size $2d\times 2d$.
  Using these matrices, it is easy to see that $(1,0,\dotsc,0)
  \dtimes \Sp_{2d}(\fO) = \fO^{2d}$ whence $\genidim(\Sp_{2d}(\fO)) = 2d$.
  As an $\fO$-module, $\Sp_{2d}(\fO)$ is generated by the following
  matrices:
  (i) $e_{ij} - e_{d+j,d+i}$ ($1 \le i,j \le d$), 
  (ii)~$e_{i,d+i},\, e_{d+i,i}$ ($1 \le i \le d$), and
  (iii) $e_{i,d+j} + e_{j,d+i}, \,e_{d+i,j} + e_{d+j,i}$ ($1 \le i < j
    \le d$).
  Write $\XX = (X_1,\dotsc,X_d)$ and $\XX'=(X_1',\dotsc,X_d')$.
  Define $m'(z_1,\dotsc,z_\ell)$ as in the proof of Proposition~\ref{prop:Symd}.
  Then $(\XX,\XX') \dtimes \Sp_{2d}(\fO[\XX,\XX'])$ is generated by
  the rows of

  \[
  \tilde m(\XX,\XX') := 
  \left[
  \begin{array}{c|c}
    \begin{matrix} X_1 \\ & \ddots \\ & & X_1 \end{matrix}
    &
    \begin{matrix}
      -X_1'&\phantom {-X_1'}&\phantom{-X_1'}\\
      \vdots&&\\
      -X_d'&&
    \end{matrix}\\
\hline  \\  \vdots & \ddots\\\\\hline
    \begin{matrix} X_d \\ & \ddots \\ & & X_d \end{matrix}
    &
    \begin{matrix}
      &\phantom{-X_1'} &-X_1'\\
      &&\vdots\\
      &&-X_d'
    \end{matrix}\\
    \hline\hline
    & m'(\XX) \\\hline
    m'(\XX')
  \end{array}
  \right].
  \]
  Using what we have shown about the minors of $m'(\XX)$ 
  in the proof of Proposition~\ref{prop:Symd}, we conclude that
  $X_i^j$ and $(X'_i)^j$ ($i=1,\dotsc,d; j=1,\dotsc,2d)$ arise as 
  $j\times j$ minors of $\tilde m(\XX,\XX')$.
  Again, the claim thus follows from
  Lemma~\ref{lem:all_monomials} and Corollary~\ref{cor:O_max}.
\end{proof}

In contrast to the above examples, 
neither $\Nil_d(\fO)$ nor $\mathfrak{tr}_d(\fO)$ (for $d > 1$) is $\orbsize$-maximal.

\begin{prop}
  \label{prop:n_tr}
  \quad
  \begin{enumerate}
  \item \label{prop:n_tr1}
    $\Zeta_{\Nil_d(\fO)}(T) = \frac{(1 - T)^{d-1}}{(1 - qT)^d}$.
  \item \label{prop:n_tr2}
    $\Zeta_{\mathfrak{tr}_d(\fO)}(T) =
    \Zeta_{\Nil_{d+1}(\fO)}(q^{-1}T) = \frac{(1-q^{-1}T)^d}{(1-T)^{d+1}}$.
  \end{enumerate}
\end{prop}
\begin{proof}
  Since $\Nil_{d+1}(\fO)$ is obtained from $\mathfrak{tr}_d(\fO)$ by adding a zero row
  and a zero column, by Corollary~\ref{cor:hadamard}, it 
  suffices to prove (\ref{prop:n_tr1}).
  Let $\xx \in \fO^d\setminus\{0\}$.
  Then $\xx \, \mathfrak n_d(\fO)$ is generated by
  \[
  \Bigl\{
  \bigl(0,x_1,0,\dotsc,0\bigr), \,\,
  \bigl(0,0,\gcd(x_1,x_2),0,\dotsc,0\bigr), \,\,
  \dotsc, \,\,
  \bigl(0,\dotsc,0,\gcd(x_1,\dotsc,x_{d-1})\bigr)
  \Bigr\};
  \]
  in particular, $\genidim(\Nil_d(\fO)) = d-1$.
  Moreover, by \eqref{eq:int_O_minors} and Lemma~\ref{lem:O_Mdxe}, 
  \[
  \orbsize_{\Nil_d(\fO)}(\xx,y) = \abs{y}^{1-d} \dtimes \norm{x_1,y} \dtimes
  \norm{x_1,x_2,y}\dotsb \norm{x_1,\dotsc,x_{d-1},y}.
  \]
  Hence, using \eqref{eq:int_O_minors} and Lemma~\ref{lem:Fr},
  $(1-q^{-1})\Zeta_{\Nil_d(\fO)}(q^{-s}) = F_{d-1}(s-2,-1,\dotsc,-1)$.
\end{proof}

\subsection{Diagonal matrices}
\label{ss:diag}

Let $\Diag_d(\fO) \subset \Gl_d(\fO)$ be the subalgebra
of diagonal matrices.
Clearly $\Diag_1(\fO) = \Gl_1(\fO)$ so that $\Zeta_{\Diag_1(\fO)}(T) =
\frac{1 - q^{-1}T}{(1-T)^2}$. 
It turns out that the functions $\Zeta_{\Diag_d(\fO)}(T)$
have essentially been computed by Brenti~\cite[Thm~3.4]{Bre94} in a different
context;
the author is grateful to Angela Carnevale for pointing this out.
First recall the definitions of $\BB_n$, $\Des(\sigma)$, and
$\negative(\sigma)$ from \S\ref{ss:permstat}.
For $\sigma \in \BB_n$, let $\descent(\sigma) := \#\Des(\sigma)$;
the function $\descent$ is known as the ``descent statistic''.
Define a polynomial
$h_n(X,Y) := \sum\limits_{\sigma\in \BB_n} X^{\negative(\sigma)} Y^{\descent(\sigma)}$.

\begin{thm}[{\cite[Thm 3.4(ii)]{Bre94}}]
  \label{thm:brenti}
  $\sum\limits_{i=0}^\infty
  (i(X+1) + 1)^n Y^i = \frac{h_n(X,Y)}{(1-Y)^{n+1}}$ for $n \ge 1$.
\end{thm}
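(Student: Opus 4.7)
The plan is to reduce the identity to a signed Worpitzky-type expansion and then sum a standard binomial series.

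\textbf{Step 1 (Signed Worpitzky expansion).} I would establish the polynomial identity
\[
(i(X+1)+1)^n = \sum_{\sigma \in \BB_n} X^{\negative(\sigma)} \binom{i + n - \descent(\sigma)}{n}, \qquad i \in \NN_0,
\]
which is the direct type-$B$ analogue of the classical Worpitzky identity. Combinatorially, the left-hand side enumerates length-$n$ words over the alphabet $\{-i,\dots,-1,0,1,\dots,i\}$ in which each positive letter and the letter $0$ carry weight $1$ and each negative letter carries weight $X$; the total weight of a word is $X^{(\text{number of negative entries})}$ and summing over all words reproduces $(i(X+1)+1)^n$. Each such word is sorted by a canonical weakly-increasing rearrangement, breaking ties by a fixed rule on signs; this sort attaches to each word a unique signed permutation $\sigma \in \BB_n$ recording the original positions. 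With $\sigma$ held fixed, the compatible sorted value sequences are parametrised (by the standard stars-and-bars argument) by $\binom{i+n-\descent(\sigma)}{n}$, while the sign weights accumulate to $X^{\negative(\sigma)}$.

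\textbf{Step 2 (Summation).} Multiplying Step~1 by $Y^i$, summing over $i \ge 0$, and interchanging the order of summation, I would then apply the elementary identity
\[
\sum_{i \ge 0} \binom{i + n - k}{n} Y^i = \frac{Y^k}{(1-Y)^{n+1}} \qquad (0 \le k \le n)
\]
with $k = \descent(\sigma)$, to obtain
\[
\sum_{i \ge 0} (i(X+1)+1)^n Y^i
= \sum_{\sigma \in \BB_n} X^{\negative(\sigma)} \dtimes \frac{Y^{\descent(\sigma)}}{(1-Y)^{n+1}}
= \frac{B_n(X,Y)}{(1-Y)^{n+1}},
\]
which is the asserted formula.

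\textbf{Step 3 (Fallback via induction).} If the sorting bijection of Step~1 feels too heavy, a cleaner alternative is induction on $n$. Writing $f_n(Y) := \sum_{i \ge 0} (i(X+1)+1)^n Y^i$, one directly checks that
\[
f_n(Y) = f_{n-1}(Y) + (X+1)\,Y f_{n-1}'(Y).
\]
It would then suffice to verify the matching recursion
\[
B_n(X,Y) = \bigl[1 + (n(X+1) - 1) Y\bigr] B_{n-1}(X,Y) + (X+1)\,Y(1-Y)\,\frac{\partial B_{n-1}(X,Y)}{\partial Y},
\]
together with the base case $B_1(X,Y) = 1 + XY$ (checked by hand), by analysing how the statistics $\descent$ and $\negative$ change when the signed letter $\pm n$ is inserted in one of the $2n$ possible positions of a signed permutation in $\BB_{n-1}$.

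The main obstacle is the type-$B$ bookkeeping common to both routes: in contrast to the symmetric-group case, inserting $-n$ as the new largest letter in absolute value can create or destroy a descent at position~$1$ depending on whether the existing first letter is negative, and the tie-breaking rule used in the canonical sort must be chosen to make this interaction compatible with the definition of $\descent$ on $\BB_n$ (where $0^\sigma = 0$ is used as a left sentinel). Once this case analysis is nailed down, both Steps~1 and~3 go through essentially mechanically.
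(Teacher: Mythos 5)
The paper does not actually prove this theorem: it cites it directly from Brenti~\cite{Bre94} (Theorem~3.4(ii) there) and uses it as a black box in the proof of Corollary~\ref{cor:diagonal}. So there is no internal argument in the paper to compare against; your proof is a self-contained replacement for the citation.

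That said, both of your routes are mathematically sound. The signed Worpitzky identity of Step~1 is indeed the correct type-$B$ analogue of the classical Worpitzky expansion — I checked it by hand for $n=1$ (giving $(i+1)+iX$) and $n=2$ (giving $\binom{i+2}{2}+(1+4X+X^2)\binom{i+1}{2}+X^2\binom{i}{2}$, which matches $(i(X+1)+1)^2$), and the subsequent negative-binomial summation $\sum_{i\ge 0}\binom{i+n-k}{n}Y^i = Y^k/(1-Y)^{n+1}$ is standard (the shift works because $\binom{i+n-k}{n}=0$ for $0\le i<k$). Step~3 is the cleaner route and is essentially complete: the functional recursion $f_n(Y)=f_{n-1}(Y)+(X+1)\,Y\,f_{n-1}'(Y)$ follows by inspection, and plugging $f_{n-1}=B_{n-1}/(1-Y)^n$ into it forces exactly the polynomial recursion you write for $B_n$, so the only combinatorial content left is to verify that recursion by case analysis on where $\pm n$ is inserted into an element of $\BB_{n-1}$ (together with the base case $B_1=1+XY$). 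You correctly flag the one delicate point: the convention $0^\sigma=0$ means that inserting $-n$ in position~$1$ always creates a descent there, while inserting $n$ in position~$1$ never does, and this asymmetry is exactly what produces the extra $X$ in the $n(X+1)$ coefficient. Since the paper outsources this entirely to Brenti, your inductive argument is the shortest path to a self-contained verification should one be wanted.
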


The following marks a departure from the simplicity of previous
examples
of $\Zeta_M(T)$.

\begin{cor}
  \label{cor:diagonal}
  $\Zeta_{\Diag_d(\fO)}(T) = \frac{h_d(-q^{-1},T)}{(1-T)^{d+1}}$.
\end{cor}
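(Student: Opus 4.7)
My plan is to reduce $\Zeta_{\Diag_d(\fO)}(T)$ to a one-variable generating function by exploiting the direct sum structure, then evaluate the single-variable sum and appeal directly to Theorem~\ref{thm:brenti}.

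First, I would observe that $\Diag_d(\fO)$ is the external direct sum of $d$ copies of $\fO = \Mat_1(\fO)$, each acting on a single coordinate of $\fO^d$. By iterating Lemma~\ref{lem:oplus}, this gives
\[
\ask{\Diag_d(\fO)_n}{\fO_n^d} = \bigl(\ask{\fO_n}{\fO_n}\bigr)^d
\]
for every $n \ge 0$, reducing the computation to the one-dimensional case.

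Next I would compute $\ask{\fO_n}{\fO_n}$ by hand. For $a \in \fO_n\setminus\{0\}$ with $\nu(a) = j \in \{0,\dotsc,n-1\}$, multiplication by $a$ on $\fO_n$ has kernel of size $q^j$, and there are $q^{n-j} - q^{n-j-1}$ such $a$; the element $0 \in \fO_n$ contributes a kernel of size $q^n$. Summing and dividing by $q^n$ yields
\[
\ask{\fO_n}{\fO_n} = 1 + n(1-q^{-1}).
\]
Consequently,
\[
\Zeta_{\Diag_d(\fO)}(T) = \sum_{n=0}^\infty \bigl(1 + n(1-q^{-1})\bigr)^d T^n.
\]

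Finally, I would specialise Theorem~\ref{thm:brenti} with $X = -q^{-1}$ and $Y = T$: since $X + 1 = 1 - q^{-1}$, the right-hand side of Brenti's identity matches the series above, and we obtain the desired formula
\[
\Zeta_{\Diag_d(\fO)}(T) = \frac{B_d(-q^{-1},T)}{(1-T)^{d+1}}.
\]
There is no serious obstacle here: the only arguably delicate point is the bookkeeping in counting elements of each valuation in $\fO_n$, but this is routine. The work has already been absorbed into Brenti's result, and Lemma~\ref{lem:oplus} lets us invoke it in exactly the right form.
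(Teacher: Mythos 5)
Your argument is correct and is essentially the paper's proof: the paper packages your iterated use of Lemma~\ref{lem:oplus} as Corollary~\ref{cor:hadamard} (the Hadamard product of ask zeta functions over a direct sum), and reads off the coefficients $\ask{\fO_n}{\fO_n} = 1 + n(1-q^{-1})$ from the already-established formula $\Zeta_{\Diag_1(\fO)}(T) = \Zeta_{\Gl_1(\fO)}(T) = (1-q^{-1}T)/(1-T)^2$ rather than recomputing them by hand, but these are cosmetic differences. Both proofs then conclude by the same specialisation $X = -q^{-1}$, $Y = T$ of Theorem~\ref{thm:brenti}.
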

\begin{proof}
  By Corollary~\ref{cor:hadamard},
  $\Zeta_{\Diag_d(\fO)}(T)$ is the $d$th Hadamard power
  of $\Zeta_{\Diag_1(\fO)}(T)$.
  Since
  $\Zeta_{\Diag_1(\fO)}(T) = \frac{1-q^{-1} T}{(1-T)^2}
  = \sum_{i=0}^\infty (1 + i - iq^{-1})T^i$,
  the claim follows from Theorem~\ref{thm:brenti}.
\end{proof}

\begin{ex}
{\footnotesize
  \quad
  \begin{enumerate}
  \item
    $\begin{aligned}[t]
    \Zeta_{\Diag_2(\fO)}(T)  = \frac
    {1 + T - 4q^{-1}T + q^{-2}T + q^{-2}T^2}
    {(1 - T)^3}\end{aligned}.$
\item
    $\begin{aligned}[t]\Zeta_{\Diag_3(\fO)}(T) = 
    \frac{
      1
      + 6q^{-2}T 
      - 12q^{-1}T 
      + 4T 
      + T^2 
      - q^{-3}T 
      - 4q^{-3}T^2
      + 12q^{-2}T^2 
      - 6q^{-1}T^2 
      - q^{-3}T^3 
    }{(1 - T)^4
    }.\end{aligned}$
  \end{enumerate}
}
\end{ex}

We note that permutation statistics have previously featured in explicit
formulae for representation zeta functions~\cite{SV14,BC17};
see also \cite{CV17,CSV17}.

\section{Constant rank spaces}
\label{s:examples_min}

By a \emph{constant rank space} over a field $F$,
we mean a subspace $M\subset\Mat_{d\times e}(F)$ such that
all non-zero elements of $M$ have the same rank, say $r$;
we then say that $M$ has constant rank $r$.
Such spaces have been studied extensively in the literature
(see e.g.\ \cite{Bea81,Syl86,Wes87,IL99,BFM13}), often in the context of
vector bundles on projective space.
A problem of particular interest is 
to find, for given $d$ and $r$, the largest possible dimension of a subspace
of $\Mat_d(\CC)$ of constant rank $r$.
Apart from trivial examples such as band matrices (see Example~\ref{ex:band}
below),
the construction of  constant rank spaces
(in particular those of large dimension)
seems to be challenging. 
Note that if $M \subset \Mat_{d\times e}(\FF_q)$ has constant rank~$r$ and
dimension $\ell$,
then
\begin{equation}
  \label{eq:ask_constant_rank}
\ask M {} =  q^{-\ell} \bigl(q^d + (q^{\ell}-1)q^{d-r}\bigr)
= q^{d-\ell} + q^{d-r} - q^{d - \ell - r}.
\end{equation}

In \S\ref{ss:K_min}, we consider a natural analogue,
$\kersize$-minimality, of the concept of $\orbsize$-maximality
studied in \S\ref{ss:O_max}. 
We then derive interpretations of these notions in a global
setting in \S\ref{ss:global_extremality}---in particular, we will see
that $\kersize$-minimality is related to constant rank spaces.

\subsection{$\kersize$-minimality}
\label{ss:K_min}

Let $\fO$ be the valuation ring of a non-Archimedean local field $K$ of
arbitrary characteristic.
Let $M \subset \Mat_{d\times e}(\fO)$ be a submodule.
Recall the definition of $\kersize_M$ from Definition~\ref{d:OK}.

\begin{lemma}
  Let $F\colon \fO^\ell \to M$ be an $\fO$-module isomorphism,
  $\ww \in \fO^\ell$, and $y \in \fO\setminus\{0\}$.
  Then $\kersize_M(F(\ww),y) \ge \abs{y}^{\genrank(M)-d} \norm{\ww,y}^{-\genrank(M)}$.
\end{lemma}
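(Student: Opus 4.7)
The plan is to bound the equivalence type of $F(\ww)$ from below using the $\fO$-linearity of $F$, and then read off the inequality from Lemma~\ref{lem:size_ker}(iii). I would begin by setting $a := F(\ww)$, $n := \nu(y)$, $r := \rank_K(a)$, and letting $(\lambda_1, \dotsc, \lambda_r)$ denote the equivalence type of~$a$; since $F$ is an isomorphism and $\ww \ne 0$, we have $a \ne 0$, so $r \ge 1$. Lemma~\ref{lem:size_ker}(iii) then gives
$$\kersize_M(a, y) = q^{(d-r)n + \sum_{i=1}^r \min(\lambda_i, n)},$$
so the problem reduces to a suitable lower bound on the $\lambda_i$.

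For that bound, I would expand $F(\ww) = \sum_{j=1}^\ell w_j F(e_j)$ along a basis $(e_1, \dotsc, e_\ell)$ of $\fO^\ell$. Each entry of $F(\ww)$ is then an $\fO$-linear form in $\ww$, so each $i \times i$ minor of $F(\ww)$ is a polynomial of total degree at most $i$ in $\ww$; since $F(0) = 0$, it is in fact homogeneous of degree $i$. Hence $\norm{\ff_i(a)} \le \norm{\ww}^i$, which combined with Lemma~\ref{lem:size_ker}(i) yields $\lambda_1 + \dotsb + \lambda_i \ge i\,\nu(\ww)$. Taking $i = 1$ gives $\lambda_1 \ge \nu(\ww)$, and since $\lambda_1 \le \dotsb \le \lambda_r$, we obtain $\lambda_i \ge \nu(\ww)$, hence $\min(\lambda_i, n) \ge \min(\nu(\ww), n)$, for each $i$.

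It then remains to assemble: the above gives $\kersize_M(a, y) \ge q^{(d-r)n + r \min(\nu(\ww), n)}$, while the target right-hand side $\abs{y}^{\genrank(M) - d} \norm{\ww, y}^{-\genrank(M)}$ equals $q^{(d-R)n + R \min(\nu(\ww), n)}$, where $R := \genrank(M)$. The difference of the exponents is $(R - r)\bigl(n - \min(\nu(\ww), n)\bigr)$, which is non-negative since $r \le R$ and $\min(\nu(\ww), n) \le n$. I do not expect any serious obstacle: the argument is essentially bookkeeping around Lemma~\ref{lem:size_ker}, using only that $\fO$-linearity of $F$ forces the $i \times i$ minors of $F(\ww)$ to be homogeneous of degree $i$ in $\ww$.
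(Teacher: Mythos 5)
Your proposal is correct and follows essentially the same route as the paper: both establish $\lambda_1 \ge \nu(\ww)$ from the $\fO$-linearity of $F$ (the paper does this directly by noting each entry of $F(\ww)$ is an $\fO$-linear combination of the $w_j$; you detour through minor homogeneity but then use only $i=1$, which is the same thing), then use $\lambda_1 \le \dotsb \le \lambda_r$, Lemma~\ref{lem:size_ker}(iii), and $r \le \genrank(M)$ to assemble the bound. The homogeneity observation for general $i$ is not needed here, but it is not wrong.
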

\begin{proof}
  We may assume that $\ww\not= 0$.
  Let $a := F(\ww)$ have equivalence type $(\lambda_1,\dotsc,\lambda_r)$ (see
  \S\ref{ss:size_ker}); of course,
  $r \le \genrank(M)$.
  Let $m := \min(\nu(w_1),\dotsc,\nu(w_\ell))$, $n = \nu(y)$, and $a := F(\ww)$.
  Then $m \le \min(\nu(a_{ij}) : 1\le i,j\le d) = \lambda_1 \le \lambda_2 \le
  \dotsb \le \lambda_r$
  and Lemma~\ref{lem:size_ker}(iii) shows that
  \begin{align*}
  \kersize_M(F(\ww),y) &\ge q^{r\min(m,n) + (d-r)n} \ge q^{\genrank(M) \min(m,n) +
    (d-\genrank(M))n}\\&
  = \abs{y}^{\genrank(M)-d} \norm{\ww,y}^{-\genrank(M)}.\qedhere
  \end{align*}
\end{proof}

The following is analogous to Lemma~\ref{lem:Omax_null}.

\begin{lemma}
  For an $\fO$-module isomorphism $F\colon \fO^\ell \to M$, the following are equivalent:
  \begin{enumerate}
  \item $\kersize_M(F(\ww),y) = \abs{y}^{\genrank(M)-d}
  \norm{\ww,y}^{-\genrank(M)}$ for all $(\ww,y) \in \fO^\ell\times \fO$
  outside a set of measure zero.
  \item
    $\kersize_M(F(\ww),y) = \abs{y}^{\genrank(M)-d}
  \norm{\ww,y}^{-\genrank(M)}$ for all 
  $\ww \in \fO^\ell$ and $y \in \fO\setminus\{0\}$.
  \qed
  \end{enumerate}
\end{lemma}

\begin{defn}
  We say that $M$ is \emph{$\kersize$-minimal} if there exists an $\fO$-module
  isomorphism $F\colon \fO^\ell \to M$ which satisfies one of the equivalent
  conditions from the preceding lemma.
\end{defn}

Clearly, if $\xx \in \fO^n$ and $a \in \GL_n(\fO)$, then $\norm{\xx a} =
\norm{x}$.
We conclude that 
if the condition in the preceding definition is satisfied for some
isomorphism $F\colon \fO^\ell\to M$, then it holds for all of them.
Lemma~\ref{lem:Fr} and arguments as in the proof of Corollary~\ref{cor:O_max}
now imply the following.

\begin{prop}
  \label{prop:Z_constant_rank}
  Let $r = \genrank(M)$ and $\ell = \dim_K(M \otimes K)$.
  Then $M$ is $\kersize$-minimal if and only if
  \[
  \pushQED{\qed}
  \Zeta_M(T) = \frac{1-q^{d-\ell-r}T}{(1-q^{d-\ell}T)(1-q^{d-r}T)}.\qedhere
  \popQED
  \]
\end{prop}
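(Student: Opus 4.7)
The plan is to evaluate $\Zeta_M(T)$ directly from the first integral formula in Theorem~\ref{thm:twoint}, exploiting the $\kersize$-minimality hypothesis to reduce to an integral already computed in Lemma~\ref{lem:Fr}. Writing $r = \genrank(M)$ and fixing an $\fO$-module isomorphism $F \colon \fO^\ell \to M$ as furnished by the definition of $\kersize$-minimality, the key observation is that $F$ transports the normalized Haar measure on $\fO^\ell$ to the normalized Haar measure on $M$. This is legitimate because $\kersize$-minimality forces $M$ to be isolated in $\Mat_{d \times e}(\fO)$, hence a free direct summand of rank $\ell$, so $F$ is a topological isomorphism of compact abelian groups.

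Concretely, Theorem~\ref{thm:twoint} yields, for $\Real(s)$ sufficiently large,
\[
(1-q^{-1}) \Zeta_M(q^{-s}) = \int_{M \times \fO} \abs{y}^{s-1} \kersize_M(a,y) \,\dd\mu_{M \times \fO}(a,y).
\]
After the change of variables $a = F(\ww)$ and substitution of the defining identity $\kersize_M(F(\ww),y) = \abs{y}^{r-d} \norm{\ww,y}^{-r}$ (valid off a set of measure zero), the right-hand side becomes
\[
\int_{\fO^\ell \times \fO} \abs{y}^{s + r - d - 1} \norm{\ww,y}^{-r} \,\dd\mu_{\fO^\ell \times \fO}(\ww,y) = F_\ell(s+r-d-1,\,0,\dotsc,0,\,-r),
\]
in the notation of Lemma~\ref{lem:Fr}.

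The final step is to apply the special case of Lemma~\ref{lem:Fr} with $a_0 = s+r-d-1$ and $a_\ell = -r$. A brief computation of the three exponents $-a_0 - \ell - 1 = d - r - \ell - s$, $\,-a_0 - a_\ell - \ell - 1 = d - \ell - s$, and $\,-a_0 - 1 = d - r - s$ immediately yields
\[
F_\ell(s+r-d-1,0,\dotsc,0,-r) = \frac{(1-q^{-1})(1-q^{d-r-\ell}T)}{(1-q^{d-\ell}T)(1-q^{d-r}T)},
\]
with $T = q^{-s}$. Dividing through by $(1 - q^{-1})$ gives the claimed formula. There is no real obstacle here: the hypothesis was tailor-made to collapse the generic $\orbsize$- or $\kersize$-minors formula into a single-norm integrand, and the remaining work is bookkeeping. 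The one point that deserves explicit mention is the measure-preserving property of $F$, since without isolation of $M$ the substitution would acquire a non-trivial Jacobian-type factor.
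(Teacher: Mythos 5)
Your argument is correct and is precisely the one the paper intends by its one-line proof ``Lemma~\ref{lem:Fr} now implies the following'': substitute the $\kersize$-minimality formula into the first integral of Theorem~\ref{thm:twoint} and evaluate via the special case of Lemma~\ref{lem:Fr}, with the exponent bookkeeping exactly as you carried it out. One small remark: the measure-preservation of $F$ is automatic because $\mu_M$ is \emph{defined} as the normalized Haar measure on $M$ (which is free of rank $\ell$ over the DVR $\fO$ whether or not it is isolated), so there is never a Jacobian factor to worry about; the role of isolation in the definition of $\kersize$-minimality is rather to make $\norm{F(\ww)}$ coincide with the ambient norm $\norm{a}$, as reflected in the lemma immediately preceding the proposition.
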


The following sufficient condition for $\kersize$-minimality is proved
similarly to Lemma~\ref{lem:all_monomials}.

\begin{lemma}
  \label{lem:all_monomials2}
  Let $(a_1,\dotsc,a_\ell)$ be an $\fO$-basis of $M$.
  Suppose that there exists $N \ge 0$ such that
  for $1 \le i \le \genrank(M)$,
  the ideal generated by the $i\times i$ minors of
  $X_1 a_1+\dotsb + X_\ell a_\ell \in \Mat_{d\times
    e}(\fO[X_1,\dotsc,X_\ell])$
  contains $X_1^N,\dotsc,X_\ell^N$.
  Then $M$ is $\kersize$-minimal. \qed
\end{lemma}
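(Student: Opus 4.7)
The plan is to mirror the proof of Lemma~\ref{lem:all_monomials}, this time invoking Corollary~\ref{cor:formula_K} in place of Corollary~\ref{cor:formula_O}. Set $A(\XX) = X_1 a_1 + \dotsb + X_\ell a_\ell \in \Mat_{d\times e}(\fO[\XX])$, put $r := \genrank(M)$, and for $0 \le i \le r$ let $\bm m_i(\XX)$ denote the set of non-zero $i\times i$ minors of $A(\XX)$ (with $\bm m_0(\XX) = \{1\}$). Since the entries of $A(\XX)$ are $\fO$-linear forms in $\XX$, each element of $\bm m_i(\XX)$ is a homogeneous polynomial of degree $i$ with coefficients in~$\fO$. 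The proposed $\fO$-module isomorphism is $F\colon \fO^\ell \to M$, $\ww \mapsto A(\ww)$.

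The heart of the argument is the identity $\norm{\bm m_i(\ww)} = \norm{\ww}^i$ for all $\ww \in \fO^\ell$ and $1 \le i \le r$. The upper bound $\norm{\bm m_i(\ww)} \le \norm{\ww}^i$ is immediate from the degree-$i$ homogeneity together with the fact that all coefficients lie in~$\fO$. For the reverse inequality I would use the hypothesis that $X_j^i$ lies in the $\fO[\XX]$-ideal generated by $\bm m_i(\XX)$: since every element of $\bm m_i(\XX)$ is homogeneous of degree $i$, projecting an algebraic witness onto its degree-$0$ component yields an identity $X_j^i = \sum_k c_{jk}\mu_k(\XX)$ with $c_{jk} \in \fO$ and $\mu_k \in \bm m_i(\XX)$. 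Evaluating at $\ww$ gives $\abs{w_j}^i \le \norm{\bm m_i(\ww)}$, and taking the maximum over~$j$ yields $\norm{\ww}^i \le \norm{\bm m_i(\ww)}$, as required.

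With this identity in hand, the conclusion is a direct substitution. By Proposition~\ref{prop:genrank}(ii), the locus $\{\ww \in \fO^\ell : \rank_K(A(\ww)) < r\}$ has measure zero, and outside this locus Corollary~\ref{cor:formula_K} applies to $a = F(\ww)$. Using $\norm{\bm m_i(\ww)} = \norm{\ww}^i$ for $0 \le i \le r$, each factor of the product simplifies via
\[
\frac{\norm{\bm m_{i-1}(\ww)}}{\norm{\bm m_i(\ww) \cup y\bm m_{i-1}(\ww)}}
= \frac{\norm{\ww}^{i-1}}{\norm{\ww}^{i-1}\norm{\ww,y}}
= \norm{\ww,y}^{-1},
\]
so the product telescopes to $\norm{\ww,y}^{-r}$ and $\kersize_M(F(\ww),y) = \abs{y}^{r-d}\norm{\ww,y}^{-r}$ for $(\ww,y) \in \fO^\ell \times \fO$ outside a set of measure zero, exactly the defining condition of $\kersize$-minimality.

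There is no deep obstacle here: the routine but pivotal step is the homogeneity argument in the lower bound, which converts the algebraic hypothesis into the sharp numerical identity $\norm{\bm m_i(\ww)} = \norm{\ww}^i$. Once that identity is available, Corollary~\ref{cor:formula_K} and a brief telescoping supply the rest.
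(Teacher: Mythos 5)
Your proof is correct and is exactly the adaptation the paper has in mind: the paper does not write out a proof of Lemma~\ref{lem:all_monomials2}, stating only that it is ``proved similarly to Lemma~\ref{lem:all_monomials}'', and your argument is precisely that adaptation, swapping Corollary~\ref{cor:formula_O} for Corollary~\ref{cor:formula_K} and the minors of $C(\XX)$ for those of $A(\XX) = X_1 a_1 + \dotsb + X_\ell a_\ell$. One small remark: the homogeneity reduction in your lower bound (passing from a witness $X_j^i = \sum_k p_{jk}(\XX)\mu_k(\XX)$ to constant coefficients $c_{jk}$) is correct but not needed; as in the paper's proof of Lemma~\ref{lem:all_monomials}, one can simply evaluate the membership identity at $\ww \in \fO^\ell$ and use $\abs{p_{jk}(\ww)} \le 1$ together with the ultrametric inequality to get $\abs{w_j}^i \le \norm{\bm m_i(\ww)}$. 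Likewise, rather than invoking Proposition~\ref{prop:genrank}(ii), one could observe that the identity $\norm{\bm m_r(\ww)} = \norm{\ww}^r > 0$ for $\ww \ne 0$ already shows the bad locus is $\{0\}$; both routes give a null set, so this is a matter of taste.
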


\begin{ex}[Band matrices]
  \label{ex:band}
  Let $r \ge 1$ and define
  \[
  B_r = \left\{
    \begin{bmatrix}
      x_1 \\
      x_2 & \ddots \\
      \vdots & \ddots & x_1 \\
      x_r & \vdots & x_2 \\
      & \ddots & \vdots\\
      & & x_r
    \end{bmatrix}
    : x_1,\dotsc,x_r \in \fO
    \right\} \subset \Mat_{(2r-1)\times r}(\fO).
  \]
  By Lemma~\ref{lem:all_monomials2} and
  Proposition~\ref{prop:Z_constant_rank} (with $d = 2r-1$ and $\ell = r$),
  $\Zeta_{B_r}(T) = \frac{1-q^{-1} T}{(1-q^{r-1}T)^2}$.
\end{ex}

\subsection{A global interpretation}
\label{ss:global_extremality}

Henceforth, let $k$ be a number field with ring of integers $\fo$;
recall the notation from \S\ref{ss:global_alpha}.
Let $\bar k$ be an algebraic closure of $k$.
Let $M \subset \Mat_{d\times e}(\fo)$ be a submodule.
The following can be proved similarly to Proposition~\ref{prop:genrank} (and Proposition~\ref{prop:genidim}).
\begin{lemma}
  \label{lem:global_genrank_genidim}
  \quad
  \begin{enumerate}
  \item \label{lem:global_genrank_genidim1}
    $\max\limits_{a\in M}\rank_k(a)
    = \max\limits_{\bar a \in M\otimes_{\fo} \bar k}\rank_{\bar k}(\bar a)
    = \genrank(M_v)$ for all $v \in \Places_k$.
  \item \label{lem:global_genrank_genidim2}
    $\max\limits_{\xx \in \fo^d}\dim_k(\xx \dtimes (M\otimes_{\fo} k))
    = \max\limits_{\bar\xx \in \bar k^d}\dim_{\bar k}(\bar\xx \dtimes (M\otimes_{\fo} \bar
    k)) = \genidim(M_v)$ for all $v \in \Places_k$. \qed
  \end{enumerate}
\end{lemma}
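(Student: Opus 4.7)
My plan is to prove both parts in parallel, following the template set by Propositions~\ref{prop:genrank} and~\ref{prop:genidim}. The central observation is that each quantity appearing in~(i) and~(ii) is governed by the generic rank of a matrix whose entries are polynomials with coefficients in $\fo$, and whether such a matrix has generic rank at least $r$ is detected by the non-vanishing of some $r\times r$ minor in $\fo[\Lambda]$ (or $\fo[\XX]$)---a condition manifestly independent of the chosen ambient field extension of $k$.

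For part~(i), I would first note that every element of $M\otimes_\fo k$ is a non-zero scalar multiple of an element of $M$ (clear denominators), so $\max_{a\in M}\rank_k(a) = \max_{\bar a \in M\otimes_\fo k}\rank_k(\bar a)$. Next, choose $a_1,\dotsc,a_\ell \in M$ forming a $k$-basis of $M\otimes_\fo k$, and form the generic matrix $A(\Lambda) := \lambda_1 a_1 + \dotsb + \lambda_\ell a_\ell \in \Mat_{d\times e}(\fo[\Lambda])$. Let $r$ be the largest integer such that some $r \times r$ minor of $A(\Lambda)$ is a non-zero polynomial in $\fo[\Lambda]$; equivalently, this minor is non-zero in $F[\Lambda]$ for every field extension $F$ of $k$, in particular for $F \in \{k,\bar k,k_v\}$. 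For each such $F$, replaying the specialisation argument from Proposition~\ref{prop:genrank}---the zero locus in $F^\ell$ of a non-zero polynomial is thin since $F$ is infinite---produces an element of rank $r$ in $M\otimes_\fo F$, while the matching upper bound is automatic. Hence each of the three maxima in~(i) equals the common integer $r$.

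Part~(ii) follows by the same template: I would use the matrix of linear forms $C(\XX)\in\Mat_{\ell\times e}(\fo[\XX])$ from~\S\ref{sss:O} in place of $A(\Lambda)$. Its generic rank over $F(\XX)$ is again detected by the non-vanishing of minors in $\fo[\XX]$, hence takes a common value $r'$ for all $F\in\{k,\bar k,k_v\}$. Specialising $\XX$ to a suitably chosen point of $\fo^d$, $\bar k^d$, or $\fo_v^d$ avoiding the zero locus of such a minor then delivers the three orbit-dimension maxima in~(ii), as in the proof of Proposition~\ref{prop:genidim}.

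The one step I expect to require a bit of care is the identification of $\genrank(M_v)$ with the integer $r$: since $\fo$ need not be a principal ideal domain, the $a_i$ chosen above need not form an $\fo_v$-basis of $M_v$. I would handle this by observing that $M_v\otimes_{\fo_v}k_v = M\otimes_\fo k_v$ and that any $\fo_v$-basis of $M_v$ is obtained from $(a_1,\dotsc,a_\ell)$ by an invertible linear change of basis over $k_v$, so the generic rank of the associated matrix over $k_v(\Lambda)$ is preserved. Once this bookkeeping is out of the way the proof is a direct transcription of the arguments already given in the excerpt, and I anticipate no further substantive obstacles.
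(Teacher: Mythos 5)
Your proof is correct and takes essentially the same approach the paper has in mind: the paper's proof of this lemma is the single sentence ``The following can be proved similarly to Proposition~\ref{prop:genrank} (and Proposition~\ref{prop:genidim})'', and you have correctly expanded this, identifying the invariance of the relevant non-vanishing-of-minors condition (with coefficients in $\fo$) under extension of the base field and handling the bookkeeping between a $k$-basis of $M\otimes_\fo k$ and an $\fo_v$-basis of $M_v$.
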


Extending Definitions~\ref{d:genidim} and \ref{d:genrank},
we let $\genrank(M)$ and $\genidim(M)$ be the common number in (\ref{lem:global_genrank_genidim1}) and
(\ref{lem:global_genrank_genidim2}), respectively.
We will now see that $\kersize$-minimality is closely related to constant rank
spaces.

\begin{prop}
  \label{prop:constant_rank}
  \quad
  \begin{enumerate}
  \item \label{prop:constant_rank1}
    Let $(a_1,\dotsc,a_\ell) \subset M$ be a $k$-basis of $M\otimes_{\fo} k$.
    Let $I_i$ be the ideal of $k[X_1,\dotsc,X_\ell]$ generated
    by the $i\times i$ minors of $X_1 a_1 + \dotsb + X_\ell a_\ell$.
    Then $M \otimes_{\fo} \bar k$ is a constant rank space 
    if and only if there exists $N \ge 0$ such that $X_1^N,\dotsc,X_\ell^N \in
    I_i$ for $i = 1,\dotsc,\genrank(M)$.
  \item \label{prop:constant_rank2}
    If $M \otimes_{\fo} \bar k$ is a constant rank space, then $M_v$ is
    $\kersize$-minimal for almost all $v\in \Places_k$. 
  \end{enumerate}
\end{prop}
\begin{proof}
  \quad
  \begin{enumerate}
  \item
    Let $\Var(I) \subset \bar k^\ell$ be the algebraic set corresponding to 
    $I \normal k[\XX] := k[X_1,\dotsc,X_\ell]$ and
    let $r = \genrank(M)$.
    Then $M\otimes_{\fo}\bar k$ is a constant rank space if and only if 
    $\Var(I_i) \subset \{0\}$ for $i =1,\dotsc,r$
    which, by Hilbert's Nullstellensatz,
    is equivalent to $\sqrt{I_i} \supset \langle X_1,\dotsc,X_r\rangle$ for
    $i = 1,\dotsc,r$.
  \item
    This follows from (\ref{prop:constant_rank1}) and Lemma~\ref{lem:all_monomials2};
    note that if $f \in \fo[\XX]$ belongs to the $k[\XX]$-ideal generated by
    $g_1,\dotsc,g_r\in \fo[\XX]$, then $f$ also belongs to the
    $\fo_v[\XX]$-ideal generated by the $g_i$, at least for almost all $v\in
    \Places_k$.
    \qedhere
  \end{enumerate}
\end{proof}

In view of Lemma~\ref{lem:WxM}, we say that a subspace $M'
\subset\Mat_{d\times e}(F)$ (where $F$ is a field) has
\emph{constant orbit dimension} if all $F$-spaces $\xx M'$ for $\xx \in
F^d\setminus\{0\}$ have the same dimension.
The following counterpart of Proposition~\ref{prop:constant_rank} 
is then proved in the same way.
\begin{prop}
  \label{prop:constant_orbit_dimension}
  \quad
  \begin{enumerate}
  \item
  Let $(a_1,\dotsc,a_\ell) \subset M$ be a $k$-basis of $M\otimes_{\fo} k$.
  Let $J_i$ be the ideal of $k[\XX] := k[X_1,\dotsc,X_d]$ generated
  by the $i\times i$ minors of
  \[
  \begin{bmatrix}
    \XX a_1 \\
    \vdots\\
    \XX a_\ell
  \end{bmatrix}.
  \]
  Then
  $M \otimes_{\fo} \bar k$ has constant orbit dimension.
  if and only if there exists $N\ge 0$ such that $X_1^N,\dotsc,X_d^N \in J_i$
  for $i = 1,\dotsc,\genidim(M)$.
  \item If $M\otimes_{\fo}\bar k$ has constant orbit dimension, then $M_v$ is
    $\orbsize$-maximal for almost all $v\in \Places_k$. \qed
  \end{enumerate}
\end{prop}

\section{Smooth determinantal hypersurfaces}
\label{s:det}

In a series of papers, Voll~\cite{Vol04,Vol05,Vol09}
developed geometric techniques for studying the normal subgroup growth of
finitely generated torsion-free nilpotent groups of class~$2$.
Under suitable genericity assumptions on the Pfaffian hypersurface
attached to such a group, he produced an explicit formula~\cite[Thm~3]{Vol05}
for almost all of its local normal subgroup zeta functions in terms of
numbers of rational points of the aforementioned hypersurface.

The following is an analogue of Voll's result for ask zeta functions.
Here, the role of the Pfaffian hypersurface of a group is
played by the determinantal hypersurface associated with a matrix of linear
forms, a classical topic in algebraic geometry (see Remark~\ref{rem:classical_det}).
Throughout this section, $K$ is a non-Archimedean local field of arbitrary
characteristic with valuation ring $\fO$.
Recall that $\RF = \fO/\fP$ denotes the residue field of $K$.

\begin{thm}
  \label{thm:det}
  Let $a_1,\dotsc,a_\ell \in \Mat_d(\fO)$, where $\ell \ge 1$.
  Let $\XX = (X_1,\dotsc,X_\ell)$ consist of algebraically independent
  variables over $\fO$.
  Write $a(\XX) = X_1 a_1 + \dotsb + X_\ell a_\ell \in \Mat_d(\fO[\XX])$ and
  let $M = \{ a(\xx) : \xx
  \in \fO^\ell\} \subset \Mat_d(\fO)$.
  Let $F(\XX) := \det(a(\XX))$.
  Suppose that the following smoothness condition is satisfied:
  \begin{equation}
    \label{eq:smooth}
    \tag{\textsf{SM}}
    \text{For all } \bar \xx\in \RF^\ell, \text{ if }
    F(\bar\xx) = \frac{\partial F (\bar\xx)}{\partial X_1} = \dotsb =
    \frac{\partial F (\bar\xx)}{\partial X_\ell} = 0,
    \text{ then } \bar\xx = 0.
  \end{equation}
  Let $H := \Proj\!\left(\fO[\XX]/(F(\XX))\right) \subset \PP^{\ell-1}_{\fO}$.
  Then:
  \[
  \Zeta_M(T) = \frac{1-q^{-\ell}T}{(1-T)(1-q^{d-\ell}T)}  + \#H(\RF) \dtimes (q-1)^2
  \dtimes \frac{q^{-\ell}T}{(1-T)^2(1-q^{d-\ell}T)}.
  \]
\end{thm}

A proof of Theorem~\ref{thm:det} will be given below.
We henceforth use the notation of Theorem~\ref{thm:det} and assume that
condition \eqref{eq:smooth} is satisfied.
Note that the latter assumption is certainly satisfied if $H$ is smooth as a scheme over $\fO$.

Let $I_i(\XX)$ be the ideal of $\fO[\XX]$ generated by the $i\times i$ minors of
$a(\XX)$.
For an ideal $J(\XX) \normal \fO[\XX]$ and an element $b$ of an
(associative, commutative, unital) $\fO$-algebra $B$, we write
$J(b) = \{ f(b) : f \in J(\XX) \} \normal B$.

\begin{lemma}[{Cf.\ \cite[Pf of Thm~2.2]{CT79}}]
  \label{lem:detrk}
  \quad
  \begin{enumerate}
  \item 
    \label{lem:detrk1}
    $\displaystyle
    \frac{\partial F(\XX)}{\partial X_i}
    \in I_{d-1}(\XX)$ for $i = 1,\dotsc,\ell$.
  \item\label{lem:detrk2}
    Let $\bar\xx \in \RF^\ell \setminus\{0\}$.
    Then $\rank_{\RF}(a(\bar\xx)) \in \{d-1,d\}$.
  \end{enumerate}
\end{lemma}
 \begin{proof}
   See \cite[p.\ 426]{CT79} for (\ref{lem:detrk1}).
   For (\ref{lem:detrk2}),
   if $\rank_{\RF}(a(\bar\xx)) < d-1$ for $\bar\xx\in \RF^\ell$,
   then $F(\bar\xx) = 0$ and $I_{d-1}(\bar\xx) = \{0\}$.
   Part (\ref{lem:detrk1}) and \eqref{eq:smooth} then imply that
   $\bar\xx = 0$.
 \end{proof}

\begin{cor}
  \label{cor:det_Mrk}
  If $d \ge 2$, then
  $\fO^\ell \to M, \, \xx \mapsto a(\xx)$
    is an isomorphism of $\fO$-modules.
\end{cor}
\begin{proof}
   Let $\xx \in \fO^\ell$ with $a(\xx) = 0$ and
   suppose that $\xx \not= 0$.
   Choose $\pi \in \fP\setminus\fP^2$.
   Then $\xx = \pi^m \yy$ for some $m \ge 0$ and an element $\yy\in \fO^\ell$
   whose image in $\RF^\ell$ is non-zero.
   Then $a(\yy) = 0$ but also $\rank_{\RF}(a(\yy)\otimes\RF)
   \ge d-1 > 0$ by Lemma~\ref{lem:detrk}(\ref{lem:detrk2}), a contradiction.
\end{proof}

\begin{proof}[Proof of Theorem~\ref{thm:det}]
  First suppose that $d = 1$.
  Then $a(\XX) = F(\XX)$ is linear.
  Moreover, condition \eqref{eq:smooth} implies that $M = \Mat_1(\fO)$
  and $\#H(\RF) = (q^{\ell-1}-1)/(q-1)$.
  The claim is now easily verified using a direct computation and
  Proposition~\ref{prop:Mdxe}.

  Let $d \ge 2$.
  Write $U = \fO^\ell$ and
  fix $\pi \in \fP\setminus \fP^2$.
  Let $\xx  \in U \setminus \fP U$ and $y\in \fP\setminus\{0\}$.
  By Lemma~\ref{lem:detrk}(\ref{lem:detrk2}),
  for each $i = 1,\dotsc,d-1$,
  some $i\times i$ minor of $a(\xx)$ belongs to $\fO^\times$.
  Hence, $I_0(\xx) = \dotsc = I_{d-1}(\xx) = \fO$.
  As $F(\XX) \not= 0$,
  $\genrank(M) = d$.
  Using Corollaries~\ref{cor:formula_K} and~\ref{cor:det_Mrk}, 
  \[
  \kersize_M(a(\xx),y)
  = \prod_{i=1}^d
  \frac{\norm{I_{i-1}(\xx)}}
  {\norm{I_{i}(\xx) \cup y I_{i-1}(\xx)}}
  = 
  \norm{F(\xx),y}^{-1}.
  \]

  Write $t = q^{-s}$.
  By Proposition~\ref{prop:proj_twoint}(\ref{prop:proj_twoint2}),
  \begin{equation}
    \label{eq:shorthand}
    (1-q^{d-\ell} t) \dtimes \Zeta_M(t) = 
    1 + (1-q^{-1})^{-1} \int_{(U\setminus\fP U)\times \fP} \omega,
  \end{equation}
  where we wrote $\omega$ as a shorthand for ${\abs{y}^{s-1}}{\norm{F(\xx),y}}^{-1}\!\dd\mu_{U\times\fO}(\xx,y)$.
  In order to evaluate the integral in \eqref{eq:shorthand},
  we decompose the domain of integration into sets 
  of the form $(\xx_0 + \fP U)\times \fP$ for $\xx_0 \in U\setminus\fP U$.
  If $F(\xx_0) \not\equiv 0 \pmod \fP$,
  then clearly
  \begin{align*}
    \int_{(\xx_0 + \fP U)\times \fP}\omega
    & = \int_{(\xx_0 + \fP U)\times \fP} \abs{y}^{s-1}
      \,\dd\mu_{U\times\fO}(\xx,y) 
     = (1-q^{-1}) A(q,t),
  \end{align*}
  where $A(q,t) := q^{-\ell} t /(1-t)$.
  Suppose that $F(\xx_0) \equiv 0 \pmod \fP$.
  Using \eqref{eq:smooth} and Hensel's lemma~\cite[Ch.\ III, \S 4.5, Cor.~2]{Bou89},
  a measure-preserving change of coordinates transforms the
  map induced by $F(\XX)$ on $\xx_0 + \fP U$ into the map induced by
  $X_1$, say,
  on $\fP U$. Thus,
  \begin{align*}
  \int_{(\xx_0 + \fP U)\times \fP}\omega
   & = \int_{\fP U\times \fP}\frac{\abs{y}^{s-1}}{\norm{x_1,y}}
     \dd\mu_{U\times\fO}(\xx,y) 
    = q^{1-\ell} \dtimes \int_{\fP\times\fP} 
     \frac{\abs{y}^{s-1}}{\norm{x,y}}
     \dd\mu_{\fO\times\fO}(x,y)\\
   & =
     q^{1-\ell} \dtimes
     \sum_{m,n=1}^{\infty}
     \mu_{\fO\times\fO}(\pi^m \fO^\times \times \pi^n \fO^\times)
     \dtimes q^{n+\min(m,n)}t^n \\&
    =
     (1-q^{-1})^2 \dtimes q^{1-\ell}
     \sum_{m,n=1}^\infty  
     q^{\min(m,n)-m}\dtimes t^n.
  \end{align*}
  
  By an elementary calculation,
  \[
  \sum_{m,n=1}^\infty q^{\min(m,n)-m}\dtimes t^n
  = \frac{t(1-q^{-1}t)}{(1-q^{-1}) (1-t)^2}
  \]
  and thus
  $\int\limits_{(\xx_0 + \fP U)\times \fP}\omega
  = (1-q^{-1}) B(q,t)$,
  where $B(q,t) = q^{1-\ell} t \dtimes (1-q^{-1}t)/(1-t)^2$.

  The condition $F(\xx_0) \equiv 0\pmod\fP$ is equivalent to
  the image of $\xx_0$ in $\PP^{\ell-1}(\RF)$ being contained in $H(\RF)$.
  Therefore,
  \begin{align*}
    (1-q^{d-\ell}t) \Zeta_M(t) & = 1 + (1-q^{-1})^{-1} \int_{(U \setminus \fP U) \times \fP}
                 \omega\\
    & = 1 +  (q-1) \left(\# \PP^{\ell-1}(\RF) - \# H(\RF)\right) A(q,t) 
      +  (q-1) \# H(\RF) B(q,t) \\
    & = 1 + {(q-1) \#\PP^{\ell-1}(\RF)}\dtimes \frac
      {q^{-\ell}t}{1-t}
      + \#H(\RF) (q-1) \bigl(B(q,t)-A(q,t)\bigr)
  \end{align*}
  and the claim follows from
  $1 + (1-q^{-\ell})\frac t{1-t} = \frac{1-q^{-\ell}t}{1-t}$
  and $B(q,t) - A(q,t) = (q-1)  \frac {q^{-\ell}t} {(1-t)^2}$.
\end{proof}

\begin{rem}
  \label{rem:det_const_rk}
  By the Chevalley-Warning Theorem, $\#H(\RF) = 0$ implies that
  $\ell \le d$; see e.g.\ \cite[Ch.~1, \S 2, Cor.\ 1]{Ser73}.
  Suppose that $\#H(\RF) = 0$.
  Then $M$ is readily seen to be $\kersize$-minimal (cf.\ the proof of
  Theorem~\ref{thm:det}) and $M\otimes K$ has constant rank $d$.
  The formula $\Zeta_M(T) = \frac{1-q^{-\ell}T}{(1-T)(1-q^{d-\ell}T)}$ given by
  Theorem~\ref{thm:det} in this case agrees with
  Proposition~\ref{prop:Z_constant_rank}. 
\end{rem}

\begin{ex}[Diagonal $2\times 2$ matrices]
  \label{ex:diag_det}
  Let $a(\XX) = \diag(X_1,\dotsc,X_\ell)$.
  Then condition \eqref{eq:smooth} is satisfied if and only if $\ell
  \le 2$.
  Using Theorem~\ref{thm:det} with $\ell = 2$ and $\#H(\RF) = 2$, we
  recover the special case $d = 2$ of Corollary~\ref{cor:diagonal}.
\end{ex}

\begin{ex}[Univariate polynomials]
  \label{ex:det_uni}
  \quad
  \begin{enumerate}
  \item
    \label{ex:det_uni1}
    (Local case.)
    Let $d \ge 1$ and
    $f(X) = X^d + c_{d-1}X^{d-1} + \dotsb + c_1 X + c_0 \in \fO[X]$.
    Let
    \[
    b_f = 
    \begin{bmatrix}
      0 & 1\\
      & \ddots & \ddots \\
      & & 0 & 1\\
      -c_0 & \hdots & -{c_{m-2}} & -c_{d-1}
    \end{bmatrix}
    \in \Mat_d(\fO)
    \]
    be the companion matrix of $f(X)$.
    Let
    $a_{f}(X,Y) = X \dtimes 1_d - Y \dtimes b_f \in
    \Mat_d(\fO[X,Y])$,
    where $1_d$ denotes the $d\times d$ identity matrix.
    Then $\det(a_{f}(X,Y)) = Y^{d} \dtimes f(Y^{-1} X)$ is the
    homogenisation of $f(X)$.
    Let $M_f(\fO) = \{ a_f(x,y) : x,y\in \fO\} \subset \Mat_d(\fO)$;
    note that $M_f(\fO)$ has $\fO$-rank $2$ and that $\genrank(M_f(\fO)) = d$.

    We now assume that the image of $f(X)$ in $\RF[X]$ has no repeated roots in $\RF$.
    Then condition \eqref{eq:smooth} is satisfied for $a(X,Y) = a_f(X,Y)$ and
    $F(X,Y) = \det(a_f(X,Y))$.
    We may therefore apply Theorem~\ref{thm:det} (with $\ell = 2$) to obtain
    \begin{equation}
      \label{eq:univariate_det}
      \Zeta_{M_f(\fO)}(T) =
      \frac{
        (1-T)(1-q^{-2}T) + n_f(\RF)(1-q^{-1})^2T
      }
      {(1-T)^2(1-q^{d-2}T)},
    \end{equation}
    where $n_f(\RF) =\#\{ x\in \RF : f(x) = 0\} \in \{0,1,\dotsc,d\}$.

    If the image of $f(X)$ in $\RF[X]$ is irreducible, then we are in the special
    case $n_f(\RF) = \#H(\RF) = 0$
    discussed in Remark~\ref{rem:det_const_rk}.
    At the other extreme, if $f(X)$ splits into $d$
    linear factors
    over $\RF$ (which is possible if and only if $q \le d$), then $n_f(\RF) = d$.
  \item
    \label{ex:det_uni2}
    (Global case.)
    Let $k$ be a number field.
    Let $f(X) = X^d + c_{d-1} X^{d-1} + \dotsb + c_0 \in \ZZ[X]$ be the
    minimal polynomial of an integral primitive element of $k/\QQ$.
    For a (rational) prime $p$, the reduction $f_p(X) \in \FF_p[X]$ of $f(X)$ modulo $p$ is
    separable if and only if $p$ does not divide the discriminant $\disc(f(X))$ of $f(X)$.
    Let $M_f \subset \Mat_d(\ZZ)$ be the module of matrices generated by the identity
    matrix $1_d$ and the companion matrix of $f(X)$.
    Then, using the notation from (\ref{ex:det_uni1}), we may identify
    $M_f \otimes_{\ZZ} \ZZ_p = M_f(\ZZ_p)$.
    In particular, if $\ndivides p {\disc(f(X))}$, then we obtain a
    formula~\eqref{eq:univariate_det} for $\Zeta_{M_f \otimes_{\ZZ}\ZZ_p}(T)$
    which depends on the number $n_f(\FF_p)$ of roots of $f(X)$ in $\FF_p$.
  \end{enumerate}
\end{ex}

\begin{rem}
  For sufficiently large primes~$p$,
  Voll~\cite[Prop.\ 3]{Vol04} gave an explicit formula for almost all local
  normal subgroup zeta functions associated with an indecomposable $\mathfrak
  D^*$-group attached to a power of an irreducible 
  polynomial $f(X)$ over $\QQ$; for background on $\mathfrak D^*$-groups, see \cite{GS84}.
  Voll's formula depends on the number of roots of $f(X)$ modulo $p$ and has
  essentially the same shape as \eqref{eq:univariate_det};
  we note that the matrix $\mathbf B$ in \cite[Eqn~(16)]{Vol04} (and in
  \cite[\S 6]{GS84}) plays the same role as $a_f(X,Y)$ above.
\end{rem}

\begin{ex}[$Y^2 = X^3-X$]
  Let $E \subset \PP^2_{\ZZ}$ be defined by the
  homogenisation of $Y^2 = X^3 - X$.
  Then $E \otimes_{\ZZ} F$ is an elliptic curve
  for every field $F$ of characteristic distinct from $2$.
  The curve $E \otimes_{\ZZ} \QQ$ has been previously used to
  show that various group-theoretic counting problems exhibit
  arithmetically ``wild'' behaviour.
  In particular, using determinantal representation in the sense of the
  present section, du~Sautoy~\cite{dS01} constructed a nilpotent group of
  Hirsch length $9$ whose local normal subgroup zeta function at a
  prime~$p$ depends on the number $\#E(\FF_p)$ of $\FF_p$-rational points of $E$.
  The precise shapes of these local zeta functions were first determined by Voll~\cite{Vol04}.
  Due to  the ``wild'' behaviour of $\#E(\FF_p)$ as a function of $p$ (see
  \cite[Pf of Thm~2.1]{dS01}),
  du~Sautoy's result disproved earlier predictions on the growth of (normal)
  subgroups of nilpotent groups. 
  His construction has since been used to demonstrate that other
  group-theoretic counting problems can be ``wild''
  (e.g.\ the enumeration of representations~\cite[Ex.~2.4]{Vol11} 
  or of ``descendants''~\cite{dSVL12}). We will now see that the
  present setting is no exception.
  Let
  \[
  a(X,Y,Z) = \begin{bmatrix} Z & X & Y \\ X & Z & 0 \\ Y & 0 &
    X\end{bmatrix}
  \]
  and $M = \{ a(x,y,z) : x,y,z\in \ZZ\} \subset \Mat_3(\ZZ)$.
  Then $E$ is defined by $\det(a(X,Y,Z)) = 0$.
  Suppose that $q$ is odd 
  so that $F(X,Y,Z) := \det(a(X,Y,Z))$ satisfies
  condition \eqref{eq:smooth}.
  By applying Theorem~\ref{thm:det},
  we thus obtain
  \[
  \Zeta_{M\otimes_\ZZ\fO}(T) = \frac{1-q^{-3} T}{(1-T)^2}
  + \#E(\RF) \dtimes (q-1)^2 \frac{q^{-3}T}{(1-T)^3}.
  \]

  In particular,
  Theorem~\ref{thm:denef} accurately reflects the
  general dependence of $\Zeta_{M_v}(T)$ on a place $v \in \Places_k$
  for a module of matrices $M$ over the ring of integers $\fo$ of a number
  field~$k$.
  However, just as in the study of zeta functions of groups, it
  is presently unclear if anything meaningful can be said about the varieties
  $V_i\otimes_{\fo} k$ ``required'' to produce formulae~\eqref{eq:denef}
   as $M$ varies over all modules of matrices over $\fo$.
\end{ex}

\begin{rem}
  \label{rem:classical_det}
  Determinantal representations of projective hypersurfaces
  (i.e.\ representations of defining polynomials as determinants of matrices of linear forms)
  over the complex numbers (or over algebraically closed fields) have been
  studied extensively; see e.g.\ \cite[\S\S 4.1, 9.3]{Dol12}, \cite{Bea00} and \cite{KV12}.
  In particular, in the smooth case, only curves and cubic surfaces over $\CC$ generically
  admit determinantal representations.
  Ishitsuka~\cite[Cor.~8.3]{Ish17} showed
  that over a local field (of arbitrary characteristic), every smooth plane
  cubic admits a determinantal representation.
  He further showed~\cite[Thm~1.1(i)]{Ish17}  that
  the same is true of a positive proportion (measured by height) of smooth plane cubics
  over $\QQ$.
\end{rem}

\begin{rem}
  Let $k$ be a number field with ring of integers $\fo$;
  recall the notation from~\S\ref{ss:global_alpha}.
  Let $a(\XX) = a_1 X_1 + \dotsb + a_\ell X_\ell\in \Mat_d(\fo[\XX])$ for
  $\ell \ge 1$ and $a_1,\dotsc,a_\ell\in \Mat_d(\fo)$.
  Let $F(\XX) := \det(a(\XX))$ and $M := \{ a(\xx) : \xx\in\fo^\ell\}$.
  Define $\mathsf H := \Proj\!\left(\fo[\XX]/(F(\XX))\right)$ and suppose that $\mathsf H
  \otimes_{\fo} k$ is smooth over $k$.
  For almost all $v\in\Places_k$, Theorem~\ref{thm:det} then provides
  a formula for $\Zeta_{M_v}(T)$ in terms of $\#\mathsf H(\RF_v)$.
  In this special case,
  for almost all $v\in\Places_k$,
  the functional equation in Theorem~\ref{thm:feqn} follows immediately from
  the identity
  \[
  \# \mathsf H(\RF_v)\Bigm\vert_{q_v\to q_v^{-1}} = q^{2-\ell}_v \dtimes
  \#\mathsf H(\RF_v),
  \]
  a consequence of the Weil conjectures applied to
  the smooth projective variety
  $\mathsf H\otimes_{\fo} \RF_v$;
  cf.\ \cite[Pf\ of Thm~4]{DM91}.
\end{rem}

\section{Orbits and conjugacy classes of linear groups}
\label{s:bounded_denominators}

In this section, we use $p$-adic Lie theory to relate ask, orbit-counting, and
conjugacy class zeta functions.
In \S\ref{ss:saturable}, we recall properties
of saturable pro-$p$ groups and Lie algebras.
In \S\ref{ss:linear_orbits}, we prove that orbit-counting zeta functions over
$\ZZ_p$ are rational.
In \S\ref{ss:cent_stab} we compare group stabilisers and Lie
centralisers under suitable hypotheses and this allows us to deduce
Theorem~\ref{thm:bounded_denominators} in~\S\ref{ss:pf_bounded_denominators}.
Finally, in \S\ref{ss:unipotent_orbits}, we prove Theorem~\ref{thm:unipotent}.

\subsection{Reminder: saturable pro-$p$ groups and Lie algebras}
\label{ss:saturable}

We briefly recall Lazard's~\cite{Laz65} notion of ($p$-)saturability
of groups and Lie algebras using Gonz\'alez-S\'anchez's~\cite{Gon07}
equivalent formulation.

Let $\fg$ be a Lie $\ZZ_p$-algebra whose underlying $\ZZ_p$-module is free of
finite rank.
A \emph{potent filtration} of $\fg$ is a central series
$\fg = \fg_1 \supset \fg_2 \supset \dotsb$ of ideals (i.e.\
$[\fg_i,\fg] \subset \fg_{i+1}$ for all $i$) with
$\bigcap\limits_{i=1}^\infty \fg_i = 0$ and such 
that
$[\fg_i,_{p-1},\fg] :=
[\fg_i,\underbrace{\fg,\dotsc,\fg}_{p-1}] := [\dotso[[\fg_i,\fg],\fg],\dotsc,\fg] \subset p\fg_{i+1}$
for all $i \ge 1$.
We say that $\fg$ is \emph{saturable} if it admits a potent filtration.

\begin{prop}
  (\textup{\cite[Prop.\ 2.3]{AKOV13}})
  Let $\fO$ be the valuation ring of a non-Archimedean local field
  $K \supset \QQ_p$.
  Let $\rind(K/\QQ_p)$ denote the ramification index of $K/\QQ_p$.
  Let $\fg$ be a Lie $\fO$-algebra whose underlying $\fO$-module is free of
  finite rank.
  Let $m > \frac {\rind(K/\QQ_p)}{p-1}$.
  Then $\fg^m$ ($= \fP^m\fg$) is saturable as a $\ZZ_p$-algebra.
\end{prop}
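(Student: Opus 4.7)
The plan is to exhibit an explicit potent filtration of $\fg^m$, regarded as a Lie $\ZZ_p$-algebra. The natural candidate is to use the filtration by powers of the maximal ideal $\fP$, shifted so as to start at $\fg^m$. Concretely, I would define
\[
\fg^m_i := \fg^{m+i-1} = \fP^{m+i-1}\fg \qquad (i \ge 1),
\]
so that $\fg^m_1 = \fg^m$ and $\bigcap_i \fg^m_i = 0$. Each $\fg^m_i$ is an $\fO$-submodule, hence in particular a $\ZZ_p$-submodule, of $\fg^m$.

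Next, I would verify that this is a descending central series of ideals of $\fg^m$. Since $\fg$ is a Lie $\fO$-algebra, $[\fP^a\fg,\fP^b\fg] \subset \fP^{a+b}\fg$. With $a = m+i-1$ and $b = m$ this gives
\[
[\fg^m_i,\fg^m] \subset \fP^{2m+i-1}\fg \subset \fP^{m+i}\fg = \fg^m_{i+1},
\]
using $m \ge 1$. So the filtration is central, and in particular each $\fg^m_i$ is an ideal of $\fg^m$.

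The main (and only substantive) step is the potency condition
\[
[\fg^m_i,{}_{p-1}\fg^m] \subset p\,\fg^m_{i+1}.
\]
By iterating the inclusion $[\fP^a\fg,\fP^b\fg] \subset \fP^{a+b}\fg$ exactly $p-1$ times, the left-hand side is contained in $\fP^{m+i-1+m(p-1)}\fg = \fP^{mp+i-1}\fg$. Since $p\fO = \fP^e$, the right-hand side equals $\fP^{e+m+i}\fg$. Thus it suffices to show $mp + i - 1 \ge e + m + i$, i.e.\ $m(p-1) \ge e+1$. The hypothesis $m > e/(p-1)$ together with the fact that $m(p-1)$ and $e$ are integers gives $m(p-1) \ge e+1$, completing the verification.

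The filtration $(\fg^m_i)_{i\ge 1}$ is therefore potent, so $\fg^m$ is saturable as a $\ZZ_p$-algebra by Gonz\'alez-S\'anchez's criterion. No step is really an obstacle here; the only thing to be careful about is to squeeze out the integrality in the last inequality from the strict inequality $m > e/(p-1)$, which is what makes the off-by-one work out.
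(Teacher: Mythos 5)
Your proof is correct. Note that the paper itself does not prove this proposition---it is cited verbatim from \cite[Prop.~2.3]{AKOV13}---so there is no in-paper argument to compare against. Your argument (exhibiting the shifted $\fP$-adic filtration $\fg^m_i = \fP^{m+i-1}\fg$ as a potent filtration, then squeezing $m(p-1) \ge e+1$ out of the strict inequality $m > e/(p-1)$ by integrality) is the standard one and is in substance what the cited reference does; all the verifications (intersection is zero since $\fg$ is $\fO$-free of finite rank, centrality from $m\ge 1$, potency from $m(p-1)\ge e+1$ together with $p\fO = \fP^e$) are carried out correctly.
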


We note that, as before, subalgebras of an $\fO$-algebra are
understood to be $\fO$-subalgebras; whenever we consider
$\ZZ_p$-subalgebras, we will explicitly state as much.

Similarly to the case of Lie algebras, a torsion-free finitely generated
pro-$p$ group $G$ which admits a central series $G = G_1 \ge G_2
\ge\dotsb$ of closed subgroups with $\bigcap\limits_{i=1}^\infty G_i = 1$
and $[G_i,_{p-1} G] \le G_{i+1}^p$ is \emph{saturable}.

If $\fg$ is a saturable Lie $\ZZ_p$-algebra, then the underlying topological
space of $\fg$ can be endowed with the structure of a saturable pro-$p$ group
using the  Hausdorff series. Conversely, every saturable pro-$p$ group gives
rise to a saturable Lie $\ZZ_p$-algebra and these two functorial operations
furnish mutually quasi-inverse equivalences between the categories of
saturable Lie $\ZZ_p$-algebras and saturable pro-$p$ groups
(defined as full subcategories of all Lie $\ZZ_p$-algebras and pro-$p$
groups, respectively); see \cite[\S 4]{Gon07} for an overview and
\cite[Ch.\ 4]{Laz65} for details. 
While the general interplay between subalgebras and subgroups
is subtle, we note the following fact.

\begin{lemma}
  \label{lem:same_idx}
  Let $\fg$ be a saturable Lie $\ZZ_p$-algebra and let $\mathfrak h$ be a
  saturable subalgebra of finite additive index.
  Let $G$ and $H$ be the saturable pro-$p$ groups associated with $\fg$ and
  $\mathfrak h$ via the Hausdorff series (so that $H \le G$).
  Then $\idx{G:H} = \idx{\mathfrak g:\mathfrak h}$.
\end{lemma}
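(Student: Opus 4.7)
The plan is to reduce the claim to the agreement of Hausdorff and additive cosets for a sufficiently deep common ideal. I would first choose $N$ large enough that $\fa := p^N\fg \subseteq \mathfrak h$ and that $\fa$ is a saturable $\ZZ_p$-subalgebra (as guaranteed by the proposition cited above from AKOV13, since $\fg$ has finite $\ZZ_p$-rank). Because $[p^N\fg,\fg] \subseteq p^N\fg$, this $\fa$ is an ideal of both $\fg$ and $\mathfrak h$, so it corresponds under the Lazard correspondence to a closed normal subgroup $A$ of $G$, automatically contained in $H$, and sharing its underlying set with $\fa$.

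Second, I would argue that the Hausdorff cosets of $A$ in $G$ coincide, as subsets, with the additive cosets of $\fa$ in $\fg$. For $x \in \fg$ and $y \in \fa$, the Hausdorff formula gives $x * y = x + y + R(x,y)$, where $R(x,y)$ is a convergent sum of iterated Lie brackets, each involving at least one occurrence of $y$; such brackets lie in $\fa$ because $\fa$ is an ideal, and the potent filtration witnessing saturability of $\fa$ ensures that the rational denominators arising in the BCH series do not spoil integrality. Hence $x * A = x + \fa$ for every $x \in \fg$, and similarly for every $x \in \mathfrak h$. This identification of cosets yields
\[
  \idx{G:A} = \idx{\fg:\fa} \quad\text{and}\quad \idx{H:A} = \idx{\mathfrak h:\fa}.
\]

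Third, dividing gives $\idx{G:H} = \idx{G:A}/\idx{H:A} = \idx{\fg:\fa}/\idx{\mathfrak h:\fa} = \idx{\fg:\mathfrak h}$, as required. The main technical hurdle will be controlling the integrality of the BCH series on the ideal $\fa$; this is standard within the Lazard and Gonz\'alez-S\'anchez framework but relies on a careful use of the potent filtration. Once that is in place, the identification of cosets combined with the tower formula for the index concludes the argument cleanly.
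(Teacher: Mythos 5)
Your strategy of shrinking to a deep ideal $\fa = p^N\fg$ and comparing Hausdorff and additive cosets is a reasonable way to attack this, and it is in the spirit of the Lazard/Gonz\'alez-S\'anchez framework that the paper invokes (the paper itself only says ``same as \cite[Lem.~3.2(4)]{GS09}'' and gives no details). However, there is a genuine gap at the central step. You verify that for $x\in\fg$ and $y\in\fa$ one has $x*y \in x+\fa$, i.e.\ $x*A \subseteq x+\fa$, and then simply write ``Hence $x*A = x+\fa$.'' The reverse inclusion does not follow from what precedes it, and without it the tower argument breaks: the one-sided containment only gives $\idx{G:A}\ge\idx{\fg:\fa}$ and $\idx{H:A}\ge\idx{\mathfrak h:\fa}$, from which $\idx{G:H}=\idx{\fg:\mathfrak h}$ cannot be deduced. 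To close the gap one can, for instance, check that the induced map $G/A\to\fg/\fa$, $gA\mapsto g+\fa$, is injective: given $g'=g+a$ with $a\in\fa$, one must show $(-g)*(g+a)\in\fa$. Writing $(-g)*(g+a)=a+R(-g,g+a)$ and expanding $R(-g,g+a)$ multilinearly in the second argument, the sub-sum with no occurrence of $a$ is $R(-g,g)=0$, and every remaining bracket involves at least one $a\in\fa$ and hence lies in $\fa$ by the same integrality estimate as before. (Alternatively one could argue via Haar measure, but that also requires a nontrivial input, namely that the additive and multiplicative Haar measures on the common underlying space coincide.)

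A secondary, more cosmetic issue: the integrality of the BCH tail $R(x,y)$ should not be attributed to ``the potent filtration witnessing saturability of $\fa$.'' The brackets in $R(x,y)$ mix one entry from $\fa$ with many entries from $\fg$, so the potent filtration of $\fa$ alone gives no control. What one actually uses is the potent filtration $\fg=\fg_1\supset\fg_2\supset\dotsb$ of $\fg$ together with $\fa=p^N\fg$: a degree-$n$ iterated bracket of $\fg$-elements lies in $p^{\lfloor (n-1)/(p-1)\rfloor}\fg$, so after factoring out the $p^N$ coming from $y\in p^N\fg$ the $p$-adic valuation absorbs the BCH denominators and the $n$-th term of $R(x,y)$ lies in $p^N\fg=\fa$. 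With that correction and the injectivity argument supplied, your proof goes through.
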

\begin{proof}
  This can be proved in the same way as \cite[Lem.\ 3.2(4)]{GS09}.
\end{proof}

\subsection{Orbits of $p$-adic linear groups}
\label{ss:linear_orbits}

Let $\fO$ be the valuation ring of a non-Archimedean local field $K$.
Recall that $\fP$ denotes the maximal ideal of $\fO$
and that $q$ and $p$ denote the size and characteristic of the residue
field of $K$, respectively.
Further recall the definition of $\Zeta^\oc_G(T)$
(Definition~\ref{d:cc_oc}(\ref{d:cc_oc2})).
Although we will not need it in the sequel, since it might be of
independent interest, we note the following rationality statement for
$\Zeta^\oc_G(T)$.

\begin{thm}
  \label{thm:orbit_rationality}
  Let $G \le \GL_d(\ZZ_p)$.
  Then $\Zeta_{G}^{\oc}(T) \in \QQ(T)$.
  More precisely, there are $a_1,\dotsc,a_r \in \ZZ$ and $b_1,\dotsc,b_r \in \NN$ such that
  $\prod_{i=1}^r (1-p^{a_i}T^{b_i}) \Zeta^\oc_G(T) \in \QQ[T]$.
\end{thm}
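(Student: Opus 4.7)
The plan is to recognise $\Zeta^{\oc}_G(T)$ as the generating series of a definable family of orbit counts and invoke Cluckers' uniform rationality theorem for $p$-adic Poincar\'e series \cite[App.~A]{HMRC15}. As a harmless preliminary step, I would replace $G$ by its topological closure in $\GL_d(\ZZ_p)$, which leaves every finite image $G_n$ unchanged. One may thus assume that $G$ is a compact $p$-adic analytic subgroup of $\GL_d(\ZZ_p) \subset \Mat_d(\ZZ_p) = \ZZ_p^{d^2}$; such a subgroup is subanalytic, hence definable in the $p$-adic subanalytic Denef--Pas language, and carries a normalised Haar measure $\mu_G$ compatible with this structure.

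Next, I would set up an explicit $p$-adic integral for $\Zeta^{\oc}_G(T)$ in the spirit of Theorem~\ref{thm:twoint}. Applying the orbit-counting lemma to the action of $G_n$ on $(\ZZ_p/p^n)^d$ and lifting the resulting average over $G_n$ to an integral over $G$ (using that $g\mapsto |\Fix(g\bmod p^n)|$ is constant on cosets of $\Ker(G\onto G_n)$) yields
\[
\bigl|(\ZZ_p/p^n)^d / G_n\bigr| \;=\; \int_G \bigl|\Ker\bigl((g-1)\otimes \ZZ_p/p^n\bigr)\bigr|\, d\mu_G(g),
\]
and Lemma~\ref{lem:size_ker} expresses the integrand explicitly in terms of valuations of the minors of $g-1$. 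Introducing an auxiliary variable $y\in\ZZ_p$ with $\nu(y)=n$, exactly as in Lemma~\ref{lem:master_integral} but with the free module $U$ replaced by the definable domain $G$, converts the generating series $\Zeta^{\oc}_G(p^{-s})$ into a single parametric integral over $G\times\ZZ_p$ whose integrand is a definable combination of absolute values of polynomial expressions in the entries of $g$ and in $y$.

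Finally, this integral falls squarely in the class to which Cluckers' theorem applies: a parametric $p$-adic integral over a definable compact domain with a definable integrand. The theorem then delivers rationality in $T=p^{-s}$ together with the asserted denominator form $\prod_{i=1}^r (1-p^{a_i}T^{b_i})$. The main obstacle is the definability step: verifying that an arbitrary closed subgroup $G\le\GL_d(\ZZ_p)$ and the explicit integrand constructed above meet the hypotheses of Cluckers' result. Once the subanalytic nature of compact $p$-adic analytic subgroups of $\GL_d(\ZZ_p)$ is recorded, and the explicit formula of Lemma~\ref{lem:size_ker} is fed into the auxiliary-variable device, the conclusion follows directly from the cited theorem.
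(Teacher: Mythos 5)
Your route is genuinely different from the paper's, and both the difference and the remaining gaps are worth spelling out.

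The paper does \emph{not} pass through an integral over $G$. It instead defines, directly, a family of equivalence relations $\sim_n$ on $\ZZ_p^d$ by
$\xx\sim_n\yy\;:\Leftrightarrow\;\exists g\in G.\;\xx\equiv\yy g\pmod{p^n}$,
notes that $\card{\fO_n^d/G_n}$ is exactly the number of $\sim_n$-classes, and then invokes Cluckers' Theorem A.2 of \cite{HMRC15}, which is a rationality theorem for the Poincar\'e series of a \emph{definable family of equivalence relations}. All that has to be checked is that $\sim_n$ is uniformly definable in the subanalytic language. This is done by replacing $G$ by its closure (as you also do), picking an open saturable subgroup $H=\exp(p^2\mathfrak h)$ with $\mathfrak h\subset\Gl_d(\ZZ_p)$ a saturable $\ZZ_p$-subalgebra, and writing $\exists g\in G$ as a finite disjunction (over a transversal for $H$ in $G$) of conditions $\exists a\in\mathfrak h.\ \xx\exp(p^2a)\equiv\yy t^{-1}\pmod{p^n}$. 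The point is that nothing at all needs to be said about the Haar measure on $G$; only the \emph{set} $G$ has to be definably parametrised.

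You instead take the orbit-counting (Burnside) route: $\card{\fO_n^d/G_n}=\int_G\card{\Ker((g-1)\bmod p^n)}\,d\mu_G(g)$ and then the auxiliary-variable device to turn the generating series into a single parametric integral. The formula itself is correct (the integrand factors through $G_n$, each fibre of $G\onto G_n$ has $\mu_G$-mass $1/\card{G_n}$), and Lemma~\ref{lem:size_ker} does give a definable integrand. But this version is strictly more demanding than the paper's in one place, and you flag it but do not close it: you must make the \emph{measure} $\mu_G$, not merely the set $G$, definable. If $\dim G<d^2$ then $G$ has measure zero in $\Mat_d(\ZZ_p)$, so $\mu_G$ cannot simply be ``the restriction of the ambient measure.'' You need exactly the structural input the paper uses — a finite transversal and a chart $a\mapsto\exp(p^2a)t$ on each coset of a saturable open subgroup $H=\exp(p^2\mathfrak h)$ — together with the fact that the pullback of $\mu_G$ along these charts is the additive Haar measure on $\mathfrak h\approx\ZZ_p^{\dim G}$. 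Once you do this, you are really integrating over $\coprod_t\mathfrak h\times\{t\}\times\ZZ_p$, and your variant of Lemma~\ref{lem:master_integral} (with ``$U$ replaced by $G$'') becomes the honest statement that you have a finite sum of integrals over free $\ZZ_p$-modules with subanalytic integrand.

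Two further points. First, the reference you lean on, \cite[App.~A]{HMRC15}, and in particular the theorem the paper cites there, is stated for counting classes of definable families of equivalence relations; for the parametric-integral formulation you would need the subanalytic rationality result for parametric $p$-adic integrals (also due to Cluckers and going back to Denef--van den Dries), and should cite that form rather than Theorem A.2. Second, a small but important technical point: you rely on the convergence of $\exp(p^2X)$ on $\Gl_d(\ZZ_p)$ to get an analytic, hence subanalytic, parametrisation. The paper records exactly this at the end of the proof, so it is worth making explicit in yours as well. In summary, your approach is workable and genuinely different (Burnside plus parametric integrals versus direct class-counting plus Theorem A.2), but as written it assumes the definability of $\mu_G$ rather than establishing it, and mislocates the cited rationality theorem.
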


\begin{rem}
  \label{rem:oc_positive_similarity}
  \quad
  \begin{enumerate}
  \item
    \label{rem:oc_positive_similarity1}
    The author does not know if the conclusion of Theorem~\ref{thm:orbit_rationality}
    remains valid if $G$ is allowed to be a subgroup of $\GL_d(\FF_q\llb z\rrb)$.
    The proof of Theorem~\ref{thm:orbit_rationality} below combines
    basic $p$-adic Lie theory and a powerful model-theoretic result due to
    Cluckers~\cite[App.~A]{HMRC17}.
    Both of these ingredients are only available in characteristic zero.
    In the latter case, this reflects the mysterious nature of the model theory of
    local fields of (small) positive characteristic.
  \item 
    \label{rem:oc_positive_similarity2}
    Avni et al.~\cite[Thms E, A.5]{AKOV16b} gave 
    formulae for the ``similarity class zeta functions''
    associated with the groups $\GL_d(\fO)$ and $\mathrm{GU}_d(\fO)$ for $d =
    2,3$.
    In addition to being consistent with Theorem~\ref{thm:orbit_rationality},
    their formulae are valid for all local fields $K$ subject to the sole
    assumption that $q$ be odd in case of $\mathrm{GU}_d(\fO)$.
    As explained in (\ref{rem:oc_positive_similarity1}),
    the techniques employed here seem incapable of establishing rationality
    results in such great generality.
  \end{enumerate}
\end{rem}

\begin{lemma}
  Let $\bar G$ be the closure of $G \le \GL_d(\fO)$ in $\GL_d(\fO)$.
  Then $\Zeta^\oc_{\bar G}(T) = \Zeta^\oc_G(T)$.
\end{lemma}
\begin{proof}
  The open subgroups $\Gamma_i := \{ a \in \GL_d(\fO) : a \equiv 1 \pmod {\fP^i}\}$
  form a fundamental system of neighbourhoods of the identity
  in $\GL_d(\fO)$.
  The claim follows since $G \Gamma_i = \bar G \Gamma_i$ for all $i \ge 0$.
\end{proof}

\begin{proof}[Proof of Theorem~\ref{thm:orbit_rationality}]
  Define an equivalence relation $\sim_n$ on $\ZZ_p^d$
  via
  \[
  \xx \sim_n \yy \quad:\Leftrightarrow\quad
  \exists g \in G.\, \xx \equiv \yy g \pmod{p^n}.
  \]

  Our theorem will follow immediately from \cite[Thm~A.2]{HMRC17} 
  once we have established that $\sim_n$ is definable (definably in
  $n$) in the subanalytic language used in \cite[App.\ A]{HMRC17}.
  By the preceding lemma, we may assume that $G = \bar G$.
  It then follows from the well-known structure theory of
  $p$-adic analytic groups (see \cite{Laz65,DdSMS99})
  that there exists an open saturable (or, more restrictively, uniform) subgroup $H
  \le G$ of the form $H = \exp(p^2 \mathfrak h)$, where
  $\mathfrak h \subset \Gl_d(\ZZ_p)$ is a suitable saturable (or uniform)
  $\ZZ_p$-subalgebra.
  Let $T$ be a transversal for the right cosets of $H$ in $G$.
  Then, for $\xx,\yy\in \ZZ_p^d$, $\xx \sim_n \yy$ if and only if
  \[
  \bigvee_{t\in T} \exists a\in \mathfrak h.\, \xx \exp(p^2a) 
  \equiv \yy t^{-1} \pmod{p^n}.
  \]
  The claim thus follows since the $d\times d$-matrix exponential
  $\exp(p^2X)$ is given by $d^2$ power series in $d^2$ variables which
  all converge on $\Gl_d(\ZZ_p)$.
\end{proof}

\begin{rem}
  Let $K/\QQ_p$ be a finite extension.
  Then we may regard $G \le \GL_d(\fO)$ as
  a $\ZZ_p$-linear group of degree $d\idx{K:\QQ_p}$ via the regular
  representation of $\fO$ over $\ZZ_p$.
  In particular, 
  Theorem~\ref{thm:orbit_rationality}
  implies that $\Zeta^\oc_{G}(T)$ is rational
  provided that $K/\QQ_p$ is unramified.
\end{rem}

The following questions are inspired by Theorems~\ref{thm:denef}--\ref{thm:transfer} and \cite[Thm~C]{BDOP13}.

\begin{qu}
  \label{qu:uniform}
  Let $k$ be a number field with ring of integers $\fo$.
  Let $\GG \le \GL_d\otimes k$ be an algebraic group over $k$.
  Let $\sG$ denote the schematic closure of $\GG$ in $\GL_d\otimes
  \fo$.
  \begin{enumerate}
  \item \label{qu:uniform1}
    Do there exist $V_1,\dotsc,V_r$ and $W_1,\dotsc,W_r\in \QQ(X,T)$ as
    in Theorem~\ref{thm:denef} such that
    \[
    \Zeta_{\sG(\fo_v)}^\oc(T) = \sum_{i=1}^r \#V_i(\RF_v)\dtimes W_i(q_v,T)
    \]
    for almost all places $v\in \Places_k$?
  \item
    Do we have
    $\Zeta_{\sG(\fo_v)}^\oc(T) = \Zeta_{\sG(\RF_v\llb z\rrb)}^\oc(T)$ for almost all $v\in\Places_k$?
  \end{enumerate}
\end{qu}

By combining Theorem~\ref{thm:denef} and
Corollary~\ref{cor:unipotent_oc_cc} below, we see that
Question~\ref{qu:uniform}(\ref{qu:uniform1}) has a positive answer if $\GG$ is unipotent.
We conclude this subsection with some elementary examples of $\Zeta^\oc_G(T)$.
\begin{ex}
  \quad
  \begin{enumerate}
  \item
    It is easy to see that the rule
    $\xx \mapsto
    \min(\nu(x_1),\dotsc,\nu(x_d),n)$ induces a bijection $\fO_n^d/\GL_d(\fO)
    \to \{ 0,\dotsc,n\}$
    whence
    \[
    \Zeta^\oc_{\GL_d(\fO)}(T) = \sum_{n=0}^{\infty} (n+1)T^n = \frac 1{(1-T)^2}.
    \]
  \item
    Let $p \not= 2$.
    The number of orbits of $\langle -1\rangle \le \GL_1(\fO)$ on $\fO_n$ is $1 + (q^n-1)/2$
    so that
    \[
    \Zeta^\oc_{\langle -1\rangle}(T) =
    \frac 1{1-T} + \frac 1 2\Bigl(
    \frac 1 {1-qT} - \frac 1{1-T}
    \Bigr)
    =
    \frac{2 -qT - T }{2(1 - qT)(1 - T)}.
    \]
\item
  Let $G = \langle \left[\begin{smallmatrix} 0 & 1\\1& 0\end{smallmatrix}\right] \rangle\le \GL_2(\fO)$.
  It is easy to see that
  \begin{align*}
    \Zeta^\oc_G(T) & =
    \sum_{n=0}^\infty \frac{q^n(q^n+1)}2T^n
      = \frac{2 -q^2 T - qT}{2(1 - q^2 T)(1 - qT)}.
  \end{align*}
  \end{enumerate}
\end{ex}

We note that the fact that the preceding examples as well as
the formula in \cite[(1.12)]{AKOV16b} all satisfy functional equations under
the operation ``$(q,T) \to
(q^{-1},T^{-1})$'' does not seem to be explained by any of the results in the
present article (e.g.\ Theorem~\ref{thm:feqn}).

\subsection{Lie centralisers and group stabilisers}
\label{ss:cent_stab}

Let $\fO$ be the valuation ring of a local field $K \supset \QQ_p$.
Let $\rind(K/\QQ_p)$ denote the ramification index of $K/\QQ_p$.
As expected, for suitable matrix algebras and groups,
the equivalence between saturable pro-$p$ groups
and Lie $\ZZ_p$-algebras recalled in \S\ref{ss:saturable} 
can be made explicit using exponentials and logarithms.

In line with our previous notation (see \S\ref{ss:rescaling}), we write
$\Gl_d^m(\fO) := \{ a\in \Gl_d(\fO) : a \equiv 0 \pmod{\fP^m}\}$.
Moreover, we write $\GL_d^m(\fO) := \{ a \in \GL_d(\fO) : a \equiv 1 \pmod {\fP^m}\}$.

\begin{prop}[{\cite[Lem.\ B.1]{Klo05}}]
  Let $m > \frac {\rind(K/\QQ_p)}{p-1}$.
  The formal exponential and logarithm series 
  converge on $\Gl_d^m(\fO)$ and $\GL_d^m(\fO)$, respectively, and
  define mutually inverse bijections 
  $\exp\colon \Gl_d^m(\fO) \to \GL_d^m(\fO)$ and $\log\colon \GL_d^m(\fO) \to
  \Gl_d^m(\fO)$. 
\end{prop}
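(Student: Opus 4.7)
The plan is to establish three claims: (a) the series $\exp$ converges on $\Gl_d^m(\fO)$ with image contained in $\GL_d^m(\fO)$; (b) the series $\log$ converges on $\GL_d^m(\fO)$ with image in $\Gl_d^m(\fO)$; and (c) $\exp$ and $\log$ are formal inverses of one another, so that convergence forces them to be mutually inverse bijections between these sets.

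For (a), let $a\in\Gl_d^m(\fO)$, so every entry of $a$ has $\pi$-adic valuation at least $m$, and consequently every entry of $a^n$ has valuation at least $nm$. The crucial estimate is on $\nu(n!)$. By Legendre's formula,
\[
v_p(n!) = \frac{n - s_p(n)}{p-1} \le \frac{n-1}{p-1}
\]
(using $s_p(n) \ge 1$ for $n\ge 1$), so $\nu(n!) \le \frac{e(n-1)}{p-1}$ since $\nu(p)=e$. Hence each entry of $a^n/n!$ has valuation at least
\[
nm - \frac{e(n-1)}{p-1} = m + (n-1)\!\left(m - \frac{e}{p-1}\right) \ge m,
\]
with strict divergence to $\infty$ as $n\to\infty$ because $m > e/(p-1)$. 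This simultaneously shows that $\exp(a) = \sum_{n\ge 0} a^n/n!$ converges entrywise and that $\exp(a) - 1 \in \Gl_d^m(\fO)$, i.e.\ $\exp(a)\in \GL_d^m(\fO)$.

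For (b), let $u = 1+b\in\GL_d^m(\fO)$ with $b\in\Gl_d^m(\fO)$, and consider $\log(u) = \sum_{n\ge 1}(-1)^{n-1}b^n/n$. Entries of $b^n$ have valuation at least $nm$, while $\nu(n) \le e\, v_p(n)$ grows only logarithmically in $n$; hence entries of $b^n/n$ have valuation at least $nm - e\,v_p(n)$, which is at least $m$ for every $n\ge 1$ (the case $n\ge 2$ following from $v_p(n)\le (n-1)/(p-1)$) and tends to $\infty$. So $\log(u)$ converges and lies in $\Gl_d^m(\fO)$. Convergence here is strictly easier than in (a); the $\exp$ case is the true bottleneck and is precisely where the hypothesis $m > e/(p-1)$ is used.

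For (c), the formal power series identities $\log(\exp(X)) = X$ and $\exp(\log(1+Y)) = 1+Y$ hold in one non-commuting variable over $\QQ$, so they hold after substituting any single matrix for which both series converge—because $a$ commutes with every $a^n$, and $\log(u)$ commutes with $u$. Combining with (a) and (b), $\exp$ and $\log$ restrict to mutually inverse set-theoretic bijections between $\Gl_d^m(\fO)$ and $\GL_d^m(\fO)$. The only step requiring any care is the entrywise valuation bookkeeping in (a); once the Legendre estimate is in hand, everything else is routine.
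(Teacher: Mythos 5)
The paper does not actually prove this proposition itself: it is cited verbatim from \cite[Lem.~B.1]{Klo05}, so there is no in-text proof to compare against. Your reconstruction is the standard one, and the valuation bound $\nu_p(n!)\le (n-1)/(p-1)$ you use via Legendre's formula is precisely the estimate the paper invokes in its own nearby Lemma~\ref{lem:exp_unit} (there phrased as $\nu_p((i+1)!)\le i/(p-1)$, with a reference to \cite[Lem.~4.2.8(1)]{Coh07}); so your approach is fully consistent with the paper's conventions.

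Parts (a) and (b) are correct and complete. The one soft spot is the justification of (c). When you substitute a matrix $a$ into the one-variable formal identities $\log(\exp(X)) = X$ and $\exp(\log(1+Y)) = 1+Y$, the issue that needs checking is not commutativity --- a single matrix always commutes with its own powers, and for a one-variable identity commutativity is automatic --- but rather the legitimacy of rearranging the double series that appears when one convergent series is expanded inside the other. Over the complete non-Archimedean ring $\Mat_d(\fO)$ this rearrangement is valid whenever the terms tend to zero, and your estimates already supply that: the generic term $a^{n_1+\cdots+n_k}/\bigl(k\,n_1!\cdots n_k!\bigr)$ has entrywise valuation at least $m + (N-1)\bigl(m - e/(p-1)\bigr)$ with $N = n_1+\cdots+n_k\ge \max(k,1)$, which tends to infinity with $N$. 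A sentence recording this would fully close the argument; as written, (c) appeals to the wrong reason (commutativity) even though the conclusion is correct and the needed estimate is in hand.
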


Hence, if $m > \frac {\rind(K/\QQ_p)}{p-1}$ and $\fg \subset \Gl_d^m(\fO)$ is a saturable
subalgebra, then we may identify the saturable pro-$p$ group associated with
$\fg$ in the sense of \S\ref{ss:saturable} with $\exp(\fg) \le \GL_d^m(\fO)$.

For $\xx \in \fO^d$ and $n \ge 0$, we write $\xx \bmod {\fP^n}$ for the
image of $\xx$ in $\fO_n^d$.

\begin{lemma}
  \label{lem:cent_sat}
  Let $\fg \subset \Gl_d^{\rind(K/\QQ_p)}(\fO)$ ($= p\,\Gl_d(\fO)$) be a saturable
  subalgebra.
  Then $$\cent_{\fg}(\xx \bmod {\fP^n}) = \bigl\{ a \in \fg : \xx a \equiv 0 \pmod{\fP^n}\bigr\}$$
  is a saturable subalgebra of $\fg$
  for all $\xx \in \fO^d$.
\end{lemma}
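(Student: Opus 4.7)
First, I would verify $\mathfrak{c} := \cent_{\fg}(\xx \bmod \fP^n)$ is closed under the Lie bracket. For $a, b \in \mathfrak{c}$, the identity $\xx[a,b] = (\xx a)b - (\xx b)a$ shows $\xx[a,b] \in \fP^n \fO^d$, since $\xx a, \xx b \in \fP^n \fO^d$ and $a, b$ have entries in $\fO$. Closure under $\fO$-linear (hence $\ZZ_p$-linear) combinations is immediate from the $\fO$-linearity of $a \mapsto \xx a$, so $\mathfrak{c}$ is an $\fO$-subalgebra of $\fg$.

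For saturability, the plan is to exhibit a potent filtration of $\mathfrak{c}$. Given a potent filtration $\fg = \fg_1 \supset \fg_2 \supset \cdots$ witnessing the saturability of $\fg$, the natural candidate is $\mathfrak{c}_i := \mathfrak{c} \cap \fg_i$. Three of the four axioms are routine and reduce to the corresponding properties of $\{\fg_i\}$ combined with $\mathfrak{c}$ being a subalgebra: each $\mathfrak{c}_i$ is an ideal of $\mathfrak{c}$ (since $[\fg_i,\fg]\subset\fg_{i+1}$ and $[\mathfrak{c},\mathfrak{c}]\subset\mathfrak{c}$), $\bigcap_i \mathfrak{c}_i = 0$, and $[\mathfrak{c}_i, \mathfrak{c}] \subset \mathfrak{c}_{i+1}$.

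The main obstacle is the potency condition $[\mathfrak{c}_i, _{p-1}, \mathfrak{c}] \subset p\mathfrak{c}_{i+1}$. The potency of $\{\fg_i\}$ directly supplies only $[\mathfrak{c}_i, _{p-1}, \mathfrak{c}] \subset p\fg_{i+1} \cap \mathfrak{c}$, and this can strictly contain $p(\mathfrak{c} \cap \fg_{i+1})$ because an element $py \in \mathfrak{c}$ with $y \in \fg_{i+1}$ need not itself satisfy $\xx y \in \fP^n \fO^d$ (indeed, only $\xx y \in \fP^{n-e}\fO^d$ is forced). To close this gap, I would choose the potent filtration in the style of \cite[Prop.~2.3]{AKOV13}, taking $\fg_i$ to be the $i$-th $\fP$-adic layer of the $\fO$-isolator of $\fg$ in $\Gl_d(\fO)$, appropriately shifted so that $\fg_1 \supseteq \fg$. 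The hypothesis $\fg \subset \Gl_d^e(\fO) = p\,\Gl_d(\fO)$ then guarantees that each iterated Lie bracket acquires an additional factor of $\fP^e$ from the ambient matrix entries, so $[\mathfrak{c}_i, _{p-1}, \mathfrak{c}]$ lies in $\fP^{i+pe-1}\Gl_d(\fO) \cap \mathfrak{c}$. The $\fO$-purity of the isolator makes division by $p$ compatible with the filtration (producing a genuine element of $\fg_{i+1}$), and the accumulated surplus $\fP$-valuation from iterated bracketing ensures that this quotient still annihilates $\xx$ modulo $\fP^n$, yielding membership in $\mathfrak{c}_{i+1}$ and completing the verification.
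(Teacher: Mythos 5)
You correctly locate the real difficulty: with $\mathfrak{c}_i := \mathfrak{c} \cap \fg_i$, potency of $\{\fg_i\}$ only yields $[\mathfrak{c}_i,{}_{p-1}\,\mathfrak{c}] \subset p\fg_{i+1}\cap\mathfrak{c}$, and an element $pz' \in \mathfrak{c}$ with $z'\in\fg_{i+1}$ a priori only satisfies $\xx z' \equiv 0 \pmod{\fP^{n-e}}$. The subalgebra check at the start is fine. The rest of the argument, however, has a genuine gap.

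Your proposed repair replaces a general potent filtration by a hand-picked one (``$\fP$-adic layers of the $\fO$-isolator''), but this does not address the issue and introduces new problems. First, a potent filtration of $\fg$ must start with $\fg_1 = \fg$; scaled layers of the isolator of $\fg$ do not do that unless $\fg$ is itself such a layer, which is not assumed. Second, and more fundamentally, the quantitative claim you lean on --- that $[\mathfrak{c}_i,{}_{p-1}\,\mathfrak{c}]$ sits inside $\fP^{i+pe-1}\Gl_d(\fO)$ --- concerns the $\fP$-valuation of $z$ \emph{as a matrix}, which is bounded in terms of $i$ and $e$ but has no dependence on $n$. From $z\in\fP^{i+pe-1}\Gl_d(\fO)$ alone you cannot deduce $\xx z \in \fP^{n+e}\fO^d$ once $n$ is large, so the final sentence (``ensures that this quotient still annihilates $\xx$ modulo $\fP^n$'') is not justified by what precedes it. The ``$\fO$-purity of the isolator'' remark does not help either: membership of $z'=z/p$ in $\fg_{i+1}$ already follows from potency of $\{\fg_i\}$; what is missing is membership of $z'$ in $\mathfrak{c}$.

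The missing ingredient is a bound on $\xx z$ rather than on $z$, and it comes from a different source than the valuation of $z$ in $\Gl_d(\fO)$. The paper establishes that $\gamma_p(\mathfrak{c}) \subset \{\,a : \xx a \equiv 0 \pmod{\fP^{n+e}}\,\}$, using the hypothesis $\fg\subset p\Gl_d(\fO)$ in the following way: any element of $\gamma_p(\mathfrak{c})$ is a $\ZZ$-linear combination of products $a_1 a_2 \dotsm a_p$ with each $a_j\in\mathfrak{c}$; then $\xx a_1\in\fP^n\fO^d$ because $a_1\in\mathfrak{c}$ (this is where the $n$ comes in), while $a_2\dotsm a_p\in\fP^e\Mat_d(\fO)$ because $\fg\subset\fP^e\Mat_d(\fO)$, so $\xx(a_1\dotsm a_p)=(\xx a_1)(a_2\dotsm a_p)\in\fP^{n+e}\fO^d$. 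Since $[\mathfrak{c}_i,{}_{p-1}\,\mathfrak{c}]\subset p\fg_{i+1}\cap\gamma_p(\mathfrak{c})\subset p\fg_{i+1}\cap\mathfrak{c}^{n+e}$, and since $p(\fg_{i+1}\cap\mathfrak{c}) = p\fg_{i+1}\cap\mathfrak{c}^{n+e}$ (as $p\fO=\fP^e$ and $\fO^d$ is torsion-free), one gets $[\mathfrak{c}_i,{}_{p-1}\,\mathfrak{c}]\subset p\mathfrak{c}_{i+1}$. Crucially this works for an \emph{arbitrary} potent filtration of $\fg$, so there is no need to choose one.
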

\begin{proof}
  Write $\rind = \rind(K/\QQ_p)$ and
  $\mathfrak c^n := \cent_{\fg}(\xx \bmod {\fP^n})$;
  obviously, $\mathfrak c^n$ is a subalgebra of $\fg$.
  Let $\fg = \gamma_1(\fg) \supset \gamma_2(\fg) \supset \dotsb$ be the lower central
  series of $\fg$.
  Then $\gamma_p(\mathfrak c^n) \subset \mathfrak c^{n+\mathfrak{e}}$.
  Indeed, each element, $a$~say, of $\gamma_p(\mathfrak c^n)$ is a sum
  of matrix products $c(p h)$ for suitable $c\in\mathfrak c^n$ and $h\in
  \Gl_d(\fO)$ and clearly, $\xx c(p h) \equiv 0
  \pmod{\fP^{n+{\mathfrak e}}}$.
  Let $\fg = \fg_1 \supset \fg_2\supset \dotsb$ be a potent filtration of $\fg$.
  Then $\mathfrak c^n = \mathfrak c^n \cap \fg_1 \supset \mathfrak
  c^n \cap \fg_2 \supset \dotsb$ is 
  a central series of $\mathfrak c^n$ with 
  $\bigcap\limits_{i=1}^\infty (\mathfrak c^n \cap \fg_i) = 0$.
  It is in fact a potent filtration 
  for $p\fg \cap \mathfrak c^{n+\mathfrak e}  = p\mathfrak c^n$ 
  whence
  $[(\mathfrak c^n \cap \fg_i),_{p-1} \mathfrak c^n]
  \subset p \fg_{i+1} \cap \gamma_p(\mathfrak c^n) \subset p\fg \cap
  \mathfrak c^{n+\mathfrak{e}} = p\mathfrak c^n$.
\end{proof}

\begin{lemma}
  \label{lem:exp_unit}
  Let $m > \frac {\rind(K/\QQ_p)} {p-1}$ and $a\in \Gl_d^m(\fO)$.
  Then there exists $u \in \GL_d^1(\fO)$ with $\exp(a) = 1 + au$.
\end{lemma}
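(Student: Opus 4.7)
The plan is to produce $u$ explicitly as the obvious ``right-factor'' of the exponential series. Formally, write
\[
\exp(a) - 1 \;=\; \sum_{k \ge 1} \frac{a^k}{k!} \;=\; a \sum_{k \ge 0} \frac{a^k}{(k+1)!},
\]
so I will define $u := \sum_{k=0}^\infty \frac{a^k}{(k+1)!}$ and then show (i) that this series converges in $\Mat_d(\fO)$, (ii) that $u \equiv 1 \pmod{\fP}$, and (iii) that $u$ is invertible in $\Mat_d(\fO)$ (which, together with (ii), places it in $\GL_d^1(\fO)$).

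For (i) and (ii) I would apply the standard bound $\nu_p(n!) \le (n-1)/(p-1)$ to each summand. Since $a \in \fP^m\Mat_d(\fO)$, the entries of $\tfrac{a^k}{(k+1)!}$ have $\fP$-adic valuation at least
\[
km - e\,\nu_p((k+1)!) \;\ge\; km - \frac{ek}{p-1} \;=\; k\!\left(m - \frac{e}{p-1}\right).
\]
The hypothesis $m > e/(p-1)$ makes this linear in $k$ with positive slope, which gives convergence of the series in $\Mat_d(\fO)$. For $k \ge 1$ the same estimate yields a strictly positive real lower bound on the $\fP$-adic valuation of $\tfrac{a^k}{(k+1)!}$; since valuations are integers this forces the valuation to be $\ge 1$, so $u - 1 \in \fP\,\Mat_d(\fO)$.

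For (iii), once $u \equiv 1 \pmod{\fP}$ the Neumann series $u^{-1} = \sum_{j \ge 0}(1-u)^j$ converges in $\Mat_d(\fO)$ (the successive terms lie in $\fP^j \Mat_d(\fO)$), so $u \in \GL_d^1(\fO)$. Multiplying the identity $\exp(a) - 1 = au$ on the left by $1$ then gives the claim.

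The only place that needs genuine attention is the worst term $k = 1$ in the bound, where the estimate reads $m - e/(p-1) > 0$; this is exactly the hypothesis on $m$, and in particular (for $p = 2$) forces $m > e$ so that $a/2$ still lies in $\fP\,\Mat_d(\fO)$. Since the two mildly nontrivial inputs --- the bound $\nu_p(n!) \le (n-1)/(p-1)$ and the convergence of $\exp$ and $\log$ on $\Gl_d^m(\fO)$ and $\GL_d^m(\fO)$ --- have already been used (and cited) in \S\ref{ss:cent_stab}, I expect no serious obstacle: the proof reduces to a one-line rearrangement of the exponential series plus the valuation estimate just described.
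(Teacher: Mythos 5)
Your proof is correct and is essentially the same as the paper's: both factor $\exp(a)-1 = a\,g(a)$ with $g(X) = \sum_{i\ge 0} X^i/(i+1)!$, bound the terms via $\nu_p((i+1)!)\le i/(p-1)$ to get $\nu_K$ of the $i$th term $\ge i(m - e/(p-1))$, and conclude convergence plus $g(a)\equiv 1\pmod\fP$. You spell out the invertibility of $g(a)$ with a Neumann series where the paper leaves it implicit, but there is no substantive difference.
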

\begin{proof}
  Let $g(X) := \sum_{i=0}^\infty \frac 1{(i+1)!} X^i$ so that $\exp(X) = 1 + X
  g(x)$ in $\QQ\llb X\rrb$.
  Let $a \in \Gl_d^m(\fO)$ be non-zero.
  Let $b$ be an entry of $a$ of minimal valuation.
  Then $\nu_K(b^{i}/(i+1)!)$ is a lower bound for the valuation
  of each entry of $\frac 1{(i+1)!} a^i$ for $i \ge 0$.
  It is well-known that $\nu_p((i+1)!) \le i/(p-1)$
  (see e.g.\ \cite[Lem.\ 4.2.8(1)]{Coh07}) so that $\nu_K((i+1)!) \le
  \rind(K/\QQ_p)i/(p-1)$.
  Therefore, $\nu_K(b^i/(i+1)!) \ge i (\nu_K(b) - \rind(K/\QQ_p)/(p-1)) =: f(i)$.
  Clearly, $f(i) \to \infty$ as $i \to \infty$ and $f(i) > 0$ for $i > 0$. 
  Hence, the series $g(a)$ converges in $\Mat_d(\fO)$ to an element
  of $\GL_d^1(\fO)$.
\end{proof}

By combining the preceding two lemmas, we obtain the following.
\begin{cor}
  \label{cor:cent_stab}
  Let $m > \max\left(\rind(K/\QQ_p)-1,\frac {\rind(K/\QQ_p)}{p-1}\right)$,
  let $\fg \subset \Gl_d^m(\fO)$ be a saturable subalgebra,
  and let $\xx \in \fO^d$.
  Then $\exp(\cent_{\fg}(\xx \bmod {\fP^n})) = 
  \Stab_{\exp(\fg)}(\xx \bmod \fP^n)$.
\end{cor}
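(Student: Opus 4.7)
The plan is to exploit Lemma~\ref{lem:exp_unit} in order to translate the group-theoretic condition $\xx\exp(a) \equiv \xx \pmod{\fP^n}$ into the linear condition $\xx a \equiv 0 \pmod{\fP^n}$, which is precisely the condition defining membership in $\cent_{\fg}(\xx \bmod \fP^n)$ according to Lemma~\ref{lem:cent_sat}. Since the hypothesis on $m$ ensures both that $\fg \subset \Gl_d^e(\fO)$ (so Lemma~\ref{lem:cent_sat} applies) and that $m > \frac{e}{p-1}$ (so $\exp\colon \fg \to \exp(\fg)$ is the bijection given by the proposition of Klopsch quoted above), the statement becomes an elementwise matching between the two sides.

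First I would fix $a \in \fg$ and apply Lemma~\ref{lem:exp_unit} to obtain $u \in \GL_d^1(\fO)$ with $\exp(a) = 1 + au$. Right-multiplying $\xx$ then gives
\[
  \xx \exp(a) - \xx = \xx a u.
\]
The crucial observation is that $u \in \GL_d^1(\fO) \subset \GL_d(\fO)$ is a unit in $\Mat_d(\fO)$, so right multiplication by $u$ is an $\fO$-linear automorphism of $\fO^d$ and in particular preserves the submodule $\fP^n \fO^d$. Therefore $\xx a u \equiv 0 \pmod{\fP^n}$ if and only if $\xx a \equiv 0 \pmod{\fP^n}$.

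With this equivalence in hand, both inclusions follow immediately. For ``$\subset$'', given $a \in \cent_{\fg}(\xx \bmod \fP^n)$ we have $\xx a \equiv 0 \pmod{\fP^n}$, hence $\xx \exp(a) \equiv \xx \pmod{\fP^n}$, so $\exp(a) \in \Stab_{\exp(\fg)}(\xx \bmod \fP^n)$. For ``$\supset$'', any element of $\Stab_{\exp(\fg)}(\xx \bmod \fP^n)$ is of the form $\exp(a)$ for a unique $a \in \fg$ by bijectivity of $\exp$ on $\fg$, and the stabiliser condition together with the displayed identity forces $\xx a \equiv 0 \pmod{\fP^n}$, i.e.\ $a \in \cent_{\fg}(\xx \bmod \fP^n)$.

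There is essentially no substantive obstacle here: the only thing to be careful about is confirming that the unit $u$ provided by Lemma~\ref{lem:exp_unit} really lies in $\GL_d(\fO)$ (so that the equivalence between $\xx a u \equiv 0$ and $\xx a \equiv 0$ modulo $\fP^n$ is valid), and this is precisely the content of $u \in \GL_d^1(\fO)$. The role of the bound $m > e-1$ is merely to guarantee $m \ge e$ (since $m$ is an integer), which is what Lemma~\ref{lem:cent_sat} requires, while $m > \frac{e}{p-1}$ is what makes $\exp$ well-defined and bijective on $\fg$.
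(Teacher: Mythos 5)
Your proof is correct and follows essentially the same route as the paper's: both apply Lemma~\ref{lem:exp_unit} to write $\exp(a) = 1 + au$ with $u \in \GL_d^1(\fO)$, then use invertibility of $u$ to convert $\xx\exp(a) \equiv \xx \pmod{\fP^n}$ into $\xx a \equiv 0 \pmod{\fP^n}$. The paper presents this as a single chain of equivalences while you spell out the two inclusions, but the underlying argument is identical; your remark that $m > e-1$ serves to let Lemma~\ref{lem:cent_sat} apply (so that $\cent_{\fg}(\xx\bmod\fP^n)$ is saturable and $\exp$ of it is a group) is also the right reading of why both bounds appear in the hypothesis.
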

\begin{proof}
  Let $a \in \fg$ and
  write $\exp(a) = 1 + au$ for $u \in \GL_d^1(\fO)$.
  Then
  \begin{align*}
  a \in \cent_{\fg}(\xx \bmod{\fP^n})
    \,& \Leftrightarrow \, \xx a \equiv 0 \pmod {\fP^n} \\
    \,& \Leftrightarrow \, \xx au \equiv 0 \pmod {\fP^n}\\
    \,& \Leftrightarrow \, \xx \exp(a) \equiv  \xx (1+au)   \equiv \xx \pmod
                              {\fP^n}
                              \\
    \,& \Leftrightarrow \,
                                  \exp(a) \in \Stab_{\exp(\fg)}(\xx \bmod{\fP^n}).
\qedhere
  \end{align*}
\end{proof}

\subsection{Proof of Theorem~\ref{thm:bounded_denominators}}
\label{ss:pf_bounded_denominators}

Let $\fO$ be the valuation ring of a local field
$K\supset \QQ_p$. Recall that $\rind(K/\QQ_p)$ denotes the ramification index of $K/\QQ_p$.

\begin{prop}
  \label{prop:Zg_ZG}
  Let $m > \max\left(\rind(K/\QQ_p)-1, \frac {\rind(K/\QQ_p)}{p-1}\right)$,
  let $\fg \subset \Gl_d^m(\fO)$ be a saturable subalgebra,
  and let $G = \exp(\fg)$.
  Then $\Zeta_{\fg}(T) = \Zeta^\oc_{G}(T)$.
\end{prop}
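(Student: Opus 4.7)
The plan is to compare the coefficient of $T^n$ on each side and reduce the proof to an identity of indices which then follows directly from the saturable-group/Lie-algebra correspondence recalled in~\S\ref{ss:saturable}. First, by Lemma~\ref{lem:WxM}, the coefficient of $T^n$ in $\Zeta_\fg(T)$ equals $\ask{\fg_n}{V_n} = \sum_{\bar\xx \in V_n} \card{\bar\xx\fg_n}^{-1}$, while the standard orbit-counting computation (one term $1/\card{\bar\xx G}$ per point in each orbit) gives the coefficient of $T^n$ in $\Zeta_G^\oc(T)$ as $\card{V_n/G} = \sum_{\bar\xx \in V_n} \card{\bar\xx G}^{-1}$. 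It therefore suffices to verify $\card{\bar\xx G} = \card{\bar\xx\fg_n}$ for every $\bar\xx \in V_n$.

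My next step is to rewrite both sides as indices. For any lift $\xx \in \fO^d$ of $\bar\xx$, the $\fO$-module homomorphism $\fg \to V_n$, $a \mapsto \xx a \bmod \fP^n$, has image $\bar\xx\fg_n$ and kernel $\cent_\fg(\xx \bmod \fP^n)$, so $\card{\bar\xx\fg_n} = \idx{\fg:\cent_\fg(\xx \bmod \fP^n)}$. The orbit-stabiliser theorem, applied to the action of $G$ on the finite set $V_n$ (which factors through $G_n$), gives $\card{\bar\xx G} = \idx{G:\Stab_G(\xx \bmod \fP^n)}$. The task is thereby reduced to matching these two indices.

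At this point the saturability machinery takes over. The hypothesis $m > \max(e-1,\frac{e}{p-1})$ ensures in particular that $\fg \subset \Gl_d^e(\fO)$, so Lemma~\ref{lem:cent_sat} applies and tells us that $\cent_\fg(\xx \bmod \fP^n)$ is itself a saturable subalgebra of $\fg$; Corollary~\ref{cor:cent_stab} then identifies $\exp(\cent_\fg(\xx \bmod \fP^n))$ with $\Stab_G(\xx \bmod \fP^n)$. Feeding the saturable pair $\cent_\fg(\xx\bmod\fP^n) \subset \fg$ and the corresponding pair $\Stab_G(\xx\bmod\fP^n) \le G$ into Lemma~\ref{lem:same_idx} equates the additive index with the group index, yielding the desired equality. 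The main conceptual input, rather than a genuine obstacle, is precisely the compatibility between the Lie-theoretic centraliser and the group stabiliser recorded in Corollary~\ref{cor:cent_stab}; once that is in hand, the argument is essentially a bookkeeping exercise between the two formulations of Burnside counting.
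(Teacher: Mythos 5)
Your proof is correct and follows essentially the same route as the paper's: both compare the coefficient of $T^n$, reduce to matching orbit sizes, rewrite each side as an index using the orbit--stabiliser theorem and the first isomorphism theorem for $\fO$-modules, and then invoke Corollary~\ref{cor:cent_stab} together with Lemma~\ref{lem:same_idx} to equate the group index with the additive index. The only cosmetic difference is that you explicitly surface the reduction to $\fg \subset \Gl_d^e(\fO)$ and name Lemma~\ref{lem:cent_sat} as an intermediate step, whereas the paper relies directly on Corollary~\ref{cor:cent_stab}, which already packages that reasoning.
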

\begin{proof}
  Write $V = \fO^d$ and $V_n = V\otimes\fO_n$.
  Recall that $\fg_n$ denotes the image of $\fg$ under the natural map
  $\Gl_d(\fO) \to \Gl_d(\fO_n)$.
  For $n \ge 0$, 
  by combining Lemma~\ref{lem:WxM}, Lemma~\ref{lem:same_idx}, and
  Corollary~\ref{cor:cent_stab}, we obtain
  \begin{align*}
    \card{V_n/G} & = \sum_{\xx \in V_n} \card{\xx G}^{-1} = \sum_{\xx \in
                   V_n}\idx{G:\Stab_G(\xx)}^{-1} \\
    & = \sum_{\xx \in V_n} \idx{\fg:\cent_{\fg}(\xx)}^{-1}
      = \sum_{\xx \in V_n} \idx{\fg_n:\cent_{\fg_n}(\xx)}^{-1} \\
    & = \sum_{\xx \in V_n} \card{\xx \,\fg_n}^{-1} = \ask{\fg_n}{V_n}.\qedhere
  \end{align*}
\end{proof}

\begin{proof}[Proof of Theorem~\ref{thm:bounded_denominators}]
  Let $m > \max\left(\rind(K/\QQ_p)-1, \frac {\rind(K/\QQ_p)}{p-1}\right)$.
  Propositions~\ref{prop:rescale} and \ref{prop:Zg_ZG} show that
  $q^{dm}\Zeta_{\fg}(T) = \Zeta^m_{\fg^m}(T) = \Zeta^{\oc,m}_{\exp(\fg^m)}(T) \in \ZZ\llb T\rrb$.
\end{proof}

\subsection{Orbits and conjugacy classes of unipotent groups}
\label{ss:unipotent_orbits}

Let $\fO$ be the valuation ring of a local field $K\supset \QQ_p$.
Recall that $\Nil_d(\fO)\subset\Gl_d(\fO)$ is the Lie algebra of all
strictly upper triangular matrices and
that $\Uni_d$ denotes the group scheme of upper unitriangular $d\times d$
matrices.
The following is well-known.

\begin{prop}
  \label{prop:nil_exp_log}
Let $p \ge d$.
\begin{enumerate}
  \item \label{prop:nil_exp_log1}
    All $\ZZ_p$-subalgebras of $\Nil_d(\fO)$ and closed subgroups of
    $\Uni_d(\fO)$ are saturable.
  \item \label{prop:nil_exp_log2}
    $\exp$ and $\log$ define polynomial bijections
    between $\Nil_d(\fO)$ and $\Uni_d(\fO)$.
  \end{enumerate}
\end{prop}
\begin{proof}
  Noting that subalgebras of $\Nil_d(\fO)$ and closed subgroups of
  $\Uni_d(\fO)$ are nilpotent of class at most $d-1 < p$, 
  their lower central series constitute potent filtrations.
  This proves (\ref{prop:nil_exp_log1}). Part (\ref{prop:nil_exp_log2}) follows since $a^d = 0$ for $a \in
  \Nil_d(\fO)$.
\end{proof}

A simple variation of Proposition~\ref{prop:nil_exp_log} yields the following.
\begin{cor}
  \label{cor:Lie_nilpotent_matrices}
  Let $\fg \subset \Gl_d(\fO)$ be a subalgebra.
  Suppose that $\fg$ is nilpotent of class at most $c$ and that
  $a^{c+1} = 0$ (in $\Mat_d(\fO)$) for all $a \in \fg$.
  Further suppose that $p > c$.
  \begin{enumerate}
  \item $\fg$ is saturable.
  \item $G := \exp(\fg)$ is a saturable subgroup of $\GL_d(\fO)$.
  \item $\exp$ and $\log$ define mutually inverse polynomial bijections
    between $\fg$ and $G$. 
    \qed
  \end{enumerate}
\end{cor}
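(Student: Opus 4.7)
The plan is to mirror the proof of Proposition~\ref{prop:nil_exp_log}, with Lie-nilpotency of class $\le c$ replacing the natural filtration on $\Nil_d(\fO)$ and with the pointwise condition $a^{c+1}=0$ replacing the fact that $d\times d$ strictly upper triangular matrices satisfy $a^d=0$. I expect~(i) to follow from the first hypothesis alone, (iii) from the second alone, and (ii) to combine both via the Baker--Campbell--Hausdorff formula.

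For~(i), I would take the lower central series $\fg=\gamma_1(\fg)\supset\gamma_2(\fg)\supset\dotsb$ as a candidate potent filtration. It has trivial intersection since $\gamma_{c+1}(\fg)=0$, and $[\gamma_i(\fg),_{p-1},\fg]\subset\gamma_{i+p-1}(\fg)\subset\gamma_{c+1}(\fg)=0$ for every $i\ge 1$ since $p>c$; so the potent condition holds vacuously. For~(iii), the hypothesis $a^{c+1}=0$ truncates the formal exponential after at most $c$ terms,
\[
\exp(a)=\sum_{k=0}^{c}\tfrac{a^k}{k!},
\]
and the bound $p>c$ ensures that each $k!$ with $k\le c$ is a unit in $\fO$, giving a polynomial map $\fg\to\Mat_d(\fO)$. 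Writing $\exp(a)-1=a\,h(a)$ for a polynomial $h$, one has $(\exp(a)-1)^{c+1}=a^{c+1}h(a)^{c+1}=0$, so $\exp(a)\in\GL_d(\fO)$ and the logarithm series truncates similarly on $\exp(\fg)$. Mutual inversion then follows from the formal power series identities $\log\circ\exp=\mathrm{id}$ and $\exp\circ\log=\mathrm{id}$, all but finitely many of whose terms are forced to vanish.

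For~(ii), I would use the Dynkin form of Baker--Campbell--Hausdorff to define $a\star b:=\log(\exp(a)\exp(b))$ directly on $\fg$: Lie-nilpotency of class $\le c$ kills every term of weight exceeding $c$, and the bound $p>c$ keeps every remaining denominator invertible in $\fO$. Hence $a\star b\in\fg$, so $G:=\exp(\fg)$ is closed under multiplication (and under inversion, via $\exp(a)^{-1}=\exp(-a)$), giving a subgroup of $\GL_d(\fO)$. Saturability of $G$ as a pro-$p$ group then follows from~(i) via the Lazard correspondence recalled in \S\ref{ss:saturable}, which identifies $\fg$ equipped with the Hausdorff product with $G$. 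The one step I anticipate as the main obstacle is the denominator bound in the BCH series; the remaining verifications are essentially bookkeeping.
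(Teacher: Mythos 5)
Your proposal is correct and follows essentially the same line the paper intends by its phrase ``simple variation of Proposition~\ref{prop:nil_exp_log}'': for (i), the lower central series is a potent filtration because $[\gamma_i(\fg),_{p-1},\fg]\subset\gamma_{i+p-1}(\fg)=0$ once $p-1\ge c$; for (iii), the pointwise nilpotence $a^{c+1}=0$ truncates $\exp$ and $\log$ to polynomials with $p$-integral coefficients; and for (ii), the Baker--Campbell--Hausdorff series in weight $\le c<p$ makes $\exp(\fg)$ a group, with saturability delivered by the Lazard correspondence applied to~(i). One point you pass over lightly in~(ii): to know that the \emph{matrix} computation $\log(\exp(a)\exp(b))$ agrees with the truncated Dynkin--BCH \emph{Lie} series (rather than merely that the latter is a well-defined element of $\fg$), one should observe that, by Engel's theorem, $\fg\otimes K$ is conjugate to a subalgebra of $\Nil_d(K)$, so the associative $K$-algebra generated by $\fg$ is nilpotent and all the relevant formal-power-series identities specialise to finite matrix identities. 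The paper uses exactly this Engel step a few lines later in the proof of Proposition~\ref{prop:cc}, so this is in the spirit of the intended argument; once that is noted, your proof is complete.
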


By going through the proof of Theorem~\ref{thm:bounded_denominators},
we now easily obtain the following.

\begin{cor}
  \label{cor:nil_O_algebra}
  Let the notation be as in Corollary~\ref{cor:Lie_nilpotent_matrices}.
  Then $\Zeta^\ak_{\fg}(T) = \Zeta^\oc_{\exp(\fg)}(T)$ and
  thus, in particular, $\Zeta^\ak_{\fg}(T) \in \ZZ\llb T\rrb$. \qed
\end{cor}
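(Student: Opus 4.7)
The plan is to follow the derivation of Theorem~\ref{thm:bounded_denominators}, with the nilpotency hypotheses of Corollary~\ref{cor:Lie_nilpotent_matrices} replacing the ``deep subalgebra'' condition $\fg\subset\Gl_d^m(\fO)$ used there. The first step is to establish a direct analogue of Corollary~\ref{cor:cent_stab}:
\[
\exp\bigl(\cent_\fg(\xx\bmod\fP^n)\bigr) = \Stab_G(\xx\bmod\fP^n)\qquad(\xx\in\fO^d,\ n\ge 0),
\]
where $G := \exp(\fg)$. For $a\in\fg$, write $\exp(a) = 1 + au$ with $u := \sum_{i=0}^{c-1} a^i/(i+1)!$. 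Since $p > c$, the factorials $(i+1)!$ for $0\le i\le c-1$ are units in $\fO$, so $u\in\Mat_d(\fO)$; since $a$ is nilpotent as a matrix, the eigenvalues of $u$ are all equal to $1$, so $\det(u)=1$ and hence $u\in\GL_d(\fO)$. The desired equivalence $\xx a\equiv 0\pmod{\fP^n}\Leftrightarrow\xx\exp(a)\equiv\xx\pmod{\fP^n}$ then follows exactly as in the proof of Corollary~\ref{cor:cent_stab}.

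Next, $\mathfrak c := \cent_\fg(\xx\bmod\fP^n)$ is a subalgebra of $\fg$ that inherits the hypotheses of Corollary~\ref{cor:Lie_nilpotent_matrices}: it is nilpotent of class at most $c$ and each of its elements $a$ still satisfies $a^{c+1} = 0$. In particular $\mathfrak c$ is saturable, the exponential identifies it with its associated saturable pro-$p$ group, and by the first step this group coincides with $\Stab_G(\xx\bmod\fP^n)$. Moreover, $\fg/\mathfrak c$ embeds into the finite module $V_n$ via $a\mapsto \xx a\bmod \fP^n$, so $\mathfrak c$ has finite additive index in $\fg$, and Lemma~\ref{lem:same_idx} yields
\[
\idx{G:\Stab_G(\xx\bmod\fP^n)} = \idx{\fg:\mathfrak c}.
\]

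With this identity at hand, the computation in the proof of Proposition~\ref{prop:Zg_ZG} goes through verbatim: combining Lemma~\ref{lem:WxM} with $\idx{\fg:\mathfrak c}=\idx{\fg_n:\cent_{\fg_n}(\xx)}$ (the latter resulting from the same quotient embedding as above) gives $\card{V_n/G}=\ask{\fg_n}{V_n}$ for every $n\ge 0$, whence $\Zeta^\ak_\fg(T) = \Zeta^\oc_G(T)$ term by term. Membership in $\ZZ\llb T\rrb$ is then immediate since $\card{V_n/G}\in\NN_0$. The only genuinely new verification is the unit property of $u$ in the absence of the usual depth assumption; every other step is a direct translation of the previous argument, with Corollary~\ref{cor:Lie_nilpotent_matrices} supplying the saturability required at each stage.
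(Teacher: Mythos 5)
Your proof is correct and takes the approach the paper intends: the paper gives no explicit argument for this corollary beyond the remark that one should ``go through the proof of Theorem~\ref{thm:bounded_denominators},'' and you have done exactly that, transporting Corollary~\ref{cor:cent_stab} and Proposition~\ref{prop:Zg_ZG} to the nilpotent setting. Your replacement for Lemma~\ref{lem:exp_unit} (the factor $u = \sum_{i=0}^{c-1} a^i/(i+1)!$ is unipotent, hence a unit, because $a$ is nilpotent and $p>c$ makes the factorials invertible) is if anything cleaner than the valuation estimate in the paper, and the saturability of $\cent_\fg(\xx\bmod\fP^n)$ comes for free from Corollary~\ref{cor:Lie_nilpotent_matrices}, making the separate Lemma~\ref{lem:cent_sat} argument unnecessary here.
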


This proves the first part of Theorem~\ref{thm:unipotent}.
We note that Corollary~\ref{cor:diagonal} shows that we may not, in
general, relax the assumptions in Corollary~\ref{cor:nil_O_algebra} 
and merely assume that $\fg\subset\Gl_d(\fO)$ is a nilpotent subalgebra.
The following completes the proof of Theorem~\ref{thm:unipotent}.

\begin{prop}
  \label{prop:cc}
  Let $\fg \subset \Gl_d(\fO)$ be an isolated subalgebra
  consisting of nilpotent matrices.
  Suppose that $p \ge d$.
  Then $\Zeta^\cc_{\exp(\fg)}(T) = \Zeta^\ak_{\ad(\fg)}(T)$.
\end{prop}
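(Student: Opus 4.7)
The plan is to use $p$-adic Lie theory to convert the enumeration of conjugacy classes of $G_n$ into the enumeration of orbits of a linear group, and then invoke the already-established first part of Theorem~\ref{thm:unipotent} (i.e.\ Corollary~\ref{cor:nil_O_algebra}) applied not to $\fg$ itself but to $\ad(\fg) \subset \Gl(\fg)$. Concretely, I will prove the chain of equalities
\[
\concnt(G_n) \;=\; \bigl\lvert \fg_n / \Ad(G)\bigr\rvert \;=\; \bigl\lvert \fg_n / \exp(\ad(\fg))\bigr\rvert \;=\; \ask{\ad(\fg)_n}{\fg_n}
\]
for every $n$, and sum over $n$.

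First I would verify that $\exp$ induces a well-behaved bijection compatible with reduction modulo $\fP^n$. Since every $a \in \fg$ satisfies $a^d = 0$ and $p \ge d$, the series $\exp(a) = \sum_{i=0}^{d-1} a^i/i!$ and $\log(1+x)$ are polynomial expressions with $\fO$-coefficients. Together with the hypothesis that $\fg$ is nilpotent of class $\le d-1 < p$, Corollary~\ref{cor:Lie_nilpotent_matrices} gives that $\exp\colon \fg \to G$ is a polynomial bijection over $\fO$. The isolatedness of $\fg$ in $\Mat_d(\fO)$ identifies $\fg_n$ with $\fg \otimes \fO_n$, so reducing coefficients modulo $\fP^n$ produces a bijection $\exp_n\colon \fg_n \to G_n$ whose inverse is $\log_n$.

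Next I would exploit the polynomial identity $g h g^{-1} = \exp(\Ad(g)(a))$ for $g = \exp(b)$, $h = \exp(a) \in G$, where $\Ad(g) = \exp(\ad(b))$. All quantities are polynomial in $a,b$ with $\fO$-integral coefficients (again by $p \ge d$), so the same identity is valid modulo $\fP^n$. Hence $\exp_n$ is $G$-equivariant for conjugation on the target and for the adjoint action on the source. Since conjugacy classes of $G_n$ are the $G$-orbits on $G_n$ under conjugation, this yields
\[
\concnt(G_n) \;=\; \bigl\lvert \fg_n / G \bigr\rvert \;=\; \bigl\lvert \fg_n / \Ad(G)\bigr\rvert \;=\; \bigl\lvert \fg_n / \exp(\ad(\fg))\bigr\rvert,
\]
using $\Ad(G) = \exp(\ad(\fg))$. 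Finally, $\ad(\fg) \subset \Gl(\fg)$ is a Lie subalgebra consisting of nilpotent endomorphisms, nilpotent as a Lie algebra of class at most $d-1$, with $\ad(a)^{d-1}=0$ for every $a \in \fg$; so Corollary~\ref{cor:Lie_nilpotent_matrices} applies to $\ad(\fg)$, and Corollary~\ref{cor:nil_O_algebra} then gives $\Zeta^{\oc}_{\exp(\ad(\fg))}(T) = \Zeta^{\ak}_{\ad(\fg)}(T)$. Summing the displayed equality over $n$ completes the proof.

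The main obstacle is bookkeeping at the level of reductions modulo $\fP^n$: I need $\exp_n$ to be a genuine bijection between the sets $\fg_n$ and $G_n$ as defined in \S\ref{ss:reduction_avg} and Definition~\ref{d:cc_oc}, not merely between $\fg\otimes\fO_n$ and some formal quotient. This is where the isolatedness hypothesis on $\fg$ is essential, since it ensures $\fg\otimes\fO_n \to \fg_n \subset \Mat_d(\fO_n)$ is an isomorphism and likewise, via the polynomial bijection $\exp$, that the image of $\exp(\fg)$ in $\GL_d(\fO_n)$ agrees with the exponential of $\fg_n$. Once that is settled, the remaining ingredients---integrality of $\exp$, $\log$, and $\Ad$, and the identification $\Ad(\exp(a)) = \exp(\ad(a))$---are standard consequences of $p\ge d$ combined with nilpotency.
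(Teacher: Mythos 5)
Your proof is correct and follows essentially the same route as the paper: pass from conjugacy classes of $G_n$ to $\Ad(G)$-orbits on $\fg_n$ via the $\fO$-polynomial bijection $\exp$ (with isolatedness giving $\fg_n = \fg\otimes\fO_n$), identify $\Ad(G)=\exp(\ad(\fg))$ using the Hausdorff series, and then apply Corollary~\ref{cor:Lie_nilpotent_matrices} and Corollary~\ref{cor:nil_O_algebra} to $\ad(\fg)\subset\Gl(\fg)$. The paper invokes Engel's theorem to get nilpotency of class at most $d-1$ and records $\ad(a)^d=0$, whereas you note the marginally sharper $\ad(a)^{d-1}=0$; both suffice to satisfy the hypotheses of Corollary~\ref{cor:Lie_nilpotent_matrices}.
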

\begin{proof}
  By Engel's theorem, $\fg$ is $\GL_d(K)$-conjugate to a subalgebra of
  $\Nil_d(K)$ and hence nilpotent of class at most $d-1$;
  in particular, $\ad(a)^d = 0$ for all $a \in \fg$.
  By Corollary~\ref{cor:Lie_nilpotent_matrices},
  $G := \exp(\fg)$ is a saturable subgroup of $\GL_d(\fO)$.

  Let $\Ad\colon G \to \GL(\fg)$
  denote the adjoint representation of $G$;
  hence, $\Ad(g)\colon \fg \to \fg, \,a \mapsto \log(\exp(a)^g) = a^g $
  for $g\in G$.
  Recall that $G_n$ denotes the image of $G$ in $\GL_d(\fO_n)$
  and $\fg_n$ that of $\fg$ in $\Gl_d(\fO_n)$.
  Clearly, $a \equiv 0\pmod{\fP^n}$ if and only if $\exp(a) \equiv 1
  \pmod{\fP^n}$ for $a\in \fg$.
  We may thus identify conjugacy classes of $G_n$ with $\Ad(G)$-orbits on $\fg_n$.
  As $\fg$ is isolated within $\Gl_d(\fO)$,
  we may identify $\fg_n = \fg \otimes \fO_n$
  and obtain $\Zeta^\cc_G(T) = \Zeta^\oc_{\Ad(G) \acts \fg}(T)$.

  By Corollary~\ref{cor:Lie_nilpotent_matrices},
  $\ad(\fg)$ is a saturable subalgebra of $\Gl(\fg)$.
  The Hausdorff series shows that
  $\log(\exp(b)^{\exp(a)})
  = \sum_{i=0}^\infty \frac 1 {i!}[b,_i a]$
  for $a,b\in \fg$ (see \cite[Eqn~(3)]{GS09})
  whence
  $\Ad(\exp(a)) = \exp(\ad(a))$ for all $a\in \fg$.
  Thus, $\Ad(G) = \exp(\ad(\fg))$ and 
  Corollary~\ref{cor:nil_O_algebra} shows that
  $\Zeta^\oc_{\Ad(G) \acts \fg}(T) = \Zeta^\ak_{\log(\Ad(G))\acts \fg}(T) =
  \Zeta^\ak_{\ad(\fg)\acts \fg}(T)$.
\end{proof}

\begin{rem}
  \label{rem:isolated}
  The conclusion of Proposition~\ref{prop:cc} does not generally hold if
  $\fg$ is not isolated.
  For a simple example, take $\fg = \fP\dtimes \Nil_2(\fO)$.
  Then $\ad(\fg) = \{0\} \subset \Gl(\fg) \approx \Gl_1(\fO)$ and thus
  $\Zeta^{\ak}_{\ad(\fg)}(T) = 1/(1-qT) = 1 + qT + \mathcal O(T^2)$.
  On the other hand, the reduction of $\exp(\fg)$ modulo $\fP$ is trivial
  whence $\Zeta^{\cc}_{\exp(\fg)}(T) = 1 + T + \mathcal O(T^2)$.
\end{rem}

Using the well-known equivalence between unipotent algebraic groups
and nilpotent finite-dimensional Lie algebras over a field of
characteristic zero (see \cite[Ch.~IV]{DG70}), 
Corollary~\ref{cor:nil_O_algebra} and Proposition~\ref{prop:cc}
now imply the following global result.

\begin{cor}
  \label{cor:unipotent_oc_cc}
  Let $k$ be a number field with ring of integers $\fo$.
  Let $\GG \le \Uni_d\otimes_{\ZZ}\, k$ be a unipotent algebraic group
  over $k$ and let $\sG \le \Uni_d\otimes_{\ZZ}\, \fo$ be the
  associated $\fo$-form of $\GG$ (i.e.~the schematic
  closure of~$\GG$).
  Let $\bm\fg \subset \Nil_d(k)$ be the Lie algebra of
  $\GG$ and $\fg = \bm\fg \cap \Nil_d(\fo)$.
  Then for almost all $v\in \Places_k$,
  $\Zeta^\oc_{\sG(\fo_v)}(T) = \Zeta^\ak_{\fg_v}(T)$ and
  $\Zeta^\cc_{\sG(\fo_v)}(T) = \Zeta^\ak_{\ad(\fg_v)}(T)$.
  \qed
\end{cor}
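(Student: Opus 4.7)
The plan is to reduce the global corollary to the two local results Corollary~\ref{cor:nil_O_algebra} (for orbits) and Proposition~\ref{prop:cc} (for conjugacy classes) applied place by place, after verifying that at almost all $v \in \Places_k$ the hypotheses of these results are met and that $\sG(\fo_v)$ can be identified with $\exp(\fg_v)$.

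First, I would observe that since $\bm\fg \subset \Nil_d(k)$ is a $k$-subspace, the quotient $\Nil_d(\fo)/\fg$ embeds into the $k$-vector space $\Nil_d(k)/\bm\fg$ and is therefore torsion-free; hence $\fg$ is isolated in $\Nil_d(\fo)$, and this property is preserved under the base change $\fo \to \fo_v$, giving $\fg_v = \fg \otimes_\fo \fo_v$ isolated in $\Nil_d(\fo_v)$. After discarding the finitely many $v$ for which $p_v < d$, we may appeal to Proposition~\ref{prop:nil_exp_log}: the exponential and logarithm are mutually inverse polynomial bijections $\Nil_d(\fo_v) \leftrightarrow \Uni_d(\fo_v)$, and in particular restrict to a bijection $\fg_v \leftrightarrow \exp(\fg_v)$.

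Second, I would use the equivalence between unipotent algebraic groups and nilpotent finite-dimensional Lie algebras in characteristic zero (Demazure--Gabriel~\cite{DG70}) to identify $\GG(k) = \exp(\bm\fg)$ inside $\Uni_d(k)$. Since $\sG$ is by definition the schematic closure of $\GG$ in $\Uni_d \otimes_\ZZ \fo$, for almost all $v$ generic flatness and the polynomial character of $\exp/\log$ ensure $\sG(\fo_v) = \GG(k_v) \cap \Uni_d(\fo_v) = \exp(\bm\fg \otimes k_v) \cap \Uni_d(\fo_v) = \exp(\bm\fg \cap \Nil_d(\fo_v)) = \exp(\fg_v)$. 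This identification, combined with isolation of $\fg_v$, allows us to invoke Corollary~\ref{cor:nil_O_algebra} directly, yielding $\Zeta^\oc_{\sG(\fo_v)}(T) = \Zeta^\ak_{\fg_v}(T)$.

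Third, for the conjugacy class identity I would apply Proposition~\ref{prop:cc} to $\fg_v \subset \Gl_d(\fo_v)$. Its hypotheses---that $\fg_v$ is isolated, consists of nilpotent matrices (as $\fg_v \subset \Nil_d(\fo_v)$), and that $p_v \ge d$---are met at almost all $v$. The conclusion $\Zeta^\cc_{\exp(\fg_v)}(T) = \Zeta^\ak_{\ad(\fg_v)}(T)$ together with the identification $\sG(\fo_v) = \exp(\fg_v)$ from the previous step gives the second equality.

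The main obstacle, as I see it, is pinning down the integral identification $\sG(\fo_v) = \exp(\fg_v)$ at almost all places. Over $k$ this is the standard equivalence between unipotent groups and nilpotent Lie algebras, but extending it from $k$ to $\fo_v$-points of the schematic closure requires a spreading-out argument: the exponential is a $k$-polynomial map whose coefficients lie in $\fo[1/N]$ for some nonzero $N \in \fo$, and similarly for $\log$; at any $v$ not dividing $N$ and with $p_v \ge d$, both maps have coefficients in $\fo_v$ and realise the claimed bijection between $\fg_v$ and $\sG(\fo_v)$. Once this is in place, everything else is a direct citation of the local results.
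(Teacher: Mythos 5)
Your proof is correct and follows the same route the paper takes: the paper's own justification is the one-sentence remark preceding the corollary, which cites the unipotent-group/nilpotent-Lie-algebra equivalence over characteristic zero together with Corollary~\ref{cor:nil_O_algebra} and Proposition~\ref{prop:cc}. You have simply made explicit what the paper treats as routine, namely the isolation of $\fg$ (and hence of $\fg_v$), the restriction to places with $p_v \ge d$, and the spreading-out argument identifying $\sG(\fo_v)$ with $\exp(\fg_v)$ at almost all $v$, after which the two local results apply directly.
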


Using Theorem~\ref{thm:feqn}, we further establish the following functional
equations for orbit-counting and conjugacy class zeta functions arising from
unipotent algebraic groups.

\begin{cor}
  \label{cor:unipotent_feqn}
  Let the notation be as in Corollary~\ref{cor:unipotent_oc_cc}.
  Then for almost all $v\in \Places_k$,
  \[
  \Zeta^\oc_{\sG(\fo_v)}(T) \Biggm\vert_{(q_v,T) \to (q_v^{-1},T^{-1})} = 
  (-q_v^d T) \dtimes \Zeta^\oc_{\sG(\fo_v)}(T)
  \]
  and
  \[
  \pushQED{\qed}
  \Zeta^\cc_{\sG(\fo_v)}(T) \Biggm\vert_{(q_v,T) \to (q_v^{-1},T^{-1})}  = 
  (-q_v^{\dim_k(\GG)} T) \dtimes \Zeta^\cc_{\sG(\fo_v)}(T).
  \qedhere
  \popQED                                                                          
  \]
\end{cor}

\begin{cor}
  Let $k$ be a number field with ring of integers $\fo$.
  Let $\GG \le \Uni_d\otimes_{\ZZ}\, k$ 
  and $\HH \le \Uni_e\otimes_{\ZZ}\, k$ 
  be unipotent algebraic groups over $k$ with
  $\fo$-forms $\sG$ and $\sH$ as above.
  Suppose that for almost all $v \in \Places_k$,
  $\Zeta^\cc_{\sG(\fo_v)}(T) = \Zeta^\cc_{\sH(\fo_v)}(T)$.
  Then $\dim_k(\GG) = \dim_k(\HH)$. \qed
\end{cor}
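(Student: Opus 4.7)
The plan is to combine the translation from conjugacy class to ask zeta functions provided by Corollary~\ref{cor:unipotent_oc_cc} with the local functional equations of Theorem~\ref{thm:feqn}. The key observation is that these functional equations carry the ambient matrix dimension as an explicit exponent, so comparing the functional equations for $\sG$ and $\sH$ under the hypothesis that their conjugacy class zeta functions coincide will force the dimensions to agree.

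Concretely, set $d_\GG := \dim_k \GG$ and $d_\HH := \dim_k \HH$. First I would invoke Corollary~\ref{cor:unipotent_oc_cc} to obtain, for almost all $v \in \Places_k$, identifications $\zeta^\cc_{\sG(\fo_v)}(s) = \zeta^\ak_{\ad(\fg_v)}(s)$ with $\ad(\fg_v) \subset \End(\fg_v) \cong \Mat_{d_\GG}(\fo_v)$, and analogously for $\sH$ with $d_\HH$. Next, I would apply Theorem~\ref{thm:feqn} to the submodules $\ad(\fg) \subset \Mat_{d_\GG}(\fo)$ and $\ad(\mathfrak h) \subset \Mat_{d_\HH}(\fo)$, yielding, for almost all $v$,
\[
\zeta^\cc_{\sG(\fo_v)}(s)\Bigm\vert_{q_v \to q_v^{-1}} = (-q_v^{d_\GG - s})\, \zeta^\cc_{\sG(\fo_v)}(s)
\quad\text{and}\quad
\zeta^\cc_{\sH(\fo_v)}(s)\Bigm\vert_{q_v \to q_v^{-1}} = (-q_v^{d_\HH - s})\, \zeta^\cc_{\sH(\fo_v)}(s).
\]
The hypothesis together with the fact that the inversion $q_v \to q_v^{-1}$ is well-defined on Denef-type expressions implies that the left-hand sides above agree for almost all $v$; hence so do the right-hand sides. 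Dividing out the common nonzero factor $\zeta^\cc_{\sG(\fo_v)}(s) = \zeta^\cc_{\sH(\fo_v)}(s)$ (its $T^0$-coefficient equals $\concnt(1)=1$), one obtains $q_v^{d_\GG} = q_v^{d_\HH}$ for almost all $v$. Running this over infinitely many $v$ with $q_v > 1$ forces $d_\GG = d_\HH$.

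The only step that requires genuine care is the legitimacy of applying the operation $q_v \to q_v^{-1}$ to the assumed equality of the two families of zeta functions. As explained in the remark following Theorem~\ref{thm:feqn}, this operation is unambiguously defined via an arbitrary Denef-type formula of the shape \eqref{eq:denef}, and its output does not depend on the choice of such a formula representing a given family of zeta functions; consequently, two families that agree for almost all $v$ have inversions that also agree for almost all $v$. No further obstacles should arise beyond this point: the rest is a short manipulation of the two functional equations together with the standard observation that a polynomial identity in $q_v$ holding at infinitely many prime powers is an identity of exponents.
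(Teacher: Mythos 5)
Your proof is correct and follows exactly the same route the paper indicates: translate conjugacy class zeta functions to ask zeta functions of adjoint modules via Corollary~\ref{cor:unipotent_oc_cc}, apply the local functional equation of Theorem~\ref{thm:feqn} (which exposes the ambient dimension $d$ as the exponent), and appeal to the well-definedness of the inversion $q_v\to q_v^{-1}$ (the content of \cite[\S 5]{stability}) to conclude that equal families of zeta functions have equal inversions, forcing $q_v^{\dim_k\GG}=q_v^{\dim_k\HH}$ for infinitely many $v$. The paper's own proof is a one-line citation of the same three ingredients; you have simply spelled out the manipulation.
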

\begin{proof}
  Corollary~\ref{cor:unipotent_feqn}  and \cite[\S 4]{stability} allow us to
  recover $\dim_k(\GG)$ from $\bigl(\Zeta^\cc_{\sG(\fo_v)}(T)\bigr)_{v \in
    \Places_k\setminus S}$
  for any finite set $S \subset \Places_k$.
\end{proof}

We note that there are examples of non-isomorphic groups
$\GG$ and $\HH$ (of the same dimension) which satisfy
$\Zeta^\cc_{\sG(\fo_v)}(T) = \Zeta^\cc_{\sH(\fo_v)}(T)$ for almost all $v \in
\Places_k$;
see Table~\ref{tab:cc6} in \S\ref{ss:ex_cc}.

\section{Further examples}
\label{s:zeta}

\subsection{Computer calculations: \textsf{Zeta}}

The author's software package \textsf{Zeta}~\cite{Zeta} 
for Sage~\cite{Sage} can compute numerous types  of ``generic local''
zeta functions in fortunate (``non-degenerate'') cases. 
The techniques used by \textsf{Zeta} were developed over the course of
several papers; see \cite{padzeta}, in particular, and \cite{spp1489} for an
overview and references to other pieces of software that \textsf{Zeta} relies upon.
When performing computations, \textsf{Zeta} proceeds by attempting to explicitly
compute certain types of $p$-adic integrals.
Fortunately, the integrals in \eqref{eq:int_K_minors} and
\eqref{eq:int_O_minors} can both be encoded in terms of the
``representation data'' introduced in \cite[\S 5]{unipotent} whence
the author's computational techniques apply verbatim to the functions
$\Zeta_{M\otimes_{\ZZ} \fO}(T)$, where $M$ is $\ZZ$-defined.
In detail, given a submodule $M \subset \Mat_{d\times e}(\ZZ)$,
\textsf{Zeta} can be used to attempt to construct a rational
function $W(X,T) \in \QQ(X,T)$ with the following property:
for almost all primes $p$ and all finite extensions $K/\QQ_p$,
$\Zeta_{M\otimes_{\ZZ}\fO_K}(T) = W(q_K,T)$;
we note that for various reasons, \textsf{Zeta} may well fail to
construct $W(X,T)$ even if it exists.
Given $M \subset \Mat_{d\times e}(\ZZ)$, \textsf{Zeta} can
also be used to attempt to construct a formula as in Theorem~\ref{thm:denef}.
We note that while the techniques used by \textsf{Zeta} can, at least in
principle, be used to construct an explicit number $C_M$ such that all
primes $p$ which needed to be excluded above satisfy $p < C_M$, such a
number $C_M$ is not presently determined.

The remainder of this section is devoted to a number of examples
of functions $\Zeta_M(T)$ and $\Zeta^\cc_G(T)$ (via
Theorem~\ref{thm:unipotent}) computed with the help of \textsf{Zeta}. 
Throughout, $\fO$ denotes the valuation ring of a non-Archimedean local field
$K \supset \QQ_p$ with residue field size $q$.

\subsection{Examples of ask zeta functions}

\begin{ex}[Small poles and unbounded denominators]
  \label{ex:unbounded_denom}
  Let
  \[
  M = \left\{
    \begin{bmatrix}
      a & b & a \\
      b & c & d \\
      a & d & c
    \end{bmatrix}
    : a,b,c,d \in \fO
    \right\}.
  \]
  Then for sufficiently large $p$,
  \[
  \Zeta_{M}(T) = 
  \frac{1 + 5 q^{-1}T - 12q^{-2}T + 5 q^{-3}T + q^{-4}T^2}{(1 - q^{-1}T)(1 - T)^2}.
  \]
  Hence, the real poles of $\zeta_M(s)$ are $-1$ and $0$; it is easy
  to see that $\genidim(M) = 3$ (see Definition~\ref{d:genidim}).
  This example illustrates that, 
  in contrast to the case of $\Mat_{d\times e}(\fO)$,
  $d - \genidim(M)$ is generally not a lower bound for the real poles
  of $\zeta_M(s)$.
  Note that $\Zeta_M(T)$ has unbounded denominators---the author has
  found comparatively few modules of square matrices with this
  property (and initially suspected they did not exist).
\end{ex}

\begin{ex}
  \label{ex:non_Lie}
  Suppose that $p \not= 2$ and let
  \[
  M = \left\{
    \begin{bmatrix}
      0 & x_2 & -x_3 & 0 & 0 & x_1 \\
      0 & 0 & x_1 & \frac {x_2} 2 & -\frac {x_3} 2 & x_5 \\
      0 & 0 & 0& x_1 & 0 & x_4 \\
      0 & 0 & 0 & 0 & x_1 & x_3 \\
      0 & 0 & 0 & 0 & 0 & x_2 \\
      0 & 0 & 0 & 0& 0& 0
    \end{bmatrix} :
    x_1,\dotsc,x_5 \in \fO
    \right\}.
  \]
  For sufficiently large $p$,

  {\small
  \begin{align*}
  \Zeta_M(T) = \bigl(\,
    &
      (+ q^{36} T^{19} - 4 q^{35} T^{19} - q^{34} T^{20} + q^{35} T^{18} + 8 q^{34} T^{19} -
      2 q^{34} T^{18} - 2 q^{33} T^{19} - q^{34} T^{17}
    \\ &
          - 6 q^{33} T^{18} - q^{32} T^{19}
      + 3 q^{33} T^{17} + 5 q^{32} T^{18} + 3 q^{32} T^{17} + 6 q^{31} T^{18} -
      q^{32} T^{16} - 12 q^{31} T^{17}
    \\ &- 2 q^{30} T^{18} - 9 q^{30} T^{17} - q^{31} T^{15}
      + 14 q^{30} T^{16} + 14 q^{29} T^{17} + 4 q^{30} T^{15} + 5
         q^{29} T^{16}
    \\ &- 3 q^{28} T^{17} - 14 q^{29} T^{15} - 41 q^{28} T^{16} + q^{29} T^{14} +
      12 q^{28} T^{15} + 26 q^{27} T^{16} - 2 q^{28} T^{14}
    \\& + 46 q^{27} T^{15} -
      4 q^{26} T^{16} - 7 q^{27} T^{14} - 73 q^{26} T^{15} + 2 q^{27} T^{13} -
      24 q^{26} T^{14} + 32 q^{25} T^{15}
    \\& - 2 q^{26} T^{13} + 103 q^{25} T^{14} -
      3 q^{24} T^{15} + 6 q^{25} T^{13} - 98 q^24 T^{14} - q^25 T^12 -
      89 q^24 T^{13} 
    \\&+ 29 q^{23} T^{14} + 8 q^{24} T^{12} + 176 q^{23} T^13 -
      2 q^{22} T^14 + 35 q^{23} T^{12} - 115 q^{22} T^{13} + q^{23}
        T^{11} 
    \\&-178 q^{22} T^{12} + 25 q^{21} T^{13} - 15 q^{22} T^{11} + 223 q^{21} T^{12} -
      2 q^{20} T^{13} + 119 q^{21} T^{11}
    \\& - 100 q^{20} T^{12} + q^{21} T^{10} -
      262 q^{20} T^{11} + 16 q^{19} T^{12} - 39 q^{20} T^{10} + 214 q^{19} T^{11} -
      q^{18} T^{12}
    \\&+ 176 q^{19} T^{10} - 61 q^{18} T^{11} + 3 q^{19} T^9 -
      280 q^{18} T^{10} + 3 q^{17} T^{11} - 61 q^{18} T^9 + 176 q^{17}
        T^{10}
    \\&-      q^{18} T^8 + 214 q^{17} T^9 - 39 q^{16} T^{10} + 16 q^{17} T^8 -
      262 q^{16} T^9 + q^{15} T^{10} - 100 q^{16} T^8
    \\&+ 119 q^{15} T^9 -
      2 q^{16} T^7 + 223 q^{15} T^8 - 15 q^{14} T^9 + 25 q^{15} T^7 -
      178 q^{14} T^8 + q^{13} T^9
    \\&- 115 q^{14} T^7 + 35 q^{13} T^8 -
      2 q^{14} T^6 + 176 q^{13} T^7 + 8 q^{12} T^8 + 29 q^{13} T^6 -
      89 q^{12} T^7
    \\&- q^{11} T^8 - 98 q^{12} T^6 + 6 q^{11} T^7 - 3 q^{12} T^5 +
      103 q^{11} T^6 - 2 q^{10} T^7 + 32 q^{11} T^5 - 24 q^{10} T^6 
    \\& + 2 q^9 T^7 - 73 q^{10} T^5 - 7 q^9 T^6 - 4 q^{10} T^4 + 46 q^9 T^5 -
      2 q^8 T^6 + 26 q^9 T^4 + 12 q^8 T^5 + q^7 T^6
    \\&- 41 q^8 T^4 -
      14 q^7 T^5 - 3 q^8 T^3 + 5 q^7 T^4 + 4 q^6 T^5 + 14 q^7 T^3 +
      14 q^6 T^4 - q^5 T^5
    \\&- 9 q^6 T^3 - 2 q^6 T^2 - 12 q^5 T^3 -
      q^4 T^4 + 6 q^5 T^2 + 3 q^4 T^3 + 5 q^4 T^2 + 3 q^3 T^3 - q^4 T
        \\& - 6 q^3 T^2 - q^2 T^3 - 2 q^3 T - 2 q^2 T^2 + 8 q^2 T + q T^2 -
      q^2 - 4 q T + T\bigr)
    \\&\,/
        \bigl(
        q^2
        (1 - q^{10} T^5)
        (1 - q^8 T^4)
        (1 - q^5 T^3)
        (1 - q^4 T^2)^2
        (1 - q^3 T^2)
        (1 - q^2 T)
        (1 - q T)^2
        \bigr).
  \end{align*}}

Since $\Zeta_M(T) = 1 + (2 q^2 + 4 q + 4q^{-1} - q^{-2} - 8)T +
\mathcal O(T^2)$, we see that, 
in contrast to $\orbsize$-maximal or $\kersize$-minimal cases (see \S\ref{s:examples_max}--\ref{s:examples_min}),
the complexity of $\ask{M_1}{V_1}$ is in general a poor indicator of that of $\Zeta_M(T)$.

We note that by Corollary~\ref{cor:nil_O_algebra} and since
$\Zeta_M(T) \not\in \ZZ\llb T\rrb$, the module $M$ cannot be a Lie subalgebra of $\Nil_6(\fO)$. 
Indeed, this is readily verified directly even though $M$ is 
listed among Lie algebras in \cite[Table 5]{Roj16}.
\end{ex}

Rojas's article~\cite{Roj16} provides numerous examples of Lie subalgebras $\fg
\subset \Nil_d(\ZZ)$, say, for $d \le 6$. 
For many of these, we may use \textsf{Zeta} to compute
$\Zeta_{\fg\otimes_{\ZZ}\ZZ_p}(T)$.
Here, we only include one example.

\begin{ex}
  Let $p\not= 2$ and let
  \[
  \fg = \left\{
    \begin{bmatrix}
      0 & x_1 & \frac{x_2} 2 & -\frac{x_3} 2 & x_5 \\
      0 & 0 & x_1 & 0 & x_4 \\
      0 & 0 & 0 & x_1 & x_3 \\
      0 & 0 & 0 & 0 & x_2 \\
      0 & 0 & 0 & 0 & 0
    \end{bmatrix} :
    x_1,\dotsc,x_5 \in \fO
    \right\}.
  \]
  Then $\fg$ is a Lie subalgebra of $\Nil_5(\fO)$ of nilpotency class $4$, listed 
  as $L_{5,6}$ (de~Graaf's~\cite{dG07} notation) in
  \cite[Table~3]{Roj16}.
  For sufficiently large $p$,
  \begin{align*}
  \Zeta_{\fg}(T)  =
            \bigl(\,&+q^8 T^7 - 3 q^8 T^6 + q^8 T^5 + q^7 T^6 + 2 q^7 T^5 -
                   2 q^6T^5 - 2 q^6T^4 - q^5T^5 + 6 q^5T^4
    \\ & - 3 q^4 T^4 - 3 q^4 T^3 + 6 q^3 T^3 - q^3 T^2 -
                   2 q^2 T^3 - 2 q^2 T^2 + 2 q T^2 + qT + T^2 
    \\ & - 3 T + 1\bigr) / \bigl( (1-q^5 T^3) (1 - q^4 T^2) (1 - q^2 T) (1-qT)^2 \bigr).
  \end{align*}
\end{ex}

Numerous examples (including the case of $\Mat_{d\times e}(\fO)$) show
that $\zeta_M(s)$ may have a pole at zero and
Example~\ref{ex:unbounded_denom} shows that negative poles can arise
even for modules of square matrices.
In contrast, 
all of the author's computations are consistent with the following
question having a positive answer.

\begin{qu}
  \label{qu:positive_poles}
  Let $k$ be a number field with ring of integers $\fo$.
  Let $\fg \subset \Nil_d(\fo)$ be a Lie subalgebra.
  Is it the case that for almost all $v \in \Places_k$, every real pole
  of $\zeta_{\fg_v}(s)$ is positive?
\end{qu}

Supposing that Question~\ref{qu:positive_poles} indeed has a positive answer,
if $\GG \le \Uni_d\otimes_{\ZZ}\, k$ is an algebraic group over $k$ with
associated $\fo$-form $\sG \le \Uni_d\otimes_{\ZZ}\, \fo$ (see
Corollary~\ref{cor:unipotent_oc_cc}),
then we may evaluate the meromorphic continuation of
$\Zeta^\oc_{\sG(\fo_v)}(q_v^{-s})$ at $s = 0$ for almost all $v \in \Places_k$.
Inspired by similar questions regarding the behaviour at zero of
local subalgebra~\cite[Conj.~IV]{topzeta}, submodule~\cite[Conj.~E]{cyclic},
and representation~\cite[Qu.\ 8.5]{padzeta} zeta functions,
it would then be interesting to see if one can interpret the resulting
rational numbers, say in terms of properties of the orbit space $\fo_v^d/\sG(\fo_v)$.

\subsection{Examples of conjugacy class zeta functions}
\label{ss:ex_cc}

Let $k$ be a number field with ring of integers $\fo$.
Morozov~\cite{Mor58} classified nilpotent Lie algebras of dimension at
most $6$ over an arbitrary field of characteristic
zero---equivalently, he classified unipotent algebraic groups of
dimension at most $6$ over these fields.
A recent computer-assisted version of this classification (valid for
fields of characteristic $\not= 2$) is due to de~Graaf~\cite{dG07}.
We use his notation and let $L_{d,i}$ (or $L_{d,i}(a)$) denote the
$i$th Lie $k$-algebra (with parameter $a$) given in \cite[\S 4]{dG07}.

Table~\ref{tab:cc5} provides a complete list of  ``generic conjugacy
class zeta function'' associated with nilpotent Lie $k$-algebras of
dimension at most $5$ in the following sense: for each such 
algebra $\bm\fg$, let $\GG$ be its associated unipotent algebraic
group over $k$. After choosing an embedding $\GG \le
\Uni_d\otimes_{\ZZ}\, k$, we obtain an $\fo$-form $\sG$ of $\GG$ as in
Corollary~\ref{cor:unipotent_oc_cc}.
Then for almost all $v \in \Places_k$ and all finite extensions
$K/k_v$, $\Zeta^\cc_{\sG(\fO)}(T)$ is given in Table~\ref{tab:cc5}.

In contrast to dimension at most $5$, \textsf{Zeta} is unable to
compute generic conjugacy class zeta functions associated with every
nilpotent Lie $k$-algebra of dimension $6$.
Nevertheless, Table~\ref{tab:cc6} contains numerous examples of such functions;
we only included examples corresponding to $\oplus$-indecomposable algebras.
Clearly, generic conjugacy class zeta functions of direct products of
algebraic groups are Hadamard products of the zeta functions corresponding to the factors.
We note that $L_{3,2} \approx \Nil_3(K)$ and $L_{6,19}(-1) \approx \Nil_4(K)$.
A formula for $\Zeta^\cc_{\Uni_3(\fO)}(T)$ was previously given
in~\cite[\S 8.2]{BDOP13}.
This formula is incorrect due to a sign mistake.
More substantially, the computation in \cite[\S 8.2]{BDOP13} relies on
\cite[Prop.~6.2]{BDOP13} and what seems to be a variation of the integral
formalism developed in \cite{BDOP13} for unipotent groups; this is however not
explained.
Said integral formalism in \cite{BDOP13} appears to be essentially different
from the methods developed and applied here.

\paragraph{Possible further directions.}
A refinement of Higman's conjecture (see \S\ref{s:intro}) predicts that
$\concnt(\Uni_d(\FF_q))$ is a polynomial in $q-1$ with non-negative
coefficients.
In recent years, the same question has been studied for groups of
$\FF_q$-rational points of unipotent radicals of Borel subgroups of more
general algebraic groups such as Chevalley groups of small rank; see, in particular,
work of Goodwin et al.~\cite{Goo06,GR09a,GMR14,GMR16}.
In this spirit, an elementary calculation using the formulae in
Table~\ref{tab:cc5}--\ref{tab:cc6} shows that the coefficients of the
generic conjugacy class zeta functions associated with $\Nil_3(K)$ and $\Nil_4(K)$ are
polynomials with non-negative coefficients in $q-1$, generalising the known
cases of the corresponding coefficients of $T$.
The same is true of the generic conjugacy class zeta functions associated with
$L_{4,3}$; the latter algebra is isomorphic to the nilradical of a Borel
subalgebra of $\Sp_4(K)$.
It would be interesting to further explore to what extent such
non-negativity properties are satisfied by the coefficients of ask zeta
functions in the setting of Goodwin et al.

For another intriguing problem, let $\mathfrak f_{c,d}$ be the free nilpotent
Lie ring of class $c$ on $d$ generators and write $\mathfrak f_{c,d}(R) :=
\mathfrak f_{c,d} \otimes_{\ZZ} R$.
O'Brien and Voll~\cite[\S 5]{O'BV15} gave a combinatorial description of
$\concnt (\exp(\mathfrak f_{c,d}(\FF_q)))$ under mild assumptions on $q$.
The generic conjugacy class zeta functions associated with $\mathfrak f_{3,2}$
and $\mathfrak f_{2,3}$ can be found in Tables \ref{tab:cc5}
and~\ref{tab:cc6}, respectively.
Lins computed the conjugacy class zeta functions associated with $\mathfrak
f_{2,d}$ for all $d$; see \cite[Cor.~1.11]{Lin18}.
It seems challenging to determine the conjugacy class zeta functions
$\mathfrak f_{c,d}$ in general.

\begin{table}[h]
  \centering
  \begin{tabular}{r|c}
    $\bm\fg$ & $\Zeta^\cc_{\sG(\fO))}(T)$
    \\ \hline
    $L_{1,1}$ & $1/(1-qT)$ \\  \hline
    $L_{2,1}$ & $1/(1-q^2T)$ \\\hline
    $L_{3,1}$ & $1/(1-q^3T)$ \\
    $L_{3,2} \approx \Nil_3(K)$ & $(1-T)/((1 - q^2T)(1 - qT))$ \\ \hline
    $L_{4,1}$ & $1/(1-q^4T)$ \\
    $L_{4,2}$ & $(1 - qT)/((1 - q^3T)(1 - q^2T))$ \\
    $L_{4,3}$ & $(1-T)/(1 - q^2T)^2$ \\ \hline
    $L_{5,1}$ & $1/(1-q^5T)$ \\
    $L_{5,2}$ & $(1 - q^2T)/((1 - q^4T)(1 - q^3T))$ \\
    $L_{5,3}$ & $(1 - qT)/(1 - q^3T)^2$ \\
    $L_{5,4}$ & $(1 - T)/((1 - q^4T)(1 - qT))_{\phantom{0_0}}$ \\
    $L_{5,5}$ & $\frac
                { 1
                - T 
                - qT 
                + q^2T 
                + q^2T^2 
                - q^3T^2 
                - q^4T^2 
                + q^4T^3
                }{(1 - q^5T^2)(1 - q^3T)(1 - qT)}$ \\
    $L_{5,6}$ & $\frac
                {1
                - 2T 
                + qT^2 
                + q^2T 
                - 2q^3T^2 
                + q^3T^3 
                }{(1 - q^5T^2)(1 - q^2T)(1 - qT)}$\\
    $L_{5,7}$ & $(1 - T)/((1 - q^3T)(1 - q^2T))$ \\
    $L_{5,8}$ & $(1 - qT)/(1 - q^3T)^2$ \\
    $L_{5,9} \approx \mathfrak f_{3,2}(K)$ & $(1 - T)/((1 - q^3T)(1 - q^2T))$
  \end{tabular}
  \caption{Complete list of generic conjugacy class zeta functions of
    unipotent algebraic groups of dimension at most $5$}
  \label{tab:cc5}
\end{table}

\begin{table}[h]
  \centering
  \begin{tabular}{r|c}
    $\bm\fg$ & $\Zeta^\cc_{\sG(\fO)}(T)$
    \\ \hline
    $L_{6,10}$, $L_{6,25}$; $L_{6,26} \approx \mathfrak f_{2,3}(K)$ & $(1 - qT)/((1 - q^4T)(1 - q^3T))$\\
    $L_{6,11}$, $L_{6,12}$, $L_{6,20}$ & $\frac{
                 1
                 - 2 q T
                 + q^2 T 
                 + q^4 T^2 
                 - 2 q^5 T^2 
                 + q^6 T^3 
                 }{(1 - q^6T^2)(1 - q^3T)^2}$ \\
    $L_{6,16}$ & $(1 - qT)(1 - T)/((1 - q^2T)^2(1 - q^3T))$ \\
    $L_{6,17}$ & $\frac{
                 1
                 - T
                 - qT
                 + q^2T
                 + q^3T^2
                 - q^4T^2
                 - q^5T^2
                 + q^5T^3
                 }{(1 - q^6T^2)(1 - q^3T)(1 - q^2T)}$ \\
    $L_{6,18}$ & $(1 - T)/((1 - q^2T)(1 - q^4T))$ \\
    $L_{6,19}(0)$ & $
                    \frac{
                     1
                    + T
                    - 3qT
                    - q^2T 
                    + q^3T^2 
                    + 3 q^4T^2
                    - q^5T^2
                    - q^5T^3
                    }
                    {(1 - q^3T)^3(1 - q^2T)}$ \\
    $L_{6,19}(-1) \approx \Nil_4(K)$,
    $L_{6,21}(0)$ & $(1 - qT)^2/((1 - q^3T)^2(1 - q^2T))$ \\

    $L_{6,21}(1)$
             & $
                                 \frac{
                                 1
                                 - T
                                 - qT
                                 + q^2T
                                 + q^2T^2
                                 - q^3T^2
                                 - q^4T^2
                                 + q^4T^3
                                 }{(1 - q^5T^2)(1 - q^3T)(1 - q^2T)}$ \\
    $L_{6,22}(0)$ & $\frac{
                    1
                    - q T
                    - q^2 T
                    + q^3 T
                    + q^4 T^2
                    - q^5 T^2
                    - q^6 T^2
                    + q^7 T^3
                    }
                    {(1 - q^7 T^2)(1 - q^4T)(1 - q^2T)}$\\
    $L_{6,23}$, $L_{6,24}(0)$ & $
                 \frac{
                 1
                 - 2qT
                 + q^3T 
                 + q^3T^2
                 - 2q^5T^2
                 + q^6T^3
                 }{(1 - q^7T^2)(1 - q^3T)(1 - q^2T)}$
  \end{tabular}
  \caption{Examples of generic conjugacy class zeta functions of
    unipotent algebraic groups of dimension $6$}
  \label{tab:cc6}
\end{table}

\clearpage
{
  \bibliographystyle{abbrv}
  \tiny
  \bibliography{ask}
}

\end{document}